\author{Samuele Giraudo}
\date{\today}
\address{Institut Gaspard-Monge, université Paris-Est Marne-la-Vallée,
5 Boulevard Descartes, Champs-sur-Marne, 77454, Marne-la-Vallée cedex 2,
France}
\email{samuele.giraudo@univ-mlv.fr}
\title[Algebraic and combinatorial structures on pairs of twin binary trees]
{Algebraic and combinatorial structures \\ on pairs of twin binary trees}
\keywords{Hopf algebra; Robinson-Schensted algorithm; Quotient monoid;
Lattice; Baxter permutation}
\newtheorem{Theoreme}{Theorem}[section]
\newtheorem{Proposition}[Theoreme]{Proposition}
\newtheorem{Lemme}[Theoreme]{Lemma}
\newtheorem{Definition}[Theoreme]{Definition}
\newtheorem{Corollaire}[Theoreme]{Corollary}
\numberwithin{equation}{section}
\renewcommand{\leq}{\leqslant}
\renewcommand{\geq}{\geqslant}
\newcommand{\EnsPermu}{\mathfrak{S}}
\newcommand{\EnsRel}{\mathbb{Z}}
\newcommand{\EnsAB}{\mathcal{B}\mathcal{T}}
\newcommand{\EnsABJ}{\mathcal{T}\mathcal{B}\mathcal{T}}
\newcommand{\EnsPermuBX}{\mathfrak{S}^{\operatorname{B}}}
\newcommand{\EquivBX}{{\:\equiv_{\operatorname{B}}\:}}
\newcommand{\EquivS}{{\:\equiv_{\operatorname{S}}\:}}
\newcommand{\EquivSchutzS}{{\:\equiv_{\operatorname{S}^\#}\:}}
\newcommand{\EquivR}[1]{{\:\equiv_{\operatorname{R}^{(#1)}}\:}}
\newcommand{\PSymb}{{\sf P}}
\newcommand{\QSymb}{{\sf Q}}
\newcommand{\FQSym}{{\bf FQSym}}
\newcommand{\Baxter}{{\bf Baxter}}
\newcommand{\PBT}{{\bf PBT}}
\newcommand{\Sym}{{\bf Sym}}
\newcommand{\FSym}{{\bf FSym}}
\newcommand{\Bell}{{\bf Bell}}
\newcommand{\DSym}[1]{{\bf DSym^{\textnormal{(#1)}}}}
\newcommand{\F}{{\bf F}}
\newcommand{\G}{{\bf G}}
\newcommand{\PP}{{\bf P}}
\newcommand{\E}{{\bf E}}
\newcommand{\HH}{{\bf H}}
\newcommand{\Over}{{\,\diagup\,}}
\newcommand{\Under}{{\,\diagdown\,}}
\newcommand{\Prod}{\cdot}
\newcommand{\Gauche}{\prec}
\newcommand{\Droite}{\succ}
\newcommand{\DeltaG}{\Delta_\Gauche}
\newcommand{\DeltaD}{\Delta_\Droite}
\newcommand{\Tenseur}{\otimes}
\newcommand{\Std}{\operatorname{std}}
\newcommand{\ArbreVide}{\perp}
\newcommand{\Canop}{\operatorname{cnp}}
\newcommand{\Sloane}[1]{\href{http://oeis.org/#1}{{\bf #1}}}
\newcommand{\Eval}{\operatorname{ev}}
\newcommand{\AdjBXA}{{\:\leftrightharpoons_{\operatorname{B}}\:}}
\newcommand{\AdjBXB}{{\:\rightleftharpoons_{\operatorname{B}}\:}}
\newcommand{\AdjBXAB}{{\:\rightleftarrows_{\operatorname{B}}\:}}
\newcommand{\AdjS}{{\:\leftrightharpoons_{\operatorname{S}}\:}}
\newcommand{\AdjSchutzS}{{\:\leftrightharpoons_{\operatorname{S}^\#}\:}}
\newcommand{\AdjR}[1]{{\:\leftrightharpoons_{\operatorname{R}^{(#1)}}\:}}
\newcommand{\OrdPermu}{{\:\leq_{\operatorname{P}}\:}}
\newcommand{\OrdTam}{{\:\leq_{\operatorname{T}}\:}}
\newcommand{\OrdBX}{{\:\leq_{\operatorname{B}}\:}}
\newcommand{\ABCons}{\wedge}
\newcommand{\K}{\mathbb{K}}
\newcommand{\LA}{{\tt a}}
\newcommand{\LB}{{\tt b}}
\newcommand{\LC}{{\tt c}}
\newcommand{\LD}{{\tt d}}
\newcommand{\LX}{{\tt x}}
\newcommand{\LY}{{\tt y}}
\newcommand{\Alphab}{\operatorname{Alph}}
\newcommand{\Incr}{\operatorname{incr}}
\newcommand{\Decr}{\operatorname{decr}}
\newcommand{\PosetG}{\bigtriangleup}
\newcommand{\PosetD}{\bigtriangledown}
\newcommand{\MinClasse}{{\!\uparrow}}
\newcommand{\MaxClasse}{{\!\downarrow}}
\newcommand{\Forme}{\operatorname{sh}}
\newcommand{\Td}{\operatorname{td}}
\newcommand{\Ttd}{\operatorname{ttd}}
\definecolor{Noir}{RGB}{0,0,0}
\definecolor{Rouge}{RGB}{205,35,38}
\definecolor{Bleu}{RGB}{2,60,195}
\definecolor{Vert}{RGB}{23,103,1}
\definecolor{Orange}{RGB}{255,113,15}
\definecolor{Blanc}{RGB}{255,255,255}
\tikzstyle{Noeud} = [circle,draw=Bleu!100,fill=Bleu!20,thick,inner sep=0pt,
\tikzstyle{Feuille} = [rectangle,draw=Noir!100,fill=Noir!30,thick,
\tikzstyle{Arete} = [Rouge!80,thick,draw,line width=2pt]
\tikzstyle{SArbre} = [rectangle,draw=Orange!100,fill=Orange!30,thick,
\tikzstyle{Marque1} = [draw=Vert!100,fill=Vert!40]
\tikzstyle{Marque2} = [draw=Orange!100,fill=Orange!60]
\tikzstyle{EtiqClair} = [draw=Bleu!100,fill=Blanc!100,font=\Huge]
\tikzstyle{EtiqFonce} = [draw=Bleu!100,fill=Bleu!20,font=\Huge]
\tikzstyle{Classe} = [ellipse,draw=Bleu!100,fill=Bleu!10,thick]
\tikzstyle{Injection} = [Noir!100,draw,>->]
\tikzstyle{Ligne}=[Noir!80,draw,line width=1pt]
\tikzstyle{CaseHaut}=[rectangle,draw=Bleu!100,fill=Bleu!30,thick,
\tikzstyle{CaseBas}=[rectangle,draw=Rouge!100,fill=Rouge!15,thick,
\begin{document}

\begin{abstract}
    We give a new construction of a Hopf algebra defined first by
    Reading~\cite{Rea05} whose bases are indexed by objects belonging to the
    Baxter combinatorial family (\emph{i.e.}, Baxter permutations, pairs
    of twin binary trees, \emph{etc.}). Our construction relies on the
    definition of the Baxter monoid, analog of the plactic monoid and the
    sylvester monoid, and on a Robinson-Schensted-like correspondence and
    insertion algorithm. Indeed, the Baxter monoid leads to the definition
    of a lattice structure over pairs of twin binary trees and the definition
    of a Hopf algebra. The algebraic properties of this Hopf algebra are
    studied and among other, multiplicative bases are provided, and freeness
    and self-duality proved.
\end{abstract}

\maketitle
{\small \tableofcontents}

\section{Introduction} \label{sec:Intro}
In recent years, many combinatorial Hopf algebras, whose bases are indexed
by combinatorial objects, have been intensively studied. For example,
the Malvenuto-Reutenauer Hopf algebra~$\FQSym$ of Free quasi-symmetric
functions~\cite{MR95,DHT02} has bases indexed by permutations. This
Hopf algebra admits several Hopf subalgebras: The Hopf algebra of Free
symmetric functions~$\FSym$~\cite{PR95,DHT02}, whose bases are indexed
by standard Young tableaux, the Hopf algebra~$\Bell$~\cite{R07} whose bases
are indexed by set partitions, the Loday-Ronco Hopf algebra~$\PBT$~\cite{LR98,HNT05}
whose bases are indexed by planar binary trees, and the Hopf algebra~$\Sym$
of non-commutative symmetric functions~\cite{GKDLLRT94} whose bases are
indexed by integer compositions. A unifying approach to construct all these
structures relies on a definition of a congruence on words leading to the
definition of monoids on combinatorial objects. Indeed,~$\FSym$ is directly
obtained from the plactic monoid~\cite{LS81,DHT02,Lot02},~$\Bell$ from the
Bell monoid~\cite{R07},~$\PBT$ from the sylvester monoid~\cite{HNT02,HNT05},
and~$\Sym$ from the hypoplactic monoid~\cite{KT97,N98}. The richness of
these constructions relies on the fact that, in addition to constructing
Hopf algebras, the definition of such monoids often brings partial orders,
combinatorial algorithms and Robinson-Schensted-like algorithms, of independent
interest.
\smallskip

The Baxter combinatorial family admits various representations. The most
famous of these are Baxter permutations~\cite{Bax64}, which are permutations
that avoid certain patterns, and pairs of twin binary trees~\cite{DG94}.
This family also contains more exotic objects like quadrangulations~\cite{ABP04}
and plane bipolar orientations~\cite{BBF10}. In this paper, we propose to
enrich the above collection of Hopf algebras by providing a plactic-like
monoid, namely the Baxter monoid, leading to the construction of a Hopf
algebra whose bases are indexed by objects belonging to this combinatorial
family.
\smallskip

In order to show examples of relations between lattice congruences~\cite{CS98}
and Hopf algebras, Reading presented in~\cite{Rea05} a lattice congruence of
the permutohedron whose equivalence classes are indexed by twisted Baxter
permutations. These permutations were defined by a pattern avoidance property.
This congruence is very natural: The meet of two lattice congruences of
the permutohedron related to the construction of~$\PBT$ is one starting
point to build~$\Sym$; A natural question is to understand what happens
when the join, instead of the meet, of these two lattice congruences is
considered. Reading proved that his lattice congruence is precisely this
last one, and that the minimal elements of its equivalence classes are
twisted Baxter permutations. Besides, thanks to his theory, he gets for
free a Hopf algebra whose bases are indexed by twisted Baxter permutations.
Actually, twisted Baxter permutations are equinumerous with Baxter permutations.
Indeed, Law and Reading pointed out in~\cite{LR10} that the first proof
occurred in unpublished notes of West. Hence, the Hopf algebra of Reading
defined in~\cite{Rea05} can already be seen as a Hopf algebra on Baxter
permutations, and our construction, considered as a different construction
of the same Hopf algebra. Moreover, very recently, Law and Reading~\cite{LR10}
detailed their construction of this Hopf algebra and studied some of its
algebraic properties.
\smallskip

We started independently the study of Baxter objects in a different way:
We looked for a quotient of the free monoid analogous to the plactic and
the sylvester monoid. Surprisingly, the equivalence classes of permutations
under our monoid congruence are the same as the equivalence classes of the
lattice congruence of Law and Reading, and hence have the same by-products, as
\emph{e.g.}, the Hopf algebra structure and the fact that each class contains
both one twisted and one non-twisted Baxter permutation. However, even if
both points of view lead to the same general theory, their paths are different
and provide different ways of understanding the construction, one centered
on lattice theory, the other centered on combinatorics on words. Moreover,
a large part of the results of each paper do not appear in the other as,
in our case, the Robinson-Schensted-like correspondence and its insertion
algorithm, the polynomial realization, the bidendriform bialgebra structure,
the freeness, cofreeness, self-duality, primitive elements, and multiplicative
bases of the Hopf algebra, and a few other combinatorial properties.
\smallskip

We begin by recalling in Section~\ref{sec:Prelim} the preliminary notions
about words, permutations, and pairs of twin binary trees used thereafter.
In Section~\ref{sec:MonoideBaxter}, we define the Baxter congruence. This
congruence allows to define a quotient of the free monoid, the Baxter monoid,
which has a number of properties required for the Hopf algebraic construction
which follows. We show that the Baxter monoid is intimately linked to the
sylvester monoid and that the equivalence classes of the permutations under
the Baxter congruence form intervals of the permutohedron. Next, in
Section~\ref{sec:RobinsonSchensted}, we develop a Robinson-Schensted-like
insertion algorithm that allows to decide if two words are equivalent according
to the Baxter congruence. Given a word, this algorithm computes iteratively
a pair of twin binary trees inserting one by one the letters of~$u$. We
give as well some algorithms to read the minimal, the maximal and the Baxter
permutation of a Baxter equivalence class encoded by a pair of twin binary
trees. We also show that each equivalence class of permutations under the
Baxter congruence contains exactly one Baxter permutation.
Section~\ref{sec:TreillisBaxter} is devoted to the study of some properties
of the equivalence classes of permutations under the Baxter congruence.
This leads to the definition of a lattice structure on pairs of twin binary
trees, very similar to the Tamari lattice~\cite{Tam62,Knu06} since covering
relations can be expressed by binary tree rotations. We introduce in this
section \emph{twin Tamari diagrams} that are objects in bijection with pairs
of twin binary trees and offer a simple way to test comparisons in this
lattice. Finally, in
Section~\ref{sec:AlgebreHopfBaxter}, we start by recalling some basic facts
about the Hopf algebra of Free quasi-symmetric functions~$\FQSym$, and
then give our construction of the Hopf algebra~$\Baxter$ and study it.
Using the polynomial realization of~$\FQSym$, we deduce a polynomial
realization of~$\Baxter$. Using the order structure on pairs of twin
binary trees defined in the above section, we describe its product as an
interval of this order. Moreover, we prove that this Hopf algebra is free
as an algebra by constructing two multiplicative bases, and introduce two
operators on pairs of twin binary trees, analogous to the operators \emph{over}
and \emph{under} of Loday-Ronco on binary trees~\cite{LR02}. Using the results
of Foissy on bidendriform bialgebras~\cite{Foi07}, we show that this Hopf algebra
is also self-dual and that the Lie algebra of its primitive elements is free.
We conclude by explaining some morphism with other known Hopf subalgebras
of~$\FQSym$.
\medskip

This paper is an extended version of~\cite{Gir11}. It contains all proofs
and Sections~\ref{sec:RobinsonSchensted} and~\ref{sec:AlgebreHopfBaxter}
have new results.

\subsubsection*{Acknowledgments}
The author would like to thank Florent Hivert and Jean-Christophe Novelli
for their advice and help during all stages of the preparation of this paper.
The computations of this work have been done with the open-source mathematical
software Sage~\cite{SAGE}.

\section{Preliminaries} \label{sec:Prelim}

\subsection{Words, definitions and notations}
In the sequel, $A := \{a_1 < a_2 < \cdots\}$ is a totally ordered infinite
alphabet and~$A^*$ is the free monoid generated by~$A$. Let~$u \in A^*$.
We shall denote by~$|u|$ the length of~$u$ and by~$\epsilon$ the word of
length~$0$. The largest (resp. smallest) letter of~$u$ is denoted by~$\max(u)$
(resp.~$\min(u)$). The \emph{evaluation}~$\Eval(u)$ of the word~$u$ is the
non-negative integer vector such that its $i$-th entry is the number of
occurrences of the letter~$a_i$ in~$u$. It is convenient to denote by
$\Alphab(u) := \left\{u_i : 1 \leq i \leq |u|\right\}$ the smallest alphabet
on which~$u$ is defined. We say that~$(i, j)$ is an \emph{inversion} of~$u$
if~$i < j$ and~$u_i > u_j$. Additionally,~$i$ is \emph{descent} of~$u$
if~$(i, i + 1)$ is an inversion of~$u$.
\medskip

Let us now recall some classical operations on words. We shall denote by
$u^\sim := u_{|u|} \dots u_1$ the \emph{mirror image} of~$u$ and by~$u_{|S}$
the \emph{restriction} of~$u$ on the alphabet~$S \subseteq A$, that is the
longest subword of~$u$ such that $\Alphab(u) \subseteq S$. Let~$v \in A^*$.
The \emph{shuffle product}~$\shuffle$ is recursively defined on the linear
span of words~$\EnsRel \langle A \rangle$ by
\begin{equation}
    u \shuffle v :=
    \begin{cases}
        u & \mbox{if $v = \epsilon$,} \\
        v & \mbox{if $u = \epsilon$,} \\
        \LA (u' \shuffle \LB v') + \LB (\LA u' \shuffle v')
        & \mbox{otherwise, where $u = \LA u'$, $v = \LB v'$, and $\LA, \LB \in A$.}
    \end{cases}
\end{equation}
For example,
\begin{align} \begin{split}
    {\bf a_1 a_2} \shuffle a_2 a_1
        & = {\bf a_1 a_2} a_2 a_1 + {\bf a_1} a_2 {\bf a_2} a_1 +
            {\bf a_1} a_2 a_1 {\bf a_2} + a_2 {\bf a_1 a_2} a_1 +
            a_2 {\bf a_1} a_1 {\bf a_2} + a_2 a_1 {\bf a_1 a_2}, \\
        & = a_1 a_2 a_1 a_2 + 2\, a_1 a_2 a_2 a_1 + 2\, a_2 a_1 a_1 a_2 +
            a_2 a_1 a_2 a_1.
\end{split} \end{align}
Let $A^\# := \{a_1^\# > a_2^\# > \cdots\}$ be the alphabet~$A$ on which
the order relation has been reversed. The \emph{Schützenberger transformation}~$\#$
is defined on words by
\begin{equation}
    u^\# = \left(u_1 u_2 \dots u_{|u|} \right)^\# :=
        u_{|u|}^\# \dots u_2^\# u_1^\#.
\end{equation}
For example, $(a_5 a_3 a_1 a_1 a_5 a_2)^\# = a_2^\# a_5^\# a_1^\# a_1^\# a_3^\# a_5^\#$.
Note that by setting ${A^\#}^\# := A$, the transformation~$\#$ becomes an
involution on words.

\subsection{Permutations, definitions and notations}
Denote by~$\EnsPermu_n$ the set of permutations of size~$n$ and by~$\EnsPermu$
the set of all permutations. One can see a permutation of size~$n$ as a
word without repetition of length~$n$ on the first letters of~$A$. We shall
call~$i$ a \emph{recoil} of~$\sigma \in \EnsPermu_n$ if~$(i, i + 1)$ is
an inversion of~$\sigma^{-1}$. By convention,~$n$ also is a recoil of~$\sigma$.
\medskip

The \emph{(right) permutohedron order} is the partial order~$\OrdPermu$
defined on~$\EnsPermu_n$ where~$\sigma$ is covered by~$\nu$ if
$\sigma = u \, \LA \LB \, v$ and $\nu = u \, \LB \LA \, v$ where
$\LA < \LB \in A$, and~$u$ and~$v$ are words. Recall that one has
$\sigma \OrdPermu \nu$ if and only if any inversion of~$\sigma^{-1}$ also
is an inversion of~$\nu^{-1}$.
\medskip

Let $\sigma, \nu \in \EnsPermu$. The permutation $\sigma \Over \nu$ is obtained
by concatenating~$\sigma$ and the letters of~$\nu$ incremented by~$|\sigma|$;
In the same way, the permutation $\sigma \Under \nu$ is obtained by
concatenating the letters of~$\nu$ incremented by~$|\sigma|$ and~$\sigma$.
For example,
\begin{equation}
    {\bf 312} \Over 2314 = {\bf 312} 5647
    \quad \mbox{and} \quad
    {\bf 312} \Under 2314 = 5647 {\bf 312}.
\end{equation}
A permutation~$\sigma$ is \emph{connected} if $\sigma = \nu \Over \pi$
implies $\nu = \sigma$ or $\pi = \sigma$. Similarly,~$\sigma$ is
\emph{anti-connected} if $\sigma^\sim$ is connected. The
\emph{shifted shuffle product}~$\cshuffle$ of two permutations is defined by
\begin{equation}
    \sigma \cshuffle \nu :=
    \sigma \shuffle \left(\nu_1\! +\! |\sigma| \dots \nu_{|\nu|}\! +\! |\sigma|\right).
\end{equation}
For example,
\begin{equation}
    {\bf 12} \cshuffle 21 = {\bf 12} \shuffle 43 = {\bf 12} 43 + {\bf 1} 4 {\bf 2} 3 +
    {\bf 1} 43 {\bf 2} + 4 {\bf 12} 3 + 4 {\bf 1} 3 {\bf 2} + 43 {\bf 12}.
\end{equation}
The \emph{standardized word}~$\Std(u)$ of~$u \in A^*$ is the unique permutation
of size~$|u|$ having the same inversions as~$u$. For example,
$\Std(a_3 a_1 a_4 a_2 a_5 a_7 a_4 a_2 a_3) = 416289735$.

\subsection{Binary trees, definitions and notations}
We call \emph{binary tree} any complete rooted planar binary tree. Recall
that a binary tree~$T$ is either a \emph{leaf} (also called \emph{empty tree})
denoted by~$\ArbreVide$, or a node that is attached through two edges to
two binary trees, called respectively the \emph{left subtree} and the
\emph{right subtree} of~$T$. Let~$\EnsAB_n$ be the set of binary trees
with~$n$ nodes and~$\EnsAB$ be the set of all binary trees. We use in the
sequel the standard terminology (\emph{i.e.}, \emph{child}, \emph{ancestor},
\emph{path}, \emph{etc.}) about binary trees~\cite{AU94}. In our graphical
representations, nodes are represented by circles
\scalebox{.3}{
\begin{tikzpicture}
    \node[Noeud](0,0){};
\end{tikzpicture}},
leaves by squares
\scalebox{.5}{
\begin{tikzpicture}
    \node[Feuille](0,0){};
\end{tikzpicture}},
edges by segments
\scalebox{.3}{
\begin{tikzpicture}
    \draw[Arete](0,0)--(-1,-.8);
\end{tikzpicture}}
or
\scalebox{.3}{
\begin{tikzpicture}
    \draw[Arete](0,0)--(1,-.8);
\end{tikzpicture}},
and arbitrary subtrees by big squares like
\scalebox{.18}{\raisebox{.3em}{
\begin{tikzpicture}
    \node[SArbre]{};
\end{tikzpicture}}}.

\subsubsection{The Tamari order}
The \emph{Tamari order}~\cite{Tam62,Knu06} is the partial order~$\OrdTam$
defined on~$\EnsAB_n$ where $T_0 \in \EnsAB_n$ is covered by $T_1 \in \EnsAB_n$
if it is possible to obtain~$T_1$ by performing a \emph{right rotation}
into~$T_0$ (see Figure~\ref{fig:Rotation}).
\begin{figure}[ht]
    \centering
    \scalebox{.3}{%
    \begin{tikzpicture}
        \node[Noeud, EtiqClair] (racine) at (0, 0) {};
        \node (g) at (-3, -1) {};
        \node (d) at (3, -1) {};
        \node[Noeud, EtiqFonce] (r) at (0, -2) {\Huge $y$};
        \node[Noeud, EtiqClair] (q) at (-2, -4) {\Huge $x$};
        \node[SArbre] (A) at (-4, -6) {$A$};
        \node[SArbre] (B) at (0, -6) {$B$};
        \node[SArbre] (C) at (2, -4) {$C$};
        \draw[Arete] (racine) -- (g);
        \draw[Arete] (racine) -- (d);
        \draw[Arete, decorate, decoration = zigzag] (racine) -- (r);
        \draw[Arete] (r) -- (q);
        \draw[Arete] (r) -- (C);
        \draw[Arete] (q) -- (A);
        \draw[Arete] (q) -- (B);
        \node at (-4, -3) {\scalebox{2.2}{$T_0 = $}};
        \path (4, -2) edge[line width=3pt, ->] node[anchor=south,above,font=\Huge,Noir!100]{right} (6, -2);
        \path (6, -4) edge[line width=3pt, ->] node[anchor=south,above,font=\Huge,Noir!100]{left} (4, -4);
        \node[Noeud, EtiqClair] (racine') at (10, 0) {};
        \node (g') at (7, -1) {};
        \node (d') at (13, -1) {};
        \node[Noeud, EtiqFonce] (r') at (12, -4) {\Huge $y$};
        \node[Noeud, EtiqClair] (q') at (10, -2) {\Huge $x$};
        \node[SArbre] (A') at (8, -4) {$A$};
        \node[SArbre] (B') at (10, -6) {$B$};
        \node[SArbre] (C') at (14, -6) {$C$};
        \draw[Arete] (racine') -- (g');
        \draw[Arete] (racine') -- (d');
        \draw[Arete, decorate, decoration = zigzag] (racine') -- (q');
        \draw[Arete] (q') -- (r');
        \draw[Arete] (q') -- (A');
        \draw[Arete] (r') -- (B');
        \draw[Arete] (r') -- (C');
        \node at (14, -3) {\scalebox{2.2}{$ = T_1$}};
    \end{tikzpicture}}
    \caption{The right rotation of root $y$.}
    \label{fig:Rotation}
\end{figure}
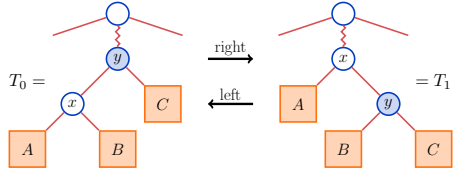
One has $T_0 \OrdTam T_1$ if and only if starting from~$T_0$, it is possible
to obtain~$T_1$ by performing some right rotations.

\subsubsection{Operations on binary trees}
If~$L$ and~$R$ are binary trees, denote by $L \ABCons R$ the binary tree
which has~$L$ as left subtree and~$R$ as right subtree. Similarly, if~$L$
and~$R$ are $A$-labeled binary trees, denote by $L \ABCons_\LA R$
the $A$-labeled binary tree which has $L$ as left subtree,~$R$ as right
subtree and a root labeled by~$\LA \in A$.
\medskip

Let $T_0, T_1 \in \EnsAB$. The binary tree $T_0 \Over T_1$ is obtained by
grafting~$T_0$ from its root on the leftmost leaf of~$T_1$; In the same
way, the binary tree $T_0 \Under T_1$ is obtained by grafting~$T_1$ from
its root on the rightmost leaf of~$T_0$.
\medskip

For example, for
\begin{equation}
    \begin{split} T_0 := \: \end{split}
    \begin{split}
    \scalebox{.15}{%
    \begin{tikzpicture}
        \node[Feuille](0)at(0,-2){};
        \node[Noeud](1)at(1,-1){};
        \node[Feuille](2)at(2,-3){};
        \node[Noeud](3)at(3,-2){};
        \node[Feuille](4)at(4,-3){};
        \draw[Arete](3)--(2);
        \draw[Arete](3)--(4);
        \draw[Arete](1)--(0);
        \draw[Arete](1)--(3);
        \node[Noeud](5)at(5,0){};
        \node[Feuille](6)at(6,-2){};
        \node[Noeud](7)at(7,-1){};
        \node[Feuille](8)at(8,-2){};
        \draw[Arete](7)--(6);
        \draw[Arete](7)--(8);
        \draw[Arete](5)--(1);
        \draw[Arete](5)--(7);
    \end{tikzpicture}}
    \end{split}
    \qquad \mbox{and} \qquad
    \begin{split} T_1 := \: \end{split}
    \begin{split}
    \scalebox{.15}{%
    \begin{tikzpicture}
        \node[Feuille](0)at(0,-1){};
        \node[Noeud,Marque1](1)at(1,0){};
        \node[Feuille](2)at(2,-3){};
        \node[Noeud,Marque1](3)at(3,-2){};
        \node[Feuille](4)at(4,-3){};
        \draw[Arete](3)--(2);
        \draw[Arete](3)--(4);
        \node[Noeud,Marque1](5)at(5,-1){};
        \node[Feuille](6)at(6,-2){};
        \draw[Arete](5)--(3);
        \draw[Arete](5)--(6);
        \draw[Arete](1)--(0);
        \draw[Arete](1)--(5);
    \end{tikzpicture}}\,
    \end{split},
\end{equation}
we have
\begin{equation}
    \begin{split}T_0 \ABCons T_1 = \: \end{split}
    \begin{split}
    \scalebox{.15}{%
    \begin{tikzpicture}
        \node[Feuille](0)at(0,-3){};
        \node[Noeud](1)at(1,-2){};
        \node[Feuille](2)at(2,-4){};
        \node[Noeud](3)at(3,-3){};
        \node[Feuille](4)at(4,-4){};
        \draw[Arete](3)--(2);
        \draw[Arete](3)--(4);
        \draw[Arete](1)--(0);
        \draw[Arete](1)--(3);
        \node[Noeud](5)at(5,-1){};
        \node[Feuille](6)at(6,-3){};
        \node[Noeud](7)at(7,-2){};
        \node[Feuille](8)at(8,-3){};
        \draw[Arete](7)--(6);
        \draw[Arete](7)--(8);
        \draw[Arete](5)--(1);
        \draw[Arete](5)--(7);
        \node[Noeud,EtiqClair](9)at(9,0){};
        \node[Feuille](10)at(10,-2){};
        \node[Noeud,Marque1](11)at(11,-1){};
        \node[Feuille](12)at(12,-4){};
        \node[Noeud,Marque1](13)at(13,-3){};
        \node[Feuille](14)at(14,-4){};
        \draw[Arete](13)--(12);
        \draw[Arete](13)--(14);
        \node[Noeud,Marque1](15)at(15,-2){};
        \node[Feuille](16)at(16,-3){};
        \draw[Arete](15)--(13);
        \draw[Arete](15)--(16);
        \draw[Arete](11)--(10);
        \draw[Arete](11)--(15);
        \draw[Arete](9)--(5);
        \draw[Arete](9)--(11);
    \end{tikzpicture}}\,
    \end{split},
\end{equation}
\begin{equation}
    \begin{split}T_0 \Over T_1 = \: \end{split}
    \begin{split}
    \scalebox{.15}{%
    \begin{tikzpicture}
        \node[Feuille](0)at(0,-3){};
        \node[Noeud](1)at(1,-2){};
        \node[Feuille](2)at(2,-4){};
        \node[Noeud](3)at(3,-3){};
        \node[Feuille](4)at(4,-4){};
        \draw[Arete](3)--(2);
        \draw[Arete](3)--(4);
        \draw[Arete](1)--(0);
        \draw[Arete](1)--(3);
        \node[Noeud](5)at(5,-1){};
        \node[Feuille](6)at(6,-3){};
        \node[Noeud](7)at(7,-2){};
        \node[Feuille](8)at(8,-3){};
        \draw[Arete](7)--(6);
        \draw[Arete](7)--(8);
        \draw[Arete](5)--(1);
        \draw[Arete](5)--(7);
        \node[Noeud,Marque1](9)at(9,0){};
        \node[Feuille](10)at(10,-3){};
        \node[Noeud,Marque1](11)at(11,-2){};
        \node[Feuille](12)at(12,-3){};
        \draw[Arete](11)--(10);
        \draw[Arete](11)--(12);
        \node[Noeud,Marque1](13)at(13,-1){};
        \node[Feuille](14)at(14,-2){};
        \draw[Arete](13)--(11);
        \draw[Arete](13)--(14);
        \draw[Arete](9)--(5);
        \draw[Arete](9)--(13);
    \end{tikzpicture}}
    \end{split}
    \qquad \mbox{and} \qquad
    \begin{split}T_0 \Under T_1 = \: \end{split}
    \begin{split}
    \scalebox{.15}{%
    \begin{tikzpicture}
        \node[Feuille](0)at(0,-2){};
        \node[Noeud](1)at(1,-1){};
        \node[Feuille](2)at(2,-3){};
        \node[Noeud](3)at(3,-2){};
        \node[Feuille](4)at(4,-3){};
        \draw[Arete](3)--(2);
        \draw[Arete](3)--(4);
        \draw[Arete](1)--(0);
        \draw[Arete](1)--(3);
        \node[Noeud](5)at(5,0){};
        \node[Feuille](6)at(6,-2){};
        \node[Noeud](7)at(7,-1){};
        \node[Feuille](8)at(8,-3){};
        \node[Noeud,Marque1](9)at(9,-2){};
        \node[Feuille](10)at(10,-5){};
        \node[Noeud,Marque1](11)at(11,-4){};
        \node[Feuille](12)at(12,-5){};
        \draw[Arete](11)--(10);
        \draw[Arete](11)--(12);
        \node[Noeud,Marque1](13)at(13,-3){};
        \node[Feuille](14)at(14,-4){};
        \draw[Arete](13)--(11);
        \draw[Arete](13)--(14);
        \draw[Arete](9)--(8);
        \draw[Arete](9)--(13);
        \draw[Arete](7)--(6);
        \draw[Arete](7)--(9);
        \draw[Arete](5)--(1);
        \draw[Arete](5)--(7);
    \end{tikzpicture}}\,
    \end{split}.
\end{equation}

\subsubsection{Binary search trees, increasing, and decreasing binary trees}
An $A$-labeled binary tree~$T$ is a \emph{right} (resp. \emph{left})
\emph{binary search tree} if for any node~$x$ labeled by~$\LB$, each
label~$\LA$ of a node in the left subtree of~$x$ and each label~$\LC$ of
a node in the right subtree of~$x$, the inequality $\LA \leq \LB < \LC$
(resp. $\LA < \LB \leq \LC$) holds.
\medskip

A binary tree $T \in \EnsAB_n$ is an \emph{increasing} (resp. \emph{decreasing})
\emph{binary tree} if it is bijectively labeled on $\{1, \dots, n\}$ and,
for any node~$x$ of~$T$, if~$y$ is a child of~$x$, then the label of~$y$
is greater (resp. smaller) than the label of~$x$.
\medskip

The \emph{shape}~$\Forme(T)$ of an $A$-labeled binary tree~$T$ is the
unlabeled binary tree obtained by forgetting its labels.

\subsubsection{Inorder traversal}
The \emph{inorder traversal} of a binary tree~$T$ consists in recursively
visiting its left subtree, then its root, and finally its right subtree
(see Figure~\ref{fig:ExempleLectureInfixe}).
\begin{figure}[ht]
    \centering
    \scalebox{.2}{\begin{tikzpicture}
        \node[Feuille](0)at(0.0,-2){};
        \node[Noeud,label=below:\scalebox{3.5}{$a$}](1)at(1.0,-1){};
        \node[Feuille](2)at(2.0,-4){};
        \node[Noeud,label=below:\scalebox{3.5}{$b$}](3)at(3.0,-3){};
        \node[Feuille](4)at(4.0,-4){};
        \draw[Arete](3)--(2);
        \draw[Arete](3)--(4);
        \node[Noeud,label=below:\scalebox{3.5}{$c$}](5)at(5.0,-2){};
        \node[Feuille](6)at(6.0,-3){};
        \draw[Arete](5)--(3);
        \draw[Arete](5)--(6);
        \draw[Arete](1)--(0);
        \draw[Arete](1)--(5);
        \node[Noeud,label=below:\scalebox{3.5}{$d$}](7)at(7.0,0){};
        \node[Feuille](8)at(8.0,-4){};
        \node[Noeud,label=below:\scalebox{3.5}{$e$}](9)at(9.0,-3){};
        \node[Feuille](10)at(10.0,-4){};
        \draw[Arete](9)--(8);
        \draw[Arete](9)--(10);
        \node[Noeud,label=below:\scalebox{3.5}{$f$}](11)at(11.0,-2){};
        \node[Feuille](12)at(12.0,-3){};
        \draw[Arete](11)--(9);
        \draw[Arete](11)--(12);
        \node[Noeud,label=below:\scalebox{3.5}{$g$}](13)at(13.0,-1){};
        \node[Feuille](14)at(14.0,-3){};
        \node[Noeud,label=below:\scalebox{3.5}{$h$}](15)at(15.0,-2){};
        \node[Feuille](16)at(16.0,-3){};
        \draw[Arete](15)--(14);
        \draw[Arete](15)--(16);
        \draw[Arete](13)--(11);
        \draw[Arete](13)--(15);
        \draw[Arete](7)--(1);
        \draw[Arete](7)--(13);
    \end{tikzpicture}}
    \caption{The sequence $(a, b, c, d, e, f, g, h)$ is the sequence of all
    nodes of this binary tree visited by the inorder traversal.}
    \label{fig:ExempleLectureInfixe}
\end{figure}
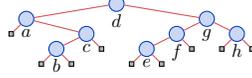
We shall say that a node~$x$ is \emph{the $i$-th node of}~$T$ if~$x$ is
the $i$-th visited node by the inorder traversal of~$T$. In the same way,
a leaf~$y$ is \emph{the $j$-th leaf of}~$T$ if~$y$ is the $j$-th visited
leaf by the inorder traversal of~$T$. We also say that~$i$ is the \emph{index}
of~$x$ and~$j$ is the \emph{index} of~$y$. If~$T$ is labeled, its
\emph{inorder reading} is the word $u_1 \dots u_{|u|}$ such that for any
$1 \leq i \leq |u|$, $u_i$ is the label of the $i$-th node of~$T$. Note
that when~$T$ is a right (or left) binary search tree, its inorder reading
is a nondecreasing word.

\subsubsection{The canopy of binary trees}
The \emph{canopy} (see~\cite{LR98} and~\cite{V04})~$\Canop(T)$ of a binary
tree~$T$ is the word on the alphabet $\{0, 1\}$ obtained by browsing the
leaves of~$T$ from left to right except the first and the last one, writing~$0$
if the considered leaf is oriented to the right,~$1$ otherwise (see
Figure~\ref{fig:ExempleCanopee}). Note that the orientation of the leaves
in a binary tree is determined only by its nodes so that we can omit to
draw the leaves in our graphical representations.
\begin{figure}[ht]
    \centering
    \scalebox{.20}{%
    \begin{tikzpicture}
        \node[Feuille](0)at(0,-3){};
        \node[Noeud](1)at(1,-2){};
        \node[Feuille](2)at(2,-3){};
        \node[] (2') [below of = 2] {\scalebox{3}{$0$}};
        \draw[Arete](1)--(0);
        \draw[Arete](1)--(2);
        \node[Noeud](3)at(3,-1){};
        \node[Feuille](4)at(4,-4){};
        \node[] (4') [below of = 4] {\scalebox{3}{$1$}};
        \node[Noeud](5)at(5,-3){};
        \node[Feuille](6)at(6,-4){};
        \node[] (6') [below of = 6] {\scalebox{3}{$0$}};
        \draw[Arete](5)--(4);
        \draw[Arete](5)--(6);
        \node[Noeud](7)at(7,-2){};
        \node[Feuille](8)at(8,-3){};
        \node[] (8') [below of = 8] {\scalebox{3}{$0$}};
        \draw[Arete](7)--(5);
        \draw[Arete](7)--(8);
        \draw[Arete](3)--(1);
        \draw[Arete](3)--(7);
        \node[Noeud](9)at(9,0){};
        \node[Feuille](10)at(10,-3){};
        \node[] (10') [below of = 10] {\scalebox{3}{$1$}};
        \node[Noeud](11)at(11,-2){};
        \node[Feuille](12)at(12,-3){};
        \node[] (12') [below of = 12] {\scalebox{3}{$0$}};
        \draw[Arete](11)--(10);
        \draw[Arete](11)--(12);
        \node[Noeud](13)at(13,-1){};
        \node[Feuille](14)at(14,-3){};
        \node[] (14') [below of = 14] {\scalebox{3}{$1$}};
        \node[Noeud](15)at(15,-2){};
        \node[Feuille](16)at(16,-3){};
        \draw[Arete](15)--(14);
        \draw[Arete](15)--(16);
        \draw[Arete](13)--(11);
        \draw[Arete](13)--(15);
        \draw[Arete](9)--(3);
        \draw[Arete](9)--(13);
    \end{tikzpicture}}
    \caption{The canopy of this binary tree is $0100101$.}
    \label{fig:ExempleCanopee}
\end{figure}
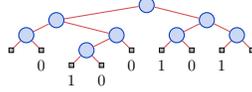

\subsection{Baxter permutations and pairs of twin binary trees}

\subsubsection{Baxter permutations}
A permutation~$\sigma$ is a \emph{Baxter permutation} if for any subword
$u := u_1 u_2 u_3 u_4$ of~$\sigma$ such that the letters~$u_2$ and~$u_3$
are adjacent in~$\sigma$, $\Std(u) \notin \{2413, 3142\}$. In other
words,~$\sigma$ is a Baxter permutation if it avoids the
\emph{generalized permutation patterns} $2-41-3$ and $3-14-2$ (see~\cite{BS00}
for an introduction on generalized permutation patterns). For example,
${\bf 4}21{\bf 73}8{\bf 5}6$ is not a Baxter permutation; On the other hand
$436975128$, is a Baxter permutation. Let us denote by~$\EnsPermuBX_n$ the
set of Baxter permutations of size~$n$ and by~$\EnsPermuBX$ the set of all
Baxter permutations.

\subsubsection{Pairs of twin binary trees}
A \emph{pair of twin binary trees}~$(T_L, T_R)$ is made of two binary trees
$T_L, T_R \in \EnsAB_n$ such that the canopies of~$T_L$ and~$T_R$ are
complementary, that is
\begin{equation}
    \Canop(T_L)_i \ne \Canop(T_R)_i \mbox{ for all } 1 \leq i \leq n - 1
\end{equation}
(see Figure~\ref{fig:ExempleABJ}).
\begin{figure}[ht]
    \centering
    \scalebox{.20}{%
    \begin{tikzpicture}
        \node[Feuille](0)at(0,-3){};
        \node[Noeud](1)at(1,-2){};
        \node[Feuille](2)at(2,-3){};
        \node[] (2') [below of = 2] {\scalebox{3}{$0$}};
        \draw[Arete](1)--(0);
        \draw[Arete](1)--(2);
        \node[Noeud](3)at(3,-1){};
        \node[Feuille](4)at(4,-4){};
        \node[] (4') [below of = 4] {\scalebox{3}{$1$}};
        \node[Noeud](5)at(5,-3){};
        \node[Feuille](6)at(6,-4){};
        \node[] (6') [below of = 6] {\scalebox{3}{$0$}};
        \draw[Arete](5)--(4);
        \draw[Arete](5)--(6);
        \node[Noeud](7)at(7,-2){};
        \node[Feuille](8)at(8,-3){};
        \node[] (8') [below of = 8] {\scalebox{3}{$0$}};
        \draw[Arete](7)--(5);
        \draw[Arete](7)--(8);
        \draw[Arete](3)--(1);
        \draw[Arete](3)--(7);
        \node[Noeud](9)at(9,0){};
        \node[Feuille](10)at(10,-3){};
        \node[] (10') [below of = 10] {\scalebox{3}{$1$}};
        \node[Noeud](11)at(11,-2){};
        \node[Feuille](12)at(12,-3){};
        \node[] (12') [below of = 12] {\scalebox{3}{$0$}};
        \draw[Arete](11)--(10);
        \draw[Arete](11)--(12);
        \node[Noeud](13)at(13,-1){};
        \node[Feuille](14)at(14,-3){};
        \node[] (14') [below of = 14] {\scalebox{3}{$1$}};
        \node[Noeud](15)at(15,-2){};
        \node[Feuille](16)at(16,-3){};
        \draw[Arete](15)--(14);
        \draw[Arete](15)--(16);
        \draw[Arete](13)--(11);
        \draw[Arete](13)--(15);
        \draw[Arete](9)--(3);
        \draw[Arete](9)--(13);
    \end{tikzpicture}}
    \enspace
    \scalebox{.20}{%
    \begin{tikzpicture}
        \node[Feuille](0)at(0,-3){};
        \node[Noeud](1)at(1,-2){};
        \node[Feuille](2)at(2,-4){};
        \node[] (2') [below of = 2] {\scalebox{3}{$1$}};
        \node[Noeud](3)at(3,-3){};
        \node[Feuille](4)at(4,-4){};
        \node[] (4') [below of = 4] {\scalebox{3}{$0$}};
        \draw[Arete](3)--(2);
        \draw[Arete](3)--(4);
        \draw[Arete](1)--(0);
        \draw[Arete](1)--(3);
        \node[Noeud](5)at(5,-1){};
        \node[Feuille](6)at(6,-3){};
        \node[] (6') [below of = 6] {\scalebox{3}{$1$}};
        \node[Noeud](7)at(7,-2){};
        \node[Feuille](8)at(8,-4){};
        \node[] (8') [below of = 8] {\scalebox{3}{$1$}};
        \node[Noeud](9)at(9,-3){};
        \node[Feuille](10)at(10,-4){};
        \node[] (10') [below of = 10] {\scalebox{3}{$0$}};
        \draw[Arete](9)--(8);
        \draw[Arete](9)--(10);
        \draw[Arete](7)--(6);
        \draw[Arete](7)--(9);
        \draw[Arete](5)--(1);
        \draw[Arete](5)--(7);
        \node[Noeud](11)at(11,0){};
        \node[Feuille](12)at(12,-3){};
        \node[] (12') [below of = 12] {\scalebox{3}{$1$}};
        \node[Noeud](13)at(13,-2){};
        \node[Feuille](14)at(14,-3){};
        \node[] (14') [below of = 14] {\scalebox{3}{$0$}};
        \draw[Arete](13)--(12);
        \draw[Arete](13)--(14);
        \node[Noeud](15)at(15,-1){};
        \node[Feuille](16)at(16,-2){};
        \draw[Arete](15)--(13);
        \draw[Arete](15)--(16);
        \draw[Arete](11)--(5);
        \draw[Arete](11)--(15);
    \end{tikzpicture}}
    \caption{A pair of twin binary trees.}
    \label{fig:ExempleABJ}
\end{figure}
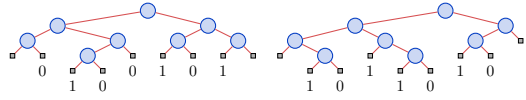
\medskip

Denote by~$\EnsABJ_n$ the set of pairs of twin binary trees where each
binary tree has~$n$ nodes and by~$\EnsABJ$ the set of all pairs of twin
binary trees.
\medskip

An $A$-labeled pair of twin binary trees~$(T_L, T_R)$ is a \emph{pair of
twin binary search trees} if~$T_L$ (resp.~$T_R$) is an $A$-labeled left
(resp. right) binary search tree and~$T_L$ and~$T_R$ have the same inorder
reading. The \emph{shape}~$\Forme(J)$ of an $A$-labeled pair of twin binary
trees $J := (T_L, T_R)$ is the unlabeled pair of twin binary
trees~$(\Forme(T_L), \Forme(T_R))$.
\medskip

In~\cite{DG94}, Dulucq and Guibert have highlighted a bijection between
Baxter permutations and unlabeled pairs of twin binary trees. In the sequel,
we shall make use of a very similar bijection.

\section{The Baxter monoid} \label{sec:MonoideBaxter}

\subsection{Definition and first properties}
Recall that an equivalence relation~$\equiv$ defined on~$A^*$ is a \emph{congruence}
if for all $u, u', v, v' \in A^*$, $u \equiv u'$ and $v \equiv v'$  imply
$uv \equiv u'v'$. Note that the quotient $A^*/_\equiv$ of~$A^*$ by a
congruence~$\equiv$ is naturally a monoid. Indeed, by denoting by
$\tau : A^* \to A^*/_\equiv$ the canonical projection, the set $A^*/_\equiv$
is endowed with a product~$\cdot$ defined by
$\widehat{u} \cdot \widehat{v} := \tau(uv)$ for all
$\widehat{u}, \widehat{v} \in A^*/_\equiv$ where~$u$ and~$v$ are any words
such that $\tau(u) = \widehat{u}$ and $\tau(v) = \widehat{v}$.

\begin{Definition} \label{def:MonoideBaxter}
    The \emph{Baxter monoid} is the quotient of the free monoid~$A^*$ by
    the congruence~$\EquivBX$ that is the reflexive and transitive closure
    of the \emph{Baxter adjacency relations}~$\AdjBXA$ and~$\AdjBXB$ defined
    for $u, v\in A^*$ and $\LA, \LB, \LC, \LD \in A$ by
    \begin{align}
        \LC \, u \, \LA \LD \, v \, \LB & \AdjBXA \LC \, u \, \LD \LA \, v \, \LB
        \qquad \mbox{where \quad $\LA \leq \LB < \LC \leq \LD$,} \label{eq:EquivBXAdj1} \\
        \LB \, u \, \LD \LA \, v \, \LC & \AdjBXB \LB \, u \, \LA \LD \, v \, \LC
        \qquad \mbox{where \quad $\LA < \LB \leq \LC < \LD$.} \label{eq:EquivBXAdj2}
    \end{align}
\end{Definition}

For example, the $\EquivBX$-equivalence class of~$2415253$ (see
Figure~\ref{fig:ExClasseBaxter}) is
\begin{equation}
    \{2142553, 2145253, 2145523, 2412553, 2415253, 2415523, 2451253,
      2451523, 2455123\}.
\end{equation}
\begin{figure}[ht]
    \centering
    \begin{tikzpicture}[scale=.5,font=\small]
        \node (2153674) at (0,0) {$2142553$};
        \node (2156374) at (-2,-2) {$2145253$};
        \node (2513674) at (2,-2) {$2412553$};
        \node (2156734) at (-4,-4) {$2145523$};
        \node (2516374) at (0,-4) {$2415253$};
        \node (2516734) at (-2,-6) {$2415523$};
        \node (2561374) at (2,-6) {$2451253$};
        \node (2561734) at (0,-8) {$2451523$};
        \node (2567134) at (0,-10) {$2455123$};
        \draw [Arete] (2153674) -- (2156374);
        \draw [Arete] (2153674) -- (2513674);
        \draw [Arete] (2156374) -- (2156734);
        \draw [Arete] (2513674) -- (2516374);
        \draw [Arete] (2156734) -- (2516734);
        \draw [Arete] (2516374) -- (2561374);
        \draw [Arete] (2516734) -- (2561734);
        \draw [Arete] (2561374) -- (2561734);
        \draw [Arete] (2561734) -- (2567134);
        \draw [Arete] (2156374) -- (2516374);
        \draw [Arete] (2516374) -- (2516734);
    \end{tikzpicture}
    \qquad
    \begin{tikzpicture}[scale=.5,font=\small]
        \node (2153674) at (0,0) {$2153674$};
        \node (2156374) at (-2,-2) {$2156374$};
        \node (2513674) at (2,-2) {$2513674$};
        \node (2156734) at (-4,-4) {$2156734$};
        \node (2516374) at (0,-4) {$2516374$};
        \node (2516734) at (-2,-6) {$2516734$};
        \node (2561374) at (2,-6) {$2561374$};
        \node (2561734) at (0,-8) {$2561734$};
        \node (2567134) at (0,-10) {$2567134$};
        \draw [Arete] (2153674) -- (2156374);
        \draw [Arete] (2153674) -- (2513674);
        \draw [Arete] (2156374) -- (2156734);
        \draw [Arete] (2513674) -- (2516374);
        \draw [Arete] (2156734) -- (2516734);
        \draw [Arete] (2516374) -- (2561374);
        \draw [Arete] (2516734) -- (2561734);
        \draw [Arete] (2561374) -- (2561734);
        \draw [Arete] (2561734) -- (2567134);
        \draw [Arete] (2156374) -- (2516374);
        \draw [Arete] (2516374) -- (2516734);
    \end{tikzpicture}
    \caption{The Baxter equivalence class of the word $u := 2415253$ and of the
    permutation $2516374 = \Std(u)$. Edges represent Baxter adjacency relations.}
    \label{fig:ExClasseBaxter}
\end{figure}
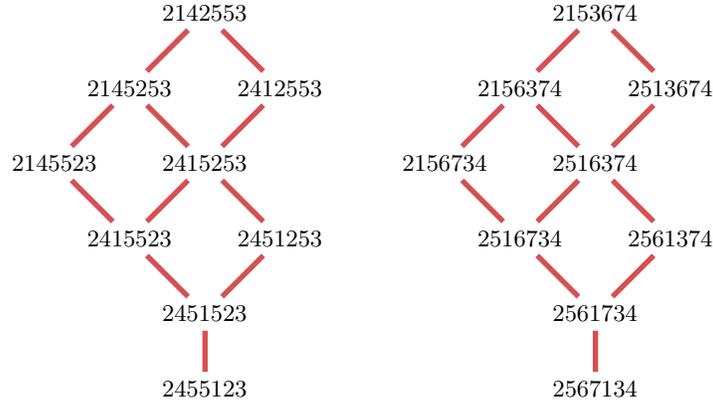

Note that if the Baxter congruence is applied on words without repetition,
the two Baxter adjacency relations~$\AdjBXA$ and~$\AdjBXB$ can be replaced
by the only adjacency relation~$\AdjBXAB$ defined for $u, v \in A^*$ and
$\LA, \LB, \LB', \LD \in A$ by
\begin{equation}
    \LB \, u \, \LA \LD \, v \, \LB' \AdjBXAB \LB \, u \, \LD \LA \, v \, \LB'
    \qquad \mbox{where \quad $\LA < \LB, \LB' < \LD$.}
\end{equation}

\subsubsection{Compatibility with the destandardization process}
A monoid $A^*/_\equiv$ is \emph{compatible with the destandardization process}
if for all $u, v \in A^*$, $u \equiv v$ if and only if $\Std(u) \equiv \Std(v)$
and $\Eval(u) = \Eval(v)$.

\begin{Proposition} \label{prop:CompDestd}
    The Baxter monoid is compatible with the destandardization process.
\end{Proposition}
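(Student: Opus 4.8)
The plan is to show the two directions of the equivalence separately, reducing everything to the behaviour of $\Std$ and $\Eval$ under the Baxter adjacency relations. First I would record the elementary facts that will be used throughout: that $\Std(u)=\Std(v)$ together with $\Eval(u)=\Eval(v)$ is equivalent to $u=v$ when restricted to a fixed evaluation (the standardization map is a bijection from words of evaluation $\Eval(u)$ to $\EnsPermu_{|u|}$), and that $\Eval$ is obviously invariant under both $\AdjBXA$ and $\AdjBXB$ since these relations only swap two adjacent letters. So the entire content is the interplay between the rewriting rules and $\Std$.

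For the forward direction, assume $u \EquivBX v$. Since $\EquivBX$ is the reflexive–transitive closure of the adjacency relations, it suffices to treat a single step, say $u \AdjBXA v$ with $u = \LC\, w\, \LA\LD\, x\, \LB$ and $v = \LC\, w\, \LD\LA\, x\, \LB$ where $\LA \leq \LB < \LC \leq \LD$ (the $\AdjBXB$ case is symmetric). Clearly $\Eval(u)=\Eval(v)$. For $\Std(u)\EquivBX\Std(v)$, the key observation is that standardizing preserves all inversions, hence preserves the relative order of every pair of positions; in particular the occurrences of $\LA$ and $\LD$ being swapped correspond, under $\Std$, to two values $\LA'<\LD'$ sitting in adjacent positions, and every letter equal to $\LB$ (resp.\ $\LC$) in $u$ becomes some value $\leq \LB'$ (resp.\ $\geq \LC'$) that still sits between/around them appropriately. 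Concretely I would check that $\Std(u)$ and $\Std(v)$ have the shape $\LC'\, w'\, \LA'\LD'\, x'\, \LB'$ and $\LC'\, w'\, \LD'\LA'\, x'\, \LB'$ with $\LA' < \LB' < \LC' < \LD'$ (strict inequalities because $\Std$ is a permutation), so that $\Std(u)\AdjBXAB\Std(v)$, and hence $\Std(u)\EquivBX\Std(v)$ by the remark following Definition~\ref{def:MonoideBaxter}.

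For the converse, suppose $\Std(u)\EquivBX\Std(v)$ and $\Eval(u)=\Eval(v)$; I want $u \EquivBX v$. Again it is enough to handle one adjacency step on the standardized side, so assume $\Std(u)\AdjBXAB\Std(v)$ (using the single relation valid for repetition-free words, since $\Std(u),\Std(v)$ are permutations), with $\Std(u) = \LB'\, w'\, \LA'\LD'\, x'\, \LB''$ and $\Std(v)=\LB'\, w'\, \LD'\LA'\, x'\, \LB''$ where $\LA' < \LB',\LB'' < \LD'$. Now pull this back along the destandardization: because $\Eval(u)=\Eval(v)$, both $u$ and $v$ destandardize the two permutations via the \emph{same} position-to-letter assignment on the unchanged positions. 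The positions carrying $\LA'$ and $\LD'$ in $\Std(u)$ carry two letters $\LA \leq \LD$ in $u$; write $\LC$ for the letter at the position of $\LB'$ and $\LB$ for the letter at the position of $\LB''$. The inequalities $\LA'<\LB',\LB''<\LD'$ translate (via order-preservation of destandardization, with possible equalities when letters repeat) into $\LA \leq \LC, \LB$ and $\LC, \LB \leq \LD$, and since $\LB'$ sits to the left of the swapped pair and $\LB''$ to the right one can pin down which of $\AdjBXA$ or $\AdjBXB$ applies — either $\LA \le \LB < \LC \le \LD$, giving $u \AdjBXA v$, or $\LA < \LB \le \LC < \LD$, giving $u \AdjBXB v$. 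Either way $u \EquivBX v$.

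The main obstacle is the bookkeeping in the converse direction: the single relation $\AdjBXAB$ is only available because $\Std(u)$ has no repeated letters, but when we destandardize we reintroduce equalities, and we must verify that the resulting weak inequalities among $\LA, \LB, \LC, \LD$ are exactly strong enough to land in one of the two cases \eqref{eq:EquivBXAdj1} or \eqref{eq:EquivBXAdj2} rather than in some forbidden borderline configuration. I expect this to come down to a short case analysis on whether the "outer" letters $\LB',\LB''$ of the pattern standardize from letters that are $\le$ or $\ge$ the swapped letters, using that $\Std$ preserves the position of the first occurrence of each letter value; once set up carefully this is routine, but it is the only place where one genuinely has to look at what standardization does to repeated letters rather than just invoking inversion-preservation.
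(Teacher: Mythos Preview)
Your overall strategy coincides with the paper's: reduce to a single adjacency step and track how $\Std$ transforms the four distinguished letters. The forward direction is fine and matches the paper.

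In the converse, however, the paper makes a cleaner choice than you do: it does \emph{not} pass through the merged relation $\AdjBXAB$. Instead it assumes $\Std(u)\AdjBXA\Std(v)$ (respectively $\AdjBXB$) and shows directly that $u\AdjBXA v$ (respectively $\AdjBXB$). Concretely, from $\Std(u)=x\,\LC\,y\,\LA\LD\,z\,\LB\,t$ with $\LA<\LB<\LC<\LD$, destandardizing at the same positions gives letters $\LA',\LB',\LC',\LD'$ with $\LA'\le\LB'<\LC'\le\LD'$: the middle inequality is strict because $\LC$ sits to the \emph{left} of $\LB$ yet has a larger standardized value, forcing $\LC'>\LB'$; the outer two may degenerate to equalities. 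This lands squarely in $\AdjBXA$, and the $\AdjBXB$ case is symmetric. No case analysis is needed.

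Your route via $\AdjBXAB$ is also correct in principle, but your sketch of the case split is wrong. From $\Std(u)=\LB'\,w'\,\LA'\LD'\,x'\,\LB''$ with $\LA'<\LB',\LB''<\LD'$, the destandardized letters (call the left witness $c$, right witness $b$, swapped pair $a,d$) satisfy $a<c$, $b<d$ strictly and $a\le b$, $c\le d$ weakly---you stated all four as weak. The dichotomy is then on $b$ versus $c$: if $b<c$ you get $a\le b<c\le d$, i.e.\ $\AdjBXA$; if $c\le b$ you get $a<c\le b<d$, i.e.\ $\AdjBXB$ (with the roles of the two witnesses exchanged). Your stated alternative ``$\LA<\LB\le\LC<\LD$'' is not the second case and, together with your first case, is not exhaustive (it misses $c<b$). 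The positions of the witnesses do not by themselves decide the case; the comparison of their destandardized values does.
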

\begin{proof}
    It is enough to check the property on adjacency relations. Let
    $u, v \in A^*$. Assume $u \AdjBXA v$. We have
    \begin{equation}
        u = x \, \LC \, y \, \LA \LD \, z \, \LB \, t
        \quad \mbox{and} \quad
        v = x \, \LC \, y \, \LD \LA \, z \, \LB \, t
    \end{equation}
    for some letters $\LA \leq \LB < \LC \leq \LD$ and words~$x$, $y$, $z$,
    and~$t$. Since~$\AdjBXA$ acts by permuting letters, we have
    $\Eval(u) = \Eval(v)$. Moreover, the letters~$\LA'$, $\LB'$, $\LC'$
    and~$\LD'$ of~$\Std(u)$ respectively at the same positions as the
    letters~$\LA$, $\LB$, $\LC$ and~$\LD$ of~$u$ satisfy
    $\LA' < \LB' < \LC' < \LD'$ due to their relative positions into~$\Std(u)$
    and the order relations between~$\LA$, $\LB$, $\LC$ and~$\LD$. The same
    relations hold for the letters of~$\Std(v)$, showing that
    $\Std(u) \AdjBXA \Std(v)$. The proof is analogous for the case $u \AdjBXB v$.

    Conversely, assume that~$v$ is a permutation of~$u$ and $\Std(u) \AdjBXA \Std(v)$.
    We have
    \begin{equation}
        \Std(u) = x \, \LC \, y \, \LA \LD \, z \, \LB \, t
        \quad \mbox{and} \quad
        \Std(v) = x \, \LC \, y \, \LD \LA \, z \, \LB \, t
    \end{equation}
    for some letters $\LA < \LB < \LC < \LD$ and words~$x$, $y$, $z$, and~$t$.
    The word~$u$ is a non-standardized version of~$\Std(u)$ so that the
    letters~$\LA'$, $\LB'$, $\LC'$ and~$\LD'$ of~$u$ respectively at the
    same positions as the letters~$\LA$, $\LB$, $\LC$ and~$\LD$ of~$\Std(u)$
    satisfy $\LA' \leq \LB' < \LC' \leq \LD'$ due to their relative positions
    into~$u$ and the order relations between~$\LA$, $\LB$, $\LC$ and~$\LD$.
    The same relations hold for the letters of~$v$, showing that $u \AdjBXA v$.
    The proof is analogous for the case $\Std(u) \AdjBXB \Std(v)$.
\end{proof}

\subsubsection{Compatibility with the restriction of alphabet intervals}
A monoid $A^*/_\equiv$ is \emph{compatible with the restriction of alphabet
intervals} if for any interval~$I$ of~$A$ and for all $u, v \in A^*$,
$u \equiv v$ implies $u_{|I} \equiv v_{|I}$.

\begin{Proposition} \label{prop:CompRestrSegmAlph}
    The Baxter monoid is compatible with the restriction of alphabet
    intervals.
\end{Proposition}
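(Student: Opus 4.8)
The plan is to reduce the claim to the level of the two Baxter adjacency relations, exactly as in the proof of Proposition~\ref{prop:CompDestd}. Since $\EquivBX$ is the reflexive–transitive closure of $\AdjBXA$ and $\AdjBXB$, and since the restriction map $w \mapsto w_{|I}$ is clearly compatible with concatenation (so that applying it to a chain of adjacency steps yields a chain of relations on the restricted words, provided each individual step survives), it suffices to show: if $u \AdjBXA v$ then $u_{|I} \EquivBX v_{|I}$, and likewise for $\AdjBXB$. First I would write the generic instance of $\AdjBXA$,
\begin{equation*}
    u = x \, \LC \, y \, \LA \LD \, z \, \LB \, t,
    \qquad
    v = x \, \LC \, y \, \LD \LA \, z \, \LB \, t,
\end{equation*}
with $\LA \leq \LB < \LC \leq \LD$, and then analyze how the interval $I$ meets the four distinguished letters.

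The key observation is that $u_{|I}$ and $v_{|I}$ differ at most by the relative order of the (possibly absent) occurrences of $\LA$ and $\LD$ that were swapped; all other letters are kept in place, since restriction preserves the left-to-right order of the retained letters. So there are essentially three cases according to whether $I$ contains both, one, or none of $\LA$ and $\LD$. If $I$ contains neither, or contains exactly one of them, then $u_{|I} = v_{|I}$ and we are done trivially. The interesting case is $\LA, \LD \in I$. Here I need $\LC \in I$ as well (to have a valid instance of a Baxter relation on the restricted words): but since $I$ is an \emph{interval} and $\LA \leq \LB < \LC \leq \LD$ with $\LA, \LD \in I$, we automatically get $\LB, \LC \in I$. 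Thus in $u_{|I}$ the letter $\LC$ still occurs to the left of the $\LA\LD$ pair, the letter $\LB$ still occurs to its right, and the inequalities $\LA \leq \LB < \LC \leq \LD$ are unchanged; hence $u_{|I} \AdjBXA v_{|I}$ directly, and a fortiori $u_{|I} \EquivBX v_{|I}$. The case $u \AdjBXB v$ is handled symmetrically, using $\LA < \LB \leq \LC < \LD$ and the same interval argument to guarantee that whenever the swapped letters $\LA, \LD$ both lie in $I$, the flanking letters $\LB$ (on the left) and $\LC$ (on the right) do too, so that $u_{|I} \AdjBXB v_{|I}$.

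I do not anticipate a serious obstacle; the only point requiring a little care is the bookkeeping when $I$ contains exactly one of the two swapped letters — one must check that the single retained letter occupies the same position in $u_{|I}$ as in $v_{|I}$, which is immediate because the other swapped letter, being outside $I$, is simply deleted from both words and its removal does not disturb the relative position of anything else. A secondary point is to state clearly at the outset the (routine) fact that restriction to $I$ distributes over concatenation, $(w w')_{|I} = w_{|I}\, w'_{|I}$, so that a single adjacency step on $u$, which only rewrites a bounded factor, induces at most a single adjacency step (or an equality) on $u_{|I}$; iterating over a derivation $u \EquivBX v$ then yields $u_{|I} \EquivBX v_{|I}$.
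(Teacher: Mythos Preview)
Your argument is correct and follows essentially the same route as the paper: reduce to adjacency relations, then use that $I$ is an interval so that whenever the two swapped letters $\LA,\LD$ both survive restriction, the witnessing letters between them survive as well. The only difference is cosmetic: the paper first invokes Proposition~\ref{prop:CompDestd} to reduce to permutations and then works with the single adjacency $\AdjBXAB$, whereas you work directly with $\AdjBXA$ and $\AdjBXB$ on arbitrary words; your version is marginally more self-contained, the paper's marginally shorter.
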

\begin{proof}
    It is enough to check the property on adjacency relations. Moreover,
    by Proposition~\ref{prop:CompDestd}, it is enough to check the property
    for permutations. Let $\sigma, \nu \in \EnsPermu_n$ such that $\sigma \AdjBXAB \nu$.
    We have $\sigma = x \, \LB \, y \, \LA \LD \, z \, \LB' \, t$ and
    $\nu = x \, \LB \, y \, \LD \LA \, z \, \LB' \, t$ for some letters
    $\LA < \LB, \LB' < \LD$ and words $x$, $y$, $z$, and~$t$. Let~$I$ be
    an interval of $\{1, \dots, n\}$ and $R := I \cap \{\LA, \LB, \LB', \LD\}$.
    If $R = \{\LA, \LB, \LB', \LD\}$,
    \begin{equation}
        \sigma_{|I} = x_{|I} \, \LB \, y_{|I} \, \LA \LD \, z_{|I} \, \LB' \, t_{|I}
        \quad \mbox{and} \quad
        \nu_{|I} = x_{|I} \, \LB \, y_{|I} \, \LD \LA \, z_{|I} \, \LB' \, t_{|I}
    \end{equation}
    so that $\sigma_{|I} \AdjBXAB \nu_{|I}$. Otherwise, we have
    $\sigma_{|I} = \nu_{|I}$ and thus $\sigma_{|I} \EquivBX \nu_{|I}$.
\end{proof}

\subsubsection{Compatibility with the Schützenberger involution}
A monoid $A^*/_\equiv$ is \emph{compatible with the Schützenberger involution}
if for all $u, v \in A^*$, $u \equiv v$ implies $u^\# \equiv v^\#$.

\begin{Proposition} \label{prop:CompSchutz}
    The Baxter monoid is compatible with the Schützenberger involution.
\end{Proposition}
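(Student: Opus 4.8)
The plan is to follow the same strategy as in the proofs of Propositions~\ref{prop:CompDestd} and~\ref{prop:CompRestrSegmAlph}: it suffices to check that the Schützenberger transformation~$\#$ sends the Baxter adjacency relations into the Baxter congruence, and since~$\#$ is an involution on words, the reflexive and transitive closure is automatically preserved. So I would reduce immediately to showing that if $u \AdjBXA v$ or $u \AdjBXB v$, then $u^\# \EquivBX v^\#$.

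First I would treat the case $u \AdjBXA v$. Here $u = x\,\LC\,y\,\LA\LD\,z\,\LB\,t$ and $v = x\,\LC\,y\,\LD\LA\,z\,\LB\,t$ with $\LA \leq \LB < \LC \leq \LD$. Applying the definition $w^\# = w_{|w|}^\# \dots w_1^\#$, the transformation reverses the word and replaces each letter by its counterpart in $A^\#$, where the order is reversed. So $u^\# = t^\#\,\LB^\#\,z^\#\,\LD^\#\LA^\#\,y^\#\,\LC^\#\,x^\#$ and $v^\# = t^\#\,\LB^\#\,z^\#\,\LA^\#\LD^\#\,y^\#\,\LC^\#\,x^\#$. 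The factor $\LA\LD$ becomes $\LD^\#\LA^\#$ in $u^\#$ and the factor $\LD\LA$ becomes $\LA^\#\LD^\#$ in $v^\#$; in both the pair has been transposed relative to the other word, so $u^\#$ and $v^\#$ differ exactly by swapping an adjacent pair of letters. Now, in $A^\#$ the inequalities $\LA \leq \LB < \LC \leq \LD$ become $\LD^\# \leq \LC^\# < \LB^\# \leq \LA^\#$. Reading off the positions: in $u^\#$ the letter $\LB^\#$ appears to the left of the block, then $\LD^\#\LA^\#$, then $\LC^\#$ to the right. Setting $\LA' := \LD^\#$, $\LB' := \LC^\#$, $\LC' := \LB^\#$, $\LD' := \LA^\#$, we have $\LA' \leq \LB' < \LC' \leq \LD'$ and the pattern is exactly $\LC'\dots\LA'\LD'\dots\LB'$ going to $\LC'\dots\LD'\LA'\dots\LB'$, which is the relation $u^\# \AdjBXB v^\#$ (in the form~\eqref{eq:EquivBXAdj2}, read with the roles of the two rules exchanged). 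Hence $u^\# \EquivBX v^\#$.

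The case $u \AdjBXB v$ is symmetric: here $\LA < \LB \leq \LC < \LD$, and the same computation shows $u^\#$ and $v^\#$ differ by a transposition whose surrounding letters satisfy the strict-middle condition of~\eqref{eq:EquivBXAdj1}, giving $u^\# \AdjBXA v^\#$. So in all cases $u^\# \EquivBX v^\#$, and by induction on the length of a derivation $u \EquivBX v$ we conclude $u^\# \EquivBX v^\#$ for all $u, v \in A^*$. I expect the main (minor) obstacle to be purely bookkeeping: keeping track of how the reversal of the word interacts with the reversal of the alphabet order, and in particular checking that $\AdjBXA$ and $\AdjBXB$ get swapped rather than each mapped to itself — once the correspondence $(\LA,\LB,\LC,\LD) \mapsto (\LD^\#,\LC^\#,\LB^\#,\LA^\#)$ is written down carefully, the inequalities line up automatically and there is nothing deep to prove.
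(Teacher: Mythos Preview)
Your argument is correct in substance: reducing to the adjacency relations and checking that $\#$ preserves them is exactly the right strategy, and your computation of $u^\#$, $v^\#$ together with the substitution $(\LA',\LB',\LC',\LD') = (\LD^\#,\LC^\#,\LB^\#,\LA^\#)$ is fine. There is however a small bookkeeping slip: the two adjacency relations do \emph{not} get swapped under~$\#$; each maps to itself. In your first case the pattern $\LC' \dots \LA'\LD' \dots \LB'$ with $\LA' \leq \LB' < \LC' \leq \LD'$ is precisely the shape of~\eqref{eq:EquivBXAdj1}, so in fact $u^\# \AdjBXA v^\#$ (not $\AdjBXB$); symmetrically, $\AdjBXB$ is sent to $\AdjBXB$. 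This does not affect the conclusion $u^\# \EquivBX v^\#$, so the proof stands once the labels are corrected.

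The paper takes a slightly different route: it first invokes Proposition~\ref{prop:CompDestd} to reduce to permutations, and then works with the single adjacency relation~$\AdjBXAB$ (available for words without repeated letters), for which one directly reads off $\sigma^\# \AdjBXAB \nu^\#$. Your approach bypasses that reduction and handles general words directly, at the cost of treating the two rules separately; both arguments are equally short.
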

\begin{proof}
    It is enough to check the property on adjacency relations. Moreover, by
    Proposition~\ref{prop:CompDestd}, it is enough to check the property
    for permutations. Let $\sigma, \nu \in \EnsPermu_n$ and assume that
    $\sigma \AdjBXAB \nu$. We have $\sigma = x \, \LB \, y \, \LA \LD \, z \, \LB' \, t$
    and $\nu = x \, \LB \, y \, \LD \LA \, z \, \LB' \, t$ for some letters
    $\LA < \LB, \LB' < \LD$ and words~$x$, $y$, $z$, and~$t$. We have
    \begin{equation}
        \sigma^\# = t^\# \, \LB'^\# \, z^\# \, \LD^\# \LA^\# \, y^\# \, \LB^\# \, x^\#
        \quad \mbox{and} \quad
        \nu^\# = t^\# \, \LB'^\# \, z^\# \, \LA^\# \LD^\# \, y^\# \, \LB^\# \, x^\#.
    \end{equation}
    Since $\LD^\# < \LB'^\#, \LB^\# < \LA^\#$, we have $\sigma^\# \AdjBXAB \nu^\#$.
\end{proof}

\subsection{Connection with the sylvester monoid}
The \emph{sylvester monoid}~\cite{HNT02, HNT05} is the quotient of the
free monoid~$A^*$ by the congruence~$\EquivS$ that is the reflexive and
transitive closure of the \emph{sylvester adjacency relation}~$\AdjS$
defined for $u \in A^*$ and $\LA, \LB, \LC \in A$ by
\begin{equation}
    \LA \LC \, u \, \LB \AdjS \LC \LA \, u \, \LB
    \qquad \mbox{where \quad $\LA \leq \LB < \LC$.}
\end{equation}
In the same way, let us define the \emph{$\#$-sylvester monoid}, the quotient
of~$A^*$ by the congruence~$\EquivSchutzS$ that is the reflexive and transitive
closure of the \emph{$\#$-sylvester adjacency relation}~$\AdjSchutzS$ defined
for $u\in A^*$ and $\LA, \LB, \LC \in A$ by
\begin{equation} \label{eq:DefSchutzS}
    \LB \, u \, \LA \LC \AdjSchutzS \LB \, u \, \LC \LA
    \qquad \mbox{where \quad $\LA < \LB \leq \LC$.}
\end{equation}
Note that this adjacency relation is defined by taking the images by the
Schützenberger involution of the sylvester adjacency relation. Indeed, for
all $u, v \in A^*$, $u \EquivSchutzS v$ if and only if~$u^\# \EquivS v^\#$.
\medskip

In~\cite{HNT05}, Hivert, Novelli and Thibon have shown that two words
are sylvester equivalent if and only if each gives the same right binary
search tree by inserting their letters from right to left using the binary
search tree insertion algorithm~\cite{AU94}. In our setting, we call this
process the \emph{leaf insertion} and it comes in two versions, depending
on if the considered binary tree is a right or a left binary search tree:
\medskip

{\flushleft
    {\bf Algorithm:} {\sc LeafInsertion}. \\
    {\bf Input:} An $A$-labeled right (resp. left) binary search tree~$T$,
    a letter~$\LA \in A$. \\
    {\bf Output:} $T$ after the leaf insertion of~$\LA$. \\
    \begin{enumerate}
        \item If $T = \ArbreVide$, return the one-node binary search tree
        labeled by~$\LA$.
        \item Let~$\LB$ be the label of the root of~$T$.
        \item If $\LA \leq \LB$ (resp. $\LA < \LB$): \label{item:InstrDiff}
        \begin{enumerate}
            \item Then, recursively leaf insert~$\LA$ into the left subtree
            of~$T$.
            \item Otherwise, recursively leaf insert $\LA$ into the right
            subtree of~$T$.
        \end{enumerate}
    \end{enumerate}
    {\bf End.}
}
\medskip

For further reference, let us recall the following theorem due to Hivert,
Novelli and Thibon~\cite{HNT05}, restated in our setting and supplemented
with a respective part:
\begin{Theoreme} \label{thm:PSymbPBT}
    Two words are $\EquivS$-equivalent (resp. $\EquivSchutzS$-equivalent)
    if and only if they give the same right (resp. left) binary search tree
    by inserting their letters from right to left (resp. left to right).
\end{Theoreme}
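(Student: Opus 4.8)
The two assertions have essentially the same status: the first is the theorem of Hivert, Novelli and Thibon, and the second follows from it by applying the Schützenberger involution. So I would first give a self-contained argument for the $\EquivS$ case and then deduce the $\EquivSchutzS$ case. Throughout, write $\operatorname{rbst}(u)$ for the right binary search tree obtained by inserting the letters of $u$ from right to left, and $\operatorname{lbst}(u)$ for the left binary search tree obtained by inserting them from left to right. \emph{From $\EquivS$ to equality of trees:} since $\EquivS$ is a congruence, it suffices to show that replacing a factor $\LA \LC \, w \, \LB$ of a word by $\LC \LA \, w \, \LB$, with $\LA \leq \LB < \LC$, does not change $\operatorname{rbst}$. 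Inserting from right to left, the letters lying to the right of that occurrence together with $\LB$ and $w$ produce a common tree $T'$ that contains a node labelled $\LB$, and the two words then differ only by the order in which $\LA$ and $\LC$ are leaf inserted into $T'$. The core lemma is that this order is irrelevant: along the common prefix of the leaf‑insertion paths of $\LA$ and of $\LC$ in $T'$, every visited node carries a label that is either $\geq \LC$ or $< \LA$ (otherwise the two letters would already have parted), hence this common prefix is a \emph{proper} prefix of the path from the root of $T'$ to the node labelled $\LB$, and so it ends at an honest node $x$ of $T'$ at which $\LA$ descends into one subtree of $x$ and $\LC$ into the other; those subtrees being disjoint, inserting $\LA$ does not disturb the insertion of $\LC$ and vice versa, so both orders yield the same tree, and inserting the remaining letters on top then gives the same result on both sides.

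\emph{From equality of trees to $\EquivS$:} here I would exhibit a canonical representative in each fiber of $\operatorname{rbst}$, by setting $\operatorname{cw}(\ArbreVide) := \epsilon$ and $\operatorname{cw}(L' \ABCons_\LB R') := \operatorname{cw}(R') \, \operatorname{cw}(L') \, \LB$; a direct check on the insertion procedure gives $\operatorname{rbst}(\operatorname{cw}(T)) = T$. One then proves, by induction on $|u|$, that every word $u$ with $\operatorname{rbst}(u) = T$ satisfies $u \EquivS \operatorname{cw}(T)$. The last letter of $u$ must be the root label $\LB$ of $T$; writing $u = u_0 \LB$, every adjacent pair in $u_0$ formed of a letter $\leq \LB$ immediately followed by a letter $> \LB$ can be exchanged by a sylvester relation, the terminal $\LB$ playing the role of the required witnessing letter (this is exactly where the hypothesis $\LA \leq \LB < \LC$ is used, in the shape $\alpha \leq \LB < \gamma$). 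Sorting $u_0$ in this way gives $u \EquivS u_R \, u_L \, \LB$, where $u_L$ (resp.\ $u_R$) is the subword of $u_0$ consisting of the letters $\leq \LB$ (resp.\ $> \LB$); these are precisely the words whose right‑to‑left insertion builds the left and right subtrees $L'$, $R'$ of $T$, so they are strictly shorter and lie in the fibers of $L'$ and $R'$. The induction hypothesis together with the congruence property then yields $u_R \, u_L \, \LB \EquivS \operatorname{cw}(R') \operatorname{cw}(L') \LB = \operatorname{cw}(T)$. Consequently $\operatorname{rbst}(u) = \operatorname{rbst}(v)$ forces $u \EquivS \operatorname{cw}(T) \EquivS v$.

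\emph{The $\EquivSchutzS$ case:} recall that $u \EquivSchutzS v$ if and only if $u^\# \EquivS v^\#$. I would observe that the map taking an $A$‑labelled binary tree to its left–right mirror image with each label $\LX$ replaced by $\LX^\#$ is a bijection which sends $\operatorname{lbst}(w)$ to $\operatorname{rbst}(w^\#)$ — a routine verification comparing the two insertion procedures step by step, using that $\LX^\# \leq \LY^\#$ is equivalent to $\LX \geq \LY$. Therefore $\operatorname{lbst}(u) = \operatorname{lbst}(v)$ if and only if $\operatorname{rbst}(u^\#) = \operatorname{rbst}(v^\#)$, and combining this with the $\EquivS$ case applied to $u^\#$ and $v^\#$ (and with $u^{\#\#} = u$) concludes.

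The only genuinely delicate point in the whole argument is the converse implication of the $\EquivS$ case, specifically the verification that the rearrangement of $u_0$ into $u_R \, u_L$ can be carried out using only sylvester relations that are legitimate inside the ambient word; the rest reduces to straightforward bookkeeping about leaf insertion into binary search trees.
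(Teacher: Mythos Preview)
Your argument is correct. For the record, the paper itself does not prove this statement: it is recalled from Hivert--Novelli--Thibon, and the paper only adds the short paragraph explaining how the $\EquivSchutzS$ case is obtained from the $\EquivS$ case by the Sch\"utzenberger involution together with the left--right mirror on trees. Your treatment of the $\EquivSchutzS$ case is exactly this reduction, so on that point you match the paper.

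What you supply beyond the paper is a self-contained proof of the $\EquivS$ case, and it is sound. In the forward direction, the key observation---that the common initial segment of the insertion paths of $\LA$ and $\LC$ in $T'$ stays within the subtree containing the node labelled $\LB$, hence must end at a genuine node where the two paths split---is the standard one; your phrasing ``proper prefix of the path to $\LB$'' is slightly loose (the divergence may occur exactly at the node labelled $\LB$, or at an ancestor of it whose label also lies in $[\LA,\LC)$), but the conclusion you draw, that the two insertions land in disjoint subtrees and therefore commute, is correct either way. In the converse direction, your canonical word $\operatorname{cw}(T)$ and the bubble-sort of $u_0$ into $u_R\,u_L$ using the terminal $\LB$ as the witness for each sylvester swap is a clean and complete argument; the stability of that sorting is what guarantees that $u_L$ and $u_R$ are the restrictions of $u_0$ to $\{\,\leq \LB\,\}$ and $\{\,> \LB\,\}$, so that they do rebuild the two subtrees. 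This is essentially the proof strategy of the original reference, carried out in your own words.
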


In other words, any $A$-labeled right (resp. left) binary search tree
encodes a sylvester (resp. $\#$-sylvester) equivalence class of words
of~$A^*$, and conversely.
\smallskip

Let us explain the respective part of Theorem~\ref{thm:PSymbPBT}. It follows
from~(\ref{eq:DefSchutzS}) that encoding the~$\EquivSchutzS$-equivalence
class of a word~$u$ is equivalent to encoding the~$\EquivS$-equivalence
class of~$u^\#$. For this, simply insert~$u$ from left to right by considering
that the reversed order relation holds between its letters. In this way,
we obtain a binary tree such that for any node~$x$ labeled by a letter~$\LB$,
all labels~$\LA$ of the nodes of the left subtree of~$x$, and all labels~$\LC$
of the nodes of the right subtree of~$x$, the inequality $\LA \geq \LB > \LC$
holds. This binary tree is obviously not a left binary search tree. Nevertheless,
a left binary search tree can be obtained from it after swapping, for each
node, its left and right subtree recursively. One can prove by induction
on~$|u|$ that this left binary search tree is the one that {\sc LeafInsertion}
constructs by inserting the letters of~$u$ from left to right and hence,
this remark explains the difference of treatment between right and left
binary search trees for the instruction~(\ref{item:InstrDiff}) of
{\sc LeafInsertion}.

\begin{Lemme} \label{lem:DiagHasseEqS}
    Let $u := x \, \LA \LC \, y$ and $v := x \, \LC \LA \, y$ be two words
    such that $x$ and $y$ are two words, $\LA < \LC$ are two letters,
    and~$u \EquivS v$. Then,~$u \AdjS v$.
\end{Lemme}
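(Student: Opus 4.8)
The statement says: if $u = x\,\LA\LC\,y$ and $v = x\,\LC\LA\,y$ differ only by swapping two adjacent letters $\LA<\LC$, and $u\EquivS v$, then in fact the single sylvester adjacency relation $u\AdjS v$ already holds. The point is that $\EquivS$ is a priori the transitive closure of $\AdjS$, so $u\EquivS v$ only guarantees a \emph{chain} of elementary moves; we must show that when $u$ and $v$ are this close, one move suffices. The natural strategy is to use the binary-tree characterization of $\EquivS$ from Theorem~\ref{thm:PSymbPBT}: $u\EquivS v$ means $u$ and $v$ yield the same right binary search tree under right-to-left leaf insertion. So the plan is to compare the insertion of $u$ and of $v$ and read off, from the hypothesis that they produce the same tree, exactly what local configuration $x$, $y$, $\LA$, $\LC$ must be in; then check that this configuration is precisely the one where $\LA\AdjS\LC$ is legal, i.e.\ where the letter playing the role of ``$\LB$'' in the defining relation $\LA\LC\,u'\,\LB\AdjS\LC\LA\,u'\,\LB$ with $\LA\le\LB<\LC$ is available just after the swapped pair.

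\textbf{Key steps.} First I would set $p := |x|$ and note that $u$ and $v$ agree on the suffix $y$ and on the prefix $x$, so when we insert right-to-left, the insertions of the letters of $y$ build a common tree $T_y$, and the insertions of the letters of $x$ (done \emph{after} the two swapped letters, since we go right to left) will be applied to whatever tree results from inserting $\LA\LC$ (resp.\ $\LC\LA$) into $T_y$. So it suffices to understand the two trees $T_1 := \mathrm{ins}(\LA,\mathrm{ins}(\LC,T_y))$ and $T_2 := \mathrm{ins}(\LC,\mathrm{ins}(\LA,T_y))$ — these are the trees obtained just before the letters of $x$ get inserted — together with the constraint that inserting $x$ right-to-left into $T_1$ and into $T_2$ gives the same final tree. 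The second step is a small case analysis on where $\LA$ and $\LC$ land in $T_y$: since $\LA<\LC$, inserting $\LC$ first then $\LA$, versus $\LA$ first then $\LC$, gives different trees \emph{unless} there is no node of $T_y$ with label in the half-open interval "between'' them interfering — more precisely, the two orders of insertion commute exactly when $\LC$ is inserted into a subtree disjoint from where $\LA$ goes, and otherwise $\LC$ becomes an ancestor of $\LA$ in one order and a descendant (or incomparable) in the other. Equality $T_1 \ne T_2$ typically holds, so the real content is: the letters of $x$, inserted afterwards, must ``repair'' the discrepancy, and a rotation-type argument shows this forces the existence of a letter $\LB$ in $x$ with $\LA \le \LB < \LC$ sitting in the right spot — but wait, $\LB$ sits in $x$, which is the \emph{prefix}, hence inserted \emph{last}; so actually the cleaner route is to track which letter of $u$ ends up as the parent of the node created by $\LC$ and the node created by $\LA$.

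\textbf{Alternative, cleaner route.} I suspect the intended proof is shorter and purely combinatorial on words: take the chain $u = w_0 \AdjS w_1 \AdjS \cdots \AdjS w_k = v$ witnessing $u\EquivS v$, and argue by a confluence/minimality argument on its length, or observe that $\EquivS$-classes are intervals of the permutohedron (for permutations; the general case reduces by Proposition-style compatibility, though here the paper hasn't yet proved sylvester classes are intervals, so more likely the binary-search-tree argument is intended). Concretely: because $u$ and $v$ differ by one transposition of adjacent values $\LA<\LC$, in the standardized picture $\Std(u)$ and $\Std(v)$ differ by an adjacent transposition too, so one of them covers the other in the permutohedron; since they are sylvester-equivalent and a sylvester class is stable under... — the robust version is: insert and compare. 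I would therefore carry out the binary-search-tree comparison, show $T_1 = T_2$ is impossible unless the insertion path of $\LA$ in $T_y$ and of $\LC$ in $T_y$ diverge at a node whose label $\LB$ satisfies $\LA\le \LB<\LC$, which is exactly the hypothesis of $\AdjS$ after rewriting $\LB$ as belonging to $y$ — i.e.\ the witnessing $\LB$ lies to the right of the swapped pair. Writing $y = y'\,\LB\,y''$ with $\LB$ this pivot and noting that the relevant suffix after the swapped pair begins (in insertion order, i.e.\ reading left to right in $y$) with something forcing $\LB$'s position, we land in the pattern $\LA\LC\,u''\,\LB$ with $\LA\le\LB<\LC$, giving $u\AdjS v$ directly.

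\textbf{Main obstacle.} The delicate point is pinning down \emph{why the pivot letter $\LB$ must lie in $y$ (to the right) rather than in $x$ (to the left)}: a priori the single move $\AdjS$ requires the controlling letter $\LB$ strictly to the right of $\LC\LA$, so I must rule out the scenario where the discrepancy between $T_1$ and $T_2$ is only resolved by a letter of the prefix $x$. This is where the right-to-left insertion order matters: letters of $x$ are inserted \emph{after} $\LA$ and $\LC$, so they are inserted into $T_1$ or $T_2$ and cannot retroactively change the relative ancestor/descendant relationship already established between the $\LA$-node and the $\LC$-node (leaf insertion never reorders existing nodes). Hence if $T_1\ne T_2$ the final trees already differ regardless of $x$, contradiction; so $T_1 = T_2$, and a direct analysis of when $\mathrm{ins}(\LA,\mathrm{ins}(\LC,T_y)) = \mathrm{ins}(\LC,\mathrm{ins}(\LA,T_y))$ with $\LA<\LC$ yields exactly the existence of a node labeled $\LB$ with $\LA\le\LB<\LC$ on the common search path inside $T_y$, i.e.\ a letter $\LB$ of $y$ with $\LA\le\LB<\LC$ such that every letter of $y$ read before $\LB$... produces the pattern. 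Making this ``exactly'' precise — i.e.\ that $T_1=T_2$ is equivalent to $\LA\AdjS\LC$ being applicable — is the one genuinely technical lemma; everything else is bookkeeping.
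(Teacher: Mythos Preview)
Your approach is correct and essentially the same as the paper's: both use Theorem~\ref{thm:PSymbPBT} to pass to the common right binary search tree and locate a separating node~$\LB$ with $\LA\le\LB<\LC$ lying in~$y$. The paper's execution is just more direct---rather than arguing that the intermediate trees $T_1,T_2$ must coincide and then analyzing when two leaf insertions commute, it observes in one stroke that in the common final tree the $\LA$-node and the $\LC$-node cannot be ancestors of one another (each insertion order forbids one direction), so their lowest common ancestor is the required~$\LB$, necessarily inserted earlier and hence a letter of~$y$.
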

\begin{proof}
    Follows from Theorem~\ref{thm:PSymbPBT}: Since~$u$ and~$v$ give the
    same right binary search tree~$T$ by inserting these from right to left,
    the node labeled by~$\LA$ and the node labeled by~$\LC$ in~$T$ cannot
    be ancestor one of the other. That implies that there exists a node
    labeled by a letter~$\LB$, common ancestor of both nodes labeled by~$\LA$
    and~$\LC$ such that $\LA \leq \LB < \LC$. Thus,~$u \AdjS v$.
\end{proof}

Lemma~\ref{lem:DiagHasseEqS} also proves that the $\AdjS$-adjacency
relations of any equivalence class~$C$ of $\EnsPermu_n /_\EquivS$ are
exactly the covering relations of the permutohedron restricted to the
elements of~$C$. Note that it is also the case for the $\AdjSchutzS$-adjacency
relations.
\medskip

The Baxter monoid, the sylvester monoid and the $\#$-sylvester monoid are
related in the following way.
\begin{Proposition} \label{prop:LienSylv}
    Let $u, v \in A^*$. Then, $u \EquivBX v$ if and only if $u \EquivS v$
    and $u \EquivSchutzS v$.
\end{Proposition}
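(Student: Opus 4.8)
The plan is to prove the two implications separately, reducing both directions to a statement about the adjacency relations by using the congruence property. For the forward direction, I would argue that $\EquivBX$ refines both $\EquivS$ and $\EquivSchutzS$ by showing that each Baxter adjacency relation is already a consequence of the sylvester (resp. $\#$-sylvester) relations. Concretely, if $u \AdjBXA v$ with $u = c\,w\,ad\,w'\,b$ and $v = c\,w\,da\,w'\,b$ where $a \leq b < c \leq d$, then I claim $u \EquivS v$: the letter $c$ sitting to the \emph{left} of the pair $ad$ plays the role of a common ancestor in the right binary search tree obtained by inserting from right to left, since $a \leq c$ (actually $a < c$) and $d \geq c$; more carefully, one checks directly that applying the sylvester relation $a\,d \to d\,a$ is legitimate because of the trailing letter $b$ with $a \leq b$, wait — the sylvester relation needs a letter to the \emph{right}, which is $b$ here, and indeed $a \leq b < c \leq d$ gives $a \leq b < d$, hence $a\,\cdots\,d\,\cdots\,b \to \cdots$ — so the swap $ad \leftrightarrow da$ is exactly a sylvester adjacency with witness $b$. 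Similarly $u \EquivSchutzS v$ using the \emph{leading} letter $c$: the $\#$-sylvester relation $\LB\,u\,\LA\LC \to \LB\,u\,\LC\LA$ with $\LA < \LB \leq \LC$ applies with $\LB = c$, $\LA = a$, $\LC = d$ since $a < c \leq d$. The case $u \AdjBXB v$ is symmetric (or follows by applying the $\#$-involution together with Propositions~\ref{prop:CompSchutz} and the analogous statement for the sylvester monoids). This shows $\EquivBX\ \subseteq\ \EquivS \cap \EquivSchutzS$.

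For the converse — the harder and more interesting direction — I would reduce to permutations first: by Proposition~\ref{prop:CompDestd} the Baxter monoid is compatible with destandardization, and the sylvester and $\#$-sylvester monoids are as well (this is standard, and can be invoked or quickly checked), so it suffices to prove: if $\sigma, \nu \in \EnsPermu_n$ with $\sigma \EquivS \nu$ and $\sigma \EquivSchutzS \nu$, then $\sigma \EquivBX \nu$. Now I would use Lemma~\ref{lem:DiagHasseEqS} and the remark following it: the $\EquivS$-class of $\sigma$, as a subposet of the permutohedron, has its covering relations given exactly by the $\AdjS$-adjacencies, and likewise the $\EquivSchutzS$-class has covering relations given by $\AdjSchutzS$-adjacencies. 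The key point is that $\sigma$ and $\nu$ lie in the same $\EquivS$-class \emph{and} the same $\EquivSchutzS$-class, hence in the same class of the common refinement; I want to connect them by a path of moves each of which is simultaneously legal, i.e. is a Baxter adjacency.

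The main obstacle, and where the real work lies, is this: a single covering step in the permutohedron that stays inside both the $\EquivS$-class and the $\EquivSchutzS$-class need not a priori be a Baxter adjacency — one must show it is. So I would take $\sigma$ and $\nu$ with $\sigma \OrdPermu \nu$ a covering relation (reducing the general case to covering steps by the interval structure, using that Baxter classes are intervals — Section~\ref{sec:MonoideBaxter} promises this, but to avoid circularity I would instead just take an arbitrary minimal sequence of elementary transpositions connecting $\sigma$ to $\nu$ within the intersection of the two classes, relying on the fact that each class is an interval of the permutohedron, proved via Lemma~\ref{lem:DiagHasseEqS} for $\EquivS$ and its $\#$-analogue for $\EquivSchutzS$, so their intersection is also an interval and is connected by covering steps). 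For a covering step $\sigma = x\,\LA\LB\,y \to \nu = x\,\LB\,\LA\,y$ with $\LA < \LB$: since this move stays inside the $\EquivS$-class, by Lemma~\ref{lem:DiagHasseEqS} it is a $\AdjS$-move, meaning there is a letter $\LC$ to the right of $\LB$ in $\sigma$, say $\nu,\sigma$ have the form $\cdots\LA\LB\cdots\LC\cdots$ with, hmm — rather, the $\AdjS$ relation $\LA\LC\,u\,\LB \to \LC\LA\,u\,\LB$ tells me there is a witnessing letter to the right; and since the move \emph{also} stays inside the $\EquivSchutzS$-class, the $\#$-sylvester relation supplies a witnessing letter to the \emph{left}. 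Combining: there is a letter $\LB'$ to the left with $\LA < \LB'$ and $\LB < \LB'$... let me instead say there is a letter $\LD$ to the left and $\LA'$ to the right with the right inequalities, which is precisely the shape $\LD\,\cdots\,\LA\LB\,\cdots\,\LA'$ required for $\AdjBXAB$ (the unrepeated-letter form of the Baxter relation). Matching up the inequalities from the two witnesses to get exactly the hypothesis $\LA < \LD, \LA' < \LB$ — or its mirror, handling the case split on which of $\AdjBXA$, $\AdjBXB$ applies — is the delicate bookkeeping step. Once every covering step in the path is shown to be a Baxter adjacency, transitivity of $\EquivBX$ finishes the proof.
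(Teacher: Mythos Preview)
Your proposal is correct and follows essentially the same approach as the paper: in the forward direction you (like the paper) verify that each Baxter adjacency is simultaneously a sylvester and a $\#$-sylvester adjacency by exhibiting the trailing and leading witnesses, and in the backward direction you reduce to permutations, use that both $\EquivS$- and $\EquivSchutzS$-classes are permutohedron intervals, and then argue that each covering step inside their intersection carries both a sylvester witness (Lemma~\ref{lem:DiagHasseEqS}) to the right and a $\#$-sylvester witness to the left, which together give the $\AdjBXAB$ form. The only cosmetic difference is that the paper routes through $\tau = \inf_{\OrdPermu}\{\sigma,\nu\}$ and two saturated chains rather than invoking ``intersection of intervals is an interval'' directly, and the paper leaves the final witness-combining step (your ``delicate bookkeeping'') implicit where you spell it out.
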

\begin{proof}
    $(\Rightarrow)$: Once more, it is enough to check the property on adjacency
    relations. Moreover, by Proposition~\ref{prop:CompDestd}, it is enough
    to check the property for permutations. Let $\sigma, \nu \in \EnsPermu_n$
    and assume that~$\sigma \AdjBXAB \nu$. We have
    $\sigma = x \, \LB \, y \, \LA \LD \, z \, \LB' \, t$
    and $\nu = x \, \LB \, y \, \LD \LA \, z \, \LB' \, t$ for some letters
    $\LA < \LB, \LB' < \LD$ and words~$x$, $y$, $z$, and~$t$. The presence
    of the letters~$\LA$, $\LD$ and~$\LB'$ with $\LA < \LB' < \LD$ ensures
    that~$\sigma \AdjS \nu$. Besides, the presence of the letters~$\LB$,
    $\LA$ and~$\LD$ with $\LA < \LB < \LD$ ensures that~$\sigma \AdjSchutzS \nu$.

    $(\Leftarrow)$: Since the sylvester and the $\#$-sylvester monoids are
    compatible with the destandardization process~\cite{HNT05}, it is enough
    to check the property for permutations. Let $\sigma, \nu \in \EnsPermu_n$
    such that~$\sigma \EquivS \nu$ and~$\sigma \EquivSchutzS \nu$. Set
    $\tau := \inf_{\OrdPermu} \{\sigma, \nu\}$. Since the permutohedron
    is a lattice,~$\tau$ is well-defined, and since the equivalence classes
    of permutations under the~$\EquivS$ and~$\EquivSchutzS$ congruences
    are intervals of the permutohedron~\cite{HNT05}, we have
    $\sigma \EquivS \tau \EquivS \nu$ and
    $\sigma \EquivSchutzS \tau \EquivSchutzS \nu$. Moreover, by
    Lemma~\ref{lem:DiagHasseEqS}, and again since that the equivalence
    classes of permutations under the~$\EquivS$ and the~$\EquivSchutzS$
    congruences are intervals of the permutohedron, for each saturated
    chains $\tau \OrdPermu \sigma' \OrdPermu \cdots \OrdPermu \sigma$
    and $\tau \OrdPermu \nu' \OrdPermu \cdots \OrdPermu \nu$, there are
    sequences of adjacency relations $\tau \AdjS \sigma' \AdjS \cdots \AdjS \sigma$,
    $\tau \AdjSchutzS \sigma' \AdjSchutzS \cdots \AdjSchutzS \sigma$,
    $\tau \AdjS \nu' \AdjS \cdots \AdjS \nu$ and
    $\tau \AdjSchutzS \nu' \AdjSchutzS \cdots \AdjSchutzS \nu$. Hence,
    $\tau \EquivBX \sigma$ and $\tau \EquivBX \nu$, implying~$\sigma \EquivBX \nu$.
\end{proof}

Proposition~\ref{prop:LienSylv} shows that the $\EquivBX$-equivalence
classes are the intersection of $\EquivS$-equivalence classes and
$\EquivSchutzS$-equivalence classes.
\medskip

By the characterization of the $\EquivBX$-equivalence classes provided by
Proposition~\ref{prop:LienSylv}, restricting the Baxter congruence on
permutations, we have the following property:
\begin{Proposition} \label{prop:EquivBXInter}
    For any $n \geq 0$, each equivalence class of $\EnsPermu_n /_\EquivBX$
    is an interval of the permutohedron.
\end{Proposition}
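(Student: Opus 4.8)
The plan is to deduce Proposition~\ref{prop:EquivBXInter} from Proposition~\ref{prop:LienSylv} together with the already-known fact that each equivalence class of $\EnsPermu_n$ under $\EquivS$ and under $\EquivSchutzS$ is an interval of the permutohedron~\cite{HNT05}. Fix $n \geq 0$ and a $\EquivBX$-equivalence class $C$ of $\EnsPermu_n$. Pick any $\sigma, \nu \in C$; I want to show that the whole permutohedron interval $[\inf_{\OrdPermu}\{\sigma,\nu\}, \sup_{\OrdPermu}\{\sigma,\nu\}]$ — more precisely, any $\pi$ with $\sigma \OrdPermu \pi \OrdPermu \nu$ — lies in $C$, and conversely that $C$ is order-convex, which together give that $C$ is an interval.

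First I would argue convexity. By Proposition~\ref{prop:LienSylv}, $\sigma \EquivS \nu$ and $\sigma \EquivSchutzS \nu$. Since the $\EquivS$-class of $\sigma$ and the $\EquivSchutzS$-class of $\sigma$ are both intervals of the permutohedron, any $\pi$ with $\sigma \OrdPermu \pi \OrdPermu \nu$ satisfies $\pi \EquivS \sigma$ and $\pi \EquivSchutzS \sigma$; applying Proposition~\ref{prop:LienSylv} again gives $\pi \EquivBX \sigma$, i.e. $\pi \in C$. Thus $C$ is closed under taking elements between two of its members in the permutohedron order.

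It remains to see that $C$ has a minimum and a maximum for $\OrdPermu$, so that it actually is the full interval between them. Here I use that both the $\EquivS$-class and the $\EquivSchutzS$-class of $\sigma$ are intervals, hence each has a minimum; call them $m_S$ and $m_{S^\#}$, and set $\tau := \sup_{\OrdPermu}\{m_S, m_{S^\#}\}$, which exists since the permutohedron is a lattice. A short argument — analogous to the $(\Leftarrow)$ direction in the proof of Proposition~\ref{prop:LienSylv} — shows $\tau \EquivS \sigma$ and $\tau \EquivSchutzS \sigma$ (both classes are intervals containing their minimum and $\sigma$, hence contain $\tau$), so $\tau \in C$; and $\tau \OrdPermu \pi$ for every $\pi \in C$ because any such $\pi$ satisfies $m_S \OrdPermu \pi$ and $m_{S^\#} \OrdPermu \pi$. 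So $\tau = \min_{\OrdPermu} C$. The dual construction with the maxima of the two classes (join replaced by meet) yields $\max_{\OrdPermu} C$. Combining with the convexity established above, $C = [\min_{\OrdPermu} C, \max_{\OrdPermu} C]$ is an interval of the permutohedron.

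The only subtle point — the main obstacle — is justifying that $\tau$ lies in $C$, i.e. that the join of the minimum of the $\EquivS$-class and the minimum of the $\EquivSchutzS$-class is both $\EquivS$- and $\EquivSchutzS$-equivalent to $\sigma$. But this is immediate once one observes that an interval of the permutohedron containing two elements contains their join: the $\EquivS$-class is an interval containing $m_S$ and $m_{S^\#}$ (the latter since $m_{S^\#} \EquivBX \sigma$ implies $m_{S^\#} \EquivS \sigma$ by Proposition~\ref{prop:LienSylv}, wait — more directly, $m_{S^\#}$ and $\sigma$ lie in the same $\EquivS$-class because $m_{S^\#} \in C$), so it contains $\tau$; symmetrically for $\EquivSchutzS$. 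Everything else is routine lattice bookkeeping, and no new combinatorics on words is needed beyond Proposition~\ref{prop:LienSylv}.
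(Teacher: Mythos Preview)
Your approach is essentially the paper's: both use Proposition~\ref{prop:LienSylv} to identify each $\EquivBX$-class with the intersection of a $\EquivS$-class and a $\EquivSchutzS$-class, both of which are permutohedron intervals by~\cite{HNT05}, and then conclude. The paper compresses this into one line --- the intersection of two intervals in a lattice is again an interval --- whereas you unpack it by exhibiting $\min_{\OrdPermu} C = m_S \vee m_{S^\#}$ and checking convexity separately.

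Your third paragraph already contains the correct justification that $\tau := m_S \vee m_{S^\#}$ lies in $C$: since $m_S \OrdPermu \sigma$ and $m_{S^\#} \OrdPermu \sigma$, one has $\tau \OrdPermu \sigma$, and then $m_S \OrdPermu \tau \OrdPermu \sigma$ forces $\tau$ into the $\EquivS$-interval, and symmetrically $m_{S^\#} \OrdPermu \tau \OrdPermu \sigma$ forces $\tau$ into the $\EquivSchutzS$-interval. Your final paragraph, however, is muddled: you assert $m_{S^\#} \in C$ (equivalently $m_{S^\#} \EquivS \sigma$), but the minimum of the $\EquivSchutzS$-class of $\sigma$ need not lie in the $\EquivS$-class of $\sigma$ --- these two intervals share $\sigma$ but there is no reason for one to contain the other's bottom element. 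This would in fact force the $\EquivBX$-class and the $\EquivSchutzS$-class to have the same minimum, which is false in general. Simply drop that paragraph; the argument was already complete before it.
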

\begin{proof}
    By Proposition~\ref{prop:LienSylv}, the~$\EquivBX$-equivalence classes
    are the intersection of the~$\EquivS$ and the~$\EquivSchutzS$-equivalence
    classes. Moreover, the permutations under the~$\EquivS$ and the~$\EquivSchutzS$
    equivalence relations are intervals of the permutohedron~\cite{HNT05}.
    The proposition comes from the fact that the intersection of two lattice
    intervals is also an interval and that the permutohedron is a lattice.
\end{proof}

\begin{Lemme} \label{lem:DiagHasseEqBX}
    Let $u := x \, \LA \LD \, y$ and $v := x \, \LD \LA \, y$
    such that $x$ and $y$ are two words, $\LA < \LD$ are two letters,
    and~$u \EquivBX v$. Then,~$u \AdjBXA v$ or~$u \AdjBXB v$.
\end{Lemme}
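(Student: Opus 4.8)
The statement to prove: if $u = x\,\LA\LD\,y$ and $v = x\,\LD\LA\,y$ with $\LA < \LD$ and $u \EquivBX v$, then $u \AdjBXA v$ or $u \AdjBXB v$. This is the Baxter analog of Lemma~\ref{lem:DiagHasseEqS} (the sylvester version), so the plan is to mimic that proof, using Proposition~\ref{prop:LienSylv} to transfer information from the two sylvester-type monoids, and then read off the required order relations among the neighbouring letters.

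**Main steps.**

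First I would reduce to permutations: by Proposition~\ref{prop:CompDestd} the Baxter monoid is compatible with the destandardization process, so it suffices to prove the claim for $\sigma := \Std(u)$ and $\nu := \Std(v)$, since $\Std$ preserves the property of being obtained by swapping a single adjacent pair of letters (the positions of $\LA$ and $\LD$ become a pair $\LA' < \LD'$ with the same neighbouring letters standardized). Next, from $u \EquivBX v$ and Proposition~\ref{prop:LienSylv} we get both $\sigma \EquivS \nu$ and $\sigma \EquivSchutzS \nu$. Applying Lemma~\ref{lem:DiagHasseEqS} to $\sigma \EquivS \nu$ (which are related by swapping the adjacent pair $\LA',\LD'$) yields $\sigma \AdjS \nu$: there is a common ancestor, in the right binary search tree, of the nodes labeled $\LA'$ and $\LD'$, whose label $\LB'$ satisfies $\LA' \leq \LB' < \LD'$ — and since we are among permutations this $\LB'$ is a letter appearing to the left of the swapped pair (it was inserted later, from the right) with $\LA' < \LB' < \LD'$. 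Symmetrically, applying the $\#$-version of Lemma~\ref{lem:DiagHasseEqS} to $\sigma \EquivSchutzS \nu$ gives a letter $\LB$ appearing to the \emph{right} of the swapped pair with $\LA' < \LB < \LD'$ (using the mirror/Schützenberger symmetry of the sylvester statement).

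**Concluding.**

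Now I have $\sigma = x'\,\cdots\,\LB'\,\cdots\,\LA'\LD'\,\cdots\,\LB\,\cdots$ with $\LB'$ on the left, $\LB$ on the right, and $\LA' < \LB', \LB < \LD'$. This is exactly the shape of the adjacency relation $\AdjBXAB$ from the remark following Definition~\ref{def:MonoideBaxter}, valid for words without repetition. Hence $\sigma \AdjBXAB \nu$, and unpacking $\AdjBXAB$ into the original two relations: if $\LB' \leq \LB$ we are in the situation of~\eqref{eq:EquivBXAdj1} read with the roles $(\LA,\LB,\LC,\LD) = (\LA',\LB,\LB',\LD')$ (checking $\LA' \leq \LB < \LB' \leq \LD'$), giving $\sigma \AdjBXA \nu$; if $\LB' > \LB$ we land in~\eqref{eq:EquivBXAdj2}, giving $\sigma \AdjBXB \nu$. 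Transporting back through destandardization (again Proposition~\ref{prop:CompDestd}, noting the order relations only relax to $\leq$ where repetitions could occur, which is precisely what~\eqref{eq:EquivBXAdj1} and~\eqref{eq:EquivBXAdj2} allow) yields $u \AdjBXA v$ or $u \AdjBXB v$.

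**The main obstacle.**

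The delicate point will be extracting, from the abstract "common ancestor in the binary search tree" statements of the two sylvester lemmas, the precise \emph{left/right positional} information about where the witnessing letters $\LB'$ and $\LB$ sit relative to the swapped pair — the sylvester tree records ancestry, not word position, so I must invoke the fact that the right binary search tree is built by right-to-left insertion (so an ancestor that is not one of $\LA', \LD'$ sits to their left in the word) and use the mirror symmetry carefully for the $\#$-side. Matching the two strict-versus-non-strict conventions in~\eqref{eq:EquivBXAdj1}–\eqref{eq:EquivBXAdj2} when passing back to arbitrary words is a bookkeeping nuisance but routine.
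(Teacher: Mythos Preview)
Your strategy is exactly the paper's: from $u \EquivBX v$ obtain $u \EquivS v$ and $u \EquivSchutzS v$ via Proposition~\ref{prop:LienSylv}, apply Lemma~\ref{lem:DiagHasseEqS} and its $\#$-analog to get $u \AdjS v$ and $u \AdjSchutzS v$, and read off a witnessing letter on each side of the swapped pair. But your execution has two reversals, and the detour you take is unnecessary.

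First, your positional extraction from the tree is backwards. In right-to-left leaf insertion an \emph{ancestor} is inserted \emph{earlier}, hence lies to the \emph{right} of its descendants in the word, not the left; so the sylvester witness sits in $y$ and the $\#$-sylvester witness sits in $x$, the opposite of what you wrote. More importantly, you do not need the tree argument at all: the very definition of $\AdjS$ reads $\LA\LC\,w\,\LB$ with $\LA \le \LB < \LC$, so the witness $\LB$ is syntactically to the right, and $\AdjSchutzS$ likewise places its witness to the left. The paper simply quotes these definitions and works directly with general words (no reduction to permutations is needed, since Lemma~\ref{lem:DiagHasseEqS} already applies to words). Your ``main obstacle'' evaporates once you use the adjacency definitions rather than re-deriving position from ancestry. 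Second, your final case split is inverted: in~\eqref{eq:EquivBXAdj1} the left outer letter $\LC$ is \emph{larger} than the right outer letter $\LB$, so (in your labelling, $\LB'$ on the left and $\LB$ on the right) it is the case $\LB' > \LB$ that gives $\AdjBXA$, not $\LB' \le \LB$; and conversely for~\eqref{eq:EquivBXAdj2}. Neither error is fatal to the plan, since the two left/right swaps cancel and you still end up with a witness on each side, but the argument as written is incorrect.
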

\begin{proof}
    By Proposition~\ref{prop:LienSylv}, since~$u \EquivBX v$, we
    have~$u \EquivS v$ and thus by Lemma~\ref{lem:DiagHasseEqS} we
    have~$u \AdjS v$, implying the existence of a letter~$\LB'$ in the
    factor~$y$ satisfying $\LA \leq \LB' < \LD$. In the same way, we also
    have~$u \EquivSchutzS v$ and thus~$u \AdjSchutzS v$, hence the existence
    of a letter~$\LB$ in the factor~$x$ satisfying $\LA < \LB \leq \LD$.
    That proves that~$u$ and~$v$ are~$\AdjBXA$ or~$\AdjBXB$-adjacent.
\end{proof}

Lemma~\ref{lem:DiagHasseEqBX} is the analog, in the case of the Baxter
congruence, of Lemma~\ref{lem:DiagHasseEqS} and also proves that the~$\AdjBXA$
and $\AdjBXB$-adjacency relations of any equivalence class~$C$ of
$\EnsPermu_n /_\EquivBX$ are exactly the covering relations of the
permutohedron restricted to the elements of~$C$.

\subsection{\texorpdfstring{Connection with the $3$-recoil monoid}
                           {Connection with the 3-recoil monoid}}
If~$\LA$ and~$\LC$ are two letters of~$A$, denote by~$\LC - \LA$ the
cardinality of the set $\{\LB \in A : \LA < \LB \leq \LC\}$.
In~\cite{NRT11}, Novelli, Reutenauer and Thibon defined for any~$k \geq 0$
the congruence~$\EquivR{k}$. This congruence is the reflexive and transitive
closure of the \emph{$k$-recoil adjacency relation}, defined for
$\LA, \LB \in A$ by
\begin{equation}
    \LA \LB \AdjR{k} \LB \LA \qquad \mbox{where \quad $\LB - \LA \geq k$.}
\end{equation}
The \emph{$k$-recoil monoid} is the quotient of the free monoid~$A^*$ by
the congruence~$\EquivR{k}$. Note that the congruence~$\EquivR{2}$ restricted
to permutations is nothing but the \emph{hypoplactic congruence}~\cite{N98}.
\medskip

The Baxter monoid and the $3$-recoil monoid are related in the following way.
\begin{Proposition} \label{prop:Lien3Recul}
    Each~$\EquivR{3}$-equivalence class of permutations can be expressed
    as a union of some~$\EquivBX$-equivalence classes.
\end{Proposition}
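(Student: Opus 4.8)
The plan is to show that each $\EquivR{3}$-equivalence class of permutations is a union of $\EquivBX$-classes, which (since being a union of equivalence classes is the same as saturation under the finer relation) amounts to proving the implication: if $\sigma \EquivBX \nu$ then $\sigma \EquivR{3} \nu$. In other words, the Baxter congruence refines the $3$-recoil congruence on permutations. Once this is established, the statement follows immediately: each $\EquivR{3}$-class, being closed under $\EquivR{3}$ and hence under the finer $\EquivBX$, is a disjoint union of the $\EquivBX$-classes it contains.

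To prove the refinement, by transitivity it suffices to check it on a single adjacency step. So I would take $\sigma, \nu \in \EnsPermu_n$ with $\sigma \AdjBXAB \nu$, say
\begin{equation}
    \sigma = x \, \LB \, y \, \LA \LD \, z \, \LB' \, t
    \quad \text{and} \quad
    \nu = x \, \LB \, y \, \LD \LA \, z \, \LB' \, t
\end{equation}
for letters $\LA < \LB, \LB' < \LD$ and words $x, y, z, t$. Here $\sigma$ and $\nu$ differ only by the transposition of the adjacent letters $\LA$ and $\LD$ at fixed positions. The key step is to verify that $\LD - \LA \geq 3$: indeed, since $\LA < \LB < \LD$ and $\LA < \LB' < \LD$, the set $\{\LC \in A : \LA < \LC \leq \LD\}$ contains $\LD$ itself, together with $\LB$ and $\LB'$ if they are distinct. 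I would need to handle the subtle case $\LB = \LB'$: in that case $\{\LA < \LC \leq \LD\}$ contains $\LB$ and $\LD$, giving only $\LD - \LA \geq 2$, which is not enough. The way around this is that we are working with \emph{permutations}, so all letters are distinct; the two occurrences $\LA$ and $\LD$ being swapped are distinct from the letters $\LB$ and $\LB'$ appearing elsewhere, and one checks that between $\LA$ and $\LD$ (exclusive of $\LA$, inclusive of $\LD$) there are at least the two letters $\LB$, $\LB'$ \emph{plus} $\LD$ when these are genuinely three distinct values; when $\LB = \LB'$ one must argue using an intermediate permutation in the same $\EquivBX$-class. Actually the cleaner route: since $\sigma \EquivBX \nu$ implies $\sigma$ and $\nu$ lie in a common interval of the permutohedron (Proposition~\ref{prop:EquivBXInter}), and by Lemma~\ref{lem:DiagHasseEqBX} consecutive elements of such a class along a saturated chain differ by an $\AdjBXA$ or $\AdjBXB$ move, it is enough to treat those two named relations directly, where the inequalities $\LA \leq \LB < \LC \leq \LD$ (resp. $\LA < \LB \leq \LC < \LD$) give three strictly interleaved values $\LB < \LC < \LD$ strictly above $\LA$, hence $\LD - \LA \geq 3$, and the swapped pair in each relation is $(\LA,\LD)$ with $\LD - \LA \geq 3$ as well since $\LB, \LC$ lie strictly between.

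So the steps, in order, are: (1) reduce to an adjacency step via transitivity of $\EquivR{3}$; (2) reduce further, via Proposition~\ref{prop:EquivBXInter} and Lemma~\ref{lem:DiagHasseEqBX}, to the two generating relations $\AdjBXA$ and $\AdjBXB$ on permutations; (3) for each, exhibit three distinct letters strictly between the swapped values to conclude $\LD - \LA \geq 3$, hence a single $\AdjR{3}$-move relates the two permutations; (4) conclude that each $\EquivR{3}$-class is saturated under $\EquivBX$ and therefore a union of $\EquivBX$-classes. The main obstacle is step (3): one must be careful that the ``middle'' letters $\LB$ (or $\LB'$) in the adjacency relations are genuinely distinct from each other and from $\LA, \LD$ so that the count $\LD - \LA$ really reaches $3$; this is exactly where the restriction to permutations (no repeated letters) is used, and where passing through the $\AdjBXA/\AdjBXB$ form rather than the combined $\AdjBXAB$ form avoids the degenerate $\LB = \LB'$ situation.
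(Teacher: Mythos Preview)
Your proposal is correct and follows essentially the same strategy as the paper. You overcomplicate step~(3), though: the ``degenerate $\LB = \LB'$'' case cannot occur, since in the $\AdjBXAB$ relation the letters $\LB$ and $\LB'$ sit at distinct positions of the permutation $\sigma$ and are therefore distinct values; the paper simply observes $\LB \ne \LB'$, so that $\{\LC : \LA < \LC \leq \LD\}$ already contains the three distinct values $\LB$, $\LB'$, $\LD$, giving $\LD - \LA \geq 3$ in one line with no appeal to Proposition~\ref{prop:EquivBXInter} or Lemma~\ref{lem:DiagHasseEqBX}.
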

\begin{proof}
    This amounts to prove that for all permutations~$\sigma$ and~$\nu$,
    if~$\sigma \EquivBX \nu$ then~$\sigma \EquivR{3} \nu$. It is enough
    to check this property on adjacency relations. Hence, assume
    that~$\sigma \AdjBXAB \nu$. We have
    $\sigma = x \, \LB \, y \, \LA \LD \, z \LB' \, t$ and
    $\nu = x \, \LB \, y \, \LD \LA \, z \, \LB' \, t$ for some letters
    $\LA < \LB, \LB' < \LD$ and words~$x$, $y$, $z$, and~$t$.
    Since~$\sigma$ and~$\nu$ are permutations, $\LB \ne \LB'$ and thus,
    we have $\LA < \LB < \LB' < \LD$ or $\LA < \LB' < \LB < \LD$,
    implying that~$\LD - \LA \geq 3$. Hence,~$\sigma \EquivR{3} \nu$.
\end{proof}

Note that Proposition~\ref{prop:Lien3Recul} is false for the congruence~$\EquivR{4}$
since there are twenty-two equivalence classes of permutations of size~$4$
under the congruence~$\EquivBX$ but twenty-four under~$\EquivR{4}$. Conversely,
note that~$\EquivR{4}$ is not a refinement of~$\EquivBX$ since for any~$n \geq 5$,
the permutation $1 . n . n\!-\!1 \dots 2$ is the only member of
its~$\EquivBX$-equivalence class but not of its~$\EquivR{4}$-equivalence class.
\medskip

Moreover, it is clear, by definition of~$\EquivR{k}$, that the~$\EquivR{k}$-equivalence
classes of permutations are union of~$\EquivR{k + 1}$-equivalence classes.
Hence, by Proposition~\ref{prop:Lien3Recul}, the hypoplactic equivalence
classes of permutations are union of some~$\EquivBX$-equivalence classes.

\section{A Robinson-Schensted-like algorithm} \label{sec:RobinsonSchensted}
The goal of this section is to define an analog to the Robinson-Schensted
algorithm for the Baxter monoid---see~\cite{LS81,Lot02} for the usual Robinson-Schensted insertion algorithm that associate to any word $u$ its
$\PSymb$-symbol, that is a Young tableau.
\medskip

The interest of the Baxter monoid in our context is that the equivalence
classes of the permutations of size~$n$ under the Baxter congruence are
equinumerous with unlabeled pairs of twin binary trees with~$n$ nodes,
and thus, by the results of Dulucq and Guibert~\cite{DG94}, also equinumerous
with Baxter permutations of size~$n$. We shall provide a proof of this
property in this section, using our analog of the Robinson-Schensted algorithm.

\subsection{Principle of the algorithm} \label{subsec:PSymbBaxter}
We describe here an algorithm testing if two words are equivalent
according to the Baxter congruence. Given a word~$u \in A^*$, it computes
its \emph{Baxter $\PSymb$-symbol}, that is an $A$-labeled pair $(T_L, T_R)$
consisting in a left and a right binary search tree such that the nondecreasing
rearrangement of~$u$ is the inorder reading of both~$T_L$ and~$T_R$. It also
computes its \emph{Baxter $\QSymb$-symbol}, that is a pair of twin binary
trees $(S_L, S_R)$ where~$S_L$ (resp.~$S_R$) is an increasing (resp. decreasing)
binary tree, such that the inorder reading of~$S_L$ and~$S_R$ are the same.
Moreover,~$T_L$ and~$S_L$ have same shape, and so have~$T_R$ and~$S_R$.

\subsubsection{\texorpdfstring{The Baxter $\PSymb$-symbol}{The Baxter P-symbol}}

\begin{Definition} \label{def:BaxterPSymb}
    The \emph{Baxter $\PSymb$-symbol} (or simply \emph{$\PSymb$-symbol}
    if the context is clear) of a word $u \in A^*$ is the pair
    $\PSymb(u) = (T_L, T_R)$ where~$T_L$ (resp.~$T_R$) is the left (resp.
    right) binary search tree obtained by leaf inserting the letters of~$u$
    from left to right (resp. right to left).
\end{Definition}
Figure~\ref{fig:ExemplePQSymboleSansEtapes} shows the $\PSymb$-symbol of
$u := 2415253$. Before showing that the $\PSymb$-symbol of
Definition~\ref{def:BaxterPSymb} can be used to decide if two words are
equivalent under the Baxter congruence, let us give an intuitive explanation
of its validity.
\medskip

Recall that, according to Proposition~\ref{prop:LienSylv}, to represent
the Baxter equivalence class of a word~$u$, one has to represent both the
equivalence class of~$u$ under the~$\EquivS$ congruence and the equivalence
class of~$u$ under the~$\EquivSchutzS$ congruence. This is exactly what
the Baxter $\PSymb$-symbol does since, for a word~$u$, it computes a
pair~$(T_L, T_R)$ where, by Theorem~\ref{thm:PSymbPBT},~$T_L$ represents
the~$\EquivSchutzS$-equivalence class of~$u$ and~$T_R$ represents
the~$\EquivS$-equivalence class of~$u$.

\subsubsection{\texorpdfstring{The Baxter $\QSymb$-symbol}{The Baxter Q-symbol}}
Let us first recall two algorithms. Let~$u$ be a word. Define~$\Incr(u)$,
the \emph{increasing binary tree of~$u$} recursively by
\begin{equation}
    \Incr(u) :=
    \begin{cases}
        \ArbreVide & \mbox{if $u = \epsilon$,} \\[.5em]
        \Incr(v) \ABCons_\LA \Incr(w) &
        \mbox{where $u = v \LA w$, $\LA = \min(u)$, and $\LA < \min(v)$.}
    \end{cases}
\end{equation}
In the same way, define the \emph{decreasing binary tree of~$u$}~$\Decr(u)$, by
\begin{equation}
    \Decr(u) :=
    \begin{cases}
        \ArbreVide & \mbox{if $u = \epsilon$,} \\[.5em]
        \Decr(v) \ABCons_\LB \Decr(w) &
        \mbox{where $u = v \LB w$, $\LB = \max(u)$, and $\LB > \max(w)$.}
    \end{cases}
\end{equation}

\begin{Definition} \label{def:BaxterQSymbole}
    The \emph{Baxter $\QSymb$-symbol} (or simply \emph{$\QSymb$-symbol}
    if the context is clear) of a word $u \in A^*$ is the pair
    $\QSymb(u) = (S_L, S_T)$ where
    \begin{equation}
        S_L := \Incr\left(\Std(u)^{-1}\right)
        \qquad \mbox{and} \qquad
        S_R := \Decr\left(\Std(u)^{-1}\right).
    \end{equation}
\end{Definition}

Figure~\ref{fig:ExemplePQSymboleSansEtapes} shows the $\QSymb$-symbol of
$u := 2415253$, whose standardized word is $2516374$, so that
$\Std(u)^{-1} = 3157246$.
\begin{figure}[ht]
    \centering
    \begin{equation*}
    \begin{split}\PSymb(u) = \: \end{split}
    \begin{split}
    \scalebox{.34}{%
    \begin{tikzpicture}
        \node[Noeud,EtiqClair](0)at(0.0,-1){$1$};
        \node[Noeud,EtiqClair](1)at(1.0,0){$2$};
        \draw[Arete](1)--(0);
        \node[Noeud,EtiqClair](2)at(2.0,-2){$2$};
        \node[Noeud,EtiqClair](3)at(3.0,-3){$3$};
        \draw[Arete](2)--(3);
        \node[Noeud,EtiqClair](4)at(4.0,-1){$4$};
        \draw[Arete](4)--(2);
        \node[Noeud,EtiqClair](5)at(5.0,-2){$5$};
        \node[Noeud,EtiqClair](6)at(6.0,-3){$5$};
        \draw[Arete](5)--(6);
        \draw[Arete](4)--(5);
        \draw[Arete](1)--(4);
    \end{tikzpicture}}
    \end{split}
    \enspace
    \begin{split}
    \scalebox{.34}{%
    \begin{tikzpicture}
        \node[Noeud,EtiqClair](0)at(0.0,-2){$1$};
        \node[Noeud,EtiqClair](1)at(1.0,-3){$2$};
        \draw[Arete](0)--(1);
        \node[Noeud,EtiqClair](2)at(2.0,-1){$2$};
        \draw[Arete](2)--(0);
        \node[Noeud,EtiqClair](3)at(3.0,0){$3$};
        \draw[Arete](3)--(2);
        \node[Noeud,EtiqClair](4)at(4.0,-3){$4$};
        \node[Noeud,EtiqClair](5)at(5.0,-2){$5$};
        \draw[Arete](5)--(4);
        \node[Noeud,EtiqClair](6)at(6.0,-1){$5$};
        \draw[Arete](6)--(5);
        \draw[Arete](3)--(6);
    \end{tikzpicture}}
    \end{split}
    \qquad
    \begin{split}\QSymb(u) = \: \end{split}
    \begin{split}
    \scalebox{.34}{%
    \begin{tikzpicture}
        \node[Noeud,EtiqClair](0)at(0.0,-1){$3$};
        \node[Noeud,EtiqClair](1)at(1.0,0){$1$};
        \draw[Arete](1)--(0);
        \node[Noeud,EtiqClair](2)at(2.0,-2){$5$};
        \node[Noeud,EtiqClair](3)at(3.0,-3){$7$};
        \draw[Arete](2)--(3);
        \node[Noeud,EtiqClair](4)at(4.0,-1){$2$};
        \draw[Arete](4)--(2);
        \node[Noeud,EtiqClair](5)at(5.0,-2){$4$};
        \node[Noeud,EtiqClair](6)at(6.0,-3){$6$};
        \draw[Arete](5)--(6);
        \draw[Arete](4)--(5);
        \draw[Arete](1)--(4);
    \end{tikzpicture}}
    \end{split}
    \enspace
    \begin{split}
    \scalebox{.34}{%
    \begin{tikzpicture}
        \node[Noeud,EtiqClair](0)at(0.0,-2){$3$};
        \node[Noeud,EtiqClair](1)at(1.0,-3){$1$};
        \draw[Arete](0)--(1);
        \node[Noeud,EtiqClair](2)at(2.0,-1){$5$};
        \draw[Arete](2)--(0);
        \node[Noeud,EtiqClair](3)at(3.0,0){$7$};
        \draw[Arete](3)--(2);
        \node[Noeud,EtiqClair](4)at(4.0,-3){$2$};
        \node[Noeud,EtiqClair](5)at(5.0,-2){$4$};
        \draw[Arete](5)--(4);
        \node[Noeud,EtiqClair](6)at(6.0,-1){$6$};
        \draw[Arete](6)--(5);
        \draw[Arete](3)--(6);
    \end{tikzpicture}}
    \end{split}
    \end{equation*}
    \caption{The $\PSymb$-symbol and the $\QSymb$-symbol of $u := 2415253$.}
    \label{fig:ExemplePQSymboleSansEtapes}
\end{figure}
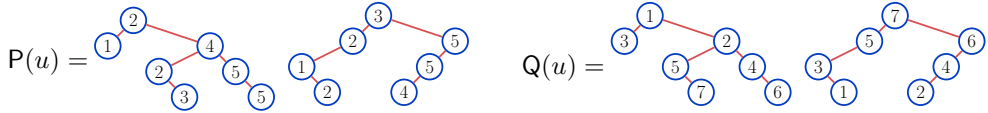
\medskip

It is plain that given a word~$u$, the $\QSymb$-symbol of~$u$ allows, in
addition with its $\PSymb$-symbol, to retrieve the original word. Indeed,
if $\PSymb(u) = (T_L, T_R)$ and $\QSymb(u) = (S_L, S_R)$, the pair~$(T_R, S_R)$
is the output of the Robinson-Schensted-like algorithm in the context of
the sylvester monoid, which is a bijection between words and pairs of such
binary trees~\cite{HNT05}. Given~$(T_R, S_R)$, it amounts to reading the
labels of~$T_R$ in the order of the corresponding labels in~$S_R$. The
same holds of the pair~$(T_L, S_L)$.

\subsection{Correctness of the insertion algorithm}
\begin{Lemme} \label{lem:OrientationFeuille}
    Let~$T$ be a non-empty binary tree and~$y$ be the $i$-th leaf of~$T$.
    If~$y$ is left-oriented, it is attached to the $i$-th node of~$T$.
    If~$y$ is right-oriented, it is attached to the $i\!-\!1$-st node of~$T$.
\end{Lemme}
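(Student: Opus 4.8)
The plan is to argue by structural induction on the binary tree~$T$. The claim relates, via the inorder traversal, the index~$i$ of a leaf~$y$ to the index of the node~$y$ is attached to, with the attachment side (left/right child) determining whether that node has index~$i$ or~$i-1$. First I would set up careful bookkeeping: if a subtree~$S$ is visited by the inorder traversal starting after~$p$ nodes and~$q$ leaves have already been visited, then the $j$-th node (resp. $k$-th leaf) of~$S$ is the $(p+j)$-th node (resp. $(q+k)$-th leaf) of the whole tree, and the counts are additive across $L$, root, $R$ in a node $L \ABCons R$. This makes the induction purely a matter of tracking offsets.

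For the base case, $T$ is a single node with a left leaf (index~$1$) attached to that node (index~$1$), matching ``left-oriented, attached to the $i$-th node'' with $i=1$, and a right leaf (index~$2$) attached to the same node (index~$1$), matching ``right-oriented, attached to the $(i-1)$-st node'' with $i=2$. For the inductive step, write $T = L \ABCons R$ and say $L$ has $\ell$ nodes (hence $\ell+1$ leaves) and $R$ has $r$ nodes. The inorder traversal visits: the $\ell$ leaves and $\ell$ nodes of $L$ interleaved, ending with the $(\ell+1)$-st leaf of $L$ which is the rightmost leaf of $L$; then the root of $T$ (node index $\ell+1$); then the leaves and nodes of $R$. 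I would split on where the leaf $y$ lies.

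Three cases. (1) $y$ is the $i$-th leaf with $i \le \ell$, i.e. $y$ lies in $L$ and is not $L$'s rightmost leaf: then $y$ has the same index as a leaf within $L$, and by induction it is attached to a node of $L$ of index $i$ or $i-1$ depending on its orientation; since the first $\ell$ nodes of $T$ are exactly the nodes of $L$ with the same indices, the conclusion transfers verbatim. (2) $y$ is the $(\ell+1)$-st leaf of $T$, i.e. it is the leftmost leaf of $R$ (or, if $R$ is empty, the right leaf of the root): this leaf is right-oriented and is attached — as a right child — to the root of $T$ in the first sub-case, or to the rightmost node of $L$ in... wait, no: the $(\ell+1)$-st leaf in inorder is the one sitting between the last node of $L$ and the root, which is the \emph{right} leaf hanging off the chain — concretely it is the leftmost leaf of $R$ if $R \neq \ArbreVide$, attached as a left child to $R$'s leftmost node (node index $\ell+2$ in $T$), which would be left-oriented of index $\ell+2$... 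I need to recheck the indexing of the ``middle'' leaf; the cleanest fix is to note the $(\ell+1)$-st leaf of $T$ is precisely the \emph{first} leaf of $R$ when $R$ is nonempty and the right leaf of the root when $R = \ArbreVide$, and handle the recursion through $R$ with offset $\ell+1$ on nodes and $\ell+1$ on leaves. (3) $y$ lies strictly inside $R$: apply induction to $R$ and add the offset $\ell+1$ to both the leaf index and the node index, which preserves the relation ``index $i$ or $i-1$''.

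The main obstacle is purely combinatorial precision: getting the off-by-one counts for the ``hinge'' leaf (the rightmost leaf of $L$ / leftmost leaf of $R$ / the leaf adjacent to the root) exactly right, and making sure the additive-offset lemma for inorder indices is stated cleanly enough that cases (1) and (3) are immediate. Once the offset lemma is in place, no genuine difficulty remains — every case is a one-line reduction to the inductive hypothesis or to the base case, and the orientation (left child vs.\ right child) is unchanged under passing to a subtree, so the ``$i$ vs.\ $i-1$'' dichotomy is preserved automatically.
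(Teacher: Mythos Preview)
Your approach is the same as the paper's --- structural induction on~$T$, with offsets tracked through the decomposition $T = L \ABCons R$ --- and your offset bookkeeping for the recursion into~$R$ (add $\ell+1$ to both leaf and node indices) is correct. The paper's proof is essentially your cases~(1) and~(3), each with an empty-subtree sub-case.

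However, your case~(2) contains a genuine indexing error that you notice but do not fix. If~$L$ has $\ell \geq 1$ nodes, then~$L$ has $\ell+1$ leaves, and the inorder traversal of~$T$ visits all of them before visiting the root. Hence the $(\ell+1)$-st leaf of~$T$ is the \emph{rightmost leaf of~$L$}, not the leftmost leaf of~$R$; the leftmost leaf of~$R$ is leaf number $\ell+2$. Your ``cleanest fix'' repeats the same mistake. Once you correct this, case~(2) dissolves: the rightmost leaf of~$L$ is covered by the inductive hypothesis on~$L$ just like every other leaf of~$L$ (it is right-oriented and attached to node~$\ell$ of~$L$, which is node~$\ell = (\ell+1)-1$ of~$T$). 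The only situations that are not handled by a direct appeal to the inductive hypothesis are when the relevant subtree is empty: if $L = \ArbreVide$ then the first leaf of~$T$ is~$L$ itself, left-oriented, attached to the root (node~$1$); if $R = \ArbreVide$ then the last leaf of~$T$ is~$R$ itself, right-oriented, attached to the root (the last node). This is exactly how the paper organizes the proof: two cases, ``$y$ lies in~$A$'' and ``$y$ lies in~$B$'', each split into the sub-case where that subtree is empty and the sub-case where induction applies. No separate ``middle leaf'' case is needed.
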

\begin{proof}
    We proceed by structural induction on the set of non-empty binary trees.
    If~$T$ is the one-node  binary tree, the lemma is clearly satisfied.
    Otherwise, we have $T = A \ABCons B$. Let~$y$ be the $i$-th leaf of~$T$
    and~$x$ be the node where~$y$ is attached. If~$y$ is also in~$A$ and
    $A = \ArbreVide$, $y$ is left-oriented and is attached to the root
    of~$T$ (that is the first node of~$T$) and the lemma is satisfied. If~$y$
    is in~$A$ and $A \ne \ArbreVide$, $y$ is also the $i$-th leaf of~$A$
    and~$x$ is a node of~$A$, so that the lemma follows by induction hypothesis
    on~$A$. Otherwise,~$y$ is in~$B$. If $B = \ArbreVide$, $y$ is right-oriented
    and is attached to the root of~$T$ (that is the last node of~$T$) and
    the lemma is satisfied. Otherwise,~$y$ is the $i\!-\!(n\!+\!1)$-st leaf
    of~$B$ where~$n$ is the number of nodes of~$A$. Assume that the node~$x$
    is the $j$-st node of~$T$, then,~$x$ becomes the $j\!-\!(n\!+\!1)$-st
    node of~$B$. Hence, the lemma follows by induction hypothesis on~$B$.
\end{proof}

The following proposition is the key of our construction.
\begin{Proposition} \label{prop:FeuillesInversions}
    Let~$\sigma$ be a permutation and~$T$ be the left binary search tree
    obtained by left leaf insertions of the letters of~$\sigma$, from left
    to right. Then, the $i\!+\!1$-st leaf of~$T$ is right-oriented if and
    only if~$i$ is a recoil of~$\sigma$.
\end{Proposition}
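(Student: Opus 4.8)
The plan is to prove the statement by structural induction on the left binary search tree $T$, keeping track simultaneously of the permutation $\sigma$ being inserted and of which positions are recoils. Recall that $i$ is a recoil of $\sigma$ if $(i,i+1)$ is an inversion of $\sigma^{-1}$, i.e.\ the letter $i+1$ occurs before the letter $i$ in $\sigma$ (and by convention $n$ is always a recoil). The leaves of $T$ are indexed $1,\dots,n+1$ by the inorder traversal, and by Lemma~\ref{lem:OrientationFeuille} the orientation of the $(i+1)$-st leaf is governed by whether it is attached to the $i$-th node (left-oriented) or the $(i+1)$-st node (right-oriented) of $T$. So the real content is: leaf $i+1$ is right-oriented iff, in the inorder reading of $T$, the node holding value $i$ and the node holding value $i+1$ are not in an ancestor relation with $i$'s node being reached by going left from a common ancestor — equivalently, iff $i+1$ was inserted and landed in a position making it \emph{not} a descendant ancestor-wise of node $i$ on the correct side.

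First I would set up the induction on the number of letters of $\sigma$, or more naturally on the structure of $T$. For the one-node tree ($\sigma = 1$) the claim holds trivially: $T$ has two leaves, the second (i.e.\ the $1+1 = 2$nd) leaf is right-oriented, and $n=1$ is a recoil by convention. For the inductive step, write $T = T_{\mathrm{root}}$ with root labeled by some value $k$, left subtree $L$ and right subtree $R$. Because $T$ is a \emph{left} binary search tree, the root label $k$ is the position (in $\sigma$) such that all letters of $\sigma$ strictly less than $k$ were inserted... — here I must be careful: the root of the left-insertion tree is the \emph{first} letter of $\sigma$. So $k = \sigma_1$. The nodes in $L$ are exactly those carrying labels $\le k$... again, in the \emph{left} binary search tree convention $\LA < \LB \le \LC$, so $L$ holds the labels strictly less than $k$ (none equal since $\sigma$ is a permutation) that appear among $\sigma_2,\dots,\sigma_n$, and $R$ holds those strictly greater. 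Crucially, $L$ is the tree obtained by left-inserting the subsequence of $\sigma_2\cdots\sigma_n$ of letters $< k$, and $R$ from the letters $> k$. These subsequences, after standardization, are themselves permutations, so the induction hypothesis applies to $L$ and to $R$.

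Next I would analyze the leaves of $T$ in terms of the leaves of $L$ and $R$. If $L$ has $p$ nodes then $R$ has $n-1-p$ nodes; the leaves $1,\dots,p+1$ of $T$ are the leaves of $L$, the leaf $p+1$ of $L$ becomes leaf $p+1$ of $T$ but is now the \emph{left child of the root} (since the root's left subtree is $L$), hence it changes from whatever it was in $L$ to right-oriented; symmetrically leaf $1$ of $R$ becomes leaf $p+2$ of $T$, the right child of the root, hence left-oriented; and the remaining interior leaves are unchanged. Translating to the recoil statement: for $i < p$, leaf $i+1$ of $T$ equals leaf $i+1$ of $L$ and the claim reduces to whether $i$ is a recoil of $\sigma$; one checks that $i$ is a recoil of $\sigma$ iff the corresponding position is a recoil of the standardized left-subsequence, because letters $i$ and $i+1$ are both $< k$ and their relative order in $\sigma$ equals their relative order in that subsequence. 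The boundary cases $i = p$ (leaf $p+1$, forced right-oriented; and indeed $i=p$ is a recoil of $\sigma$ because value $k=\sigma_1$ satisfies $p < k$ hence the letter $p+1$ — which is $\le k$, possibly $=k$... if $p+1 = k$ then value $k$ is $\sigma_1$ itself and lies before value $p$, a recoil; if $p+1 < k$ then both are in $L$'s subsequence, and one argues $p$ is the last recoil position of that subsequence, analogous to the convention case) and $i = p+1$ (leaf $p+2$, forced left-oriented; need $p+1$ not a recoil of $\sigma$, i.e.\ value $p+2$ comes after value $p+1$ in $\sigma$ — these straddle the root value region and one must verify carefully) and $i > p+1$ (reduce to $R$ via its induction hypothesis, shifting indices by $p+1$).

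The main obstacle I anticipate is the bookkeeping at the three boundary leaf positions — the leaf immediately left of the root, the leaf immediately right of the root, and the interaction with the ``$n$ is a recoil by convention'' clause as it propagates into the subtrees. Getting the off-by-one indices consistent between (a) inorder leaf indexing, (b) the relabeling when passing to subtrees, and (c) the recoil bookkeeping under standardization of subsequences is where all the care is needed; the rest is routine once Lemma~\ref{lem:OrientationFeuille} is invoked to convert leaf orientations into ``which node is the leaf attached to.'' An alternative, possibly cleaner, route would be to induct on $|\sigma|$ by tracking what happens when the \emph{last} letter $\sigma_n$ is inserted (it becomes a leaf-replacing new node, affecting exactly two adjacent leaves), but the subtree induction above seems more symmetric and I would pursue that first.
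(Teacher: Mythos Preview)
Your inductive approach is genuinely different from the paper's, which gives a short direct argument exploiting the fact that $i$ and $i+1$ are \emph{consecutive} integers. Since no value lies strictly between them, in any binary search tree on $\{1,\dots,n\}$ the nodes carrying $i$ and $i+1$ must be in an ancestor--descendant relationship, and whichever of the two is inserted first is the ancestor. If $i+1$ comes first in $\sigma$ (i.e.\ $i$ is a recoil), then node $i$ lies in the left subtree of node $i+1$ and has empty right subtree; Lemma~\ref{lem:OrientationFeuille} then says leaf $i+1$ is right-oriented. The non-recoil case is symmetric. That is the entire proof --- no induction, no boundary bookkeeping.

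Your structural induction can be made to work, but your description of the boundary leaves is incorrect. You write that ``leaf $p+1$ of $L$ \dots\ is now the left child of the root'' and that ``leaf $1$ of $R$ becomes \dots\ the right child of the root''. Neither is true: when $T = L \wedge_k R$ with $L, R \ne \perp$, the children of the root of $T$ are the \emph{roots} of $L$ and $R$, not leaves; every leaf of $L$ and of $R$ remains a leaf of $T$ with unchanged orientation. What saves you is that leaf $p+1$ of $L$ is the rightmost leaf of $L$, hence already right-oriented, and leaf $1$ of $R$ is the leftmost leaf of $R$, hence already left-oriented --- so your boundary conclusions are right for the wrong reason. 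Note also that since $T$ is a binary search tree on $\{1,\dots,n\}$ with root label $k$, necessarily $p = k-1$; your case ``$p+1 < k$'' never occurs, and the recoil checks at $i = k-1$ and $i = k$ reduce cleanly to ``$k = \sigma_1$ precedes $k-1$'' (always true) and ``$k+1$ precedes $k = \sigma_1$'' (always false). With these corrections the induction goes through, but the paper's direct argument avoids all of this.
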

\begin{proof}
    Set $\LA := i$ and $\LC := i\!+\!1$. Assume that~$\LA$ is a recoil
    of~$\sigma$. We have $\sigma = u \, \LC \, v \, \LA \, w$ for some
    words~$u$, $v$, and~$w$. Since no letter~$\LB$ of~$u$ and~$v$ satisfies
    $\LA < \LB < \LC$, the node of~$T$ labeled by~$\LC$ has a node labeled
    by~$\LA$ in its left subtree, itself having no right child and thus
    contributes, by Lemma~\ref{lem:OrientationFeuille}, to a right-oriented
    leaf in position~$i\!+\!1$.

    Conversely, assume that~$\LA$ is not a recoil of~$\sigma$. We have
    $\sigma = u \, \LA \, v \, \LC \, w$ for some words~$u$, $v$, and~$w$.
    For the same reason as before, the node of~$T$ labeled by~$\LA$ has
    a node labeled by~$\LC$ in its right subtree, itself having no left
    child and thus contributes, by Lemma~\ref{lem:OrientationFeuille},
    to a left-oriented leaf in position~$i\!+\!1$.
\end{proof}

Figure~\ref{fig:IllustrationOrientationFeuilles} shows an example of
application of Proposition~\ref{prop:FeuillesInversions}.
\begin{figure}[ht]
    \centering
    \scalebox{.35}{
    \begin{tikzpicture}
        \node[Feuille](0)at(0.0,-2){};
        \node[Noeud,EtiqClair](1)at(1.0,-1){$1$};
        \node[Feuille](2)at(2.0,-4){};
        \node[Noeud,EtiqClair](3)at(3.0,-3){$2$};
        \node[Feuille](4)at(4.0,-4){};
        \draw[Arete](3)--(2);
        \draw[Arete](3)--(4);
        \node[Noeud,EtiqClair](5)at(5.0,-2){$3$};
        \node[Feuille](6)at(6.0,-3){};
        \draw[Arete](5)--(3);
        \draw[Arete](5)--(6);
        \draw[Arete](1)--(0);
        \draw[Arete](1)--(5);
        \node[Noeud,EtiqClair](7)at(7.0,0){$4$};
        \node[Feuille](8)at(8.0,-3){};
        \node[Noeud,EtiqClair](9)at(9.0,-2){$5$};
        \node[Feuille](10)at(10.0,-3){};
        \draw[Arete](9)--(8);
        \draw[Arete](9)--(10);
        \node[Noeud,EtiqClair](11)at(11.0,-1){$6$};
        \node[Feuille](12)at(12.0,-3){};
        \node[Noeud,EtiqClair](13)at(13.0,-2){$7$};
        \node[Feuille](14)at(14.0,-3){};
        \draw[Arete](13)--(12);
        \draw[Arete](13)--(14);
        \draw[Arete](11)--(9);
        \draw[Arete](11)--(13);
        \draw[Arete](7)--(1);
        \draw[Arete](7)--(11);
    \end{tikzpicture}}
    \caption{The binary search tree drawn with its leaves obtained by left
    leaf insertions of the letters of $\sigma := 4136275$, from left to right.
    The recoils of $\sigma$ are $2$, $3$, $5$, and $7$ and the $3$-rd, $4$-th,
    $6$-th, and $8$-th leaves of this binary tree are right-oriented.}
    \label{fig:IllustrationOrientationFeuilles}
\end{figure}
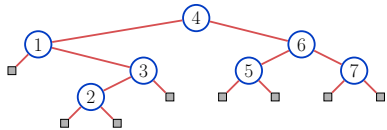

\subsubsection{\texorpdfstring{The $\PSymb$-symbol}{The P-symbol}}
\begin{Proposition} \label{prop:PSymboleNonIncr}
    For any word $u \in A^*$, the $\PSymb$-symbol $(T_L, T_R)$ of~$u$ is
    a pair of twin binary search trees---$T_L$ (resp.~$T_R$) is a left
    (resp. right) binary search tree, and the inorder reading of both~$T_L$
    and~$T_R$ is the nondecreasing rearrangement of~$u$.
\end{Proposition}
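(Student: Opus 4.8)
The plan is to prove the three assertions --- that $T_L$ is a left binary search tree, that $T_R$ is a right binary search tree, and that both have the nondecreasing rearrangement of $u$ as inorder reading --- more or less independently, since the first two are just facts about the classical binary search tree insertion algorithm and the third follows from a standard invariant of that algorithm.

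First I would recall that $T_R$ is obtained by leaf inserting the letters of $u$ from right to left using the right version of {\sc LeafInsertion}: the key point of that algorithm is that at each step a new one-node tree is grafted in place of a leaf, and the position of that leaf is chosen precisely so that the defining inequalities $\LA \leq \LB < \LC$ of a right binary search tree are preserved (a letter equal to a node label goes left, a strictly larger one goes right). Thus by induction on the number of inserted letters, $T_R$ is a right binary search tree at every stage; symmetrically, using the left version of {\sc LeafInsertion} (where ties break to the right, i.e. $\LA < \LB$ sends $\LA$ left), $T_L$ is a left binary search tree. This is essentially a restatement of the classical correctness of binary search tree insertion, supplemented with the tie-breaking convention, and I would keep it brief.

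For the inorder reading, I would argue that inserting a letter $\LA$ by {\sc LeafInsertion} into a right binary search tree $T$ inserts a new node into the inorder sequence of $T$ at exactly the position dictated by the binary search tree order: all nodes visited before the insertion point carry labels $\leq \LA$ and all nodes visited after carry labels $> \LA$ (with the mirrored convention for the left version). Hence the inorder reading of the tree is, at every stage, the nondecreasing rearrangement of the multiset of inserted letters. Since $T_L$ and $T_R$ are built from the same multiset of letters (the letters of $u$, just inserted in opposite orders), their inorder readings coincide and both equal the nondecreasing rearrangement of $u$. Finally, to conclude that $(T_L,T_R)$ really is a \emph{pair of twin} binary search trees in the sense defined above, I must check that $\Canop(T_L)$ and $\Canop(T_R)$ are complementary; but this is exactly where Proposition~\ref{prop:FeuillesInversions} enters: applying it to $\Std(u)$ (and using that the canopy depends only on the shape, which is unchanged by destandardization via Proposition~\ref{prop:CompDestd}), the $i\!+\!1$-st leaf of $T_L$ is right-oriented iff $i$ is a recoil of $\Std(u)$, while the mirror-image statement for the right binary search tree $T_R$ --- which one gets by applying Proposition~\ref{prop:FeuillesInversions} to the reverse situation, or equivalently to $\Std(u)^\#$ via Proposition~\ref{prop:CompSchutz} --- shows the $i\!+\!1$-st leaf of $T_R$ is right-oriented iff $i$ is \emph{not} a recoil of $\Std(u)$. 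Complementarity of the two canopies follows.

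The main obstacle I anticipate is not in the binary-search-tree bookkeeping, which is routine, but in pinning down the canopy--complementarity claim cleanly: Proposition~\ref{prop:FeuillesInversions} is stated only for left binary search trees and left leaf insertions read left to right, so I need to carefully transport it to the right binary search tree $T_R$ (built by \emph{right}-to-left insertion into a \emph{right} binary search tree) via the Schützenberger/mirror symmetry, making sure the off-by-one indexing of leaves in Lemma~\ref{lem:OrientationFeuille} lines up under that symmetry. Once that symmetry dictionary is set up, the complementarity is immediate from the fact that $i$ is or is not a recoil of $\Std(u)$.
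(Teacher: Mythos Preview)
Your proposal is essentially correct and follows the same skeleton as the paper's proof: the binary-search-tree and inorder-reading claims are routine consequences of {\sc LeafInsertion} (the paper dispatches them in a single sentence), and the real content is the canopy complementarity via Proposition~\ref{prop:FeuillesInversions}.

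Where you diverge from the paper is in the transport of Proposition~\ref{prop:FeuillesInversions} to $T_R$. You invoke Proposition~\ref{prop:CompDestd} and Proposition~\ref{prop:CompSchutz}, but both of those are statements about the Baxter \emph{congruence}, not about shapes of binary search trees; they do not literally give you ``the canopy depends only on the shape'' or the mirror dictionary you need. The paper's route is both simpler and avoids this mis-citation: it first observes directly that leaf inserting $u$ and $\Std(u)$ produce trees of the same shape (because {\sc LeafInsertion} only compares letters, so only their relative order matters), hence one may assume $u=\sigma$ is a permutation. For a permutation there are no ties, so left and right leaf insertion coincide, and $T_R$ is exactly the tree obtained by left leaf inserting the reversal $\sigma^\sim$ from left to right. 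Proposition~\ref{prop:FeuillesInversions} then applies verbatim to $\sigma^\sim$, and complementarity follows from the elementary fact that $i$ is a recoil of $\sigma$ if and only if $i$ is \emph{not} a recoil of $\sigma^\sim$. No Sch\"utzenberger involution, and no off-by-one bookkeeping under~$\#$, is needed.
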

\begin{proof}
    Note by definition of the {\sc LeafInsertion} algorithm that~$T_L$
    (resp.~$T_R$) is a left (resp. right) binary search tree and the inorder
    reading of both~$T_L$ and~$T_R$ is the nondecreasing rearrangement of~$u$.
    It is plain that the leaf insertion of~$u$ and~$\Std(u)$ from left to
    right (resp. right to left) into left (resp. right) binary search trees
    give binary trees of same shape. That implies that we can consider
    that $u =: \sigma$ is a permutation. Proposition~\ref{prop:FeuillesInversions}
    implies that the canopies of~$T_L$ and~$T_R$ are complementary because~$i$
    is a recoil of~$\sigma$ if and only if~$i$ is not a recoil of~$\sigma^\sim$.
    Thus, the shapes of~$T_L$ and~$T_R$ consist in a pair of twin binary trees.
\end{proof}

\begin{Theoreme} \label{thm:PSymboleClasses}
    Let $u, v \in A^*$. Then, $u \EquivBX v$ if and only if
    $\PSymb(u) = \PSymb(v)$.
\end{Theoreme}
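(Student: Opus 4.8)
The plan is to leverage Proposition~\ref{prop:LienSylv}, which states that $u \EquivBX v$ if and only if $u \EquivS v$ and $u \EquivSchutzS v$, together with Theorem~\ref{thm:PSymbPBT}, the Robinson-Schensted-type characterization for the sylvester and $\#$-sylvester monoids. The key observation is that the $\PSymb$-symbol $\PSymb(u) = (T_L, T_R)$ is designed precisely so that $T_R$ is the right binary search tree obtained by inserting the letters of $u$ from right to left, and $T_L$ is the left binary search tree obtained by inserting the letters of $u$ from left to right. By Theorem~\ref{thm:PSymbPBT}, $T_R$ determines and is determined by the $\EquivS$-equivalence class of $u$, and $T_L$ determines and is determined by the $\EquivSchutzS$-equivalence class of $u$.

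With this in hand, the proof is essentially a chain of equivalences. First I would note that Proposition~\ref{prop:PSymboleNonIncr} guarantees $\PSymb(u)$ is a well-defined pair of twin binary search trees, so the statement makes sense. Then: $u \EquivBX v$ holds, by Proposition~\ref{prop:LienSylv}, if and only if $u \EquivS v$ and $u \EquivSchutzS v$. By Theorem~\ref{thm:PSymbPBT} applied to the right binary search tree insertion, $u \EquivS v$ if and only if the right binary search trees obtained from $u$ and $v$ by right-to-left insertion coincide, i.e.\ $T_R(u) = T_R(v)$. Similarly, by the respective part of Theorem~\ref{thm:PSymbPBT} applied to the left binary search tree insertion, $u \EquivSchutzS v$ if and only if $T_L(u) = T_L(v)$. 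Combining, $u \EquivBX v$ if and only if both components agree, which is exactly $\PSymb(u) = \PSymb(v)$.

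The main (and only real) subtlety to address carefully is making sure the identification of the components of $\PSymb(u)$ with the sylvester and $\#$-sylvester classifying trees is airtight. Concretely, I would spell out that the right subtree $T_R$ produced by Definition~\ref{def:BaxterPSymb} is literally the output of the leaf-insertion algorithm {\sc LeafInsertion} in its right binary search tree mode, reading the letters of $u$ from right to left; this is exactly the binary search tree of Theorem~\ref{thm:PSymbPBT} for the sylvester monoid. For the left component $T_L$, I would invoke the discussion following Theorem~\ref{thm:PSymbPBT}: leaf-inserting the letters of $u$ from left to right into a left binary search tree produces the tree that classifies the $\EquivSchutzS$-class of $u$, because $u \EquivSchutzS v$ if and only if $u^\# \EquivS v^\#$, and the left-to-right left binary search tree insertion of $u$ corresponds, after recursively swapping subtrees, to the right-to-left right binary search tree insertion of $u^\#$. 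Once these correspondences are stated, the theorem follows immediately from Proposition~\ref{prop:LienSylv}, with no further computation required.

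\begin{proof}
    By Proposition~\ref{prop:PSymboleNonIncr}, $\PSymb(u) = (T_L, T_R)$ is a
    well-defined pair of twin binary search trees for any $u \in A^*$. By
    construction (Definition~\ref{def:BaxterPSymb}), $T_R$ is the right binary
    search tree obtained by leaf inserting the letters of~$u$ from right to left,
    which by Theorem~\ref{thm:PSymbPBT} is the right binary search tree encoding
    the~$\EquivS$-equivalence class of~$u$; hence, for $u, v \in A^*$,
    $T_R(u) = T_R(v)$ if and only if $u \EquivS v$. Similarly, $T_L$ is the left
    binary search tree obtained by leaf inserting the letters of~$u$ from left to
    right; as explained in the discussion following Theorem~\ref{thm:PSymbPBT},
    this tree encodes the~$\EquivSchutzS$-equivalence class of~$u$, so
    $T_L(u) = T_L(v)$ if and only if $u \EquivSchutzS v$.

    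Therefore, $\PSymb(u) = \PSymb(v)$ if and only if $u \EquivS v$ and
    $u \EquivSchutzS v$. By Proposition~\ref{prop:LienSylv}, this last condition
    holds if and only if $u \EquivBX v$, which concludes the proof.
\end{proof}
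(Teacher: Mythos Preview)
Your proof is correct and follows essentially the same approach as the paper's: both directions are obtained by combining Proposition~\ref{prop:LienSylv} with Theorem~\ref{thm:PSymbPBT}, identifying the two components of~$\PSymb(u)$ with the trees encoding the~$\EquivS$- and~$\EquivSchutzS$-classes of~$u$. The paper presents the two implications separately rather than as a chain of equivalences, but the content is identical.
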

\begin{proof}
    Assume $u \EquivBX v$. Then, by Proposition~\ref{prop:LienSylv},~$u$
    and~$v$ are~$\EquivS$ and $\EquivSchutzS$-equivalent. Hence, by
    Theorem~\ref{thm:PSymbPBT},~$u$ and~$v$ have the same sylvester and
    $\#$-sylvester $\PSymb$-symbol, so that $\PSymb(u) = \PSymb(v)$.

    Conversely assume that $\PSymb(u) = \PSymb(v) =: (T_L, T_R)$. Since
    the leaf insertion of both~$u$ and~$v$ from left to right gives~$T_L$,
    we have, by Theorem~\ref{thm:PSymbPBT}, $u \EquivSchutzS v$. In addition,
    the leaf insertion of both~$u$ and~$v$ from right to left gives~$T_R$,
    so that, by the just cited theorem, $u \EquivS v$. By
    Proposition~\ref{prop:LienSylv}, we have~$u \EquivBX v$.
\end{proof}

In the case of permutations, each~$\EquivBX$-equivalence class can be encoded
by an unlabeled pair of twin binary trees because there is one unique way
to bijectively label a binary tree with~$n$ nodes on~$\{1, \dots, n\}$
such that it is a binary search tree. Hence, in the sequel, unlabeled pairs
of twin binary search trees can be considered as labeled by a permutation,
and conversely.

\subsubsection{\texorpdfstring{The $\QSymb$-symbol}{The Q-symbol}}
Let us recall the following lemma of~\cite{HNT05}, restated in our setting
and supplemented with a respective part:
\begin{Lemme} \label{lem:FormeInsIncr}
    Let~$u$ be a word and $\sigma := \Std(u)^{-1}$. The right (resp. left)
    binary search tree obtained by inserting $u$ from right to left (resp.
    from left to right) and~$\Decr(\sigma)$ (resp.~$\Incr(\sigma)$) have
    same shape.
\end{Lemme}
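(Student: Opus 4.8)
The statement is the ``respective part'' of a known lemma of~\cite{HNT05}; the new content is the assertion about left binary search trees and $\Incr(\sigma)$. My plan is to reduce the left case to the right case via the Sch\"utzenberger involution, following exactly the same philosophy that was used throughout Section~\ref{sec:MonoideBaxter} (compare the proofs of Proposition~\ref{prop:CompSchutz} and the discussion after Theorem~\ref{thm:PSymbPBT}). First I would recall the precise sense in which the left binary search tree produced by {\sc LeafInsertion} on $u$ (read left to right) is obtained from the right binary search tree produced by {\sc LeafInsertion} on $u^\#$ (read right to left): as explained right after Theorem~\ref{thm:PSymbPBT}, inserting $u$ from the left with the reversed order and then recursively swapping left and right subtrees yields precisely the {\sc LeafInsertion} left binary search tree of $u$. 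Since subtree-swapping is a shape-reversing operation on binary trees (it sends a shape $L\ABCons R$ to $R^{\mathrm{swap}}\ABCons L^{\mathrm{swap}}$), the shape of the left binary search tree of $u$ is the ``mirror'' of the shape of the right binary search tree of $u^\#$.

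Next I would do the analogous bookkeeping on the $\QSymb$ side. By definition $\Incr$ and $\Decr$ are related by the same mirror/swap symmetry together with reversing the roles of $\min$ and $\max$: concretely, for a permutation $\tau$ one checks directly from the recursive definitions that $\Incr(\tau)$ and $\Decr(\tau^{\mathrm{c}})$, where $\tau^{\mathrm{c}}$ denotes the complement $n{+}1{-}\tau_1,\dots,n{+}1{-}\tau_n$, have shapes that are mirror images of one another (passing to the complement exchanges $\min$ and $\max$ and does not move positions, so the recursive splitting points are the same, and the left/right subtrees get swapped). Then I would relate the permutation-level operations: if $\sigma=\Std(u)^{-1}$, one has $\Std(u^\#)^{-1}=(\sigma^{\mathrm{c}})^{\sim}$ or a similarly explicit formula — the Sch\"utzenberger transformation on $u$ reverses positions and complements values, so on $\Std(u)$ it is the ``reverse-complement,'' and inverting turns a reverse-complement on $\sigma$ into a complement-reverse; I would pin down the exact identity by tracking how inversions transform, since $\Incr$ and $\Decr$ only see the relative order.

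With these two mirror correspondences in hand, the argument assembles as follows. Apply the already-established right-case statement to the word $u^\#$: the right binary search tree obtained by inserting $u^\#$ from right to left and $\Decr\!\bigl(\Std(u^\#)^{-1}\bigr)$ have the same shape. Now translate both sides back along the mirror symmetries: the left-hand side becomes (up to the shape-mirror) the left binary search tree obtained by inserting $u$ from left to right, by the remark after Theorem~\ref{thm:PSymbPBT}; the right-hand side becomes (up to the same shape-mirror, using the $\Incr$/$\Decr$-complement identity together with the permutation identity $\Std(u^\#)^{-1}=(\sigma\text{ reverse-complemented appropriately})$) the tree $\Incr\!\bigl(\Std(u)^{-1}\bigr)=\Incr(\sigma)$. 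Since the mirror operation is a bijection on binary tree shapes, equality of shapes is preserved, giving the claim. The main obstacle I anticipate is purely notational: getting the composition of ``reverse,'' ``complement,'' and ``inverse'' exactly right on $\Std(u)$, so that the permutation fed to $\Incr$ on one side genuinely matches the one fed to $\Decr$ on the other. I would handle this by working at the level of inversions (which is all $\Incr$, $\Decr$, and {\sc LeafInsertion} depend on, by Proposition~\ref{prop:FeuillesInversions} and the binary-search-tree definitions) rather than by chasing the words themselves, and by checking the base case and one inductive step of each recursive identity, leaving the rest as a routine induction on length.
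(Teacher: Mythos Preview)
The paper does not actually prove this lemma: it is stated as a recall from~\cite{HNT05} ``supplemented with a respective part,'' with no argument given. So there is no paper proof to compare against, and your reduction of the left case to the already-known right case via the Sch\"utzenberger involution is exactly in the spirit of the discussion following Theorem~\ref{thm:PSymbPBT}. The overall architecture --- mirror the left BST of $u$ to the right BST of $u^\#$, apply the known right case, and mirror back on the $\QSymb$ side --- is correct and goes through cleanly once the identities are written down.

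There is, however, one concrete slip you should fix before executing. You assert that $\Incr(\tau)$ and $\Decr(\tau^{\mathrm c})$ have \emph{mirror} shapes, reasoning that complementation ``exchanges $\min$ and $\max$ and does not move positions, so the recursive splitting points are the same, and the left/right subtrees get swapped.'' The first clause is right, but the conclusion is wrong: if the splitting position is the same, then the prefix feeding the left subtree and the suffix feeding the right subtree are the \emph{same} on both sides, so by induction $\Incr(\tau)$ and $\Decr(\tau^{\mathrm c})$ have the \emph{same} shape, not mirror shapes. The mirror comes from the \emph{reverse}, not the complement: one checks directly that $\Decr(\mu)$ and $\Decr(\mu^{\sim})$ are mirror images. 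Combining the two, $\Incr(\sigma)$ and $\Decr(\sigma^\#)$ (reverse-complement) are mirror images, and since $\Std(u^\#)^{-1} = (\Std(u)^{-1})^\# = \sigma^\#$, this is precisely what you need to close the argument. Your own caveat that ``getting the composition of reverse, complement, and inverse exactly right'' is the main obstacle was well placed; once this bookkeeping is corrected the proof is complete.
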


\begin{Proposition} \label{prop:TypeQSymbole}
    For any word $u \in A^*$, the shape of the $\QSymb$-symbol~$(S_L, S_R)$
    of~$u$ is a pair of twin binary trees. Moreover,~$S_L$ is an increasing
    binary tree,~$S_R$ is a decreasing binary tree and their inorder reading
    is~$\Std(u)^{-1}$.
\end{Proposition}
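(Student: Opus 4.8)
The strategy is to reduce everything to the already-established sylvester side of the story, via Lemma~\ref{lem:FormeInsIncr}, and then to handle the twin (canopy complementarity) condition separately. Concretely, I would argue in three steps.

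First, I would recall that by Lemma~\ref{lem:FormeInsIncr}, with $\sigma := \Std(u)^{-1}$, the right binary search tree $T_R$ obtained by inserting $u$ from right to left has the same shape as $\Decr(\sigma) = S_R$, and likewise the left binary search tree $T_L$ obtained by inserting $u$ from left to right has the same shape as $\Incr(\sigma) = S_L$. Hence $\Forme(S_L) = \Forme(T_L)$ and $\Forme(S_R) = \Forme(T_R)$, where $(T_L, T_R) = \PSymb(u)$. By Proposition~\ref{prop:PSymboleNonIncr} the pair $(T_L, T_R)$ is a pair of twin binary trees, so its shape $(\Forme(T_L), \Forme(T_R))$ is a (unlabeled) pair of twin binary trees; therefore $(\Forme(S_L), \Forme(S_R))$ is a pair of twin binary trees as well. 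This settles the claim that the shape of the $\QSymb$-symbol is a pair of twin binary trees, and we get the canopy complementarity for free rather than having to recompute it.

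Second, I would check that $S_L$ is increasing and $S_R$ is decreasing: this is immediate from the recursive definitions of $\Incr$ and $\Decr$ — in $\Incr(v)$ the root carries $\min(v)$, which is strictly smaller than every label appearing in either subtree, and symmetrically the root of $\Decr(v)$ carries $\max(v)$; an easy structural induction on $|v|$ turns this into the statement that every child has a label greater (resp.\ smaller) than its parent. Third, I would verify that the common inorder reading of $S_L$ and $S_R$ is $\sigma = \Std(u)^{-1}$. Here I would prove by induction on $|v|$ that the inorder reading of $\Incr(v)$ is exactly $v$: if $v = v' \LA v''$ with $\LA = \min(v)$ and $\LA < \min(v')$, the inorder traversal of $\Incr(v') \ABCons_\LA \Incr(v'')$ reads (inorder of $\Incr(v')$), then $\LA$, then (inorder of $\Incr(v'')$), which by induction is $v' \, \LA \, v'' = v$. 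The same argument applied to $\Decr$ (splitting at $\max(v)$) shows the inorder reading of $\Decr(v)$ is again $v$. Applying both with $v := \sigma$ gives that $S_L$ and $S_R$ share the inorder reading $\sigma = \Std(u)^{-1}$.

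The only genuine subtlety — and the step I would treat most carefully — is making sure the recursive definitions of $\Incr$ and $\Decr$ actually apply to $\sigma = \Std(u)^{-1}$, i.e.\ that the decompositions $u = v\LA w$ with $\LA = \min(u)$, $\LA < \min(v)$ (and the dual for $\Decr$) are well-defined and unambiguous; since $\sigma$ is a permutation (a word without repeated letters) this is clear, the position of $\min$ being unique, so the induction goes through without friction. Everything else is bookkeeping, and the twin-tree part is entirely inherited from Proposition~\ref{prop:PSymboleNonIncr} through Lemma~\ref{lem:FormeInsIncr}.
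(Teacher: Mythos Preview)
Your proof is correct and follows essentially the same route as the paper's: both invoke Lemma~\ref{lem:FormeInsIncr} to identify the shapes of $S_L$ and $S_R$ with those of leaf-insertion trees, then deduce canopy complementarity, and finish with the straightforward induction for the increasing/decreasing property and the inorder reading. The only cosmetic difference is that you cite Proposition~\ref{prop:PSymboleNonIncr} (applied to $u$) as a black box for the twin-tree conclusion, whereas the paper applies Lemma~\ref{lem:FormeInsIncr} to $\sigma^{-1} = \Std(u)$ and appeals directly to Proposition~\ref{prop:FeuillesInversions}; since Proposition~\ref{prop:PSymboleNonIncr} is itself proved via Proposition~\ref{prop:FeuillesInversions}, the two arguments are equivalent.
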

\begin{proof}
    By definition of the~$\QSymb$-symbol,~$S_L$ and~$S_R$ are respectively
    the increasing and the decreasing binary trees of $\sigma := \Std(u)^{-1}$.
    By Lemma~\ref{lem:FormeInsIncr}, a binary tree with same shape as~$S_L$
    (resp.~$S_R$) can also be obtained by leaf insertions of the letters
    of~$\sigma^{-1}$ from left to right (resp. right to left). Thus, by
    Proposition~\ref{prop:FeuillesInversions}, the shape of $(S_L, S_R)$ is a pair
    of twin binary trees. Moreover, by the definition of the algorithms~$\Incr$
    and~$\Decr$, we can prove by induction on the size of~$\sigma$ that
    the binary trees~$S_L$ and~$S_R$ have both~$\sigma$ as inorder reading.
\end{proof}

\begin{Theoreme} \label{thm:BijectionMotsPQSymbole}
    The map $u \longmapsto \left(\PSymb(u), \QSymb(u)\right)$ is a bijection
    between the elements of $A^*$ and the set formed by the pairs
    $\left((T_L, T_R), (S_L, S_R)\right)$ where
    \begin{enumerate}[label = (\roman*)]
        \item $(T_L, T_R)$ is a pair of twin binary search trees---$T_L$
        (resp. $T_R$) is a left (resp. right) binary search tree, and $T_L$
        and $T_R$ have both the same inorder reading; \label{item:BMPQS1}
        \item $(S_L, S_R)$ is a pair of twin binary trees where $S_L$ (resp. $S_R$)
        is an increasing (resp. decreasing) binary tree, and $S_L$ and $S_R$
        have both the same inorder reading; \label{item:BMPQS2}
        \item $(T_L, T_R)$ and $(S_L, S_R)$ have same shape. \label{item:BMPQS3}
    \end{enumerate}
\end{Theoreme}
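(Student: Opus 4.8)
The plan is to prove the bijection by combining the two "half" bijections already available from the sylvester and $\#$-sylvester theory, using the canopy condition to glue them. I would first observe, via Theorem~\ref{thm:PSymbPBT} and the classical fact recalled just before Proposition~\ref{prop:TypeQSymbole}, that the map $u \longmapsto (T_R, S_R)$ — where $T_R$ is the right binary search tree obtained by inserting $u$ from right to left and $S_R = \Decr(\Std(u)^{-1})$ — is a bijection from $A^*$ onto the set of pairs consisting of a right binary search tree and a decreasing binary tree of the same shape with the same inorder reading; symmetrically, $u \longmapsto (T_L, S_L)$ is a bijection onto pairs of a left binary search tree and an increasing binary tree with matching shape and inorder reading. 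Both of these are the Robinson-Schensted correspondence for the sylvester monoid (applied once directly and once through the Schützenberger involution), so I would cite \cite{HNT05} for them rather than reprove them.

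Next I would check that the image of $u \longmapsto \left(\PSymb(u), \QSymb(u)\right)$ lands in the claimed target set. Conditions~\ref{item:BMPQS1} and~\ref{item:BMPQS2} are exactly Proposition~\ref{prop:PSymboleNonIncr} and Proposition~\ref{prop:TypeQSymbole}; in particular these already record that $(T_L,T_R)$ and $(S_L,S_R)$ are each pairs of \emph{twin} binary trees, i.e.\ have complementary canopies. Condition~\ref{item:BMPQS3}, that $(T_L,T_R)$ and $(S_L,S_R)$ have the same shape, is the content of Lemma~\ref{lem:FormeInsIncr}: the shape of $S_L$ equals the shape of the left binary search tree obtained by left-to-right leaf insertion of $\Std(u)^{-1}{}^{-1} = \Std(u)$, which has the same shape as $T_L$ (and likewise on the right). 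So well-definedness is essentially bookkeeping over results already in hand.

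For injectivity and surjectivity I would argue as follows. Given a target pair $\left((T_L,T_R),(S_L,S_R)\right)$, the pair $(T_R,S_R)$ satisfies the hypotheses of the sylvester RS bijection, so there is a unique word $u$ with right-to-left sylvester $\PSymb$-symbol $T_R$ and $\QSymb$-symbol $S_R$; concretely $u$ is recovered by reading the labels of $T_R$ in the order dictated by $S_R$, as explained in the paragraph preceding Proposition~\ref{prop:TypeQSymbole}. This already pins down $u$ uniquely, giving injectivity of the whole map. It remains to see that this $u$ also has $\PSymb$-symbol $(T_L,T_R)$ and $\QSymb$-symbol $(S_L,S_R)$ on the left side; equivalently, that the $T_L$ and $S_L$ produced by $u$ coincide with the prescribed ones. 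Here I would use that $u$ is determined by $\Std(u)$ together with $\Eval(u)$, that $\Std(u)$ is recovered from $S_R$ (it is $\Decr^{-1}$ of... more precisely $\Std(u)^{-1}$ is the inorder reading of $S_R$, so $\Std(u)$ is its inverse), and that the left structures $(T_L,S_L)$ attached to a permutation are a function of $\Std(u)$ only in the same way the right ones are — so the left half of the target pair, which by hypothesis has the same shape and is twin to $(T_R,S_R)$, must be exactly what $u$ produces. The one point needing a genuine argument, and the step I expect to be the main obstacle, is precisely this compatibility: why a pair $(S_L,S_R)$ that is merely assumed to be twin (complementary canopies) with matching inorder reading $\Std(u)^{-1}$ is forced to equal $\bigl(\Incr(\Std(u)^{-1}),\Decr(\Std(u)^{-1})\bigr)$. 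I would handle it by noting that Proposition~\ref{prop:FeuillesInversions} shows the canopy of $S_R$ (via its shape, equal to that of the right binary search tree of $\Std(u)$) encodes exactly the recoil set of $\Std(u)$, hence the shape of $S_L$ — being twin to it — is forced; then an increasing binary tree is uniquely determined by its shape together with its inorder reading (fill in labels greedily), and likewise a decreasing one, so $S_L$ and $S_R$ are uniquely determined, establishing surjectivity and finishing the proof.
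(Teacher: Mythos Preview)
Your plan for well-definedness and injectivity matches the paper exactly: Propositions~\ref{prop:PSymboleNonIncr} and~\ref{prop:TypeQSymbole} for \ref{item:BMPQS1}--\ref{item:BMPQS2}, Lemma~\ref{lem:FormeInsIncr} for \ref{item:BMPQS3}, and the sylvester half-bijection $(T_R,S_R)\leftrightarrow u$ for injectivity.

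For surjectivity the paper takes a more symmetric and slightly cleaner route than you do. Given a target $\bigl((T_L,T_R),(S_L,S_R)\bigr)$, it applies the sylvester bijection to $(T_R,S_R)$ to get a word $u$, applies the $\#$-sylvester bijection to $(T_L,S_L)$ to get a word $v$, and then argues directly that $u=v$: the common inorder reading of $T_L$ and $T_R$ forces $\Eval(u)=\Eval(v)$, and the common inorder reading of $S_L$ and $S_R$ forces $\Std(u)^{-1}=\Std(v)^{-1}$, hence $\Std(u)=\Std(v)$. Evaluation plus standardization determine a word, so $u=v$, and both halves of the target come from this single $u$. This avoids any separate verification of the left half.

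Your asymmetric route (recover $u$ from the right, then check the left matches) also works, but one step is misstated. The claim ``the shape of $S_L$ --- being twin to it --- is forced'' is false: complementary canopy does not determine shape, so Proposition~\ref{prop:FeuillesInversions} does not pin down the shape of $S_L$. Fortunately you do not need it. An increasing binary tree is uniquely determined by its inorder reading \emph{alone} (the root must carry the minimum label, which for a permutation sits at a unique inorder position, and one recurses); so from hypothesis~\ref{item:BMPQS2} the equality of inorder readings already gives $S_L=\Incr(\Std(u)^{-1})$ with no appeal to canopies or shapes. Once $S_L$ is identified, Lemma~\ref{lem:FormeInsIncr} and hypothesis~\ref{item:BMPQS3} fix the shape of $T_L$, and hypothesis~\ref{item:BMPQS1} fixes its labels, completing your argument.
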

\begin{proof}
    Let us first show that for any $u \in A^*$, the pair
    $\left(\PSymb(u), \QSymb(u)\right)$ satisfies the assertions of the
    theorem. Point~\ref{item:BMPQS1} follows from
    Proposition~\ref{prop:PSymboleNonIncr}. Point~\ref{item:BMPQS2} follows
    from Proposition~\ref{prop:TypeQSymbole}. Moreover, by Lemma~\ref{lem:FormeInsIncr},
    Point~\ref{item:BMPQS3} checks out. Besides, as already mentioned,
    it is possible to reconstruct from the pair $\left(\PSymb(u), \QSymb(u)\right)$
    the word~$u$ and such a word is unique. That shows that the correspondence
    is well-defined and injective.

    Conversely, assume that $\left((T_L, T_R), (S_L, S_R)\right)$ satisfies
    the three assertions of the theorem. According to~\cite{HNT02}, there
    is a bijection between the elements of~$A^*$ and the pairs $(T_R, S_R)$
    where~$T_R$ is a right binary search tree and~$S_R$ a decreasing binary
    tree of same shape. Let~$u$ be the word in correspondence with $(T_R, S_R)$.
    In the same way, there is a bijection between the elements of~$A^*$
    and the pairs $(T_L, S_L)$ where~$T_L$ is a left binary search tree
    and~$S_L$ an increasing binary tree of same shape. Let~$v$ be the word
    in correspondence with $(T_L, S_L)$. By hypothesis,~$T_L$ and~$T_R$
    have both the same inorder reading, implying $\Eval(u) = \Eval(v)$.
    In the same way, since~$S_L$ and~$S_R$ have both the same inorder reading,
    one has $\Std(u)^{-1} = \Std(v)^{-1}$. Hence, we have $\Std(u) = \Std(v)$
    and thus~$u = v$. Note also that the pair $(T_L, S_L)$ is entirely
    determined by the pair $(T_R, S_R)$ and conversely. Now, again according
    to~\cite{HNT02}, the pair $(T_R, S_R)$ is the sylvester $\PSymb$-symbol
    of~$u$ and the pair $(T_L, S_R)$ is the $\#$-sylvester $\PSymb$-symbol
    of~$u$. Hence, the insertion of $u$ gives the pair
    $\left((T_L, T_R), (S_L, S_R)\right)$, showing that the correspondence
    is also surjective.
\end{proof}

\subsection{Distinguished permutations from a pair of twin binary trees}
We present in this section some algorithms to read some distinguished
permutations from a pair of twin binary search trees. Let us first start
with a useful characterization of $\EquivBX$-equivalence classes.

\subsubsection{Baxter equivalence classes as linear extensions of posets}
Let $T$ be an $A$-labeled binary tree. We shall denote by $\PosetG(T)$
(resp. $\PosetD(T)$) the poset $(N, \leq)$ where $N := \{1, \dots, n\}$,
$n$ is the number of nodes of~$T$, and $\leq$ is defined, for $i, j \in N$, by
\begin{equation}
    i \leq j \qquad
    \mbox{if the $i$-th node is an ancestor (resp. descendant) of the
    $j$-th node of~$T$}.
\end{equation}
If the sequence~$i_1 \dots i_n$ is a linear extension of~$\PosetG(T)$
(resp.~$\PosetD(T)$), we shall also say that the word~$u_1 \dots u_n$ is
a \emph{linear extension} of~$\PosetG(T)$ (resp.~$\PosetD(T)$) if for any
$1 \leq \ell \leq n$, the label of the $i_\ell$-th node of~$T$ is~$u_\ell$.
\medskip

The words of a sylvester equivalence class encoded by a labeled right
binary search tree~$T$ coincide with the linear extensions of~$\PosetD(T)$
(see Note 4 of~\cite{HNT05}). Additionally, this also says that the words
of a $\#$-sylvester equivalence class encoded by a labeled left binary
search tree~$T$ are exactly the linear extensions of~$\PosetG(T)$. One has
a similar characterization of Baxter equivalence classes:
\begin{Proposition} \label{prop:BXExtLin}
    The words of a Baxter equivalence class encoded by a pair of twin binary
    search trees~$(T_L, T_R)$ coincide with the words that are both
    linear extensions of the posets~$\PosetG(T_L)$ and~$\PosetD(T_R)$.
\end{Proposition}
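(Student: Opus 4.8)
The plan is to combine the two sylvester-type characterizations already recalled just before the statement with the intersection description of Baxter classes from Proposition~\ref{prop:LienSylv}. Let $(T_L, T_R)$ be a pair of twin binary search trees and let $u \in A^*$ be a word whose Baxter equivalence class is the one encoded by $(T_L, T_R)$, i.e., $\PSymb(u) = (T_L, T_R)$. First I would invoke Theorem~\ref{thm:PSymboleClasses}: a word $w$ belongs to this Baxter class if and only if $w \EquivBX u$, which by Proposition~\ref{prop:LienSylv} holds if and only if $w \EquivS u$ and $w \EquivSchutzS u$.

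Next I would translate each of these two conditions separately. Since $T_R$ is the right binary search tree obtained by inserting $u$ from right to left, Theorem~\ref{thm:PSymbPBT} tells us that $w \EquivS u$ if and only if $w$ gives the same right binary search tree $T_R$; and by the recalled fact (Note~4 of~\cite{HNT05}) the words giving $T_R$ are exactly the linear extensions of $\PosetD(T_R)$. Symmetrically, $T_L$ is the left binary search tree obtained by inserting $u$ from left to right, so $w \EquivSchutzS u$ if and only if $w$ gives $T_L$, which happens if and only if $w$ is a linear extension of $\PosetG(T_L)$. Intersecting, $w$ is in the Baxter class of $(T_L, T_R)$ if and only if $w$ is simultaneously a linear extension of $\PosetG(T_L)$ and of $\PosetD(T_R)$, which is exactly the assertion.

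One small point to address carefully is that being a ``linear extension of $\PosetG(T_L)$'' in the labeled sense also forces $w$ to have the correct multiset of letters (namely the inorder reading of $T_L$, equivalently of $T_R$, equivalently the nondecreasing rearrangement of $u$ by Proposition~\ref{prop:PSymboleNonIncr}); I would note that this is built into the definition of a labeled linear extension given just above the statement, since the labels visited must spell $w$. Thus there is no hidden evaluation condition to impose separately. The only mild obstacle is bookkeeping: making sure the ``left''/``right'' and ``ancestor''/``descendant'' conventions in $\PosetG$ versus $\PosetD$ match the ``insert from the left''/``insert from the right'' conventions of the $\PSymb$-symbol, so that the $\#$-sylvester side genuinely corresponds to $\PosetG(T_L)$ and the sylvester side to $\PosetD(T_R)$; once those conventions are pinned down, the proof is a direct chaining of Theorem~\ref{thm:PSymbPBT}, Note~4 of~\cite{HNT05}, and Proposition~\ref{prop:LienSylv}.
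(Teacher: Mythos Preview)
Your proposal is correct and follows essentially the same route as the paper: both rely on Theorem~\ref{thm:PSymboleClasses}, Proposition~\ref{prop:LienSylv}, and Note~4 of~\cite{HNT05} to reduce the Baxter class to the intersection of the $\EquivS$- and $\EquivSchutzS$-classes and then identify each with the linear extensions of the corresponding poset. The only cosmetic difference is that the paper argues the forward direction directly from the insertion order (if $i \leq j$ in $\PosetG(T_L)$ or $\PosetD(T_R)$ then the $i$-th node was inserted before the $j$-th), whereas you invoke the ``if and only if'' of Note~4 for both directions; this is not a substantive deviation.
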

\begin{proof}
    Let~$u$ be a word belonging to the Baxter equivalence class encoded
    by~$(T_L, T_R)$. By Theorem~\ref{thm:PSymboleClasses},~$T_L$ (resp.~$T_R$)
    can be obtained by leaf inserting~$u$ from left to right (resp. right
    to left). Hence, if $i \leq j$  in~$\PosetG(T_L)$ (resp. in~$\PosetD(T_R)$)
    then~$i$ is smaller than~$j$ as integers. Thus,~$u$ is a linear extension
    of both~$\PosetG(T_L)$ and~$\PosetD(T_R)$.

    Assume now that~$u$ is a linear extension of~$\PosetG(T_L)$ and~$\PosetD(T_R)$
    and let $v$ be any word of the Baxter equivalence class encoded by $(T_L, T_R)$.
    By Theorem~\ref{thm:PSymboleClasses},~$T_L$ (resp.~$T_R$) can be obtained
    by leaf inserting~$v$ from left to right (resp. right to left).
    Note 4 of~\cite{HNT05} implies that~$u \EquivSchutzS v$ and~$u \EquivS v$.
    Hence, by Proposition~\ref{prop:LienSylv}, one has~$u \EquivBX v$, showing
    that~$u$ also belongs to the Baxter equivalence class represented by~$(T_L, T_R)$.
\end{proof}

To illustrate Proposition~\ref{prop:BXExtLin}, consider the following
labeled pair of twin binary search trees,
\begin{equation}
    \begin{split} (T_L, T_R) := \: \end{split}
    \begin{split}
    \scalebox{.35}{%
    \begin{tikzpicture}
        \node[Noeud,EtiqClair](0)at(0,-2){$1$};
        \node[Noeud,EtiqClair](1)at(1,-1){$2$};
        \draw[Arete](1)--(0);
        \node[Noeud,EtiqClair](2)at(2,-3){$3$};
        \node[Noeud,EtiqClair](3)at(3,-2){$4$};
        \draw[Arete](3)--(2);
        \draw[Arete](1)--(3);
        \node[Noeud,EtiqClair](4)at(4,0){$5$};
        \draw[Arete](4)--(1);
        \node[Noeud,EtiqClair](5)at(5,-2){$6$};
        \node[Noeud,EtiqClair](6)at(6,-1){$7$};
        \draw[Arete](6)--(5);
        \draw[Arete](4)--(6);
    \end{tikzpicture}}
    \end{split}
    \enspace
    \begin{split}
    \scalebox{.35}{%
    \begin{tikzpicture}
        \node[Noeud,EtiqClair](0)at(0,-2){$1$};
        \node[Noeud,EtiqClair](1)at(1,-3){$2$};
        \draw[Arete](0)--(1);
        \node[Noeud,EtiqClair](2)at(2,-1){$3$};
        \draw[Arete](2)--(0);
        \node[Noeud,EtiqClair](3)at(3,-2){$4$};
        \node[Noeud,EtiqClair](4)at(4,-3){$5$};
        \draw[Arete](3)--(4);
        \draw[Arete](2)--(3);
        \node[Noeud,EtiqClair](5)at(5,0){$6$};
        \draw[Arete](5)--(2);
        \node[Noeud,EtiqClair](6)at(6,-1){$7$};
        \draw[Arete](5)--(6);
    \end{tikzpicture}}
    \end{split}\,.
\end{equation}
The set of words that are linear extensions of~$\PosetG(T_L)$
and~$\PosetD(T_R)$ are (the highlighted permutation is a Baxter permutation)
\begin{align}
    \begin{split}
        \{{\bf 5214376}, & \enspace 5214736, \enspace
            5217436, \enspace 5241376, \enspace 5241736, \\
            5247136, & \enspace 5271436, \enspace
            5274136, \enspace 5721436, \enspace 5724136 \},
    \end{split}
\end{align}
which is exactly the Baxter equivalence class encoded by~$(T_L, T_R)$.
\medskip

Note that it is possible to represent the order relations induced by the
posets~$\PosetG(T_L)$ and $\PosetD(T_R)$ in only one poset
$\PosetG(T_L) \cup \PosetD(T_R)$, adding  on~$\PosetG(T_L)$ the order
relations induced by~$\PosetD(T_R)$. For the previous example, we obtain
the poset
\begin{equation}
    \begin{split}\PosetG(T_L) \cup \PosetD(T_R) = \:\end{split}
    \begin{split}
    \scalebox{.35}{
    \begin{tikzpicture}
        \node[Noeud,EtiqClair](0)at(0,-2){$1$};
        \node[Noeud,EtiqClair](1)at(1,-1){$2$};
        \draw[Arete](1)--(0);
        \node[Noeud,EtiqClair](2)at(2,-3){$3$};
        \node[Noeud,EtiqClair](3)at(3,-2){$4$};
        \draw[Arete](3)--(2);
        \draw[Arete](1)--(3);
        \node[Noeud,EtiqClair](4)at(4,0){$5$};
        \draw[Arete](4)--(1);
        \node[Noeud,EtiqClair](5)at(5,-4){$6$};
        \node[Noeud,EtiqClair](6)at(6,-1){$7$};
        \draw[Arete](6)--(5);
        \draw[Arete](4)--(6);
        \draw[Arete](0)--(2);
        \draw[Arete](2)--(5);
    \end{tikzpicture}}
    \end{split}\,.
\end{equation}

\subsubsection{Extracting Baxter permutations}
The following algorithm allows, given an $A$-labeled pair of twin binary
search trees~$(T_L, T_R)$, to compute a word belonging to the
$\EquivBX$-equivalence class encoded by~$(T_L, T_R)$. When~$(T_L, T_R)$
is labeled by a permutation, our algorithm coincides with the algorithm
designed by Dulucq and Guibert to describe a bijection between pairs of
twin binary trees and Baxter permutations~\cite{DG94}. Besides, since their
algorithm always computes a Baxter permutation, our algorithm also returns
a Baxter permutation when~$(T_L, T_R)$ is labeled by a permutation.
\medskip

{\flushleft
    {\bf Algorithm:} {\sc ExtractBaxter}. \\
    {\bf Input:} An $A$-labeled pair of twin binary search trees~$(T_L, T_R)$. \\
    {\bf Output:} A word belonging to the Baxter equivalence class encoded
    by~$(T_L, T_R)$. \\
    \begin{enumerate}
        \item Let $u := \epsilon$ be the empty word.
        \item While $T_L \ne \ArbreVide$ and $T_R \ne \ArbreVide$:
        \begin{enumerate}
            \item Let $\LA$ be the label of the root of $T_L$.
            \item Let $i$ be the index of root of $T_L$.
            \item Set $u := u\LA$.
            \item Let $A$ (resp. $B$) be the left (resp. right) subtree of $T_L$.
            \item If the $i$-th node of $T_R$ is a left child in $T_R$:
            \begin{enumerate}
                \item Then, set $T_L := A \Over B$.
                \item Otherwise, set $T_L := A \Under B$.
            \end{enumerate}
            \item Suppress the $i$-th node in $T_R$.
        \end{enumerate}
        \item Return $u$.
    \end{enumerate}
    {\bf End.}
}
\medskip

Figure~\ref{fig:ExempleExtractBaxter} shows an execution of this algorithm.
\begin{figure}[ht]
    \centering
    \begin{equation*}
        \begin{split} (T_L, T_R) := \: \end{split}
        \begin{split}
        \scalebox{.35}{
        \begin{tikzpicture}
            \node[Noeud,EtiqClair](0)at(0,-2){$1$};
            \node[Noeud,EtiqClair](1)at(1,-1){$2$};
            \draw[Arete](1)--(0);
            \node[Noeud,EtiqClair](2)at(2,-2){$3$};
            \node[Noeud,EtiqClair](3)at(3,-3){$4$};
            \draw[Arete](2)--(3);
            \draw[Arete](1)--(2);
            \node[Noeud,EtiqFonce](4)at(4,0){$5$};
            \draw[Arete](4)--(1);
            \node[Noeud,EtiqClair](5)at(5,-1){$6$};
            \draw[Arete](4)--(5);
        \end{tikzpicture}}
        \end{split}
        \enspace
        \begin{split}
        \scalebox{.35}{
        \begin{tikzpicture}
            \node[Noeud,EtiqClair](0)at(0,-2){$1$};
            \node[Noeud,EtiqClair](1)at(1,-3){$2$};
            \draw[Arete](0)--(1);
            \node[Noeud,EtiqClair](2)at(2,-1){$3$};
            \draw[Arete](2)--(0);
            \node[Noeud,EtiqClair](3)at(3,0){$4$};
            \draw[Arete](3)--(2);
            \node[Noeud,EtiqFonce](4)at(4,-2){$5$};
            \node[Noeud,EtiqClair](5)at(5,-1){$6$};
            \draw[Arete](5)--(4);
            \draw[Arete](3)--(5);
        \end{tikzpicture}}
        \end{split}
        \begin{split} \quad \xrightarrow{\LA = 5} \quad \end{split}
        \begin{split}
        \scalebox{.35}{
        \begin{tikzpicture}
            \node[Noeud,EtiqClair](0)at(0,-2){$1$};
            \node[Noeud,EtiqClair](1)at(1,-1){$2$};
            \draw[Arete](1)--(0);
            \node[Noeud,EtiqClair](2)at(2,-2){$3$};
            \node[Noeud,EtiqClair](3)at(3,-3){$4$};
            \draw[Arete](2)--(3);
            \draw[Arete](1)--(2);
            \node[Noeud,EtiqFonce](4)at(4,0){$6$};
            \draw[Arete](4)--(1);
        \end{tikzpicture}}
        \end{split}
        \enspace
        \begin{split}
        \scalebox{.35}{
        \begin{tikzpicture}
            \node[Noeud,EtiqClair](0)at(0,-2){$1$};
            \node[Noeud,EtiqClair](1)at(1,-3){$2$};
            \draw[Arete](0)--(1);
            \node[Noeud,EtiqClair](2)at(2,-1){$3$};
            \draw[Arete](2)--(0);
            \node[Noeud,EtiqClair](3)at(3,0){$4$};
            \draw[Arete](3)--(2);
            \node[Noeud,EtiqFonce](4)at(4,-1){$6$};
            \draw[Arete](3)--(4);
        \end{tikzpicture}}
        \end{split}
    \end{equation*}
    \begin{equation*}
        \begin{split}\xrightarrow{\LA = 6} \quad \end{split}
        \begin{split}
        \scalebox{.35}{
        \begin{tikzpicture}
            \node[Noeud,EtiqClair](0)at(0,-1){$1$};
            \node[Noeud,EtiqFonce](1)at(1,0){$2$};
            \draw[Arete](1)--(0);
            \node[Noeud,EtiqClair](2)at(2,-1){$3$};
            \node[Noeud,EtiqClair](3)at(3,-2){$4$};
            \draw[Arete](2)--(3);
            \draw[Arete](1)--(2);
        \end{tikzpicture}}
        \end{split}
        \enspace
        \begin{split}
        \scalebox{.35}{
        \begin{tikzpicture}
            \node[Noeud,EtiqClair](0)at(0,-2){$1$};
            \node[Noeud,EtiqFonce](1)at(1,-3){$2$};
            \draw[Arete](0)--(1);
            \node[Noeud,EtiqClair](2)at(2,-1){$3$};
            \draw[Arete](2)--(0);
            \node[Noeud,EtiqClair](3)at(3,0){$4$};
            \draw[Arete](3)--(2);
        \end{tikzpicture}}
        \end{split}
        \begin{split}\quad \xrightarrow{\LA = 2} \quad \end{split}
        \begin{split}
        \scalebox{.35}{
        \begin{tikzpicture}
            \node[Noeud,EtiqFonce](0)at(0,0){$1$};
            \node[Noeud,EtiqClair](1)at(1,-1){$3$};
            \node[Noeud,EtiqClair](2)at(2,-2){$4$};
            \draw[Arete](1)--(2);
            \draw[Arete](0)--(1);
        \end{tikzpicture}}
        \end{split}
        \enspace
        \begin{split}
        \scalebox{.35}{
        \begin{tikzpicture}
            \node[Noeud,EtiqFonce](0)at(0,-2){$1$};
            \node[Noeud,EtiqClair](1)at(1,-1){$3$};
            \draw[Arete](1)--(0);
            \node[Noeud,EtiqClair](2)at(2,0){$4$};
            \draw[Arete](2)--(1);
        \end{tikzpicture}}
        \end{split}
        \begin{split}\quad \xrightarrow{\LA = 1} \quad \end{split}
        \begin{split}
        \scalebox{.35}{
        \begin{tikzpicture}
            \node[Noeud,EtiqFonce](0)at(0,0){$3$};
            \node[Noeud,EtiqClair](1)at(1,-1){$4$};
            \draw[Arete](0)--(1);
        \end{tikzpicture}}
        \end{split}
        \enspace
        \begin{split}
        \scalebox{.35}{
        \begin{tikzpicture}
            \node[Noeud,EtiqFonce](0)at(0,-1){$3$};
            \node[Noeud,EtiqClair](1)at(1,0){$4$};
            \draw[Arete](1)--(0);
        \end{tikzpicture}}
        \end{split}
    \end{equation*}
    \begin{equation*}
        \begin{split} \xrightarrow{\LA = 3} \quad \end{split}
        \begin{split}
        \scalebox{.35}{%
        \begin{tikzpicture}
            \node[Noeud,EtiqFonce](0)at(0,0){$4$};
        \end{tikzpicture}}
        \end{split}
        \enspace
        \begin{split}
        \scalebox{.35}{%
        \begin{tikzpicture}
            \node[Noeud,EtiqFonce](0)at(0,0){$4$};
        \end{tikzpicture}}
        \end{split}
        \begin{split}\quad \xrightarrow{\LA = 5} \quad \end{split}
        \begin{split} \ArbreVide \ArbreVide \end{split}
    \end{equation*}
    \caption{An execution of the algorithm {\sc ExtractBaxter} on~$(T_L, T_R)$.
    The computed Baxter permutation is~$562134$.}
    \label{fig:ExempleExtractBaxter}
\end{figure}
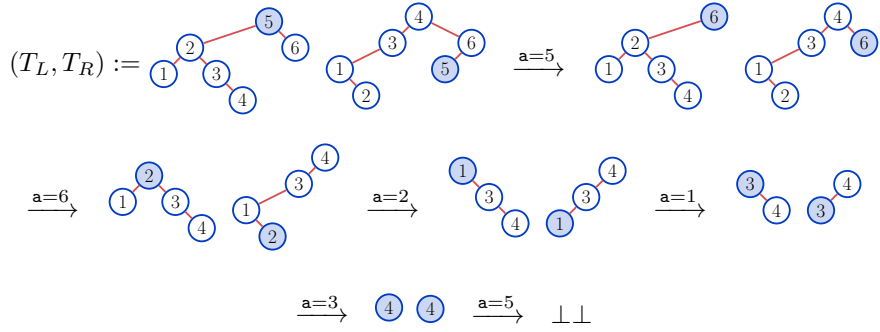
\medskip

The results of Dulucq and Guibert~\cite{DG94} imply that {\sc ExtractBaxter}
terminates. The only thing to prove is that the computed word belongs to
the $\EquivBX$-equivalence class encoded by the pair of twin binary search
trees as input. For that, let us first prove the following lemma.

\begin{Lemme} \label{lem:NoeudRacineFeuille}
    Let~$(T_L, T_R)$ be a non-empty pair of twin binary trees. If the root
    of~$T_L$ is the $i$-th node of~$T_L$, then, the $i$-th node of~$T_R$
    has no child.
\end{Lemme}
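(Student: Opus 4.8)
The plan is to reinterpret the statement in terms of the canopies of $T_L$ and $T_R$ and the leaf-orientation lemma already established. Recall that by definition of a pair of twin binary trees, $\Canop(T_L)$ and $\Canop(T_R)$ are complementary words of length $n-1$. The key observation is that the root of a binary tree with $n$ nodes being the $i$-th node (in inorder) is a very special combinatorial situation: it means the left subtree has exactly $i-1$ nodes and the right subtree has exactly $n-i$ nodes, and, crucially, the entire left subtree hangs off the root's left edge while the entire right subtree hangs off its right edge. First I would translate this into a statement about leaves: if the root of $T_L$ is its $i$-th node, then the $i$-th leaf of $T_L$ (the one ``between'' node $i-1$ and node $i$) is the leftmost leaf of the right subtree of the root; since this leaf is reached by descending from the root immediately to the right, one sees it is left-oriented \emph{unless} the right subtree is empty. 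Symmetrically, the $(i+1)$-st leaf is the rightmost leaf of the left subtree, which is right-oriented unless the left subtree is empty.

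**Main steps.** The argument would proceed as follows. (1) Show that if the root of $T_L$ is the $i$-th node and $1 < i < n$ (both subtrees nonempty), then in $\Canop(T_L)$ the letter in position $i-1$ is $\mathtt{1}$ (the $i$-th leaf is left-oriented, being the leftmost leaf of the right subtree reached by one step right then all-left) and the letter in position $i$ is $\mathtt{0}$ (the $(i+1)$-st leaf is right-oriented). This is essentially an unfolding of Lemma~\ref{lem:OrientationFeuille} together with the explicit description of a tree as $L \ABCons R$. (2) By complementarity of canopies, $\Canop(T_R)_{i-1} = \mathtt{0}$ and $\Canop(T_R)_i = \mathtt{1}$. (3) Now apply Lemma~\ref{lem:OrientationFeuille} in the other direction to $T_R$: the $i$-th leaf of $T_R$ is right-oriented, so it is attached to the $(i-1)$-st node of $T_R$; the $(i+1)$-st leaf of $T_R$ is left-oriented, so it is attached to the $(i+1)$-st node of $T_R$. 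Hence the $i$-th node of $T_R$ has no left child (its left-hanging leaf would be the $i$-th leaf, but that leaf is elsewhere) and no right child (its right-hanging leaf would be the $(i+1)$-st leaf, again elsewhere), so it is a leaf-node, i.e.\ has no child. (4) Handle the boundary cases $i = 1$ and $i = n$ separately: if $i=1$ the root of $T_L$ has empty left subtree, forcing $\Canop(T_L)_1 = \mathtt{0}$, hence $\Canop(T_R)_1 = \mathtt{1}$, so the first leaf of $T_R$ is left-oriented and attached to the first node of $T_R$, meaning that node has no left child; and since $T_R$ has the smallest label at its first node and it's the inorder-first node, a short separate check (or the symmetric canopy argument at the right end is vacuous here, so one argues directly) shows it also has no right child — actually the cleanest uniform treatment is to note that $i=1$ means $n=1$ forces the claim trivially when combined with... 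I would instead simply observe that for $i=1$, node $1$ of $T_R$ having no child follows because leaf $1$ of $T_R$ attaches to node $1$ (leftmost leaf) and leaf $2$ of $T_R$ must, by $\Canop(T_R)_1 = \mathtt 1$, attach to node $2$, so node $1$ has no right child either. The case $i=n$ is symmetric.

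**Main obstacle.** The routine part is the canopy bookkeeping; the part that needs care is step (1) and its boundary analogues — making the passage from ``root is the $i$-th node of $T_L$'' to ``$\Canop(T_L)_{i-1}=\mathtt1$ and $\Canop(T_L)_i=\mathtt0$'' fully rigorous, since one must argue about leaves that may sit deep inside the subtrees. The honest way is a small structural-induction lemma, exactly parallel to Lemma~\ref{lem:OrientationFeuille}: \emph{in any binary tree $L \ABCons R$ with $|L| = i-1 \geq 1$, the $(i+1)$-st leaf is the rightmost leaf of $L$ and is right-oriented; and with $|R| = n - i \geq 1$, the $i$-th leaf is the leftmost leaf of $R$ and is left-oriented.} The edge cases where $L$ or $R$ is empty are where the orientations flip, and that is precisely the subtlety the proof must not gloss over. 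Once that lemma is in hand, steps (2)–(4) are immediate applications of Lemma~\ref{lem:OrientationFeuille} and the twin-tree complementarity condition, and the proposition follows.
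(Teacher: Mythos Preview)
Your overall strategy is sound and is essentially the direct version of the paper's argument (which proceeds by contradiction): both rely on Lemma~\ref{lem:OrientationFeuille} and on the complementarity of canopies. However, your execution contains two systematic errors that happen to cancel, so that you reach the right conclusion for the wrong reasons.

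\textbf{Error 1 (leaf indexing).} If $T_L = A \ABCons B$ with $|A| = i-1 \geq 1$, then $A$ has $i$ leaves, namely leaves $1,\dots,i$ of $T_L$, and $B$ contributes leaves $i+1,\dots,n+1$. Thus the $i$-th leaf of $T_L$ is the \emph{rightmost} leaf of $A$ (right-oriented), and the $(i+1)$-st leaf is the \emph{leftmost} leaf of $B$ (left-oriented). You have these swapped throughout, so your canopy values should read $\Canop(T_L)_{i-1}=0$ and $\Canop(T_L)_i=1$, hence $\Canop(T_R)_{i-1}=1$ and $\Canop(T_R)_i=0$. (The paper records exactly this, writing $\Canop(T_L)=v\,0\,1\,w$.) Your boundary case $i=1$ inherits the same mistake: with $A=\ArbreVide$ and $B\ne\ArbreVide$, leaf $2$ of $T_L$ is the leftmost leaf of $B$, hence left-oriented, so $\Canop(T_L)_1=1$, not $0$.

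\textbf{Error 2 (direction of the implication in step (3)).} By Lemma~\ref{lem:OrientationFeuille}, the $i$-th leaf is the left child of the $i$-th node exactly when it is left-oriented; equivalently, the $i$-th node has \emph{no} left child if and only if the $i$-th leaf is left-oriented. Your sentence ``its left-hanging leaf would be the $i$-th leaf, but that leaf is elsewhere, hence no left child'' has the implication backwards: if the $i$-th leaf is attached to node $i-1$, then the left child of node $i$ is \emph{not} a leaf, so node $i$ \emph{does} have a left child.

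With both errors corrected, the argument goes through cleanly: $\Canop(T_R)_{i-1}=1$ gives that leaf $i$ of $T_R$ is left-oriented and hence is the (leaf) left child of node $i$, so node $i$ has no left child; $\Canop(T_R)_i=0$ gives that leaf $i+1$ is right-oriented and hence is the (leaf) right child of node $i$, so node $i$ has no right child. This is precisely the content of the paper's proof, phrased directly rather than by contradiction.
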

\begin{proof}
    Assume that $T_L = A \ABCons B$. Note that if both~$A$ and~$B$ are
    empty,~$T_L$ and~$T_R$ are the one-node binary trees and the lemma is
    clearly satisfied.

    If $A \ne \ArbreVide$, assume that the $i$-th node of~$T_R$ has a non-empty
    left subtree. That implies that the $i$-th leaf of~$T_R$ is not attached
    to its $i$-th node. Thus, by Lemma~\ref{lem:OrientationFeuille}, the $i$-th
    leaf of~$T_R$ is attached to its $i\!-\!1$-st node and is right-oriented.
    In~$T_L$, the $i$-th leaf cannot be attached to its $i$-th node
    because~$A \ne \ArbreVide$. Hence, by Lemma~\ref{lem:OrientationFeuille},
    the $i$-th leaf of~$T_L$ is also attached to its $i\!-\!1$-st node and is
    right-oriented. Since~$T$ contains at least~$i$ nodes, there is at
    least~$i\!+\!1$ leaves in~$T$, implying that the $i$-th leaf is not
    the rightmost leaf of~$T_L$ and~$T_R$, and thus~$(T_L, T_R)$ is not
    a pair of twin binary trees, contradicting the hypothesis.

    Assume now that the $i$-th node of~$T_R$ has a non-empty right subtree.
    That implies that the $i\!+\!1$-st leaf of~$T_R$ is not attached to
    its $i$-th node and thus, by Lemma~\ref{lem:OrientationFeuille},
    the $i\!+\!1$-st leaf of~$T_R$ is left-oriented. Moreover, since the
    $i$-th node of~$T_R$ has a non-empty right subtree and the $i$-th node
    of~$T_L$ is its root, the $i$-th node of~$T_L$ also has a non-empty
    right subtree. That implies that the $i\!+\!1$-st leaf of~$T_L$ is not
    attached to its $i$-th node and thus, by Lemma~\ref{lem:OrientationFeuille},
    the $i\!+\!1$-st leaf of~$T_R$ is also left-oriented. That contradicts
    that~$(T_L, T_R)$ is a pair of twin binary trees, and implies that the
    $i$-th node of~$T_R$ has no child. The case~$B \ne \ArbreVide$ is analogous.
\end{proof}

\begin{Proposition} \label{prop:LectureBaxMemeClasse}
    For any $A$-labeled pair of twin binary search trees~$(T_L, T_R)$ as
    input, the algorithm {\sc ExtractBaxter} computes a word belonging to
    the $\EquivBX$-equivalence class encoded by~$(T_L, T_R)$. Moreover,
    if~$(T_L, T_R)$ is labeled by a permutation, the computed word is a
    Baxter permutation.
\end{Proposition}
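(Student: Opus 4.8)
The plan is to induct on the number $n$ of nodes of $(T_L, T_R)$, using the description of $\EquivBX$-classes as sets of joint linear extensions (Proposition~\ref{prop:BXExtLin}) together with the fact that removing a minimal element of a poset is compatible with linear extensions. If $n = 0$ both trees are empty, the loop is not entered, and the returned word $\epsilon$ is the unique word of the $\EquivBX$-class of the empty pair. So assume $n \geq 1$, let $\LA$ be the label of the root of $T_L$ and $r$ its index, and let $(T_L', T_R')$ be the pair obtained after one pass through the main loop.

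First I would check that $(T_L', T_R')$ is again an $A$-labeled pair of twin binary search trees, with $n - 1$ nodes. By Lemma~\ref{lem:NoeudRacineFeuille} the $r$-th node of $T_R$ has no child, so suppressing it is legitimate and produces a right binary search tree $T_R'$ with $n - 1$ nodes; since $T_L$ and $T_R$ share the same inorder reading, that node carries the label $\LA$. Writing $T_L = A \ABCons_\LA B$, the left binary search tree condition forces the labels of $A$ to be $< \LA$ and those of $B$ to be $\geq \LA$, and a node-by-node check then shows that grafting $A$ on the leftmost leaf of $B$, or $B$ on the rightmost leaf of $A$, creates no violation; hence both $A \Over B$ and $A \Under B$, and so $T_L'$, are left binary search trees with $n - 1$ nodes. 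The inorder reading of each of $A \Over B$ and $A \Under B$ is the inorder reading of $A$ followed by that of $B$, that is, the inorder reading of $T_L$ with its $r$-th letter $\LA$ deleted; likewise the inorder reading of $T_R'$ is that of $T_R$ with its $r$-th letter $\LA$ deleted, so the two still agree. Finally, the tree operations performed by {\sc ExtractBaxter} depend only on the shapes of $T_L$ and $T_R$ and on the index $r$, and are exactly those of the algorithm of Dulucq and Guibert; hence by~\cite{DG94} the shapes of $(T_L', T_R')$ again form a pair of twin binary trees. So $(T_L', T_R')$ is a valid input of size $n - 1$, and {\sc ExtractBaxter} on $(T_L, T_R)$ returns $\LA v$, where $v$ is the word it returns on $(T_L', T_R')$.

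By the induction hypothesis and Proposition~\ref{prop:BXExtLin}, $v$ is simultaneously a linear extension of $\PosetG(T_L')$ and of $\PosetD(T_R')$. The root of $T_L$ is the minimum of $\PosetG(T_L)$, so the restriction of $\PosetG(T_L)$ to the non-root nodes is the disjoint union of $\PosetG(A)$ and $\PosetG(B)$, with no node of $A$ comparable to a node of $B$; on the other hand $\PosetG(T_L')$, which equals $\PosetG(A \Over B)$ or $\PosetG(A \Under B)$, contains this disjoint union, the reshaping by $\Over$ or $\Under$ only adding relations (it makes the nodes of the left spine of $B$, resp. of the right spine of $A$, ancestors of the grafted subtree). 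Therefore $v$ is a linear extension of $\PosetG(T_L)$ with its minimum removed, so $\LA v$ is a linear extension of $\PosetG(T_L)$. Symmetrically, the $r$-th node of $T_R$ is childless, hence a minimal element of $\PosetD(T_R)$, and deleting it produces precisely $\PosetD(T_R')$; so $v$, a linear extension of $\PosetD(T_R')$, gives a linear extension $\LA v$ of $\PosetD(T_R)$ once the label $\LA$ of that node is prepended. By Proposition~\ref{prop:BXExtLin} again, $\LA v$ lies in the $\EquivBX$-class encoded by $(T_L, T_R)$, which completes the induction.

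For the last assertion, when $(T_L, T_R)$ is labeled by a permutation the output $u$ uses each of the $n$ labels exactly once, hence is a permutation; it lies in the $\EquivBX$-class of $(T_L, T_R)$ by the above, and since on permutations {\sc ExtractBaxter} coincides with the algorithm of Dulucq and Guibert, which always returns a Baxter permutation, $u$ is itself a Baxter permutation. I expect the one genuinely delicate point to be the bookkeeping in the third paragraph: one must make sure that the $\Over$/$\Under$ reshaping of $T_L$ never destroys an ancestor relation (only creates new ones), and that node indices are correctly matched through the simultaneous deletion in $T_R$ and re-grafting in $T_L$. The four verifications forming the reduction step, including the appeal to~\cite{DG94} for the twin property of the reshaped shapes, are then routine.
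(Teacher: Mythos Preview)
Your proof is correct and follows essentially the same inductive strategy as the paper: both argue via Proposition~\ref{prop:BXExtLin} that the output is simultaneously a linear extension of $\PosetG(T_L)$ and $\PosetD(T_R)$, reducing to the smaller pair $(T'_L,T'_R)$ after one pass of the loop, and both defer the Baxter-permutation conclusion to~\cite{DG94}. The only substantive difference is in how the twin property of $(T'_L,T'_R)$ is checked: you outsource it to~\cite{DG94} (observing that the shape-level operations coincide with the Dulucq--Guibert algorithm), whereas the paper gives a short self-contained canopy computation, showing that if $\Canop(T_L)=v01w$ and $\Canop(T_R)=v'10w'$ with $v',w'$ complementary to $v,w$, then after the step one gets canopies $v0w$ and $v'1w'$ (or $v1w$ and $v'0w'$ in the other case). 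Either route works; the paper's is slightly more self-contained, yours avoids repeating an argument already in the literature.
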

\begin{proof}
    Let us prove by induction on~$n$, that is the number of nodes of~$T_L$
    and~$T_R$, that if~$(T_L, T_R)$ is an $A$-labeled pair of twin binary
    search trees, then {\sc ExtractBaxter} returns a word that is a linear
    extension of~$\PosetG(T_L)$ and a linear extension of~$\PosetD(T_R)$,
    \emph{i.e.}, by Proposition~\ref{prop:BXExtLin}, a word belonging to
    the $\EquivBX$-equivalence class encoded by~$(T_L, T_R)$. This property
    clearly holds for~$n \leq 1$. Now, assume that $T_L = A \ABCons_\LA B$
    where~$\LA$ is the label of the root of~$T_L$. By
    Lemma~\ref{lem:NoeudRacineFeuille}, if the root of~$T_L$ is its $i$-th
    node, the $i$-th node~$x$ of~$T_R$ has no child. Moreover, since~$T_L$
    and~$T_R$ are binary search trees and labeled by a same word, their
    respective $i$-th nodes have the same label~$\LA$. Moreover, the canopy
    of~$T_L$ is of the form~$v01w$ where~$v := \Canop(A)$ and~$w := \Canop(B)$,
    and the canopy of~$T_R$ is of the form~$v'10w'$ where~$v'$ (resp.~$w'$)
    is the complementary of~$v$ (resp.~$w$) since that~$(T_L, T_R)$ is a
    pair of twin binary trees. We have now two cases whether~$x$ is a left
    of right child in~$T_R$.

    If~$x$ is a left child in~$T_R$, the algorithm returns the word~$\LA u$
    where~$u$ is the word obtained by applying the algorithm on~$(T'_L, T'_R)$
    where~$T'_L = A \Over B$ and~$T'_R$ is obtained from~$T_R$ by suppressing
    the node~$x$. First, the canopy of~$T'_L$ is of the form~$v0w$ and the
    canopy of~$T'_R$ is of the form~$v'1w'$. Moreover,~$T'_L$ and~$T'_R$
    are clearly still binary search trees. That implies that~$(T'_L, T'_R)$
    is a pair of twin binary search trees. By induction hypothesis and
    Proposition~\ref{prop:BXExtLin}, the word~$u$ belongs to the
    $\EquivBX$-equivalence class encoded by~$(T'_L, T'_R)$, and thus,~$\LA u$
    belongs to the $\EquivBX$-equivalence class encoded by~$(T_L, T_R)$
    because~$\LA u$ is a linear extension of~$\PosetG(T_L)$ (resp.~$\PosetD(T_R)$)
    since~$u$ is a linear extension of both~$\PosetG(T'_L)$ and~$\PosetD(T'_R)$.
    The case where~$x$ is a right child in~$T_R$ is analogous.

    Finally, when~$(T_L, T_R)$ is labeled by a permutation, {\sc ExtractBaxter}
    coincides with the algorithm of Dulucq and Guibert~\cite{DG94} and
    computes a Baxter permutation.
\end{proof}

The validity of {\sc ExtractBaxter} implies the two following results.

\begin{Theoreme} \label{thm:BijectionEquivBXPermuBX}
    For any~$n \geq 0$, there is a bijection between the set of Baxter
    equivalence classes of words of length $n$ and $A$-labeled pairs of
    twin binary search trees with~$n$ nodes.
\end{Theoreme}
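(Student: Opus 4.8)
The plan is to show that the Baxter $\PSymb$-symbol induces the claimed bijection. First I would define the map $\Phi$ that sends a Baxter equivalence class $C$ of words of length~$n$ to $\PSymb(u)$, where $u$ is any element of~$C$. By Theorem~\ref{thm:PSymboleClasses}, the pair $\PSymb(u)$ does not depend on the choice of $u \in C$ (all elements of a class share the same $\PSymb$-symbol), so $\Phi$ is well-defined; the same theorem shows that $\PSymb(u) = \PSymb(v)$ forces $u \EquivBX v$, so distinct classes have distinct images and $\Phi$ is injective. By Proposition~\ref{prop:PSymboleNonIncr}, $\Phi(C)$ is an $A$-labeled pair of twin binary search trees with~$n$ nodes, hence $\Phi$ takes values in the announced target set.

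It then remains to prove surjectivity of~$\Phi$. Let $(T_L, T_R)$ be an arbitrary $A$-labeled pair of twin binary search trees with~$n$ nodes. Running {\sc ExtractBaxter} on $(T_L, T_R)$ produces a word~$u$, and Proposition~\ref{prop:LectureBaxMemeClasse}, together with the characterization of Proposition~\ref{prop:BXExtLin}, guarantees that~$u$ lies in the Baxter equivalence class encoded by $(T_L, T_R)$, that is, $\PSymb(u) = (T_L, T_R)$. Therefore $\Phi$ sends the class of~$u$ to $(T_L, T_R)$, which establishes surjectivity. The case $n = 0$ is trivial, the empty word being the unique word in bijection with the pair $(\ArbreVide, \ArbreVide)$.

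In the end almost no new work is required here: the two genuinely substantial ingredients---that $\PSymb$ is a complete invariant of~$\EquivBX$ (Theorem~\ref{thm:PSymboleClasses}) and that every pair of twin binary search trees is realized as some $\PSymb$-symbol via {\sc ExtractBaxter} (Proposition~\ref{prop:LectureBaxMemeClasse})---have already been proved. The only point needing a moment of care is the well-definedness and injectivity of the map induced on equivalence classes, which follows immediately from Theorem~\ref{thm:PSymboleClasses}. Thus the anticipated obstacle does not lie in the present statement but was already dispatched, in the correctness proof of {\sc ExtractBaxter}.
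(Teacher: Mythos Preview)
Your proposal is correct and follows essentially the same route as the paper: injectivity of the map induced by~$\PSymb$ via Proposition~\ref{prop:PSymboleNonIncr} and Theorem~\ref{thm:PSymboleClasses}, and surjectivity via {\sc ExtractBaxter} and Proposition~\ref{prop:LectureBaxMemeClasse}. The only minor remark is that your appeal to Proposition~\ref{prop:BXExtLin} is redundant, since Proposition~\ref{prop:LectureBaxMemeClasse} already asserts directly that the output of {\sc ExtractBaxter} lies in the Baxter class encoded by~$(T_L,T_R)$.
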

\begin{proof}
    By Proposition~\ref{prop:PSymboleNonIncr} and Theorem~\ref{thm:PSymboleClasses},
    the $\PSymb$-symbol algorithm induces an injection between the set
    of equivalence classes of~$\EnsPermu_n /_\EquivBX$ and the set of
    unlabeled pairs of twin binary trees. Moreover,
    by Proposition~\ref{prop:LectureBaxMemeClasse}, the algorithm
    {\sc ExtractBaxter} exhibits a surjection between these two sets.
    Hence, these two sets are in bijection.
\end{proof}

Theorem~\ref{thm:BijectionEquivBXPermuBX} implies in particular that
the Baxter equivalence classes of permutations of size~$n$ are in bijection
with pairs of twin binary trees labeled by a permutation (or equivalently with
unlabeled pairs of twin binary trees).

\begin{Theoreme} \label{thm:EquivBXBaxter}
    For any~$n \geq 0$, each equivalence class of~$\EnsPermu_n /_\EquivBX$
    contains exactly one Baxter permutation.
\end{Theoreme}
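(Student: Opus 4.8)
The plan is to combine an existence fact that is already available with a counting argument. Existence is immediate: by the remark following Theorem~\ref{thm:PSymboleClasses}, each equivalence class of $\EnsPermu_n /_\EquivBX$ is encoded by a unique pair of twin binary search trees labeled by a permutation, and by Proposition~\ref{prop:LectureBaxMemeClasse} the algorithm {\sc ExtractBaxter} applied to this pair outputs a Baxter permutation lying in that class. Hence every class of $\EnsPermu_n /_\EquivBX$ contains at least one Baxter permutation.

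For uniqueness I would count. By Theorem~\ref{thm:BijectionEquivBXPermuBX}, specialised to permutations, the number of classes of $\EnsPermu_n /_\EquivBX$ equals the number of unlabeled pairs of twin binary trees with~$n$ nodes; and by the bijection of Dulucq and Guibert~\cite{DG94} this number equals $|\EnsPermuBX_n|$. Thus $\EnsPermu_n /_\EquivBX$ has exactly $|\EnsPermuBX_n|$ classes. Since these classes partition $\EnsPermu_n$ and $\EnsPermuBX_n \subseteq \EnsPermu_n$, summing $|C \cap \EnsPermuBX_n|$ over all classes~$C$ gives $|\EnsPermuBX_n|$, that is, the number of classes; as each summand is at least~$1$ and there are as many summands as the total, each summand is exactly~$1$.

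I do not anticipate a real obstacle: the argument only uses the $\PSymb$/{\sc ExtractBaxter} machinery developed above together with the Dulucq--Guibert enumeration of pairs of twin binary trees by the Baxter numbers, and the only mild subtlety is checking that the existence statement genuinely applies to every class, which is exactly what the uniqueness of the encoding pair of twin binary search trees guarantees. As a by-product, the unique Baxter permutation of a class is precisely the word returned by {\sc ExtractBaxter}.
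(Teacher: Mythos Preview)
Your argument is correct and follows essentially the same route as the paper: existence via Proposition~\ref{prop:LectureBaxMemeClasse} and {\sc ExtractBaxter}, uniqueness by counting classes via Theorem~\ref{thm:BijectionEquivBXPermuBX} together with the Dulucq--Guibert enumeration. The paper states the counting step more tersely, but the reasoning is the same.
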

\begin{proof}
    Let~$C$ be an equivalence class of~$\EnsPermu_n /_\EquivBX$. By
    Theorem~\ref{thm:BijectionEquivBXPermuBX},~$C$ can be represented by
    an unlabeled pair of twin binary trees~$J$. By
    Proposition~\ref{prop:LectureBaxMemeClasse}, the algorithm
    {\sc ExtractBaxter} computes a permutation belonging to the
    $\EquivBX$-equivalence class encoded by~$J$, showing that each
    $\EquivBX$-equivalence class of permutations contains at least one
    Baxter permutation. The theorem follows from the fact that Baxter
    permutations are equinumerous with unlabeled pairs of twin binary trees.
\end{proof}

\subsubsection{Extracting minimal and maximal permutations}
Reading defined in~\cite{Rea05} \emph{twisted Baxter permutations}, that
are the permutations avoiding the generalized permutation patterns~$2-41-3$
and~$3-41-2$. These permutations are particular elements of Baxter classes
of permutations:
\begin{Proposition} \label{prop:TwistedMinClasses}
    Twisted Baxter permutations coincide with minimal elements of
    Baxter equivalence classes of permutations.
\end{Proposition}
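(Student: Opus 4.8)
\medskip

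The plan is to read minimality directly off the Hasse diagram of a Baxter class and then to recognize the failure of minimality as a generalized pattern. By Proposition~\ref{prop:EquivBXInter}, the $\EquivBX$-equivalence class $C$ of a permutation $\sigma$ is an interval $[\min C, \max C]$ of the permutohedron, so it has a unique minimal element and ``$\sigma$ is minimal in its class'' means exactly ``$\sigma = \min C$''. Since $C$ is an interval, $\sigma$ fails to be minimal if and only if some permutation $\rho$ is covered by $\sigma$ in the permutohedron and still lies in $C$: indeed, if $\rho_0 \in C$ with $\rho_0 \OrdPermu \sigma$ and $\rho_0 \ne \sigma$, one takes the element covered by $\sigma$ on a saturated chain $\rho_0 \OrdPermu \cdots \OrdPermu \sigma$, which lies in $C$ because $C$ is an interval; conversely any such $\rho$ is strictly below $\sigma$. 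By Lemma~\ref{lem:DiagHasseEqBX}, a covering relation of the permutohedron whose two endpoints lie in $C$ is exactly a Baxter adjacency relation, which on permutations is the single relation $\AdjBXAB$. Hence $\sigma$ is \emph{not} minimal in its class if and only if it can be factored as $\sigma = \LB \, u \, \LD \LA \, v \, \LB'$ for some words $u, v$ and letters $\LA < \LB, \LB' < \LD$.

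It then remains to translate this factorization into pattern language. The four distinguished letters occur in $\sigma$, from left to right, as $\LB, \LD, \LA, \LB'$ with $\LD$ and $\LA$ in consecutive positions; as $\sigma$ is a permutation, $\LB \ne \LB'$, so either $\LA < \LB < \LB' < \LD$, in which case $\Std(\LB \LD \LA \LB') = 2413$, or $\LA < \LB' < \LB < \LD$, in which case $\Std(\LB \LD \LA \LB') = 3412$. In both situations the two central letters are adjacent in $\sigma$, so $\sigma$ contains the generalized pattern $2-41-3$ or $3-41-2$. Conversely, an occurrence of $2-41-3$ (resp. $3-41-2$) in $\sigma$ is a subword $u_1 u_2 u_3 u_4$ with $u_2 u_3$ adjacent and $\Std(u_1 u_2 u_3 u_4) = 2413$ (resp. $3412$); setting $\LB := u_1$, $\LD := u_2$, $\LA := u_3$, $\LB' := u_4$ gives $\LA < \LB$ and $\LB' < \LD$, hence the forbidden factorization. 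Therefore $\sigma$ is not minimal in its class if and only if it is not a twisted Baxter permutation, and, together with the uniqueness of the minimal element of each class, this shows that twisted Baxter permutations coincide with the minimal elements of Baxter equivalence classes.

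The argument is short once the orientation conventions are pinned down, and that is where I expect the only real care to be needed: one must check that the $\LD \LA$ side of an adjacency relation (with $\LA < \LD$) is the \emph{upper} one in the permutohedron, so that it is this side, and not its partner, whose minimality fails, and one must use that an interval is closed under passing to covered elements that stay above its bottom. After these checks the proof reduces to standardizing a four-letter word, which presents no difficulty.
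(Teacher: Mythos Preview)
Your proof is correct and follows essentially the same approach as the paper: use Proposition~\ref{prop:EquivBXInter} to get a unique minimum in each class, use Lemma~\ref{lem:DiagHasseEqBX} to identify covering relations inside a class with the adjacency $\AdjBXAB$, and translate the presence of such an adjacency into the generalized patterns $2\text{-}41\text{-}3$ and $3\text{-}41\text{-}2$. The paper argues the two implications separately while you run a single chain of equivalences, but the content is the same.

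One wording issue to tighten: when you write ``factored as $\sigma = \LB\, u\, \LD\LA\, v\, \LB'$'', taken literally this forces $\LB$ and $\LB'$ to be the first and last letters of $\sigma$, which is too strong (e.g.\ $\sigma = 135246$ contains $2\text{-}41\text{-}3$ via $3524$ but starts with $1$ and ends with $6$). What you need, and what Lemma~\ref{lem:DiagHasseEqBX} actually gives, is that $\sigma$ contains such a factor in context, i.e.\ $\sigma = x\,\LB\, u\, \LD\LA\, v\, \LB'\, y$ for some (possibly empty) words $x,y$. The paper's own proof uses the same shorthand, and your subsequent pattern analysis is already stated in the correct generality, so this is purely cosmetic.
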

\begin{proof}
    First, note that by Proposition~\ref{prop:EquivBXInter}, every Baxter
    equivalence class of permutations has a minimal element. Assume that~$\sigma$
    is minimal of its $\EquivBX$-equivalence class of permutations. Then,
    it is not possible to perform any rewriting of the form
    \begin{equation}
        \LB \, u \, \LD \LA \, v \, \LB' \rightarrow
        \LB \, u \, \LA \LD \, v \, \LB',
    \end{equation}
    where $\LA < \LB, \LB' < \LD$ are letters, and~$u$ and~$v$ are words.
    Hence,~$\sigma$ avoids the patterns $2-41-3$ and $3-41-2$, and is a
    twisted Baxter permutation.

    Conversely, if~$\sigma$ is a twisted Baxter permutation, it avoids
    $2-41-3$ and $3-41-2$ and it is not possible to perform any
    rewriting~$\rightarrow$, so that, by Proposition~\ref{prop:EquivBXInter}
    and Lemma~\ref{lem:DiagHasseEqBX}, it is minimal of its
    $\EquivBX$-equivalence class.
\end{proof}

In a similar way, by calling \emph{anti-twisted Baxter permutation} any
permutation that avoids the generalized permutation patterns $2-14-3$ and
$3-14-2$, an analogous proof to the one of Proposition~\ref{prop:TwistedMinClasses}
shows that anti-twisted Baxter permutations coincide with maximal
elements of Baxter equivalence classes of permutations.
\medskip

Proposition~\ref{prop:TwistedMinClasses} implies that twisted Baxter
permutations, anti-twisted Baxter permutations, and Baxter permutations
are equinumerous since by Theorem~\ref{thm:EquivBXBaxter} there is exactly
one Baxter permutation by $\EquivBX$-equivalence class of permutations and
by Proposition~\ref{prop:EquivBXInter}, there is also exactly one twisted
(and one anti-twisted) Baxter permutation. This suggests among other that
there exists a bijection sending a Baxter permutation to the twisted Baxter
permutation of its $\EquivBX$-equivalence class.
\medskip

As pointed out by Law and Reading, West has shown first a bijection between
Baxter permutations and twisted Baxter permutations using generating
trees~\cite{B03}. In our setting, as in the setting of Law and Reading~\cite{LR10},
this bijection is the one preserving the classes. Here follows an algorithm
to compute this bijection.
\medskip

Let us consider the following algorithm which allows, given an $A$-labeled
pair of twin binary search trees~$(T_L, T_R)$, to compute the minimal
permutation for the lexicographic order belonging to the $\EquivBX$-equivalence
class encoded by~$(T_L, T_R)$.
\medskip

{\flushleft
    {\bf Algorithm:} {\sc ExtractMin}. \\
    {\bf Input:} An $A$-labeled pair of twin binary search trees~$(T_L, T_R)$. \\
    {\bf Output:} The minimal word for the lexicographic order of the
    class encoded by~$(T_L, T_R)$. \\
    \begin{enumerate}
        \item Let $u := \epsilon$ be the empty word.
        \item Let $F := T_L$ be a rooted forest.
        \item While $F$ is not empty and $T_R \ne \ArbreVide$:
        \begin{enumerate}
            \item Let $i$ be the smallest index such that the $i$-th node
            of $F$ is a root and the $i$-th node of $T_R$ has no child.\label{item:InstrChoixNoeudMin}
            \item Let $\LA$ be the label of the $i$-th node of $T_L$.
            \item Set $u := u \LA$.
            \item Suppress the $i$-th node of $F$ and the $i$-th node of $T_R$.
        \end{enumerate}
        \item Return $u$.
    \end{enumerate}
    {\bf End.}
}
\medskip

Note that, by choosing in the instruction~(\ref{item:InstrChoixNoeudMin}) the
greatest index instead of the smallest, the previous algorithm would compute the
maximal word for the lexicographic order of the $\EquivBX$-equivalence class
encoded by~$(T_L, T_R)$. Let us call this variant {\sc ExtractMax}.
\medskip

Figure~\ref{fig:ExempleExtractMin} shows an example of application of {\sc ExtractMin}.
\begin{figure}[ht]
    \centering
    \begin{equation*}
        \begin{split}(T_L, T_R) := \: \end{split}
        \begin{split}
        \scalebox{.35}{
        \begin{tikzpicture}
            \node[Noeud,EtiqClair](0)at(0,-2){$1$};
            \node[Noeud,EtiqClair](1)at(1,-1){$2$};
            \draw[Arete](1)--(0);
            \node[Noeud,EtiqClair](2)at(2,-2){$3$};
            \node[Noeud,EtiqClair](3)at(3,-3){$4$};
            \draw[Arete](2)--(3);
            \draw[Arete](1)--(2);
            \node[Noeud,EtiqFonce](4)at(4,0){$5$};
            \draw[Arete](4)--(1);
            \node[Noeud,EtiqClair](5)at(5,-1){$6$};
            \draw[Arete](4)--(5);
        \end{tikzpicture}}
        \end{split}
        \quad
        \begin{split}
        \scalebox{.35}{
        \begin{tikzpicture}
            \node[Noeud,EtiqClair](0)at(0,-2){$1$};
            \node[Noeud,EtiqClair](1)at(1,-3){$2$};
            \draw[Arete](0)--(1);
            \node[Noeud,EtiqClair](2)at(2,-1){$3$};
            \draw[Arete](2)--(0);
            \node[Noeud,EtiqClair](3)at(3,0){$4$};
            \draw[Arete](3)--(2);
            \node[Noeud,EtiqFonce](4)at(4,-2){$5$};
            \node[Noeud,EtiqClair](5)at(5,-1){$6$};
            \draw[Arete](5)--(4);
            \draw[Arete](3)--(5);
        \end{tikzpicture}}
        \end{split}
        \begin{split}\quad \xrightarrow{\LA = 5} \quad \end{split}
        \begin{split}
        \scalebox{.35}{
        \begin{tikzpicture}
            \node[Noeud,EtiqClair](0)at(0,-1){$1$};
            \node[Noeud,EtiqFonce](1)at(1,0){$2$};
            \draw[Arete](1)--(0);
            \node[Noeud,EtiqClair](2)at(2,-1){$3$};
            \node[Noeud,EtiqClair](3)at(3,-2){$4$};
            \draw[Arete](2)--(3);
            \draw[Arete](1)--(2);
        \end{tikzpicture}}
        \end{split}
        \begin{split}
        \scalebox{.35}{
        \begin{tikzpicture}
            \node[Noeud,EtiqClair](0)at(0,0){$6$};
        \end{tikzpicture}}
        \end{split}
        \quad
        \begin{split}
        \scalebox{.35}{
        \begin{tikzpicture}
            \node[Noeud,EtiqClair](0)at(0,-2){$1$};
            \node[Noeud,EtiqFonce](1)at(1,-3){$2$};
            \draw[Arete](0)--(1);
            \node[Noeud,EtiqClair](2)at(2,-1){$3$};
            \draw[Arete](2)--(0);
            \node[Noeud,EtiqClair](3)at(3,0){$4$};
            \draw[Arete](3)--(2);
            \node[Noeud,EtiqClair](4)at(4,-1){$6$};
            \draw[Arete](3)--(4);
        \end{tikzpicture}}
        \end{split}
    \end{equation*}
    \begin{equation*}
        \begin{split}\xrightarrow{\LA = 2} \quad \end{split}
        \begin{split}
        \scalebox{.35}{%
        \begin{tikzpicture}
            \node[Noeud,EtiqFonce](0)at(0,0){$1$};
        \end{tikzpicture}}
        \end{split}
        \begin{split}
        \scalebox{.35}{
        \begin{tikzpicture}
            \node[Noeud,EtiqClair](0)at(0,0){$3$};
            \node[Noeud,EtiqClair](1)at(1,-1){$4$};
            \draw[Arete](0)--(1);
        \end{tikzpicture}}
        \end{split}
        \begin{split}
        \scalebox{.35}{
        \begin{tikzpicture}
            \node[Noeud,EtiqClair](0)at(0,0){$6$};
        \end{tikzpicture}}
        \end{split}
        \quad
        \begin{split}
        \scalebox{.35}{
        \begin{tikzpicture}
            \node[Noeud,EtiqFonce](0)at(0,-2){$1$};
            \node[Noeud,EtiqClair](1)at(1,-1){$3$};
            \draw[Arete](1)--(0);
            \node[Noeud,EtiqClair](2)at(2,0){$4$};
            \draw[Arete](2)--(1);
            \node[Noeud,EtiqClair](3)at(3,-1){$6$};
            \draw[Arete](2)--(3);
        \end{tikzpicture}}
        \end{split}
        \begin{split}\quad \xrightarrow{\LA = 1} \quad\end{split}
        \begin{split}
        \scalebox{.35}{
        \begin{tikzpicture}
            \node[Noeud,EtiqFonce](0)at(0,0){$3$};
            \node[Noeud,EtiqClair](1)at(1,-1){$4$};
            \draw[Arete](0)--(1);
        \end{tikzpicture}}
        \end{split}
        \begin{split}
        \scalebox{.35}{%
        \begin{tikzpicture}
            \node[Noeud,EtiqClair](0)at(0,0){$6$};
        \end{tikzpicture}}
        \end{split}
        \quad
        \begin{split}
        \scalebox{.35}{
        \begin{tikzpicture}
            \node[Noeud,EtiqFonce](0)at(0,-1){$3$};
            \node[Noeud,EtiqClair](1)at(1,0){$4$};
            \draw[Arete](1)--(0);
            \node[Noeud,EtiqClair](2)at(2,-1){$6$};
            \draw[Arete](1)--(2);
        \end{tikzpicture}}
        \end{split}
        \begin{split}\quad \xrightarrow{\LA = 3} \quad \end{split}
        \begin{split}
        \scalebox{.35}{%
        \begin{tikzpicture}
            \node[Noeud,EtiqClair](0)at(0,0){$4$};
        \end{tikzpicture}}
        \end{split}
        \begin{split}
        \scalebox{.35}{%
        \begin{tikzpicture}
            \node[Noeud,EtiqFonce](0)at(0,0){$6$};
        \end{tikzpicture}}
        \end{split}
        \quad
        \begin{split}
        \scalebox{.35}{
        \begin{tikzpicture}
            \node[Noeud,EtiqClair](0)at(0,0){$4$};
            \node[Noeud,EtiqFonce](1)at(1,-1){$6$};
            \draw[Arete](0)--(1);
        \end{tikzpicture}}
        \end{split}
    \end{equation*}
    \begin{equation*}
        \begin{split}\xrightarrow{\LA = 6} \quad \end{split}
        \begin{split}
        \scalebox{.35}{%
        \begin{tikzpicture}
            \node[Noeud,EtiqFonce](0)at(0,0){$4$};
        \end{tikzpicture}}
        \end{split}
        \quad
        \begin{split}
        \scalebox{.35}{%
        \begin{tikzpicture}
            \node[Noeud,EtiqFonce](0)at(0,0){$4$};
        \end{tikzpicture}}
        \end{split}
        \begin{split}\quad \xrightarrow{\LA = 4} \quad \end{split}
        \begin{split} \ArbreVide \ArbreVide \end{split}
    \end{equation*}
    \caption{An execution of the algorithm {\sc ExtractMin} on~$(T_L, T_R)$.
    The computed permutation is~$521364$ and it is minimal in
    its~$\EquivBX$-equivalence class.}
    \label{fig:ExempleExtractMin}
\end{figure}
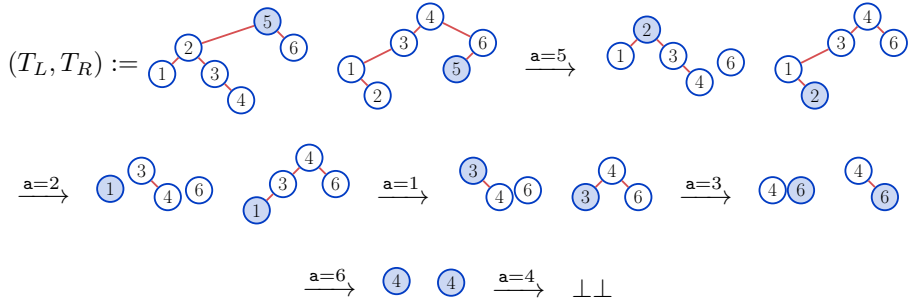

\begin{Proposition} \label{prop:LectureMin}
    For any $A$-labeled pair of twin binary search trees~$(T_L, T_R)$
    as input, the algorithm {\sc ExtractMin} (resp. {\sc ExtractMax})
    computes the minimal (resp. maximal) word for the lexicographic order
    of the $\EquivBX$-equivalence class encoded by~$(T_L, T_R)$. Moreover,
    if~$(T_L, T_R)$ is labeled by a permutation, the computed word is the
    minimal (resp. maximal) permutation for the permutohedron order of its
    $\EquivBX$-equivalence class.
\end{Proposition}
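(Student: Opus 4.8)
The plan is to prove that {\sc ExtractMin} produces the lexicographically least linear extension of the partial order $R$ on the node set obtained as the transitive closure of $\PosetG(T_L)\cup\PosetD(T_R)$. Since the class is nonempty, it admits a common linear extension, so $R$ is indeed a partial order, and by Proposition~\ref{prop:BXExtLin} the linear extensions of $R$ are exactly the words of the $\EquivBX$-equivalence class encoded by $(T_L,T_R)$; this yields the first assertion for {\sc ExtractMin}, the one for {\sc ExtractMax} following by exchanging ``smallest'' with ``largest'' throughout. I would use the classical fact that the lexicographically least linear extension of a finite labeled poset is obtained greedily: repeatedly output, among the currently minimal untreated elements, one of smallest label, and remove it.

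The core of the argument is the following invariant on the data $(F,T_R)$ carried by the algorithm, proved by induction on the number of already-treated nodes; write $N$ for the set of untreated nodes. First, $F$ is $T_L$ with the already-treated nodes removed, so its ancestor relation is the restriction of $\PosetG(T_L)$ to $N$ and its roots are exactly the $\PosetG(T_L)$-minimal elements of $N$. Second, the current $T_R$ is the original $T_R$ with the already-treated nodes pruned; it is still a binary search tree, its descendant relation is the restriction of $\PosetD(T_R)$ to $N$, and its childless nodes are exactly the $\PosetD(T_R)$-minimal elements of $N$. Third, $N$ is a filter (up-set) of $R$. The base case is immediate; the inductive step uses only that removing the root of a tree of $F$ turns its children into roots, that the node selected in step~(\ref{item:InstrChoixNoeudMin}) is required to be childless so $T_R$ is only ever pruned at leaf-nodes (which keeps it a binary search tree), and that removing an $R$-minimal element of a filter leaves a filter.

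Granting the invariant, the indices $i$ eligible in step~(\ref{item:InstrChoixNoeudMin}) are exactly those for which the $i$-th node of $F$ is simultaneously $\PosetG(T_L)$-minimal and $\PosetD(T_R)$-minimal in $N$, and I claim this coincides with being $R$-minimal in $N$. One direction is clear since $R$ contains both relations. For the converse, if such a node $x$ had some $y<_R x$ with $y\in N$, take a saturated chain $y=z_0<\dots<z_k=x$ whose successive steps each lie in $\PosetG(T_L)$ or in $\PosetD(T_R)$; each $z_\ell$ satisfies $z_\ell\ge_R y$, hence $z_\ell\in N$ since $N$ is a filter, and then the last step $z_{k-1}<z_k=x$ is a relation of $\PosetG(T_L)$ or of $\PosetD(T_R)$ between elements of $N$, contradicting minimality of $x$ for one of the two posets. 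In particular an eligible index exists whenever $N\ne\emptyset$, so the loop treats exactly one node per iteration, terminates after $n$ steps, and returns a linear extension of $R$, i.e. a word of the class. Finally, since $T_L$ is a \emph{left} binary search tree its inorder reading, hence that of the subforest $F$, is nondecreasing, so among the eligible indices the smallest one carries a smallest-possible label; checking that the resulting word is then the lexicographically least word of the class is routine, and is immediate when the labeling is a permutation since all labels are then distinct. The analysis of {\sc ExtractMax} is symmetric, picking the greatest eligible index.

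It remains to identify the lexicographic extremes of the class with the permutohedron extremes when $(T_L,T_R)$ is labeled by a permutation. By Proposition~\ref{prop:EquivBXInter} the class is an interval $[m,M]$ of the right permutohedron. A covering step of the right permutohedron replaces a factor $\LA\LB$ by $\LB\LA$ with $\LA<\LB$, which strictly increases the word for the lexicographic order; hence $\sigma\OrdPermu\nu$ implies $\sigma\leq_{\operatorname{lex}}\nu$, so the right permutohedron order is refined by the lexicographic order. Therefore $m$, the $\OrdPermu$-least element of the class, is also its lexicographically least element---the word returned by {\sc ExtractMin}---and likewise $M$ is its lexicographically greatest element, returned by {\sc ExtractMax}. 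I expect the bulk of the work to be the invariant of the second paragraph and the equivalence between being $R$-minimal and being minimal for both $\PosetG(T_L)$ and $\PosetD(T_R)$ on filters of $R$; in particular the guarantee that the algorithm never stalls is the {\sc ExtractMin}-analogue of Lemma~\ref{lem:NoeudRacineFeuille} (which itself rests on Lemma~\ref{lem:OrientationFeuille}), but here it drops out of the invariant. Once these are in place, the step relating the smallest eligible index to the least available letter and the permutohedron identification are short.
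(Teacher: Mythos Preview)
Your approach is essentially the same as the paper's: show that the output is a common linear extension of $\PosetG(T_L)$ and $\PosetD(T_R)$ (hence a word of the class by Proposition~\ref{prop:BXExtLin}), argue that the greedy choice of smallest label at each step yields the lexicographically least such extension, and then use that the permutohedron order is refined by the lexicographic order together with Proposition~\ref{prop:EquivBXInter} to conclude. The paper's proof is much terser---it invokes Theorem~\ref{thm:EquivBXBaxter} for termination and simply asserts the greedy-minimality step---whereas you spell out the invariant on $(F,T_R)$ and the characterization of eligible nodes as $R$-minimal elements of the current filter, which makes termination and correctness self-contained; this is a genuine improvement in rigor but not a different strategy.
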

\begin{proof}
    The output~$u$ of the algorithm {\sc ExtractMin} (resp. {\sc ExtractMax})
    is both a linear extension of~$\PosetG(T_L)$ and a linear extension
    of~$\PosetD(T_R)$. That implies by Proposition~\ref{prop:BXExtLin}
    that~$u$ belongs to the $\EquivBX$-equivalence class encoded by
    the input pair of twin binary trees. Moreover, this algorithm terminates
    since by Theorem~\ref{thm:EquivBXBaxter}, each $A$-labeled pair of
    twin binary search trees~$(T_L, T_R)$ admits at least one word that
    is a common linear extension of~$\PosetG(T_L)$ and~$\PosetD(T_R)$.
    The minimality (resp. maximality) for the lexicographic order of the
    computed word comes from the fact that at each step, the node that
    has the smallest (resp. greatest) label is chosen.

    Finally, since the lexicographic order is a linear extension of the
    permutohedron order, and by Proposition~\ref{prop:EquivBXInter}, since
    Baxter equivalence classes are intervals of the permutohedron,
    {\sc ExtractMin} (resp. {\sc ExtractMax}) returns the minimal (resp.
    maximal) permutation for the permutohedron order of its Baxter
    equivalence class.
\end{proof}

By Proposition~\ref{prop:LectureMin} and using our Robinson-Schensted-like
algorithm, we can compute the bijection between Baxter permutations and
twisted Baxter permutations in the following way: If~$\sigma$ is a Baxter
permutation, apply {\sc ExtractMin} on~$\PSymb(\sigma)$ to obtain its
corresponding twisted Baxter permutation. Conversely, if~$\sigma$ is a
twisted Baxter permutation, apply {\sc ExtractBaxter} on~$\PSymb(\sigma)$
to obtain its corresponding Baxter permutation.
\medskip

In the same way, we can compute a bijection between Baxter permutations
and anti-twisted Baxter permutations using {\sc ExtractMax} instead of
{\sc ExtractMin}. Moreover, these algorithms give a bijection between
twisted Baxter permutations and anti-twisted Baxter permutations:
If~$\sigma$ is a twisted (resp. anti-twisted) Baxter permutation, apply
{\sc ExtractMax} (resp. {\sc ExtractMin}) on~$\PSymb(\sigma)$ to obtain
its corresponding anti-twisted (resp. twisted) Baxter permutation.

\subsection{Definition and correctness of the iterative insertion algorithm}
In what follows, we shall revise our $\PSymb$-symbol algorithm that we have
presented in Section~\ref{subsec:PSymbBaxter} to make it iterative. Indeed,
we propose an insertion algorithm such that, for any word~$u$ such that
$\PSymb(u) = (T_L, T_R)$ and any letter~$\LA$, the insertion of~$\LA$
into~$(T_L, T_R)$ is the pair of twin binary trees~$\PSymb(u\LA)$. This,
besides being in agreement with the usual Robinson-Schensted-like algorithms,
has the merit to allow to compute in the Baxter monoid. Indeed, this gives
a simple way to compute the concatenation of two words~$u$ and~$v$ under
the Baxter congruence simply by inserting the letters of the word~$uv$
into the pair~$(\ArbreVide, \ArbreVide)$. Note that one can compute the
product of two pairs of twin binary trees~$(T_L, T_R)$ and~$(T'_L, T'_R)$
by computing a word~$u'$ that belongs to the $\EquivBX$-equivalence class
of~$(T'_L, T'_R)$ by applying the algorithm {\sc ExtractMin} (or
{\sc ExtractBaxter}) with~$(T'_L, T'_R)$ as input, and then, by inserting
the letters of~$u'$ from left to right into~$(T_L, T_R)$.

\subsubsection{Root insertion in binary search trees}
Let~$T$ be an $A$-labeled right binary search tree and~$\LB$ a letter of~$A$.
The \emph{lower restricted binary tree} of~$T$ compared to~$\LB$,
namely~$T_{\leq \LB}$, is the right binary search tree uniquely made of
the nodes~$x$ of~$T$ labeled by letters~$\LA$ satisfying~$\LA \leq \LB$
and such that for all nodes~$x$ and~$y$ of~$T_{\leq \LB}$, if~$x$ is
ancestor of~$y$ in~$T_{\leq \LB}$, then~$x$ is also ancestor of~$y$ in~$T$.
In the same way, we define the \emph{higher restricted binary tree}
of~$T$ compared to~$\LB$, namely~$T_{> \LB}$ (see Figure~\ref{fig:ExempleABRestreints}).
\begin{figure}[ht]
    \centering
    \begin{equation*}
        \begin{split}
        \scalebox{.35}{
        \begin{tikzpicture}
            \node[Noeud,EtiqFonce](0)at(0,-2){$1$};
            \node[Noeud,EtiqFonce](1)at(1,-1){$1$};
            \draw[Arete](1)--(0);
            \node[Noeud,EtiqFonce](2)at(2,-3){$2$};
            \node[Noeud,EtiqClair](3)at(3,-4){$3$};
            \draw[Arete](2)--(3);
            \node[Noeud,EtiqClair](4)at(4,-2){$3$};
            \draw[Arete](4)--(2);
            \draw[Arete](1)--(4);
            \node[Noeud,EtiqClair](5)at(5,0){$4$};
            \draw[Arete](5)--(1);
            \node[Noeud,EtiqClair](6)at(6,-1){$5$};
            \draw[Arete](5)--(6);
        \end{tikzpicture}}
        \end{split}
        \qquad
        \begin{split}
        \scalebox{.35}{
        \begin{tikzpicture}
            \node[Noeud,EtiqFonce](0)at(0,-1){$1$};
            \node[Noeud,EtiqFonce](1)at(1,0){$1$};
            \draw[Arete](1)--(0);
            \node[Noeud,EtiqFonce](2)at(2,-1){$2$};
            \draw[Arete](1)--(2);
        \end{tikzpicture}}
        \end{split}
        \qquad
        \begin{split}
        \scalebox{.35}{
        \begin{tikzpicture}
            \node[Noeud,EtiqClair](0)at(0,-2){$3$};
            \node[Noeud,EtiqClair](1)at(1,-1){$3$};
            \draw[Arete](1)--(0);
            \node[Noeud,EtiqClair](2)at(2,0){$4$};
            \draw[Arete](2)--(1);
            \node[Noeud,EtiqClair](3)at(3,-1){$5$};
            \draw[Arete](2)--(3);
        \end{tikzpicture}}
        \end{split}
    \end{equation*}
    \caption{A right binary search tree~$T$, $T_{\leq 2}$ and~$T_{>2}$.}
    \label{fig:ExempleABRestreints}
\end{figure}
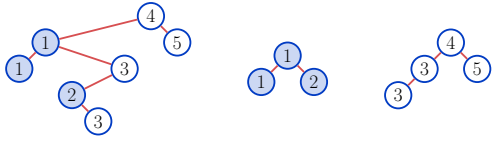
\medskip

Let~$T$ be an $A$-labeled right binary search tree and~$\LA$ a letter
of~$A$. The \emph{root insertion} of~$\LA$ into~$T$ consists in modifying~$T$
so that the root of~$T$ is a new node labeled by~$\LA$, its left subtree
is~$T_{\leq \LA}$ and its right subtree is~$T_{> \LA}$.

\subsubsection{The iterative insertion algorithm}

\begin{Definition} \label{def:BaxterPQSymboleIt}
    Let~$(T_L, T_R)$ be an $A$-labeled pair of twin binary search trees
    and~$\LA$ be a letter. The \emph{insertion} of~$\LA$ into~$(T_L, T_R)$
    consists in making a leaf insertion of~$\LA$ into~$T_L$ and a root
    insertion of~$\LA$ into~$T_R$. The \emph{iterative Baxter $\PSymb$-symbol}
    (or simply \emph{iterative $\PSymb$-symbol} if the context is clear)
    of a word~$u \in A^*$ is the pair $\PSymb(u) = (T_L, T_R)$ computed
    by iteratively inserting the letters of~$u$, from left to right, into
    $(\ArbreVide, \ArbreVide)$. The \emph{iterative Baxter $\QSymb$-symbol}
    (or simply \emph{iterative $\QSymb$-symbol} if the context is clear)
    of~$u \in A^*$ is the pair $\QSymb(u) = (S_L, S_R)$ of same shape
    as~$\PSymb(u)$ and such that each node is labeled by its date of
    creation in~$\PSymb(u)$.
\end{Definition}

Figure~\ref{fig:ExemplePQSymbole} shows, step by step, the computation
of the iterative Baxter $\PSymb$ and $\QSymb$-symbols of a word.
\begin{figure}[ht]
    \centering
    \begin{equation*}
    \begin{split}\ArbreVide \ArbreVide\end{split}
    \begin{split}\quad \xrightarrow{\hspace{.5em}2\hspace{.5em}} \quad \end{split}
    \begin{split}
    \scalebox{.35}{
    \begin{tikzpicture}
        \node[Noeud,EtiqFonce](0)at(0,0){$2$};
    \end{tikzpicture}}
    \end{split}
    \enspace
    \begin{split}
    \scalebox{.35}{
    \begin{tikzpicture}
        \node[Noeud,EtiqFonce](0)at(0,0){$2$};
    \end{tikzpicture}}
    \end{split}
    \begin{split}\quad \xrightarrow{\hspace{.5em}4\hspace{.5em}} \quad \end{split}
    \begin{split}
    \scalebox{.35}{
    \begin{tikzpicture}
        \node[Noeud,EtiqClair](0)at(0.0,0){$2$};
        \node[Noeud,EtiqFonce](1)at(1.0,-1){$4$};
        \draw[Arete](0)--(1);
    \end{tikzpicture}}
    \end{split}
    \enspace
    \begin{split}
    \scalebox{.35}{
    \begin{tikzpicture}
        \node[Noeud,EtiqClair](0)at(0,-1){$2$};
        \node[Noeud,EtiqFonce](1)at(1,0){$4$};
        \draw[Arete](1)--(0);
    \end{tikzpicture}}
    \end{split}
    \begin{split}\quad \xrightarrow{\hspace{.5em}1\hspace{.5em}} \quad \end{split}
    \begin{split}
    \scalebox{.35}{
    \begin{tikzpicture}
        \node[Noeud,EtiqFonce](0)at(0,-1){$1$};
        \node[Noeud,EtiqClair](1)at(1,0){$2$};
        \draw[Arete](1)--(0);
        \node[Noeud,EtiqClair](2)at(2,-1){$4$};
        \draw[Arete](1)--(2);
    \end{tikzpicture}}
    \end{split}
    \enspace
    \begin{split}
    \scalebox{.35}{
    \begin{tikzpicture}
        \node[Noeud,EtiqFonce](0)at(0,0){$1$};
        \node[Noeud,EtiqClair](1)at(1,-2){$2$};
        \node[Noeud,EtiqClair](2)at(2,-1){$4$};
        \draw[Arete](2)--(1);
        \draw[Arete](0)--(2);
    \end{tikzpicture}}
    \end{split}
    \end{equation*}
    \begin{equation*}
    \begin{split}\xrightarrow{\hspace{.5em}5\hspace{.5em}} \quad \end{split}
    \begin{split}
    \scalebox{.35}{
    \begin{tikzpicture}
        \node[Noeud,EtiqClair](0)at(0,-1){$1$};
        \node[Noeud,EtiqClair](1)at(1,0){$2$};
        \draw[Arete](1)--(0);
        \node[Noeud,EtiqClair](2)at(2,-1){$4$};
        \node[Noeud,EtiqFonce](3)at(3,-2){$5$};
        \draw[Arete](2)--(3);
        \draw[Arete](1)--(2);
    \end{tikzpicture}}
    \end{split}
    \enspace
    \begin{split}
    \scalebox{.35}{
    \begin{tikzpicture}
        \node[Noeud,EtiqClair](0)at(0,-1){$1$};
        \node[Noeud,EtiqClair](1)at(1,-3){$2$};
        \node[Noeud,EtiqClair](2)at(2,-2){$4$};
        \draw[Arete](2)--(1);
        \draw[Arete](0)--(2);
        \node[Noeud,EtiqFonce](3)at(3,0){$5$};
        \draw[Arete](3)--(0);
    \end{tikzpicture}}
    \end{split}
    \begin{split}\quad \xrightarrow{\hspace{.5em}2\hspace{.5em}} \quad \end{split}
    \begin{split}
    \scalebox{.35}{
    \begin{tikzpicture}
        \node[Noeud,EtiqClair](0)at(0.0,-1){$1$};
        \node[Noeud,EtiqClair](1)at(1.0,0){$2$};
        \draw[Arete](1)--(0);
        \node[Noeud,EtiqFonce](2)at(2.0,-2){$2$};
        \node[Noeud,EtiqClair](3)at(3.0,-1){$4$};
        \draw[Arete](3)--(2);
        \node[Noeud,EtiqClair](4)at(4.0,-2){$5$};
        \draw[Arete](3)--(4);
        \draw[Arete](1)--(3);
    \end{tikzpicture}}
    \end{split}
    \enspace
    \begin{split}
    \scalebox{.35}{
    \begin{tikzpicture}
        \node[Noeud,EtiqClair](0)at(0.0,-1){$1$};
        \node[Noeud,EtiqClair](1)at(1.0,-2){$2$};
        \draw[Arete](0)--(1);
        \node[Noeud,EtiqFonce](2)at(2.0,0){$2$};
        \draw[Arete](2)--(0);
        \node[Noeud,EtiqClair](3)at(3.0,-2){$4$};
        \node[Noeud,EtiqClair](4)at(4.0,-1){$5$};
        \draw[Arete](4)--(3);
        \draw[Arete](2)--(4);
    \end{tikzpicture}}
    \end{split}
    \end{equation*}
    \begin{equation*}
    \begin{split}\xrightarrow{\hspace{.5em}5\hspace{.5em}} \quad \end{split}
    \begin{split}
    \scalebox{.35}{
    \begin{tikzpicture}
        \node[Noeud,EtiqClair](0)at(0.0,-1){$1$};
        \node[Noeud,EtiqClair](1)at(1.0,0){$2$};
        \draw[Arete](1)--(0);
        \node[Noeud,EtiqClair](2)at(2.0,-2){$2$};
        \node[Noeud,EtiqClair](3)at(3.0,-1){$4$};
        \draw[Arete](3)--(2);
        \node[Noeud,EtiqClair](4)at(4.0,-2){$5$};
        \node[Noeud,EtiqFonce](5)at(5.0,-3){$5$};
        \draw[Arete](4)--(5);
        \draw[Arete](3)--(4);
        \draw[Arete](1)--(3);
    \end{tikzpicture}}
    \end{split}
    \enspace
    \begin{split}
    \scalebox{.35}{
    \begin{tikzpicture}
        \node[Noeud,EtiqClair](0)at(0.0,-2){$1$};
        \node[Noeud,EtiqClair](1)at(1.0,-3){$2$};
        \draw[Arete](0)--(1);
        \node[Noeud,EtiqClair](2)at(2.0,-1){$2$};
        \draw[Arete](2)--(0);
        \node[Noeud,EtiqClair](3)at(3.0,-3){$4$};
        \node[Noeud,EtiqClair](4)at(4.0,-2){$5$};
        \draw[Arete](4)--(3);
        \draw[Arete](2)--(4);
        \node[Noeud,EtiqFonce](5)at(5.0,0){$5$};
        \draw[Arete](5)--(2);
    \end{tikzpicture}}
    \end{split}
    \begin{split}\xrightarrow{\hspace{.5em}3\hspace{.5em}} \quad \end{split}
    \begin{split}
    \scalebox{.35}{
    \begin{tikzpicture}
        \node[Noeud,EtiqClair](0)at(0.0,-1){$1$};
        \node[Noeud,EtiqClair](1)at(1.0,0){$2$};
        \draw[Arete](1)--(0);
        \node[Noeud,EtiqClair](2)at(2.0,-2){$2$};
        \node[Noeud,EtiqFonce](3)at(3.0,-3){$3$};
        \draw[Arete](2)--(3);
        \node[Noeud,EtiqClair](4)at(4.0,-1){$4$};
        \draw[Arete](4)--(2);
        \node[Noeud,EtiqClair](5)at(5.0,-2){$5$};
        \node[Noeud,EtiqClair](6)at(6.0,-3){$5$};
        \draw[Arete](5)--(6);
        \draw[Arete](4)--(5);
        \draw[Arete](1)--(4);
    \end{tikzpicture}}
    \end{split}
    \enspace
    \begin{split}
    \scalebox{.35}{
    \begin{tikzpicture}
        \node[Noeud,EtiqClair](0)at(0.0,-2){$1$};
        \node[Noeud,EtiqClair](1)at(1.0,-3){$2$};
        \draw[Arete](0)--(1);
        \node[Noeud,EtiqClair](2)at(2.0,-1){$2$};
        \draw[Arete](2)--(0);
        \node[Noeud,EtiqFonce](3)at(3.0,0){$3$};
        \draw[Arete](3)--(2);
        \node[Noeud,EtiqClair](4)at(4.0,-3){$4$};
        \node[Noeud,EtiqClair](5)at(5.0,-2){$5$};
        \draw[Arete](5)--(4);
        \node[Noeud,EtiqClair](6)at(6.0,-1){$5$};
        \draw[Arete](6)--(5);
        \draw[Arete](3)--(6);
    \end{tikzpicture}}
    \end{split}
    \begin{split} = \PSymb(u) \end{split}
    \end{equation*}
    \vspace{2em}
    \begin{equation*}
    \begin{split}\ArbreVide \ArbreVide \end{split}
    \begin{split}\quad \xrightarrow{\hspace{.5em}2\hspace{.5em}} \quad \end{split}
    \begin{split}
    \scalebox{.35}{
    \begin{tikzpicture}
        \node[Noeud,EtiqFonce](0)at(0,0){$1$};
    \end{tikzpicture}}
    \end{split}
    \enspace
    \begin{split}
    \scalebox{.35}{
    \begin{tikzpicture}
        \node[Noeud,EtiqFonce](0)at(0,0){$1$};
    \end{tikzpicture}}
    \end{split}
    \begin{split}\quad \xrightarrow{\hspace{.5em}4\hspace{.5em}} \quad \end{split}
    \begin{split}
    \scalebox{.35}{
    \begin{tikzpicture}
        \node[Noeud,EtiqClair](0)at(0.0,0){$1$};
        \node[Noeud,EtiqFonce](1)at(1.0,-1){$2$};
        \draw[Arete](0)--(1);
    \end{tikzpicture}}
    \end{split}
    \enspace
    \begin{split}
    \scalebox{.35}{
    \begin{tikzpicture}
        \node[Noeud,EtiqClair](0)at(0,-1){$1$};
        \node[Noeud,EtiqFonce](1)at(1,0){$2$};
        \draw[Arete](1)--(0);
    \end{tikzpicture}}
    \end{split}
    \begin{split}\quad \xrightarrow{\hspace{.5em}1\hspace{.5em}} \quad \end{split}
    \begin{split}
    \scalebox{.35}{
    \begin{tikzpicture}
        \node[Noeud,EtiqFonce](0)at(0,-1){$3$};
        \node[Noeud,EtiqClair](1)at(1,0){$1$};
        \draw[Arete](1)--(0);
        \node[Noeud,EtiqClair](2)at(2,-1){$2$};
        \draw[Arete](1)--(2);
    \end{tikzpicture}}
    \end{split}
    \enspace
    \begin{split}
    \scalebox{.35}{
    \begin{tikzpicture}
        \node[Noeud,EtiqFonce](0)at(0,0){$3$};
        \node[Noeud,EtiqClair](1)at(1,-2){$1$};
        \node[Noeud,EtiqClair](2)at(2,-1){$2$};
        \draw[Arete](2)--(1);
        \draw[Arete](0)--(2);
    \end{tikzpicture}}
    \end{split}
    \end{equation*}
    \begin{equation*}
    \begin{split}\xrightarrow{\hspace{.5em}5\hspace{.5em}} \quad \end{split}
    \begin{split}
    \scalebox{.35}{
    \begin{tikzpicture}
        \node[Noeud,EtiqClair](0)at(0,-1){$3$};
        \node[Noeud,EtiqClair](1)at(1,0){$1$};
        \draw[Arete](1)--(0);
        \node[Noeud,EtiqClair](2)at(2,-1){$2$};
        \node[Noeud,EtiqFonce](3)at(3,-2){$4$};
        \draw[Arete](2)--(3);
        \draw[Arete](1)--(2);
    \end{tikzpicture}}
    \end{split}
    \enspace
    \begin{split}
    \scalebox{.35}{
    \begin{tikzpicture}
        \node[Noeud,EtiqClair](0)at(0,-1){$3$};
        \node[Noeud,EtiqClair](1)at(1,-3){$1$};
        \node[Noeud,EtiqClair](2)at(2,-2){$2$};
        \draw[Arete](2)--(1);
        \draw[Arete](0)--(2);
        \node[Noeud,EtiqFonce](3)at(3,0){$4$};
        \draw[Arete](3)--(0);
    \end{tikzpicture}}
    \end{split}
    \begin{split}\quad \xrightarrow{\hspace{.5em}2\hspace{.5em}} \quad \end{split}
    \begin{split}
    \scalebox{.35}{
    \begin{tikzpicture}
        \node[Noeud,EtiqClair](0)at(0.0,-1){$3$};
        \node[Noeud,EtiqClair](1)at(1.0,0){$1$};
        \draw[Arete](1)--(0);
        \node[Noeud,EtiqFonce](2)at(2.0,-2){$5$};
        \node[Noeud,EtiqClair](3)at(3.0,-1){$2$};
        \draw[Arete](3)--(2);
        \node[Noeud,EtiqClair](4)at(4.0,-2){$4$};
        \draw[Arete](3)--(4);
        \draw[Arete](1)--(3);
    \end{tikzpicture}}
    \end{split}
    \enspace
    \begin{split}
    \scalebox{.35}{
    \begin{tikzpicture}
        \node[Noeud,EtiqClair](0)at(0.0,-1){$3$};
        \node[Noeud,EtiqClair](1)at(1.0,-2){$1$};
        \draw[Arete](0)--(1);
        \node[Noeud,EtiqFonce](2)at(2.0,0){$5$};
        \draw[Arete](2)--(0);
        \node[Noeud,EtiqClair](3)at(3.0,-2){$2$};
        \node[Noeud,EtiqClair](4)at(4.0,-1){$4$};
        \draw[Arete](4)--(3);
        \draw[Arete](2)--(4);
    \end{tikzpicture}}
    \end{split}
    \end{equation*}
    \begin{equation*}
    \begin{split}\xrightarrow{\hspace{.5em}5\hspace{.5em}} \quad \end{split}
    \begin{split}
    \scalebox{.35}{
    \begin{tikzpicture}
        \node[Noeud,EtiqClair](0)at(0.0,-1){$3$};
        \node[Noeud,EtiqClair](1)at(1.0,0){$1$};
        \draw[Arete](1)--(0);
        \node[Noeud,EtiqClair](2)at(2.0,-2){$5$};
        \node[Noeud,EtiqClair](3)at(3.0,-1){$2$};
        \draw[Arete](3)--(2);
        \node[Noeud,EtiqClair](4)at(4.0,-2){$4$};
        \node[Noeud,EtiqFonce](5)at(5.0,-3){$6$};
        \draw[Arete](4)--(5);
        \draw[Arete](3)--(4);
        \draw[Arete](1)--(3);
    \end{tikzpicture}}
    \end{split}
    \enspace
    \begin{split}
    \scalebox{.35}{
    \begin{tikzpicture}
        \node[Noeud,EtiqClair](0)at(0.0,-2){$3$};
        \node[Noeud,EtiqClair](1)at(1.0,-3){$1$};
        \draw[Arete](0)--(1);
        \node[Noeud,EtiqClair](2)at(2.0,-1){$5$};
        \draw[Arete](2)--(0);
        \node[Noeud,EtiqClair](3)at(3.0,-3){$2$};
        \node[Noeud,EtiqClair](4)at(4.0,-2){$4$};
        \draw[Arete](4)--(3);
        \draw[Arete](2)--(4);
        \node[Noeud,EtiqFonce](5)at(5.0,0){$6$};
        \draw[Arete](5)--(2);
    \end{tikzpicture}}
    \end{split}
    \begin{split}\xrightarrow{\hspace{.5em}3\hspace{.5em}} \quad \end{split}
    \begin{split}
    \scalebox{.35}{
    \begin{tikzpicture}
        \node[Noeud,EtiqClair](0)at(0.0,-1){$3$};
        \node[Noeud,EtiqClair](1)at(1.0,0){$1$};
        \draw[Arete](1)--(0);
        \node[Noeud,EtiqClair](2)at(2.0,-2){$5$};
        \node[Noeud,EtiqFonce](3)at(3.0,-3){$7$};
        \draw[Arete](2)--(3);
        \node[Noeud,EtiqClair](4)at(4.0,-1){$2$};
        \draw[Arete](4)--(2);
        \node[Noeud,EtiqClair](5)at(5.0,-2){$4$};
        \node[Noeud,EtiqClair](6)at(6.0,-3){$6$};
        \draw[Arete](5)--(6);
        \draw[Arete](4)--(5);
        \draw[Arete](1)--(4);
    \end{tikzpicture}}
    \end{split}
    \enspace
    \begin{split}
    \scalebox{.35}{
    \begin{tikzpicture}
        \node[Noeud,EtiqClair](0)at(0.0,-2){$3$};
        \node[Noeud,EtiqClair](1)at(1.0,-3){$1$};
        \draw[Arete](0)--(1);
        \node[Noeud,EtiqClair](2)at(2.0,-1){$5$};
        \draw[Arete](2)--(0);
        \node[Noeud,EtiqFonce](3)at(3.0,0){$7$};
        \draw[Arete](3)--(2);
        \node[Noeud,EtiqClair](4)at(4.0,-3){$2$};
        \node[Noeud,EtiqClair](5)at(5.0,-2){$4$};
        \draw[Arete](5)--(4);
        \node[Noeud,EtiqClair](6)at(6.0,-1){$6$};
        \draw[Arete](6)--(5);
        \draw[Arete](3)--(6);
    \end{tikzpicture}}
    \end{split}
    \begin{split} = \QSymb(u) \end{split}
    \end{equation*}
    \caption{Steps of the computation of the $\PSymb$-symbol and the
    $\QSymb$-symbol of~$u := 2415253$.}
    \label{fig:ExemplePQSymbole}
\end{figure}
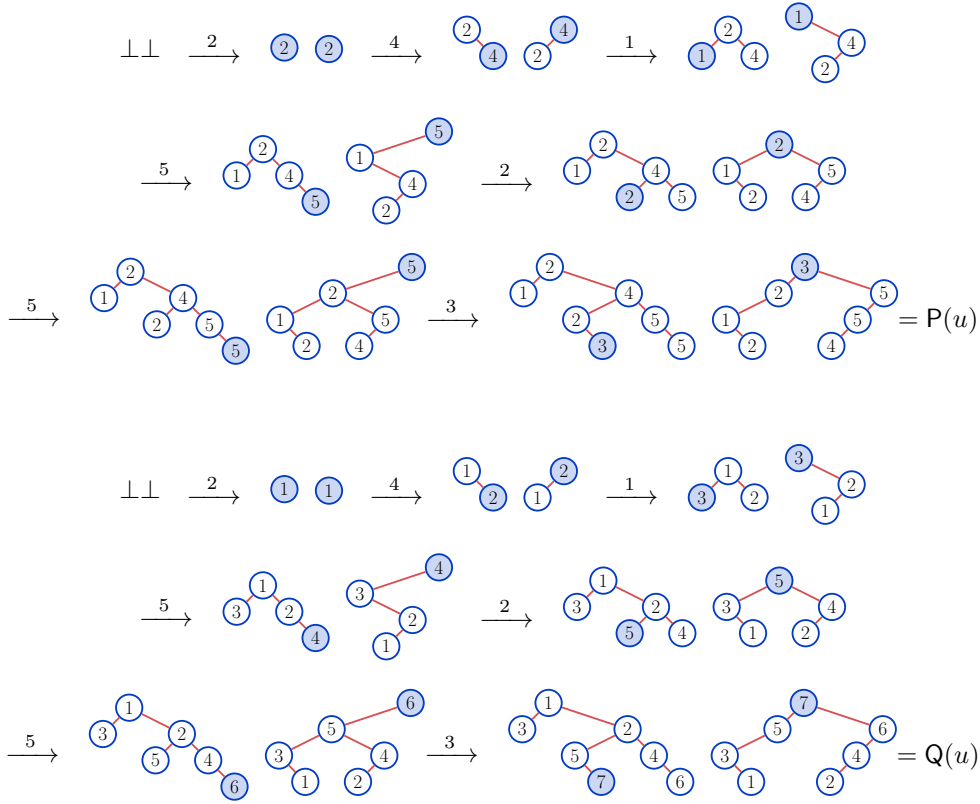

\subsubsection{Correctness of the iterative insertion algorithm}
To show that the iterative version of the Baxter $\PSymb$-symbol computes
the same labeled pair of twin binary trees than its non-iterative version,
we need the following lemma.
\begin{Lemme} \label{lem:SensInsertion}
    Let $u \in A^*$. Let~$T$ be the right binary search tree obtained by
    root insertions of the letters of~$u$, from left to right. Let~$T'$
    be the right binary search tree obtained by leaf insertions of the
    letters of~$u$, from right to left. Then,~$T = T'$.
\end{Lemme}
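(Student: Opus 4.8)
The plan is to deduce Lemma~\ref{lem:SensInsertion} from a general identity relating root insertion to leaf insertion performed in the opposite order. For a word $w$, let $\operatorname{Leaf}(w)$ be the right binary search tree obtained by leaf inserting the letters of $w$ from \emph{left} to \emph{right}; with this notation the tree $T'$ of the statement is exactly $\operatorname{Leaf}(u^\sim)$, while $T$ is obtained by root inserting $u_1, u_2, \dots, u_{|u|}$ in this order. I shall write $w_{\leq \LB}$ (resp. $w_{> \LB}$) for the subword of $w$ formed by the letters $\leq \LB$ (resp. $> \LB$), i.e. the restriction of $w$ to the corresponding alphabet interval. The claim at the centre of the proof is: for every word $w$ and letter $\LA$, the root insertion of $\LA$ into $\operatorname{Leaf}(w)$ equals $\operatorname{Leaf}(\LA w)$. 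Granting this, Lemma~\ref{lem:SensInsertion} follows by induction on $|u|$, the empty case being clear: root inserting $u_1, \dots, u_n$ from left to right amounts to root inserting $u_n$ into the tree obtained from $u_1, \dots, u_{n-1}$, which by the induction hypothesis is $\operatorname{Leaf}(u_{n-1} u_{n-2} \cdots u_1)$; applying the claim with $\LA := u_n$ turns this into $\operatorname{Leaf}(u_n u_{n-1} \cdots u_1) = \operatorname{Leaf}(u^\sim) = T'$.

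To prove the central claim I would first establish that leaf insertion commutes with restriction to an alphabet interval that is downward- or upward-closed: namely $\left(\operatorname{Leaf}(w)\right)_{\leq \LB} = \operatorname{Leaf}(w_{\leq \LB})$ and $\left(\operatorname{Leaf}(w)\right)_{> \LB} = \operatorname{Leaf}(w_{> \LB})$ for every word $w$ and letter $\LB$, where $T_{\leq \LB}$ and $T_{> \LB}$ are the lower and higher restricted binary trees defined just above. This is a routine structural induction on $w$: if $w = \LC w'$ then $\operatorname{Leaf}(w) = \operatorname{Leaf}(w'_{\leq \LC}) \ABCons_\LC \operatorname{Leaf}(w'_{> \LC})$, and one treats the cases $\LC \leq \LB$ and $\LC > \LB$ separately, in the latter using that in a right binary search tree every node with label $\leq \LB$ sits in the left subtree of a root labelled $\LC > \LB$.

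Given this, the central claim is immediate: by definition the root insertion of $\LA$ into $\operatorname{Leaf}(w)$ is the tree with root $\LA$, left subtree $\left(\operatorname{Leaf}(w)\right)_{\leq \LA} = \operatorname{Leaf}(w_{\leq \LA})$, and right subtree $\left(\operatorname{Leaf}(w)\right)_{> \LA} = \operatorname{Leaf}(w_{> \LA})$; on the other hand, when leaf inserting $\LA w$ from left to right the first letter $\LA$ becomes the root and every later letter is routed there to the left subtree if it is $\leq \LA$ and to the right subtree otherwise, thereafter interacting only with letters sent to the same side, so that $\operatorname{Leaf}(\LA w)$ likewise has root $\LA$, left subtree $\operatorname{Leaf}(w_{\leq \LA})$ and right subtree $\operatorname{Leaf}(w_{> \LA})$. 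The only place where care is needed is the bookkeeping of the $\leq$-versus-$<$ convention that distinguishes left from right binary search trees, so that letters equal to a threshold are consistently routed to the same side in all three statements; no genuinely difficult step arises.
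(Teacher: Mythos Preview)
Your proof is correct and follows essentially the same route as the paper's: both argue by induction on $|u|$, split off the last letter $\LA$, observe that leaf inserting $\LA$ first makes it the root and routes the remaining letters to the left or right subtree according to the comparison with $\LA$, and then match this against the definition of root insertion as $S_{\leq \LA} \ABCons_\LA S_{> \LA}$. The one difference is expository: the paper compresses the identification $L' = S_{\leq \LA}$, $R' = S_{> \LA}$ into the phrase ``by induction hypothesis'', whereas you isolate the needed commutation $\left(\operatorname{Leaf}(w)\right)_{\leq \LB} = \operatorname{Leaf}(w_{\leq \LB})$ as an explicit auxiliary lemma before applying it.
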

\begin{proof}
    Let us proceed by induction on~$|u|$. If $u = \epsilon$, the lemma is
    satisfied. Otherwise, assume that~$u = v \LA$ where~$\LA \in A$.
    Let~$S$ be the right binary search tree obtained by root insertions of the
    letters of~$v$ from left to right. By induction hypothesis,~$S$ also is
    the right binary tree obtained by leaf insertions of the letters of~$v$
    from right to left. The right binary search tree~$T$ obtained by root
    insertions of~$u$ from left to right satisfies, by definition,
    $T = S_{\leq \LA} \ABCons_\LA S_{> \LA}$. The right binary
    search tree~$T'$ obtained by leaf insertions of~$u$ from right to left
    satisfies $T' = L' \ABCons_\LA R'$ where the subtree~$L'$ only depends
    on the subword $v_{\leq \LA} := v_{|]-\infty, \LA]}$ and the subtree~$R'$
    only depends on the subword $v_{> \LA} := v_{|]\LA, +\infty[}$, so that,
    by induction hypothesis, $L' = S_{\leq \LA}$, $R' = S_{> \LA}$ and
    thus,~$T = T'$.
\end{proof}

\begin{Proposition} \label{prop:PSymboleIteratif}
    For any $u \in A^*$, the Baxter $\PSymb$-symbol of~$u$ and the iterative
    Baxter $\PSymb$-symbol of~$u$ are equal.
\end{Proposition}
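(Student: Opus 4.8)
The plan is to treat the two components of the pair separately and to check that, in each case, the iterative algorithm performs exactly the operation prescribed by the non-iterative Definition~\ref{def:BaxterPSymb}, possibly consuming the letters of~$u$ in a different order.

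First I would dispatch the left component. By Definition~\ref{def:BaxterPQSymboleIt}, inserting a letter~$\LA$ into a pair of twin binary search trees performs a \emph{leaf} insertion of~$\LA$ into the left tree, and the iterative $\PSymb$-symbol of~$u$ is obtained by inserting the letters of~$u$ from left to right, starting from~$(\ArbreVide, \ArbreVide)$. Hence the left tree returned by the iterative algorithm is, by construction, the left binary search tree obtained by leaf inserting the letters of~$u$ from left to right, which is exactly the left component of~$\PSymb(u)$ as defined in Definition~\ref{def:BaxterPSymb}. So there is nothing to prove for the left tree.

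Next I would handle the right component. For each letter of~$u$, read from left to right, the iterative algorithm performs a \emph{root} insertion into the current right tree; therefore the right tree it returns is the right binary search tree obtained by root insertions of the letters of~$u$ from left to right. On the other hand, the right component of~$\PSymb(u)$ is, by Definition~\ref{def:BaxterPSymb}, the right binary search tree obtained by \emph{leaf} insertions of the letters of~$u$ from right to left. These two trees coincide by Lemma~\ref{lem:SensInsertion}, applied to~$u$.

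Putting the two observations together shows that the pair produced by the iterative algorithm equals~$\PSymb(u)$, which is the claim. The whole substance of the argument is carried by Lemma~\ref{lem:SensInsertion}; I do not expect any genuine obstacle here, only the routine bookkeeping of matching ``left to right'' against ``right to left'' and ``leaf insertion'' against ``root insertion'' with the relevant definitions, which is immediate once the lemma is in hand.
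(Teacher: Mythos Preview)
Your proof is correct and matches the paper's own argument essentially line for line: the left components agree by definition since both algorithms perform left-to-right leaf insertion, and the right components agree by Lemma~\ref{lem:SensInsertion}, which equates left-to-right root insertion with right-to-left leaf insertion.
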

\begin{proof}
    Let $(T_L, T_R)$ be the $\PSymb$-symbol of~$u$ and~$(T'_L, T'_R)$ be
    the iterative $\PSymb$-symbol of~$u$. By definition of these two insertion
    algorithms, we have~$T_L = T'_L$. Moreover,~$T_R$ is obtained by leaf
    insertions of the letters of~$u$ from right to left and~$T'_R$ is
    obtained by root insertions of the letters of~$u$ from left to right.
    By Lemma~\ref{lem:SensInsertion}, we have~$T_R = T'_R$.
\end{proof}

The correctness of the iterative version of the $\QSymb$-symbol algorithm
comes from the correctness of the iterative $\PSymb$-algorithm.

\section{The Baxter lattice} \label{sec:TreillisBaxter}

\subsection{The Baxter lattice congruence}
Recall that an equivalence relation $\equiv$ on the elements of a lattice
$(L, \wedge, \vee)$ is a \emph{lattice congruence} if for all $x, x', y, y' \in L$,
$x \equiv x'$ and~$y \equiv y'$ imply $x \wedge y \equiv x' \wedge y'$
and~$x \vee y \equiv x' \vee y'$. The quotient~$L/_\equiv$ of~$L$ by~$\equiv$
is naturally a lattice. Indeed, by denoting by $\tau : L \to L/_\equiv$ the
canonical projection, the set~$L/_\equiv$ is endowed with meet and join
operations defined by $\widehat{x} \wedge \widehat{y} := \tau(x \wedge y)$
and $\widehat{x} \vee \widehat{y} := \tau(x \vee y)$ for all
$\widehat{x}, \widehat{y} \in L/_\equiv$ where~$x$ and~$y$ are any
elements of~$L$ such that~$\tau(x) = \widehat{x}$ and~$\tau(y) = \widehat{y}$.
\medskip

Lattices congruences admit the following very useful order-theoretic
characterization~\cite{CS98,Rea05}. An equivalence relation~$\equiv$
on the elements of a lattice~$(L, \wedge, \vee)$ seen as a poset~$(L, \leq)$
is a lattice congruence is the following three conditions hold.
\begin{enumerate}[label = (L\arabic*)]
    \item Every $\equiv$-equivalence class is an interval of~$L$;
    \label{item:LatticeCong1}
    \item For any $x, y \in L$, if $x \leq y$ then $x \MaxClasse \leq y \MaxClasse$
    where $x \MaxClasse$ is the maximal element of the $\equiv$-equivalence
    class of~$x$; \label{item:LatticeCong2}
    \item For any $x, y \in L$, if $x \leq y$ then $x \MinClasse \leq y \MinClasse$
    where~$x \MinClasse$ is the minimal element of the $\equiv$-equivalence
    class of~$x$. \label{item:LatticeCong3}
\end{enumerate}

For any permutation~$\sigma$, let us denote by~$\sigma \MaxClasse$
(resp.~$\sigma \MinClasse$) the maximal (resp. minimal) permutation of
the $\EquivBX$-equivalence class of~$\sigma$ for the permutohedron order.
Note by Proposition~\ref{prop:EquivBXInter} that~$\sigma \MaxClasse$
and~$\sigma \MinClasse$ are well-defined.

\begin{Theoreme} \label{thm:OrdreBaxterMinMax}
    The Baxter equivalence relation is a lattice congruence of the permutohedron.
\end{Theoreme}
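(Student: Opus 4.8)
The plan is to verify the three order-theoretic conditions \ref{item:LatticeCong1}, \ref{item:LatticeCong2}, \ref{item:LatticeCong3} of the characterization of lattice congruences, applied to the permutohedron $(\EnsPermu_n, \OrdPermu)$ and the relation $\EquivBX$. Condition \ref{item:LatticeCong1} is already established: it is exactly Proposition~\ref{prop:EquivBXInter}, which says every $\EquivBX$-class is an interval of the permutohedron, so $\sigma\MaxClasse$ and $\sigma\MinClasse$ are well-defined. It remains to prove the compatibility of the projections onto class maxima and class minima with the order, i.e.\ \ref{item:LatticeCong2} and \ref{item:LatticeCong3}.

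For \ref{item:LatticeCong3}, I would exploit Proposition~\ref{prop:LienSylv}, which says $\EquivBX$ is the common refinement of the sylvester congruence $\EquivS$ and the $\#$-sylvester congruence $\EquivSchutzS$, together with the known fact (from~\cite{HNT05,Rea05}) that each of these two congruences is a lattice congruence of the permutohedron. Concretely: it suffices to treat a covering $\sigma \lessdot_{\operatorname{P}} \nu$, so $\sigma = u\,\LA\LB\,v$ and $\nu = u\,\LB\LA\,v$ with $\LA < \LB$. Write $\rho := \sigma\MinClasse$ and $\rho' := \nu\MinClasse$; I must show $\rho \OrdPermu \rho'$. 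The strategy is to show $\rho \EquivBX \rho'$ would contradict minimality unless $\rho \OrdPermu \rho'$, but more cleanly: since $\EquivBX$-classes are intervals and $\rho$ is the bottom of $\sigma$'s class, it suffices to exhibit some $\mu \EquivBX \nu$ with $\rho \OrdPermu \mu$, for then $\rho \OrdPermu \mu\MinClasse = \rho'$; take $\mu$ obtained from $\rho$ by a sequence of adjacency relations matching the way $\nu$ is reached from $\sigma$. The honest way, and the one I would actually carry out, is to invoke the minimal-element maps $\MinClasse$ for $\EquivS$ and $\EquivSchutzS$: because $\EquivS$ and $\EquivSchutzS$ are lattice congruences, $x \le y$ implies $x\MinClasse^{\operatorname{S}} \le y\MinClasse^{\operatorname{S}}$ and similarly for $\EquivSchutzS$. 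One then needs the fact that the minimal element of a $\EquivBX$-class equals (a suitable combination of) the minimal elements of the enclosing $\EquivS$- and $\EquivSchutzS$-classes — this follows because the $\EquivBX$-class is the intersection of those two classes, all three are intervals, and the minimum of an intersection of two sublattice-intervals sharing a common element is the join of the two minima inside the permutohedron lattice; monotonicity of $\vee$ then gives \ref{item:LatticeCong3}.

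For \ref{item:LatticeCong2} the argument is dual, using $\sigma\MaxClasse$, the meet $\wedge$ of the permutohedron, and the fact (Proposition~\ref{prop:CompSchutz}, plus that the Schützenberger involution is an anti-automorphism of the permutohedron sending $\EquivS$ to $\EquivSchutzS$ and preserving $\EquivBX$) that \ref{item:LatticeCong2} for $\EquivBX$ is equivalent to \ref{item:LatticeCong3} for $\EquivBX$ under $w \mapsto w^\#$. So in fact, once \ref{item:LatticeCong3} is proved, \ref{item:LatticeCong2} follows formally by applying $\#$: if $\sigma \OrdPermu \nu$ then $\nu^\# \OrdPermu \sigma^\#$, hence $\nu^\#\MinClasse \OrdPermu \sigma^\#\MinClasse$, and pushing back through $\#$ (which swaps $\MinClasse$ with $\MaxClasse$ since $\#$ reverses the order and preserves classes) yields $\sigma\MaxClasse \OrdPermu \nu\MaxClasse$.

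The main obstacle I expect is the identification of $\sigma\MinClasse$ (and $\sigma\MaxClasse$) in terms of the sylvester and $\#$-sylvester class extremes — that is, proving that the minimum of the intersection of the two intervals is the permutohedron-join of their individual minima. This requires knowing that the three classes are not merely intervals but behave well enough that the join stays inside both, which ultimately rests on $\EquivS$ and $\EquivSchutzS$ being lattice congruences (so their class-minimum maps are lattice endomorphisms, hence their images are closed under $\vee$). An alternative, more self-contained route that sidesteps this is to verify \ref{item:LatticeCong2} and \ref{item:LatticeCong3} directly by a rewriting argument on a single permutohedron covering $\sigma \lessdot_{\operatorname{P}} \nu$, using Lemma~\ref{lem:DiagHasseEqBX} to control which adjacent transpositions are $\EquivBX$-moves and which are genuine order-increasing steps between classes, and then chasing the pattern-avoidance description of class minima (twisted Baxter permutations, Proposition~\ref{prop:TwistedMinClasses}); I would fall back on this case analysis if the clean lattice-theoretic argument turns out to need more about intersections of lattice congruences than is convenient to assemble here.
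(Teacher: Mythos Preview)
Your proposal is correct, and in fact your primary lattice-theoretic route is cleaner than the path the paper takes. The paper proves \ref{item:LatticeCong2} (and dually \ref{item:LatticeCong3}) by a direct combinatorial case analysis: for a single covering $\sigma \lessdot_{\operatorname{P}} \nu = \sigma s_i$, it picks a Baxter-adjacent $\pi = \sigma s_j$ above $\sigma$ (via Lemma~\ref{lem:DiagHasseEqBX}) and, by splitting into four cases on the relative position of $i$ and $j$, constructs $\mu \EquivBX \nu$ with $\pi \OrdPermu \mu$, then iterates. This is exactly the ``fallback'' you sketch at the end.

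Your main route, by contrast, exploits Proposition~\ref{prop:LienSylv}: on permutations $\EquivBX$ is the set-theoretic intersection of the two lattice congruences $\EquivS$ and $\EquivSchutzS$. From there the result is essentially formal---the intersection of two lattice congruences is again a lattice congruence, since $x \equiv_1 x'$, $x \equiv_2 x'$, $y \equiv_1 y'$, $y \equiv_2 y'$ immediately give $x \wedge y \equiv_i x' \wedge y'$ for $i=1,2$, and likewise for $\vee$. Your more explicit version via $\sigma\MinClasse = m_S(\sigma) \vee m_{S^\#}(\sigma)$ also works, and you are being slightly over-cautious about the ``obstacle'': in any lattice, two intervals $[a_1,c_1]$ and $[a_2,c_2]$ with nonempty intersection always intersect in $[a_1 \vee a_2,\, c_1 \wedge c_2]$, so no congruence hypothesis is needed for that step (the congruence property of $\EquivS$ and $\EquivSchutzS$ is only needed for the monotonicity of $m_S$ and $m_{S^\#}$). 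The trade-off is that your argument imports the lattice-congruence property of $\EquivS$ and $\EquivSchutzS$ from~\cite{HNT05,Rea05}, whereas the paper's case analysis is self-contained within the combinatorics already developed.
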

\begin{proof}
    By Proposition~\ref{prop:EquivBXInter}, any Baxter equivalence class
    of permutations is an interval of the permutohedron, so that
    \ref{item:LatticeCong1} checks out. One just has to show that~$\EquivBX$
    satisfies \ref{item:LatticeCong2} and \ref{item:LatticeCong3}.

    Let~$\sigma$ and~$\nu$ two permutations such that~$\sigma \OrdPermu \nu$.
    Let us show that $\sigma \MaxClasse \OrdPermu \nu \MaxClasse$.
    It is enough to check the property when~$\nu = \sigma s_i$ where~$s_i$
    is an elementary transposition and~$i$ is not a descent of~$\sigma$.
    If~$\sigma = \sigma \MaxClasse$, then
    $\sigma \MaxClasse \OrdPermu \nu \OrdPermu \nu \MaxClasse$
    and the property holds. Otherwise, by Lemma~\ref{lem:DiagHasseEqBX},
    there exists an elementary transposition~$s_j$ and a permutation~$\pi$
    such that~$\pi$ and~$\sigma$ are $\AdjBXAB$-adjacent,~$\pi = \sigma s_j$
    and~$\sigma \OrdPermu \pi$. It then remains to prove that there exists
    a permutation~$\mu$ such that~$\nu \EquivBX \mu$ and~$\pi \OrdPermu \mu$.
    Indeed, this leads to show, by applying iteratively this reasoning,
    that~$\sigma \MaxClasse$ is smaller than a permutation belonging to the
    $\EquivBX$-equivalence class of~$\nu$ for the permutohedron order and
    hence, by transitivity, that~$\sigma \MaxClasse \OrdPermu \nu \MaxClasse$.
    We have four cases:
    \begin{enumerate}[label = {\bf Case \arabic*:}, fullwidth]
        \item If $j \leq i - 2$, $\sigma$ is of the form
        $\sigma = u \, \LA \LB \, v \, \LC \LD \, w$ where~$u$, $v$,
        and~$w$ are some words and~$\LA$ (resp.~$\LC$) is the $j$-th
        (resp. $i$-th) letter of~$\sigma$. One has~$\LA < \LB$
        and~$\LC < \LD$ since~$i$ and~$j$ are not descents of~$\sigma$.
        We have $\nu = u \, \LA \LB \, v \, \LD \LC \, w$ and
        $\nu s_j = u \, \LB \LA \, v \, \LD \LC \, w =: \mu$. Moreover,
        since~$\pi \AdjBXAB \sigma$, there are some letters~$\LX \in \Alphab(u)$
        and $\LY \in \Alphab(v \, \LC \LD \, w)$ such that $\LA < \LX, \LY < \LB$.
        Thus,~$\mu \AdjBXAB \nu$. Finally, since
        $\pi = u \, \LB \LA \, v \, \LC \LD \, w$, $\pi \OrdPermu \mu$,
        so that~$\mu$ is appropriate.
        \item If $j \geq i + 2$, this is analogous to the previous case.
        \item If $j = i + 1$, $\sigma$ is of the form
        $\sigma = u \, \LA \LB \LC \, v$ where~$u$ and~$v$ are some words
        and~$\LA$ is the $i$-th letter of~$\sigma$. One has $\LA < \LB < \LC$
        since~$i$ and~$j$ are not descents of~$\sigma$. Since~$\sigma \AdjBXAB \pi$,
        there are some letters~$\LX \in \Alphab(u)$ and~$\LY \in \Alphab(v)$
        such that $\LB < \LX, \LY < \LC$. Thus, since $\nu = u \, \LB \LA \LC \, v$
        and $\LA < \LB < \LX, \LY < \LC$, we have
        $\nu s_j = u \, \LB \LC \LA \, v \AdjBXAB \nu$. Moreover,
        $\nu s_j s_i = u \, \LC \LB \LA \, v =: \mu$ and
        $\nu s_j \AdjBXAB \nu s_j s_i$ since $\LB < \LX, \LY < \LC$ and
        thus,~$\mu \EquivBX \nu$. Finally, since $\pi = u \, \LA \LC \LB \, v$,
        we have~$\pi \OrdPermu \mu$, and hence~$\mu$ is appropriate.
        \item If $j = i - 1$, this is analogous to the previous case.
    \end{enumerate}
    Hence, the Baxter equivalence relation satisfies \ref{item:LatticeCong2}.
    The proof that~$\EquivBX$ satisfies \ref{item:LatticeCong3} is analogous.
\end{proof}

\subsection{A lattice structure over the set of pairs of twin binary trees}
Recall that by Theorem~\ref{thm:BijectionEquivBXPermuBX}, the Baxter
equivalence classes of permutations are in correspondence with unlabeled
pairs of twin binary trees. Thus, the quotient of the permutohedron of
order~$n$ by the Baxter congruence is a lattice $(\EnsABJ_n, \OrdBX)$ where
the \emph{Baxter order relation}~$\OrdBX$ satisfies, for any~$J_0, J_1 \in \EnsABJ_n$,
\begin{equation}
    J_0 \OrdBX J_1 \qquad \mbox{if and only if} \qquad
    \substack{\mbox{there are $\sigma, \nu \in \EnsPermu_n$ such that} \\[.3em]
              \mbox{$\sigma \OrdPermu \nu$, $\PSymb\left(\sigma\right) = J_0$
                    and $\PSymb\left(\nu\right) = J_1$}.}
\end{equation}
Let us call \emph{Baxter lattice} the lattice $(\EnsABJ_n, \OrdBX)$.
Figure~\ref{fig:TreillisBaxter} shows the $\EquivBX$-equivalence classes
in the permutohedron of order $4$ that form the Baxter lattice~$(\EnsABJ_4, \OrdBX)$.
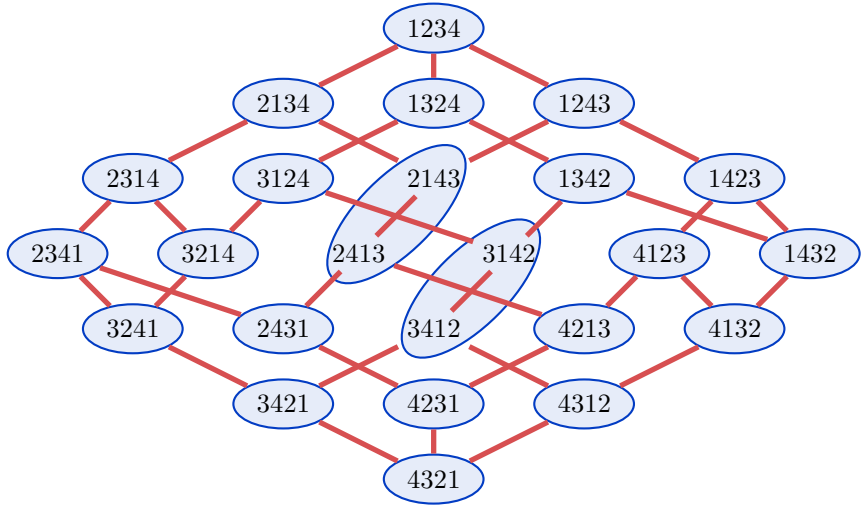
\begin{figure}[ht]
    \centering
    \begin{tikzpicture}
        \draw[Classe,rotate = -45] (1.4,-2.1) ellipse (5mm and 12mm);
        \draw[Classe,rotate = -45] (2.8,-2.1) ellipse (5mm and 12mm);
        \node[Classe] (1234) at (0, 0) {$1234$};
        \node[Classe] (2134) at (-2, -1) {$2134$};
        \node[Classe] (1243) at (2, -1) {$1243$};
        \node[Classe] (1324) at (0, -1) {$1324$};
        \node[Classe] (2314) at (-4, -2) {$2314$};
        \node[] (2143) at (0, -2) {$2143$};
        \node[Classe] (3124) at (-2, -2) {$3124$};
        \node[Classe] (1423) at (4, -2) {$1423$};
        \node[Classe] (1342) at (2, -2) {$1342$};
        \node[Classe] (2341) at (-5, -3) {$2341$};
        \node[Classe] (3214) at (-3, -3) {$3214$};
        \node[] (2413) at (-1, -3) {$2413$};
        \node[] (3142) at (1, -3) {$3142$};
        \node[Classe] (4123) at (3, -3) {$4123$};
        \node[Classe] (1432) at (5, -3) {$1432$};
        \node[Classe] (3241) at (-4, -4) {$3241$};
        \node[Classe] (2431) at (-2, -4) {$2431$};
        \node[] (3412) at (0, -4) {$3412$};
        \node[Classe] (4213) at (2, -4) {$4213$};
        \node[Classe] (4132) at (4, -4) {$4132$};
        \node[Classe] (3421) at (-2, -5) {$3421$};
        \node[Classe] (4231) at (0, -5) {$4231$};
        \node[Classe] (4312) at (2, -5) {$4312$};
        \node[Classe] (4321) at (0, -6) {$4321$};
        \draw[Arete] (1234) -- (2134);
        \draw[Arete] (1234) -- (1243);
        \draw[Arete] (1234) -- (1324);
        \draw[Arete] (2134) -- (2314);
        \draw[Arete] (2134) -- (2143);
        \draw[Arete] (1243) -- (2143);
        \draw[Arete] (1243) -- (1423);
        \draw[Arete] (1324) -- (3124);
        \draw[Arete] (1324) -- (1342);
        \draw[Arete] (2314) -- (2341);
        \draw[Arete] (2314) -- (3214);
        \draw[Arete] (2143) -- (2413);
        \draw[Arete] (3124) -- (3214);
        \draw[Arete] (3124) -- (3142);
        \draw[Arete] (1423) -- (4123);
        \draw[Arete] (1423) -- (1432);
        \draw[Arete] (1342) -- (3142);
        \draw[Arete] (1342) -- (1432);
        \draw[Arete] (2341) -- (3241);
        \draw[Arete] (3214) -- (3241);
        \draw[Arete] (2341) -- (2431);
        \draw[Arete] (2413) -- (2431);
        \draw[Arete] (2413) -- (4213);
        \draw[Arete] (3142) -- (3412);
        \draw[Arete] (4123) -- (4213);
        \draw[Arete] (4123) -- (4132);
        \draw[Arete] (1432) -- (4132);
        \draw[Arete] (3241) -- (3421);
        \draw[Arete] (2431) -- (4231);
        \draw[Arete] (3412) -- (3421);
        \draw[Arete] (3412) -- (4312);
        \draw[Arete] (4213) -- (4231);
        \draw[Arete] (4132) -- (4312);
        \draw[Arete] (3421) -- (4321);
        \draw[Arete] (4231) -- (4321);
        \draw[Arete] (4312) -- (4321);
    \end{tikzpicture}
    \caption{The permutohedron of order~$4$ cut into Baxter equivalence classes.}
    \label{fig:TreillisBaxter}
\end{figure}

\subsection{Covering relations of the Baxter lattice}
Let us describe the covering relations of the lattice $(\EnsABJ_n, \OrdBX)$
in terms of operations on pairs of twin binary trees. Consider a Baxter
equivalence class~$\widehat{\sigma}$ of permutations encoded by a pair
of twin binary trees $(T_L, T_R)$. Let~$\sigma$ by the maximal element
of~$\widehat{\sigma}$. If~$i$ is a descent of~$\sigma$, the permutation~$\sigma s_i$
is not in~$\widehat{\sigma}$, and, by definition of the Baxter lattice,
the pair of twin binary trees $\PSymb(\sigma s_i) =: (T'_L, T'_R)$
covers~$(T_L, T_R)$. The permutations~$\sigma$ and~$\sigma s_i$ satisfy
\begin{equation}
    \sigma = u \, \LA \LD \, v \qquad \mbox{and} \qquad
    \sigma s_i = u \, \LD \LA \, v,
\end{equation}
where~$\LA < \LD$. There are three cases whether the factor~$u$ or~$v$
contains a letter~$\LB$ satisfying $\LA < \LB < \LD$. Since the quotient
of the permutohedron by the sylvester congruence is the Tamari
lattice~\cite{HNT05} and that covering relations in the Tamari lattice are
binary tree rotations, the covering relations of the Baxter lattice are
the following:
\begin{enumerate}[label = (C\arabic*)]
    \item If there is a letter~$\LB$ in~$v$ such that $\LA < \LB < \LD$,
    then~$T'_R = T_R$ and~$T'_L$ is obtained from~$T_L$ by performing a
    left rotation that does not change its canopy;
    \item If there is a letter~$\LB$ in~$u$ such that $\LA < \LB < \LD$,
    then~$T'_L = T_L$ and~$T'_R$ is obtained from~$T_R$ by performing a
    right rotation that does not change its canopy;
    \item If for any letter~$\LB$ of~$u$ and~$v$, one has $\LB < \LA$ or
    $\LD < \LB$, then~$T'_L$ (resp.~$T'_R$) is obtained from~$T_L$
    (resp.~$T_R$) by performing a left (resp. right) rotation that changes
    its canopy.
\end{enumerate}

Hence, according to this characterization of the covering relations of
the Baxter lattice and the definition of the Tamari lattice, we have,
for any pairs of twin binary trees~$(T_L, T_R)$ and~$(T'_L, T'_R)$,
\begin{equation} \label{eq:RelOrdreBX}
    (T_L, T_R) \OrdBX (T'_L, T'_R) \qquad \mbox{if and only if} \qquad
    T'_L \OrdTam T_L \mbox{ and } T_R \OrdTam T'_R .
\end{equation}

Note that a right rotation at root~$y$ in a binary tree~$T$ changes its
canopy if and only if the right subtree~$B$ of the left child~$x$ of~$y$
is empty (see Figure~\ref{fig:Rotation}). Similarly, a left rotation at
root~$y$ changes the canopy of~$T$ if and only if the left subtree~$B$
of~$y$ is empty. Moreover, if~$y$ is the $i$-th node of~$T$, by
Lemma~\ref{lem:OrientationFeuille}, one can see that~$B$ is the $i$-th
leaf of~$T$. Hence, the right (resp. left) rotation at root~$y$ changes
the orientation of the $i$-th leaf of~$T$ formerly on the right to the
left (resp. left to the right).

\subsection{Twin Tamari diagrams}
The purpose of this section is to introduce \emph{twin Tamari diagrams}.
These diagrams are in bijection with pairs of twin binary trees and provide
a useful realization of the Baxter lattice since it appears that testing
if two twin Tamari diagrams are comparable under the Baxter order relation
is immediate.

\subsubsection{Tamari diagrams and the Tamari order relation}
Pallo introduced in~\cite{Pal86}  words in bijection with binary trees
(see also~\cite{Knu06}). We call \emph{Tamari diagrams} these words and
to compute the Tamari diagram~$\Td(T)$ of a binary tree~$T$, just label
each node~$x$ of~$T$ by the number of nodes in the right subtree of~$x$
and then, consider its inorder reading.
\medskip

Any Tamari diagram~$\delta$ of length~$n$ satisfies the following two
inequalities:
\begin{enumerate}
    \item $0 \leq \delta_i \leq n - i$, \quad for all $1 \leq i \leq n$;
    \item $\delta_{i + j} \leq \delta_i - j$,
    \quad for all $1 \leq i \leq n$ and $1 \leq j \leq \delta_i$.
\end{enumerate}
\medskip

The main interest of Tamari diagrams is that they offer a very simple way
to test if two binary trees are comparable in the Tamari lattice~\cite{Knu06}.
Indeed, if~$T$ and $T'$ are two binary trees with~$n$ nodes, one has
\begin{equation} \label{eq:ComparaisonDT}
    T \OrdTam T' \qquad \mbox{if and only if} \qquad
    \Td(T)_i \leq \Td(T')_i \quad \mbox{for all $1 \leq i \leq n$}.
\end{equation}

\subsubsection{Twin Tamari diagrams and the Baxter order relation}

\begin{Definition} \label{def:DTD}
    A \emph{twin Tamari diagram} of size~$n$ is a
    pair~$\left(\delta^L, \delta^R\right)$ such that~$\delta^L$ and~$\delta^R$
    are Tamari diagrams of length~$n$ and for all index $1 \leq i \leq n - 1$,
    exactly one letter among~$\delta^L_i$ and~$\delta^R_i$ is zero.
\end{Definition}

Note that we can represent any twin Tamari diagram
$\delta := \left(\delta^L, \delta^R\right)$
in a more compact way by a word~$\omega(\delta)$ were
\begin{equation}
    \omega(\delta)_i :=
    \begin{cases}
        - \delta^L_i & \mbox{if $\delta^L_i \ne 0$}, \\
        \delta^R_i & \mbox{otherwise},
    \end{cases}
\end{equation}
for all $1 \leq i \leq n$ where~$n$ is the size of~$\delta$. We graphically
represent a twin Tamari diagram~$\delta$ by drawing for each index~$i$ a
column of~$|\omega(\delta)_i|$ boxes facing up if~$\omega(\delta)_i \geq 0$
and facing down otherwise. First twin Tamari diagrams are drawn in
Figure~\ref{fig:PremiersDTD}.
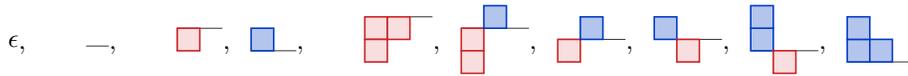
\begin{figure}[ht]
    \begin{equation*}
        \begin{split}\epsilon\end{split}, \qquad
        \begin{split}\scalebox{.3}{
        \begin{tikzpicture}
            \draw[Ligne] (-0.5,0)--(0.5,0);
            \draw (0,0) node[above] {};
        \end{tikzpicture}}
        \end{split}, \qquad
        \begin{split}\scalebox{.3}{
        \begin{tikzpicture}
            \draw[Ligne] (-0.5,0)--(1.5,0);
            \node[CaseBas](0, 0) at (0,-0.5) {};
            \draw (0,0) node[above] {};
            \draw (1,0) node[above] {};
        \end{tikzpicture}}\end{split}, \enspace
        \begin{split}\scalebox{.3}{
        \begin{tikzpicture}
            \draw[Ligne] (-0.5,0)--(1.5,0);
            \node[CaseHaut](0, 0) at (0,0.5) {};
            \draw (0,1) node[above] {};
            \draw (1,0) node[above] {};
        \end{tikzpicture}}
        \end{split}, \qquad
        \begin{split}\scalebox{.3}{
        \begin{tikzpicture}
            \draw[Ligne] (-0.5,0)--(2.5,0);
            \node[CaseBas](0, 0) at (0,-0.5) {};
            \node[CaseBas](0, 1) at (0,-1.5) {};
            \draw (0,0) node[above] {};
            \node[CaseBas](1, 0) at (1,-0.5) {};
            \draw (1,0) node[above] {};
            \draw (2,0) node[above] {};
        \end{tikzpicture}}
        \end{split}, \enspace
        \begin{split}\scalebox{.3}{
        \begin{tikzpicture}
            \draw[Ligne] (-0.5,0)--(2.5,0);
            \node[CaseBas](0, 0) at (0,-0.5) {};
            \node[CaseBas](0, 1) at (0,-1.5) {};
            \draw (0,0) node[above] {};
            \node[CaseHaut](1, 0) at (1,0.5) {};
            \draw (1,1) node[above] {};
            \draw (2,0) node[above] {};
        \end{tikzpicture}}
        \end{split}, \enspace
        \begin{split}\scalebox{.3}{
        \begin{tikzpicture}
            \draw[Ligne] (-0.5,0)--(2.5,0);
            \node[CaseBas](0, 0) at (0,-0.5) {};
            \draw (0,0) node[above] {};
            \node[CaseHaut](1, 0) at (1,0.5) {};
            \draw (1,1) node[above] {};
            \draw (2,0) node[above] {};
        \end{tikzpicture}}
        \end{split}, \enspace
        \begin{split}\scalebox{.3}{
        \begin{tikzpicture}
            \draw[Ligne] (-0.5,0)--(2.5,0);
            \node[CaseHaut](0, 0) at (0,0.5) {};
            \draw (0,1) node[above] {};
            \node[CaseBas](1, 0) at (1,-0.5) {};
            \draw (1,0) node[above] {};
            \draw (2,0) node[above] {};
        \end{tikzpicture}}
        \end{split}, \enspace
        \begin{split}\scalebox{.3}{
        \begin{tikzpicture}
            \draw[Ligne] (-0.5,0)--(2.5,0);
            \node[CaseHaut](0, 0) at (0,0.5) {};
            \node[CaseHaut](0, 1) at (0,1.5) {};
            \draw (0,2) node[above] {};
            \node[CaseBas](1, 0) at (1,-0.5) {};
            \draw (1,0) node[above] {};
            \draw (2,0) node[above] {};
        \end{tikzpicture}}
        \end{split}, \enspace
        \begin{split}\scalebox{.3}{
        \begin{tikzpicture}
            \draw[Ligne] (-0.5,0)--(2.5,0);
            \node[CaseHaut](0, 0) at (0,0.5) {};
            \node[CaseHaut](0, 1) at (0,1.5) {};
            \draw (0,2) node[above] {};
            \node[CaseHaut](1, 0) at (1,0.5) {};
            \draw (1,1) node[above] {};
            \draw (2,0) node[above] {};
        \end{tikzpicture}}
        \end{split}
    \end{equation*}
    \caption{First twin Tamari diagrams of size~$0$, $1$, $2$, and~$3$.}
    \label{fig:PremiersDTD}
\end{figure}

\begin{Proposition} \label{prop:DTDBijectionABJ}
    For any~$n \geq 0$, the set of twin Tamari diagrams of size~$n$ is
    in bijection with the set of pairs of twin binary trees with~$n$ nodes.
    Moreover, this bijection is expressed as follows: If $J := (T_L, T_R)$
    is a pair of twin binary trees, the twin Tamari diagram in bijection
    with~$J$ is~$\Ttd(J) := \left(\Td(T_L), \Td(T_R)\right)$.
\end{Proposition}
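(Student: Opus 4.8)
The plan is to reduce the statement to Pallo's bijection between binary trees and Tamari diagrams~\cite{Pal86}, together with a dictionary relating the canopy of a binary tree to the zero entries of its Tamari diagram. Recall (Pallo~\cite{Pal86}) that $\Td$ is a bijection from~$\EnsAB_n$ onto the set of Tamari diagrams of length~$n$; hence the map $(T_L, T_R) \longmapsto (\Td(T_L), \Td(T_R))$ is a bijection between pairs of binary trees with~$n$ nodes and pairs of Tamari diagrams of length~$n$. By the very definitions of pairs of twin binary trees and of twin Tamari diagrams, it therefore suffices to show that this map restricts to a bijection between the former and the latter, which by those definitions amounts to the claim that, for every $T \in \EnsAB_n$ and every index $1 \leq i \leq n - 1$,
\begin{equation} \label{eq:CanopTd}
    \Canop(T)_i = 0 \quad \Longleftrightarrow \quad \Td(T)_i = 0 .
\end{equation}

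To prove~(\ref{eq:CanopTd}), let~$x$ be the $i$-th node of~$T$. By definition of~$\Td$, we have $\Td(T)_i = 0$ if and only if the right subtree of~$x$ is empty, that is, the right child of~$x$ is a leaf. Since the inorder traversal of a binary tree visits nodes and leaves alternately, the leaf visited immediately after~$x$ is the $(i\!+\!1)$-st leaf of~$T$, which is precisely the leaf indexing position~$i$ of~$\Canop(T)$. If the right subtree of~$x$ is empty, this leaf is the right child of~$x$, hence right-oriented, so $\Canop(T)_i = 0$. Conversely, if $\Canop(T)_i = 0$, the $(i\!+\!1)$-st leaf is right-oriented, so by Lemma~\ref{lem:OrientationFeuille} it is attached to the $i$-th node~$x$; being right-oriented, it is then the right child of~$x$, so the right subtree of~$x$ is empty and $\Td(T)_i = 0$. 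This establishes~(\ref{eq:CanopTd}).

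With~(\ref{eq:CanopTd}) in hand the proposition follows: a pair $(T_L, T_R)$ of binary trees with~$n$ nodes has complementary canopies if and only if, for every $1 \leq i \leq n - 1$, exactly one of $\Canop(T_L)_i$ and $\Canop(T_R)_i$ is zero, which by~(\ref{eq:CanopTd}) holds if and only if exactly one of $\Td(T_L)_i$ and $\Td(T_R)_i$ is zero, i.e.\ if and only if $(\Td(T_L), \Td(T_R))$ is a twin Tamari diagram. The main obstacle is the bookkeeping hidden in~(\ref{eq:CanopTd}): one has to match the indexing conventions carefully, checking that position~$i$ of the canopy corresponds to the $(i\!+\!1)$-st leaf of~$T$ and that this leaf sits between the $i$-th and the $(i\!+\!1)$-st nodes in the inorder traversal, so that its orientation is governed solely by whether the right subtree of the $i$-th node is empty. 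Everything else is the formal bijectivity argument above, built on Pallo's result and Lemma~\ref{lem:OrientationFeuille}.
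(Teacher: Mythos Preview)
Your proof is correct and follows essentially the same approach as the paper's. Both arguments rest on Pallo's bijection~$\Td$ together with Lemma~\ref{lem:OrientationFeuille} to translate between the condition $\Td(T)_i = 0$ and the orientation of the $(i\!+\!1)$-st leaf; you simply package this translation as the clean single-tree equivalence~(\ref{eq:CanopTd}) and then apply it to both trees, whereas the paper runs the two cases $\delta^L_i = \delta^R_i = 0$ and $\delta^L_i \ne 0,\ \delta^R_i \ne 0$ by contradiction directly on the pair.
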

\begin{proof}
    Let us show that the application~$\Ttd$ is well-defined, that is
    $\Ttd(J) =: \left(\delta^L, \delta^R\right)$ is a twin Tamari diagram.
    Fix an index $1 \leq i \leq n - 1$. By contradiction, assume first
    that $\delta^L_i = \delta^R_i = 0$. By definition of~$\Td$, this
    implies that the $i$-th nodes of~$T_L$ and~$T_R$ have no right child.
    Hence, by Lemma~\ref{lem:OrientationFeuille}, the $i\!+\!1$-st leaves
    of~$T_L$ and~$T_R$ are attached to its $i$-th nodes and are right-oriented.
    Since $i \leq n - 1$, these leaves are not the rightmost leaves of~$T_L$
    and~$T_R$, implying that~$T_L$ and~$T_R$ have not complementary canopies,
    and hence that~$(T_L, T_R)$ is not a pair of twin binary trees.
    Assume now that~$\delta^L_i \ne 0$ and~$\delta^R_i \ne 0$. By definition
    of~$\Td$, this implies that the $i$-th nodes of~$T_L$ and~$T_R$ have
    a right child. Hence, by Lemma~\ref{lem:OrientationFeuille}, the
    $i\!+\!1$-st leaves of~$T_L$ and~$T_R$ are attached to its $i\!+\!1$-st
    nodes and are left-oriented. This implies again that~$(T_L, T_R)$ is
    not a pair of twin binary trees. Thus,~$\Ttd$ computes twin Tamari diagrams.

    Now, since~$\Td$ is a bijection between the set of binary trees
    with~$n$ nodes and Tamari diagrams of size~$n$~\cite{Pal86}, for any
    twin Tamari diagram~$\delta$, there is a unique pair of binary
    trees~$J$ such that~$\Ttd(J) = \delta$. Using very similar arguments
    as above, one can prove that the canopies of the trees of~$J$ are
    complementary, and hence, that $J$ is a pair of twin binary trees.
\end{proof}

Figure~\ref{fig:ExempleBijABJDTD} shows an example of a pair of twin binary
trees with the corresponding twin Tamari diagram.
\begin{figure}[ht]
    \begin{equation*}
        \begin{split}
        \scalebox{.25}{
        \begin{tikzpicture}
            \node[Noeud](0)at(0.0,-2){};
            \node[Noeud](1)at(1.0,-1){};
            \draw[Arete](1)--(0);
            \node[Noeud](2)at(2.0,-2){};
            \draw[Arete](1)--(2);
            \node[Noeud](3)at(3.0,0){};
            \draw[Arete](3)--(1);
            \node[Noeud](4)at(4.0,-2){};
            \node[Noeud](5)at(5.0,-3){};
            \draw[Arete](4)--(5);
            \node[Noeud](6)at(6.0,-1){};
            \draw[Arete](6)--(4);
            \node[Noeud](7)at(7.0,-2){};
            \draw[Arete](6)--(7);
            \draw[Arete](3)--(6);
        \end{tikzpicture}}
        \end{split}
        \enspace
        \begin{split}
        \scalebox{.25}{
        \begin{tikzpicture}
            \node[Noeud](0)at(0.0,-1){};
            \node[Noeud](1)at(1.0,-4){};
            \node[Noeud](2)at(2.0,-3){};
            \draw[Arete](2)--(1);
            \node[Noeud](3)at(3.0,-4){};
            \draw[Arete](2)--(3);
            \node[Noeud](4)at(4.0,-2){};
            \draw[Arete](4)--(2);
            \draw[Arete](0)--(4);
            \node[Noeud](5)at(5.0,0){};
            \draw[Arete](5)--(0);
            \node[Noeud](6)at(6.0,-2){};
            \node[Noeud](7)at(7.0,-1){};
            \draw[Arete](7)--(6);
            \draw[Arete](5)--(7);
        \end{tikzpicture}}
        \end{split}
        \begin{split}
        \qquad \xrightarrow{\enspace \Ttd \enspace} \qquad
        \end{split}
        (01041010, 40100200)
        \begin{split}
        \qquad \simeq \qquad
        \end{split}
        \begin{split}
        \scalebox{.25}{
        \begin{tikzpicture}
            \draw[Ligne] (-0.5,0)--(7.5,0);
            \node[CaseHaut](0, 0) at (0,0.53) {};
            \node[CaseHaut](0, 1) at (0,1.53) {};
            \node[CaseHaut](0, 2) at (0,2.53) {};
            \node[CaseHaut](0, 3) at (0,3.53) {};
            \node[CaseBas](1, 0) at (1,-0.53) {};
            \node[CaseHaut](2, 0) at (2,0.53) {};
            \node[CaseBas](3, 0) at (3,-0.53) {};
            \node[CaseBas](3, 1) at (3,-1.53) {};
            \node[CaseBas](3, 2) at (3,-2.53) {};
            \node[CaseBas](3, 3) at (3,-3.53) {};
            \node[CaseBas](4, 0) at (4,-0.53) {};
            \node[CaseHaut](5, 0) at (5,0.53) {};
            \node[CaseHaut](5, 1) at (5,1.53) {};
            \node[CaseBas](6, 0) at (6,-0.53) {};
        \end{tikzpicture}}
        \end{split}
    \end{equation*}
    \caption{A pair of twin binary trees, the corresponding twin Tamari
    diagram via the bijection~$\Ttd$ and its graphical representation.}
    \label{fig:ExempleBijABJDTD}
\end{figure}
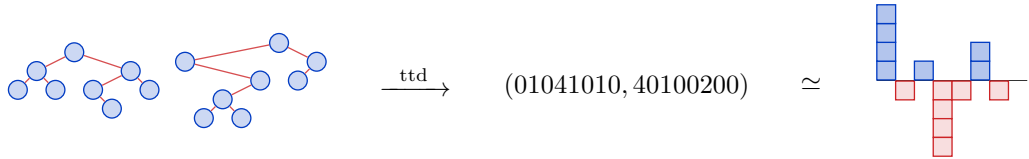

\begin{Proposition} \label{prop:OrdreBaxterDTD}
    Let~$J_0$ and~$J_1$ two pairs of twin binary trees with~$n$ nodes.
    We have
    \begin{equation}
        J_0 \OrdBX J_1
        \qquad \mbox{if and only if} \qquad
        \omega\left(\Ttd\left(J_0\right)\right)_i \leq
        \omega\left(\Ttd\left(J_1\right)\right)_i \quad
        \mbox{for all $1 \leq i \leq n$}.
    \end{equation}
\end{Proposition}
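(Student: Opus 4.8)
The plan is to reduce the statement to the description of the Baxter order in terms of the Tamari order given by~\eqref{eq:RelOrdreBX}, combined with the characterization~\eqref{eq:ComparaisonDT} of the Tamari order through Tamari diagrams, and then to dispose of the sign bookkeeping introduced by the word~$\omega$ by a short case analysis on each index.

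Concretely, write $J_0 =: (T_L, T_R)$ and $J_1 =: (T'_L, T'_R)$, and set $\delta^L := \Td(T_L)$, $\delta^R := \Td(T_R)$, $\epsilon^L := \Td(T'_L)$, $\epsilon^R := \Td(T'_R)$, so that $\Ttd(J_0) = (\delta^L, \delta^R)$ and $\Ttd(J_1) = (\epsilon^L, \epsilon^R)$. By~\eqref{eq:RelOrdreBX}, $J_0 \OrdBX J_1$ holds if and only if $T'_L \OrdTam T_L$ and $T_R \OrdTam T'_R$. By~\eqref{eq:ComparaisonDT}, these two comparisons are respectively equivalent to $\epsilon^L_i \leq \delta^L_i$ for all $1 \leq i \leq n$ and to $\delta^R_i \leq \epsilon^R_i$ for all $1 \leq i \leq n$. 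Hence $J_0 \OrdBX J_1$ if and only if, for every $1 \leq i \leq n$, both $\epsilon^L_i \leq \delta^L_i$ and $\delta^R_i \leq \epsilon^R_i$ hold. It therefore suffices to prove, for each fixed index~$i$, the equivalence between the conjunction ``$\epsilon^L_i \leq \delta^L_i$ and $\delta^R_i \leq \epsilon^R_i$'' on the one hand, and $\omega(\Ttd(J_0))_i \leq \omega(\Ttd(J_1))_i$ on the other.

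For $i = n$ one has $\delta^L_n = \delta^R_n = \epsilon^L_n = \epsilon^R_n = 0$ (since $0 \leq \delta_n \leq n - n$ forces the last entry of a Tamari diagram of length~$n$ to vanish), so both sides are trivially true. For $1 \leq i \leq n - 1$, Proposition~\ref{prop:DTDBijectionABJ} ensures that exactly one of $\delta^L_i, \delta^R_i$ vanishes and exactly one of $\epsilon^L_i, \epsilon^R_i$ vanishes, while the nonzero entries are strictly positive (Tamari-diagram entries being nonnegative). This splits the verification into four subcases. If $\delta^L_i = 0 = \epsilon^L_i$, both $\omega$-values are $\delta^R_i$ and $\epsilon^R_i$, the constraint $\epsilon^L_i \leq \delta^L_i$ is automatic, and the equivalence reduces to $\delta^R_i \leq \epsilon^R_i$. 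The case $\delta^L_i, \epsilon^L_i > 0$ is symmetric: the $\omega$-values are $-\delta^L_i$ and $-\epsilon^L_i$, and the equivalence reduces to $\epsilon^L_i \leq \delta^L_i$. If $\delta^L_i = 0$ and $\epsilon^L_i > 0$, the $\omega$-values are $\delta^R_i > 0$ and $-\epsilon^L_i < 0$, so the right-hand inequality fails; and $\epsilon^L_i \leq \delta^L_i = 0$ also fails, so both sides are false. If $\delta^L_i > 0$ and $\epsilon^L_i = 0$, the $\omega$-values are $-\delta^L_i < 0$ and $\epsilon^R_i > 0$, so the right-hand inequality holds, while $\epsilon^L_i = 0 \leq \delta^L_i$ and $\delta^R_i = 0 \leq \epsilon^R_i$ both hold, so both sides are true. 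This exhausts all cases and finishes the proof.

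There is essentially no serious obstacle here: once~\eqref{eq:RelOrdreBX} and~\eqref{eq:ComparaisonDT} are in hand, the only content is that the sign convention defining~$\omega$ is precisely the one for which the two ``opposite'' Tamari comparisons $T'_L \OrdTam T_L$ and $T_R \OrdTam T'_R$ merge into a single entrywise inequality, which the case analysis above confirms; the only care needed is to track that each nonzero Tamari-diagram entry is strictly positive, so that a (negated) entry coming from~$\delta^L$ is never confused with an entry coming from~$\delta^R$.
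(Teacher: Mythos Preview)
Your proof is correct and follows exactly the approach the paper takes: combine~\eqref{eq:RelOrdreBX}, \eqref{eq:ComparaisonDT}, and Proposition~\ref{prop:DTDBijectionABJ}. The paper states this as a one-sentence ``direct consequence'' of these three ingredients, whereas you have written out the entrywise case analysis that unpacks the sign convention of~$\omega$; the content is the same.
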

\begin{proof}
    This result is a direct consequence of the characterization of the
    Baxter order relation~(\ref{eq:RelOrdreBX}) using the Tamari order
    relation, the characterization furnished by~(\ref{eq:ComparaisonDT})
    to compare two binary trees in the Tamari lattice with Tamari diagrams,
    and the bijection between pairs of twin binary trees and Twin Tamari
    diagrams provided by Proposition~\ref{prop:DTDBijectionABJ}.
\end{proof}

Figure~\ref{fig:IntervalleABJ} shows an interval of the Baxter lattice.
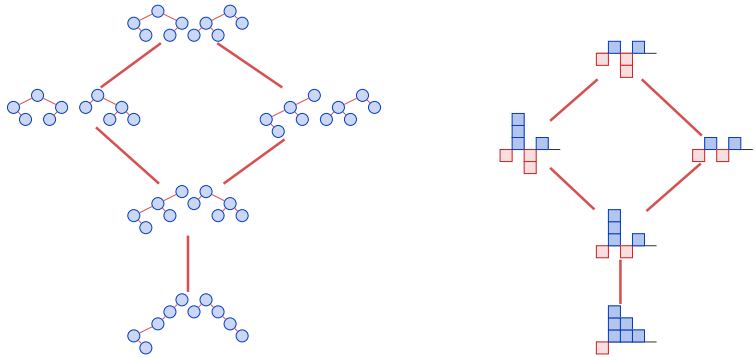
\begin{figure}[ht]
    \centering
    \scalebox{.16}{
    \begin{tikzpicture}
        \node[Noeud](0)at(0,1){};
        \node[Noeud](1)at(1,0){};
        \draw[Arete](0)--(1);
        \node[Noeud](2)at(2,2){};
        \draw[Arete](2)--(0);
        \node[Noeud](3)at(3,0){};
        \node[Noeud](4)at(4,1){};
        \draw[Arete](4)--(3);
        \draw[Arete](2)--(4);
        \node[Noeud](5)at(5,0){};
        \node[Noeud](6)at(6,1){};
        \draw[Arete](6)--(5);
        \node[Noeud](7)at(7,0){};
        \draw[Arete](6)--(7);
        \node[Noeud](8)at(8,2){};
        \draw[Arete](8)--(6);
        \node[Noeud](9)at(9,1){};
        \draw[Arete](8)--(9);
        \node[fit=(0) (1) (2) (3) (4) (5) (6) (7) (8) (9)] (J1) {};
        \node[Noeud](10)at(-10,-6){};
        \node[Noeud](11)at(-9,-7){};
        \draw[Arete](10)--(11);
        \node[Noeud](12)at(-8,-5){};
        \draw[Arete](12)--(10);
        \node[Noeud](13)at(-7,-7){};
        \node[Noeud](14)at(-6,-6){};
        \draw[Arete](14)--(13);
        \draw[Arete](12)--(14);
        \node[Noeud](15)at(-4,-6){};
        \node[Noeud](16)at(-3,-5){};
        \draw[Arete](16)--(15);
        \node[Noeud](17)at(-2,-7){};
        \node[Noeud](18)at(-1,-6){};
        \draw[Arete](18)--(17);
        \node[Noeud](19)at(0,-7){};
        \draw[Arete](18)--(19);
        \draw[Arete](16)--(18);
        \node[fit=(10) (11) (12) (13) (14) (15) (16) (17) (18) (19)] (J2) {};
        \node[Noeud](20)at(11,-7){};
        \node[Noeud](21)at(12,-8){};
        \draw[Arete](20)--(21);
        \node[Noeud](22)at(13,-6){};
        \draw[Arete](22)--(20);
        \node[Noeud](23)at(14,-7){};
        \draw[Arete](22)--(23);
        \node[Noeud](24)at(15,-5){};
        \draw[Arete](24)--(22);
        \node[Noeud](25)at(16,-7){};
        \node[Noeud](26)at(17,-6){};
        \draw[Arete](26)--(25);
        \node[Noeud](27)at(18,-7){};
        \draw[Arete](26)--(27);
        \node[Noeud](28)at(19,-5){};
        \draw[Arete](28)--(26);
        \node[Noeud](29)at(20,-6){};
        \draw[Arete](28)--(29);
        \node[fit=(20) (21) (22) (23) (24) (25) (26) (27) (28) (29)] (J3) {};
        \node[Noeud](30)at(0,-15){};
        \node[Noeud](31)at(1,-16){};
        \draw[Arete](30)--(31);
        \node[Noeud](32)at(2,-14){};
        \draw[Arete](32)--(30);
        \node[Noeud](33)at(3,-15){};
        \draw[Arete](32)--(33);
        \node[Noeud](34)at(4,-13){};
        \draw[Arete](34)--(32);
        \node[Noeud](35)at(5,-14){};
        \node[Noeud](36)at(6,-13){};
        \draw[Arete](36)--(35);
        \node[Noeud](37)at(7,-15){};
        \node[Noeud](38)at(8,-14){};
        \draw[Arete](38)--(37);
        \node[Noeud](39)at(9,-15){};
        \draw[Arete](38)--(39);
        \draw[Arete](36)--(38);
        \node[fit=(30) (31) (32) (33) (34) (35) (36) (37) (38) (39)] (J4) {};
        \node[Noeud](40)at(0,-25){};
        \node[Noeud](41)at(1,-26){};
        \draw[Arete](40)--(41);
        \node[Noeud](42)at(2,-24){};
        \draw[Arete](42)--(40);
        \node[Noeud](43)at(3,-23){};
        \draw[Arete](43)--(42);
        \node[Noeud](44)at(4,-22){};
        \draw[Arete](44)--(43);
        \node[Noeud](45)at(5,-23){};
        \node[Noeud](46)at(6,-22){};
        \draw[Arete](46)--(45);
        \node[Noeud](47)at(7,-23){};
        \node[Noeud](48)at(8,-24){};
        \node[Noeud](49)at(9,-25){};
        \draw[Arete](48)--(49);
        \draw[Arete](47)--(48);
        \draw[Arete](46)--(47);
        \node[fit=(40) (41) (42) (43) (44) (45) (46) (47) (48) (49)] (J5) {};
        \draw [Arete,line width=6pt] (J1)--(J2);
        \draw [Arete,line width=6pt] (J1)--(J3);
        \draw [Arete,line width=6pt] (J2)--(J4);
        \draw [Arete,line width=6pt] (J3)--(J4);
        \draw [Arete,line width=6pt] (J4)--(J5);
    \end{tikzpicture}}
    \qquad \qquad
    \scalebox{.16}{
    \begin{tikzpicture}
        \draw[Ligne](-0.5,0)--(4.5,0);
        \node[CaseBas](c1) at (0,-0.5) {};
        \node[CaseHaut](c2) at (1,0.5) {};
        \node[CaseBas](c3) at (2,-0.5) {};
        \node[CaseBas](c4) at (2,-1.5) {};
        \node[CaseHaut](c5) at (3,0.5) {};
        \node[fit=(c1) (c2) (c3) (c4) (c5)] (d1) {};
        \draw[Ligne](-8.5,-8)--(-3.5,-8);
        \node[CaseBas](c6) at (-8,-8.5) {};
        \node[CaseHaut](c7) at (-7,-7.5) {};
        \node[CaseHaut](c8) at (-7,-6.5) {};
        \node[CaseHaut](c9) at (-7,-5.5) {};
        \node[CaseBas](c10) at (-6,-8.5) {};
        \node[CaseBas](c11) at (-6,-9.5) {};
        \node[CaseHaut](c12) at (-5,-7.5) {};
        \node[fit=(c6) (c7) (c8) (c9) (c10) (c11) (c12)] (d2) {};
        \draw[Ligne](7.5,-8)--(12.5,-8);
        \node[CaseBas](c13) at (8,-8.5) {};
        \node[CaseHaut](c14) at (9,-7.5) {};
        \node[CaseBas](c15) at (10,-8.5) {};
        \node[CaseHaut](c16) at (11,-7.5) {};
        \node[fit=(c13) (c14) (c15) (c16)] (d3) {};
        \draw[Ligne](-0.5,-16)--(4.5,-16);
        \node[CaseBas](c17) at (0,-16.5) {};
        \node[CaseHaut](c18) at (1,-15.5) {};
        \node[CaseHaut](c19) at (1,-14.5) {};
        \node[CaseHaut](c20) at (1,-13.5) {};
        \node[CaseBas](c21) at (2,-16.5) {};
        \node[CaseHaut](c22) at (3,-15.5) {};
        \node[fit=(c17) (c18) (c19) (c20) (c21) (c22)] (d4) {};
        \draw[Ligne](-0.5,-24)--(4.5,-24);
        \node[CaseBas](c23) at (0,-24.5) {};
        \node[CaseHaut](c24) at (1,-23.5) {};
        \node[CaseHaut](c25) at (1,-22.5) {};
        \node[CaseHaut](c26) at (1,-21.5) {};
        \node[CaseHaut](c27) at (2,-23.5) {};
        \node[CaseHaut](c28) at (2,-22.5) {};
        \node[CaseHaut](c29) at (3,-23.5) {};
        \node[fit=(c23) (c24) (c25) (c26) (c27) (c28) (c29)] (d5) {};
        \draw [Arete,line width=6pt] (d1)--(d2);
        \draw [Arete,line width=6pt] (d1)--(d3);
        \draw [Arete,line width=6pt] (d2)--(d4);
        \draw [Arete,line width=6pt] (d3)--(d4);
        \draw [Arete,line width=6pt] (d4)--(d5);
    \end{tikzpicture}}
    \caption{An interval of the Baxter lattice of order~$5$ where vertices
    are seen as pairs of twin binary trees and as Twin Tamari diagrams.}
    \label{fig:IntervalleABJ}
\end{figure}

\section{The Hopf algebra of pairs of twin binary trees} \label{sec:AlgebreHopfBaxter}

In the sequel, all the algebraic structures have a field of characteristic
zero $\K$ as ground field.

\subsection{\texorpdfstring{The Hopf algebra $\FQSym$ and construction
                            of Hopf subalgebras}
           {The Hopf algebra FQSym and construction of Hopf subalgebras}}

\subsubsection{\texorpdfstring{The Hopf algebra $\FQSym$}
                              {The Hopf algebra FQSym}}
Recall that the family $\left\{\F_\sigma\right\}_{\sigma \in \EnsPermu}$
forms the \emph{fundamental} basis of~$\FQSym$, the Hopf algebra of Free
quasi-symmetric functions~\cite{MR95, DHT02}. Its product and its coproduct
are defined by
\begin{equation}
    \F_\sigma \Prod \F_\nu :=
    \sum_{\pi \; \in \; \sigma \; \cshuffle \; \nu} \F_\pi,
\end{equation}
\begin{equation}
    \Delta \left(\F_\sigma\right) :=
    \sum_{\sigma = uv} \F_{\Std(u)} \Tenseur \F_{\Std(v)}.
\end{equation}
For example,
\begin{equation}\begin{split}
    \F_{132} \Prod \F_{12} & =
        \F_{13245} + \F_{13425} + \F_{13452} + \F_{14325} + \F_{14352} \\
    & + \F_{14532} + \F_{41325} + \F_{41352} + \F_{41532} + \F_{45132},
\end{split}\end{equation}

\begin{equation}\begin{split}
    \Delta\left(\F_{35142}\right) & =
        1 \Tenseur \F_{35142} + \F_{1} \Tenseur \F_{4132} + \F_{12} \Tenseur \F_{132} \\
    & + \F_{231} \Tenseur \F_{21} + \F_{2413} \Tenseur \F_{1} + \F_{35142} \Tenseur 1.
\end{split}\end{equation}
\medskip

Set $\G_\sigma := \F_{\sigma^{-1}}$. Recall that~$\FQSym$ is isomorphic to
its dual~$\FQSym^\star$ through the map $\psi : \FQSym \to \FQSym^\star$
defined by $\psi\left(\F_\sigma\right) := \F_{\sigma^{-1}}^\star = \G_\sigma^\star$.
\medskip

Recall also that~$\FQSym$ admits a \emph{polynomial realization}~\cite{DHT02},
that is an injective algebra morphism
$r_A : \FQSym \hookrightarrow \K \langle A \rangle$. Furthermore, this map
should be compatible with the coalgebra structure in the sense that the
coproduct of an element can be computed by taking its image by~$r_A$, and
then by applying the \emph{alphabet doubling trick}~\cite{DHT02,Hiv07}.
This map is defined by
\begin{equation} \label{eq:RealisationFQSym}
    r_A\left(\G_\sigma\right) :=
    \sum_{\substack{u \; \in \; A^* \\ \Std(u) = \sigma}} u.
\end{equation}
For example,
\begin{align}
    r_A\left(\G_\epsilon\right) & = 1, \\
    r_A\left(\G_1\right)        & = \sum_i a_i = a_1 + a_2 + a_3 + \cdots, \\
    r_A\left(\G_{231}\right)    & = \sum_{k < i \leq j} a_i a_j a_k
        = a_2 a_2 a_1 + a_2 a_3 a_1 + a_2 a_4 a_1 + \cdots.
\end{align}

\subsubsection{\texorpdfstring{Construction of Hopf subalgebras of $\FQSym$}
                              {Construction of Hopf subalgebras of FQSym}}
If~$\equiv$ is an equivalence relation on~$\EnsPermu$ and~$\sigma \in \EnsPermu$,
let us denote by~$\widehat{\sigma}$ the $\equiv$-equivalence class of~$\sigma$.
\medskip

The following theorem contained in an unpublished note of Hivert and
Nzeutchap~\cite{HN07} (see also~\cite{DHT02,Hiv07}) shows that an equivalence
relation on~$A^*$ satisfying some properties can be used to define Hopf
subalgebras of~$\FQSym$:
\begin{Theoreme} \label{thm:HivertJanvier}
    Let~$\equiv$ be an equivalence relation defined on~$A^*$. If~$\equiv$
    is a congruence, compatible with the restriction of alphabet intervals
    and compatible with the destandardization process, then the family
    $\left\{\PP_{\widehat{\sigma}}\right\}_{\widehat{\sigma} \in \EnsPermu/_\equiv}$
    defined by
    \begin{equation} \label{eq:EquivFQSym}
        \PP_{\widehat{\sigma}} :=
        \sum_{\nu \; \in \; \widehat{\sigma}} \F_\nu,
    \end{equation}
    spans a Hopf subalgebra of~$\FQSym$.
\end{Theoreme}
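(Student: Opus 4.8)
The plan is to show that the subspace $V$ spanned by the family $\left\{\PP_{\widehat\sigma}\right\}_{\widehat\sigma \in \EnsPermu/_\equiv}$ is a sub-bialgebra of $\FQSym$, and then to invoke the standard fact that a graded connected sub-bialgebra of a Hopf algebra is automatically a Hopf subalgebra. Since the $\F_\nu$ form a basis of $\FQSym$ and the classes $\widehat\sigma$ partition $\EnsPermu$, the elements $\PP_{\widehat\sigma}$ are homogeneous and linearly independent, hence a basis of $V$; it therefore suffices to check that $V$ is stable under the product and under the coproduct of $\FQSym$.

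For the product, I would first observe that, for a permutation $\pi$ of size $m+n$, there is at most one pair $(\sigma', \nu')$ of respective sizes $m$ and $n$ with $\pi \in \sigma' \cshuffle \nu'$, namely $\sigma' = \pi_{|[1,m]}$ and $\nu' = \Std\left(\pi_{|[m+1,m+n]}\right)$. Consequently, expanding $\PP_{\widehat\sigma} \Prod \PP_{\widehat\nu}$ by bilinearity and the product formula, each $\F_\pi$ occurs with multiplicity at most one, so that $\PP_{\widehat\sigma} \Prod \PP_{\widehat\nu} = \sum_{\pi \in S} \F_\pi$, where $S$ is the set of $\pi$ whose two parts lie respectively in $\widehat\sigma$ and $\widehat\nu$. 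The crux is then that $S$ is a union of whole $\equiv$-classes: if $\pi \equiv \pi'$, compatibility with the restriction of alphabet intervals (applied to the intervals $[1,m]$ and $[m+1,m+n]$) gives $\pi_{|[1,m]} \equiv \pi'_{|[1,m]}$ and $\pi_{|[m+1,m+n]} \equiv \pi'_{|[m+1,m+n]}$, and the destandardization compatibility then makes the standardized parts of $\pi$ and of $\pi'$ pairwise $\equiv$-equivalent. Hence $\pi \in S \Leftrightarrow \pi' \in S$, and $\PP_{\widehat\sigma} \Prod \PP_{\widehat\nu}$ is a sum of elements $\PP_{\widehat\pi}$.

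For the coproduct, I would write $\Delta\left(\PP_{\widehat\sigma}\right) = \sum_{\alpha, \beta} N(\alpha, \beta)\, \F_\alpha \Tenseur \F_\beta$, where $N(\alpha, \beta)$ counts the permutations $\pi \in \widehat\sigma$ admitting a factorization $\pi = uv$ with $\Std(u) = \alpha$ and $\Std(v) = \beta$. I would then prove that $N(\alpha, \beta)$ depends only on $\widehat\alpha$ and $\widehat\beta$ by a bijection: given such a $\pi = uv$ and any $\alpha' \equiv \alpha$, let $u'$ be the unique rearrangement of $\Alphab(u)$ with $\Std(u') = \alpha'$; since $\Std(u) \equiv \Std(u')$ and $\Eval(u) = \Eval(u')$, the destandardization compatibility yields $u \equiv u'$, and the congruence property then gives $\pi = uv \equiv u'v =: \pi'$, so $\pi' \in \widehat\sigma$. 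This map is a bijection onto the permutations of $\widehat\sigma$ with prefix pattern $\alpha'$ and suffix pattern $\beta$, and a symmetric argument on the suffix handles $\beta$. Thus $N$ is constant on $\widehat\alpha \times \widehat\beta$, and $\Delta\left(\PP_{\widehat\sigma}\right) = \sum_{\widehat\alpha, \widehat\beta} N_{\widehat\alpha, \widehat\beta}\, \PP_{\widehat\alpha} \Tenseur \PP_{\widehat\beta}$ lies in $V \Tenseur V$.

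Finally, I would note that $V$ contains the unit $\F_\epsilon = \PP_{\widehat\epsilon}$ and is compatible with the counit, so the two stability statements exhibit $V$ as a graded connected sub-bialgebra of $\FQSym$, hence as a Hopf subalgebra. I expect the coproduct step to be the main obstacle: it is the place where both directions of the destandardization compatibility and the congruence axiom must be combined, and where one must verify that the \emph{multiplicities} $N(\alpha, \beta)$, not merely the support, are constant across each class, so that the terms $\F_\alpha \Tenseur \F_\beta$ genuinely reassemble into tensors $\PP_{\widehat\alpha} \Tenseur \PP_{\widehat\beta}$.
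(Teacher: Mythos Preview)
Your proposal is correct and follows essentially the same approach as the paper. The paper does not give a full proof of this theorem (it is attributed to an unpublished note of Hivert and Nzeutchap), but merely sketches after the statement that destandardization plus interval-restriction compatibility force the product to be a sum of full $\equiv$-classes, and that destandardization plus the congruence property do the same for the coproduct; your argument is a correct fleshing-out of exactly this sketch, with the added care of checking that the coproduct \emph{multiplicities} $N(\alpha,\beta)$, and not just the support, are constant on each $\widehat\alpha\times\widehat\beta$.
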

The compatibility with the destandardization process and with the restriction
of alphabet intervals imply that for any~$\F_\pi$ appearing in a product
$\PP_{\widehat{\sigma}} \Prod \PP_{\widehat{\nu}}$ and any
permutation~$\pi' \equiv \pi$, $\F_{\pi'}$ also appears in the product.
Moreover, the compatibility with the destandardization process and the
fact that~$\equiv$ is a congruence imply that for any $\F_\sigma \Tenseur \F_\nu$
appearing in a coproduct~$\Delta\left(\PP_{\widehat{\pi}}\right)$ and
any permutations~$\sigma' \equiv \sigma$ and~$\nu' \equiv \nu$,
$\F_{\sigma'} \Tenseur \F_{\nu'}$ also appears in the coproduct.
\medskip

In the sequel, we shall call
$\left\{\PP_{\widehat{\sigma}}\right\}_{\widehat{\sigma} \in \EnsPermu/_\equiv}$
the \emph{fundamental basis} of the corresponding Hopf subalgebra of~$\FQSym$.

\subsection{\texorpdfstring{Construction of the Hopf algebra $\Baxter$}
                           {Construction of the Hopf algebra Baxter}}
By Theorem~\ref{thm:BijectionEquivBXPermuBX}, the $\EquivBX$-equivalence
classes of permutations can be encoded by unlabeled pairs of twin binary
trees. Moreover, in the sequel, the $\PSymb$-symbols of permutations are
regarded as unlabeled pairs of twin binary trees since there is only one
way to label a pair of twin binary trees with a permutation so that it is
a pair of twin binary search trees. Hence, in our graphical representations
we will only represent their shape.
\medskip

Since by definition~$\EquivBX$ is a congruence, since by
Propositions~\ref{prop:CompDestd} and~\ref{prop:CompRestrSegmAlph},~$\EquivBX$
satisfies the conditions of Theorem~\ref{thm:HivertJanvier}, and since by
Theorem~\ref{thm:PSymboleClasses}, the permutations~$\sigma$ such
that~$\PSymb(\sigma) = J$ coincide with the Baxter equivalence class
represented by the pair of twin binary trees~$J$, we have the following
theorem.
\begin{Theoreme} \label{thm:AlgebreHopfBaxter}
    The family $\left\{\PP_J\right\}_{J \in \EnsABJ}$ defined by
    \begin{equation}
        \PP_J :=
        \sum_{\substack{\sigma \; \in \; \EnsPermu \\ \PSymb(\sigma) = J}}
        \F_\sigma,
    \end{equation}
    spans a Hopf subalgebra of~$\FQSym$, namely the Hopf algebra~$\Baxter$.
\end{Theoreme}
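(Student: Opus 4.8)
The plan is to invoke Theorem~\ref{thm:HivertJanvier} directly. That theorem says that whenever an equivalence relation on $A^*$ is a congruence, is compatible with the restriction of alphabet intervals, and is compatible with the destandardization process, then the associated family $\left\{\PP_{\widehat{\sigma}}\right\}$ of sums of $\F_\nu$ over equivalence classes spans a Hopf subalgebra of $\FQSym$. So the whole argument is a matter of checking that $\EquivBX$ satisfies these three hypotheses, all of which have already been established earlier in the paper.

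First I would recall that $\EquivBX$ is a congruence: this is immediate from Definition~\ref{def:MonoideBaxter}, since $\EquivBX$ is defined as the reflexive and transitive closure of adjacency relations that only rewrite factors, hence is automatically compatible with left and right concatenation. Next, compatibility with the destandardization process is exactly Proposition~\ref{prop:CompDestd}, and compatibility with the restriction of alphabet intervals is exactly Proposition~\ref{prop:CompRestrSegmAlph}. Therefore Theorem~\ref{thm:HivertJanvier} applies to $\EquivBX$, and the family $\left\{\PP_{\widehat{\sigma}}\right\}_{\widehat{\sigma} \in \EnsPermu/_{\EquivBX}}$ spans a Hopf subalgebra of $\FQSym$.

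Then I would identify this abstract index set $\EnsPermu/_{\EquivBX}$ with $\EnsABJ$. By Theorem~\ref{thm:BijectionEquivBXPermuBX}, the $\EquivBX$-equivalence classes of permutations of size $n$ are in bijection with unlabeled pairs of twin binary trees with $n$ nodes, and by Theorem~\ref{thm:PSymboleClasses} the permutations $\sigma$ with $\PSymb(\sigma) = J$ are precisely those forming the class indexed by $J$ (using that a pair of twin binary trees admits a unique labelling making it a pair of twin binary search trees). Hence $\PP_J = \sum_{\sigma : \PSymb(\sigma) = J} \F_\sigma$ agrees with $\PP_{\widehat{\sigma}}$ under this bijection, and the family $\left\{\PP_J\right\}_{J \in \EnsABJ}$ spans the same Hopf subalgebra, which we name $\Baxter$.

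There is essentially no hard part here: the statement is a corollary assembled from results proved in the preceding sections. If anything, the one point deserving a sentence of care is that the family $\left\{\PP_J\right\}$ is genuinely a basis (linear independence), which follows because the $\F_\sigma$ are a basis of $\FQSym$ and the classes $\widehat{\sigma}$ partition $\EnsPermu$, so distinct $\PP_J$ are sums over disjoint nonempty sets of basis vectors. I would state this observation and then conclude that $\Baxter$ is the claimed Hopf subalgebra.
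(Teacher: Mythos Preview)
Your proposal is correct and follows essentially the same route as the paper: invoke Theorem~\ref{thm:HivertJanvier} after checking that $\EquivBX$ is a congruence (Definition~\ref{def:MonoideBaxter}) and satisfies the compatibility conditions (Propositions~\ref{prop:CompDestd} and~\ref{prop:CompRestrSegmAlph}), then identify the equivalence classes with $\EnsABJ$ via Theorems~\ref{thm:PSymboleClasses} and~\ref{thm:BijectionEquivBXPermuBX}. Your extra remark on linear independence is a harmless clarification the paper leaves implicit.
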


For example,
\begin{align}
    \PP_{\scalebox{0.15}{%
    \begin{tikzpicture}
        \node[Noeud](0)at(0,0){};
        \node[Noeud](1)at(1,-1){};
        \draw[Arete](0)--(1);
    \end{tikzpicture}}
    \;
    \scalebox{0.15}{%
    \begin{tikzpicture}
        \node[Noeud](0)at(0,-1){};
        \node[Noeud](1)at(1,0){};
        \draw[Arete](1)--(0);
    \end{tikzpicture}}}
    & =
    \F_{12}, \\[1em]
    \PP_{\scalebox{0.15}{%
    \begin{tikzpicture}
        \node[Noeud](0)at(0,-1){};
        \node[Noeud](1)at(1,0){};
        \draw[Arete](1)--(0);
        \node[Noeud](2)at(2,-2){};
        \node[Noeud](3)at(3,-1){};
        \draw[Arete](3)--(2);
        \draw[Arete](1)--(3);
    \end{tikzpicture}}
    \;
    \scalebox{0.15}{%
    \begin{tikzpicture}
        \node[Noeud](0)at(0,-1){};
        \node[Noeud](1)at(1,-2){};
        \draw[Arete](0)--(1);
        \node[Noeud](2)at(2,0){};
        \draw[Arete](2)--(0);
        \node[Noeud](3)at(3,-1){};
        \draw[Arete](2)--(3);
    \end{tikzpicture}}}
    & =
    \F_{2143} + \F_{2413}, \\[1em]
    \PP_{\scalebox{0.15}{%
    \begin{tikzpicture}
        \node[Noeud](0)at(0,-3){};
        \node[Noeud](1)at(1,-2){};
        \draw[Arete](1)--(0);
        \node[Noeud](2)at(2,-3){};
        \draw[Arete](1)--(2);
        \node[Noeud](3)at(3,-1){};
        \draw[Arete](3)--(1);
        \node[Noeud](4)at(4,0){};
        \draw[Arete](4)--(3);
        \node[Noeud](5)at(5,-1){};
        \draw[Arete](4)--(5);
    \end{tikzpicture}}
    \;
    \scalebox{0.15}{%
    \begin{tikzpicture}
        \node[Noeud](0)at(0,-1){};
        \node[Noeud](1)at(1,-2){};
        \draw[Arete](0)--(1);
        \node[Noeud](2)at(2,0){};
        \draw[Arete](2)--(0);
        \node[Noeud](3)at(3,-2){};
        \node[Noeud](4)at(4,-3){};
        \draw[Arete](3)--(4);
        \node[Noeud](5)at(5,-1){};
        \draw[Arete](5)--(3);
        \draw[Arete](2)--(5);
    \end{tikzpicture}}}
    & =
    \F_{542163} + \F_{542613} + \F_{546213}.
\end{align}
\medskip

The Hilbert series of~$\Baxter$ is
\begin{equation}
    B(z) := 1 + z + 2z^2 + 6z^3 + 22z^4 + 92z^5 + 422z^6 +
            2074z^7 + 10754z^8 + 58202z^9 + \cdots,
\end{equation}
the generating series of Baxter permutations (sequence~\Sloane{A001181}
of~\cite{SLOANE}).
\medskip

By Theorem~\ref{thm:HivertJanvier}, the product of~$\Baxter$ is well-defined.
We deduce it from the product of~$\FQSym$, and, since by Theorem~\ref{thm:EquivBXBaxter}
there is exactly one Baxter permutation in any $\EquivBX$-equivalence
class of permutations, we obtain
\begin{equation} \label{eq:ProduitBaxterP}
    \PP_{J_0} \Prod \PP_{J_1} =
        \sum_{\substack{\PSymb(\sigma) = J_0, \; \PSymb(\nu) = J_1 \\
              \pi \; \in \; \sigma \; \cshuffle \; \nu \; \cap \; \EnsPermuBX}}
        \PP_{\PSymb(\pi)}.
\end{equation}
For example,
\begin{equation}\begin{split}
    \PP_{\scalebox{0.15}{%
    \begin{tikzpicture}
        \node[Noeud](0)at(0,-1){};
        \node[Noeud](1)at(1,-2){};
        \draw[Arete](0)--(1);
        \node[Noeud](2)at(2,0){};
        \draw[Arete](2)--(0);
    \end{tikzpicture}}
    \;
    \scalebox{0.15}{%
    \begin{tikzpicture}
        \node[Noeud](0)at(0,-1){};
        \node[Noeud](1)at(1,0){};
        \draw[Arete](1)--(0);
        \node[Noeud](2)at(2,-1){};
        \draw[Arete](1)--(2);
    \end{tikzpicture}}}
    \Prod
    \PP_{\scalebox{0.15}{%
    \begin{tikzpicture}
        \node[Noeud](0)at(0,0){};
        \node[Noeud](1)at(1,-1){};
        \draw[Arete](0)--(1);
    \end{tikzpicture}}
    \;
    \scalebox{0.15}{%
    \begin{tikzpicture}
        \node[Noeud](0)at(0,-1){};
        \node[Noeud](1)at(1,0){};
        \draw[Arete](1)--(0);
    \end{tikzpicture}}}
    & =
    \PP_{\scalebox{0.15}{%
    \begin{tikzpicture}
        \node[Noeud](0)at(0,-1){};
        \node[Noeud](1)at(1,-2){};
        \draw[Arete](0)--(1);
        \node[Noeud](2)at(2,0){};
        \draw[Arete](2)--(0);
        \node[Noeud](3)at(3,-1){};
        \node[Noeud](4)at(4,-2){};
        \draw[Arete](3)--(4);
        \draw[Arete](2)--(3);
    \end{tikzpicture}}
    \;
    \scalebox{0.15}{%
    \begin{tikzpicture}
        \node[Noeud](0)at(0,-3){};
        \node[Noeud](1)at(1,-2){};
        \draw[Arete](1)--(0);
        \node[Noeud](2)at(2,-3){};
        \draw[Arete](1)--(2);
        \node[Noeud](3)at(3,-1){};
        \draw[Arete](3)--(1);
        \node[Noeud](4)at(4,0){};
        \draw[Arete](4)--(3);
    \end{tikzpicture}}}
    +
    \PP_{\scalebox{0.15}{%
    \begin{tikzpicture}
        \node[Noeud](0)at(0,-1){};
        \node[Noeud](1)at(1,-2){};
        \draw[Arete](0)--(1);
        \node[Noeud](2)at(2,0){};
        \draw[Arete](2)--(0);
        \node[Noeud](3)at(3,-1){};
        \node[Noeud](4)at(4,-2){};
        \draw[Arete](3)--(4);
        \draw[Arete](2)--(3);
    \end{tikzpicture}}
    \;
    \scalebox{0.15}{%
    \begin{tikzpicture}
        \node[Noeud](0)at(0,-2){};
        \node[Noeud](1)at(1,-1){};
        \draw[Arete](1)--(0);
        \node[Noeud](2)at(2,-3){};
        \node[Noeud](3)at(3,-2){};
        \draw[Arete](3)--(2);
        \draw[Arete](1)--(3);
        \node[Noeud](4)at(4,0){};
        \draw[Arete](4)--(1);
    \end{tikzpicture}}}
    +
    \PP_{\scalebox{0.15}{%
    \begin{tikzpicture}
        \node[Noeud](0)at(0,-1){};
        \node[Noeud](1)at(1,-2){};
        \draw[Arete](0)--(1);
        \node[Noeud](2)at(2,0){};
        \draw[Arete](2)--(0);
        \node[Noeud](3)at(3,-1){};
        \node[Noeud](4)at(4,-2){};
        \draw[Arete](3)--(4);
        \draw[Arete](2)--(3);
    \end{tikzpicture}}
    \;
    \scalebox{0.15}{%
    \begin{tikzpicture}
        \node[Noeud](0)at(0,-1){};
        \node[Noeud](1)at(1,0){};
        \draw[Arete](1)--(0);
        \node[Noeud](2)at(2,-3){};
        \node[Noeud](3)at(3,-2){};
        \draw[Arete](3)--(2);
        \node[Noeud](4)at(4,-1){};
        \draw[Arete](4)--(3);
        \draw[Arete](1)--(4);
    \end{tikzpicture}}} \\
    & +
    \PP_{\scalebox{0.15}{%
    \begin{tikzpicture}
        \node[Noeud](0)at(0,-2){};
        \node[Noeud](1)at(1,-3){};
        \draw[Arete](0)--(1);
        \node[Noeud](2)at(2,-1){};
        \draw[Arete](2)--(0);
        \node[Noeud](3)at(3,0){};
        \draw[Arete](3)--(2);
        \node[Noeud](4)at(4,-1){};
        \draw[Arete](3)--(4);
    \end{tikzpicture}}
    \;
    \scalebox{0.15}{%
    \begin{tikzpicture}
        \node[Noeud](0)at(0,-2){};
        \node[Noeud](1)at(1,-1){};
        \draw[Arete](1)--(0);
        \node[Noeud](2)at(2,-2){};
        \node[Noeud](3)at(3,-3){};
        \draw[Arete](2)--(3);
        \draw[Arete](1)--(2);
        \node[Noeud](4)at(4,0){};
        \draw[Arete](4)--(1);
    \end{tikzpicture}}}
    +
    \PP_{\scalebox{0.15}{%
    \begin{tikzpicture}
        \node[Noeud](0)at(0,-2){};
        \node[Noeud](1)at(1,-3){};
        \draw[Arete](0)--(1);
        \node[Noeud](2)at(2,-1){};
        \draw[Arete](2)--(0);
        \node[Noeud](3)at(3,0){};
        \draw[Arete](3)--(2);
        \node[Noeud](4)at(4,-1){};
        \draw[Arete](3)--(4);
    \end{tikzpicture}}
    \;
    \scalebox{0.15}{%
    \begin{tikzpicture}
        \node[Noeud](0)at(0,-1){};
        \node[Noeud](1)at(1,0){};
        \draw[Arete](1)--(0);
        \node[Noeud](2)at(2,-2){};
        \node[Noeud](3)at(3,-3){};
        \draw[Arete](2)--(3);
        \node[Noeud](4)at(4,-1){};
        \draw[Arete](4)--(2);
        \draw[Arete](1)--(4);
    \end{tikzpicture}}}
    +
    \PP_{\scalebox{0.15}{
    \begin{tikzpicture}
        \node[Noeud](0)at(0,-2){};
        \node[Noeud](1)at(1,-3){};
        \draw[Arete](0)--(1);
        \node[Noeud](2)at(2,-1){};
        \draw[Arete](2)--(0);
        \node[Noeud](3)at(3,0){};
        \draw[Arete](3)--(2);
        \node[Noeud](4)at(4,-1){};
        \draw[Arete](3)--(4);
    \end{tikzpicture}}
    \;
    \scalebox{0.15}{%
    \begin{tikzpicture}
        \node[Noeud](0)at(0,-1){};
        \node[Noeud](1)at(1,0){};
        \draw[Arete](1)--(0);
        \node[Noeud](2)at(2,-1){};
        \node[Noeud](3)at(3,-3){};
        \node[Noeud](4)at(4,-2){};
        \draw[Arete](4)--(3);
        \draw[Arete](2)--(4);
        \draw[Arete](1)--(2);
    \end{tikzpicture}}}.
\end{split}\end{equation}
\medskip

In the same way, we deduce the coproduct of~$\Baxter$ from the coproduct
of~$\FQSym$ and by Theorem~\ref{thm:EquivBXBaxter}, we obtain
\begin{equation}
    \Delta \left(\PP_J\right) =
        \sum_{\substack{uv \; \in \; \EnsPermu \\
                \PSymb(uv) = J \\
                \sigma := \Std(u), \; \nu := \Std(v) \; \in \; \EnsPermuBX}}
        \PP_{\PSymb(\sigma)} \Tenseur \PP_{\PSymb(\nu)}.
\end{equation}
For example,
\begin{equation}\begin{split}
    \Delta \left(
    \PP_{\scalebox{0.15}{%
    \begin{tikzpicture}
        \node[Noeud](0)at(0,-1){};
        \node[Noeud](1)at(1,0){};
        \draw[Arete](1)--(0);
        \node[Noeud](2)at(2,-2){};
        \node[Noeud](3)at(3,-1){};
        \draw[Arete](3)--(2);
        \draw[Arete](1)--(3);
    \end{tikzpicture}}
    \;
    \scalebox{0.15}{%
    \begin{tikzpicture}
        \node[Noeud](0)at(0,-1){};
        \node[Noeud](1)at(1,-2){};
        \draw[Arete](0)--(1);
        \node[Noeud](2)at(2,0){};
        \draw[Arete](2)--(0);
        \node[Noeud](3)at(3,-1){};
        \draw[Arete](2)--(3);
    \end{tikzpicture}}}\right)
    & =
    1
    \Tenseur
    \PP_{\scalebox{0.15}{%
    \begin{tikzpicture}
        \node[Noeud](0)at(0,-1){};
        \node[Noeud](1)at(1,0){};
        \draw[Arete](1)--(0);
        \node[Noeud](2)at(2,-2){};
        \node[Noeud](3)at(3,-1){};
        \draw[Arete](3)--(2);
        \draw[Arete](1)--(3);
    \end{tikzpicture}}
    \;
    \scalebox{0.15}{%
    \begin{tikzpicture}
        \node[Noeud](0)at(0,-1){};
        \node[Noeud](1)at(1,-2){};
        \draw[Arete](0)--(1);
        \node[Noeud](2)at(2,0){};
        \draw[Arete](2)--(0);
        \node[Noeud](3)at(3,-1){};
        \draw[Arete](2)--(3);
    \end{tikzpicture}}}
    +
    \PP_{\scalebox{0.15}{%
    \begin{tikzpicture}
        \node[Noeud](0)at(0,0){};
    \end{tikzpicture}}
    \;
    \scalebox{0.15}{%
    \begin{tikzpicture}
        \node[Noeud](0)at(0,0){};
    \end{tikzpicture}}}
    \Tenseur
    \PP_{\scalebox{0.15}{%
    \begin{tikzpicture}
        \node[Noeud](0)at(0,0){};
        \node[Noeud](1)at(1,-2){};
        \node[Noeud](2)at(2,-1){};
        \draw[Arete](2)--(1);
        \draw[Arete](0)--(2);
    \end{tikzpicture}}
    \;
    \scalebox{0.15}{%
    \begin{tikzpicture}
        \node[Noeud](0)at(0,-1){};
        \node[Noeud](1)at(1,0){};
        \draw[Arete](1)--(0);
        \node[Noeud](2)at(2,-1){};
        \draw[Arete](1)--(2);
    \end{tikzpicture}}}
    +
    \PP_{\scalebox{0.15}{%
    \begin{tikzpicture}
        \node[Noeud](0)at(0,0){};
    \end{tikzpicture}}
    \;
    \scalebox{0.15}{%
    \begin{tikzpicture}
        \node[Noeud](0)at(0,0){};
    \end{tikzpicture}}}
    \Tenseur
    \PP_{\scalebox{0.15}{%
    \begin{tikzpicture}
        \node[Noeud](0)at(0,-1){};
        \node[Noeud](1)at(1,-2){};
        \draw[Arete](0)--(1);
        \node[Noeud](2)at(2,0){};
        \draw[Arete](2)--(0);
    \end{tikzpicture}}
    \;
    \scalebox{0.15}{%
    \begin{tikzpicture}
        \node[Noeud](0)at(0,-1){};
        \node[Noeud](1)at(1,0){};
        \draw[Arete](1)--(0);
        \node[Noeud](2)at(2,-1){};
        \draw[Arete](1)--(2);
    \end{tikzpicture}}}
    +
    \PP_{\scalebox{0.15}{%
    \begin{tikzpicture}
        \node[Noeud](0)at(0,0){};
        \node[Noeud](1)at(1,-1){};
        \draw[Arete](0)--(1);
    \end{tikzpicture}}
    \;
    \scalebox{0.15}{%
    \begin{tikzpicture}
        \node[Noeud](0)at(0,-1){};
        \node[Noeud](1)at(1,0){};
        \draw[Arete](1)--(0);
    \end{tikzpicture}}}
    \Tenseur
    \PP_{\scalebox{0.15}{%
    \begin{tikzpicture}
        \node[Noeud](0)at(0,0){};
        \node[Noeud](1)at(1,-1){};
        \draw[Arete](0)--(1);
    \end{tikzpicture}}
    \;
    \scalebox{0.15}{%
    \begin{tikzpicture}
        \node[Noeud](0)at(0,-1){};
        \node[Noeud](1)at(1,0){};
        \draw[Arete](1)--(0);
    \end{tikzpicture}}} \\
    & +
    \PP_{\scalebox{0.15}{%
    \begin{tikzpicture}
        \node[Noeud](0)at(0,-1){};
        \node[Noeud](1)at(1,0){};
        \draw[Arete](1)--(0);
    \end{tikzpicture}}
    \;
    \scalebox{0.15}{%
    \begin{tikzpicture}
        \node[Noeud](0)at(0,0){};
        \node[Noeud](1)at(1,-1){};
        \draw[Arete](0)--(1);
    \end{tikzpicture}}}
    \Tenseur
    \PP_{\scalebox{0.15}{%
    \begin{tikzpicture}
        \node[Noeud](0)at(0,-1){};
        \node[Noeud](1)at(1,0){};
        \draw[Arete](1)--(0);
    \end{tikzpicture}}
    \;
    \scalebox{0.15}{%
    \begin{tikzpicture}
        \node[Noeud](0)at(0,0){};
        \node[Noeud](1)at(1,-1){};
        \draw[Arete](0)--(1);
    \end{tikzpicture}}}
    +
    \PP_{\scalebox{0.15}{%
    \begin{tikzpicture}
        \node[Noeud](0)at(0,-1){};
        \node[Noeud](1)at(1,0){};
        \draw[Arete](1)--(0);
        \node[Noeud](2)at(2,-1){};
        \draw[Arete](1)--(2);
    \end{tikzpicture}}
    \;
    \scalebox{0.15}{%
    \begin{tikzpicture}
        \node[Noeud](0)at(0,-1){};
        \node[Noeud](1)at(1,-2){};
        \draw[Arete](0)--(1);
        \node[Noeud](2)at(2,0){};
        \draw[Arete](2)--(0);
    \end{tikzpicture}}}
    \Tenseur
    \PP_{\scalebox{0.15}{%
    \begin{tikzpicture}
        \node[Noeud](0)at(0,0){};
    \end{tikzpicture}}
    \;
    \scalebox{0.15}{%
    \begin{tikzpicture}
        \node[Noeud](0)at(0,0){};
    \end{tikzpicture}}}
    +
    \PP_{\scalebox{0.15}{%
    \begin{tikzpicture}
        \node[Noeud](0)at(0,-1){};
        \node[Noeud](1)at(1,0){};
        \draw[Arete](1)--(0);
        \node[Noeud](2)at(2,-1){};
        \draw[Arete](1)--(2);
    \end{tikzpicture}}
    \;
    \scalebox{0.15}{%
    \begin{tikzpicture}
        \node[Noeud](0)at(0,0){};
        \node[Noeud](1)at(1,-2){};
        \node[Noeud](2)at(2,-1){};
        \draw[Arete](2)--(1);
        \draw[Arete](0)--(2);
    \end{tikzpicture}}}
    \Tenseur
    \PP_{\scalebox{0.15}{%
    \begin{tikzpicture}
        \node[Noeud](0)at(0,0){};
    \end{tikzpicture}}
    \;
    \scalebox{0.15}{%
    \begin{tikzpicture}
        \node[Noeud](0)at(0,0){};
    \end{tikzpicture}}}
    +
    \PP_{\scalebox{0.15}{%
    \begin{tikzpicture}
        \node[Noeud](0)at(0,-1){};
        \node[Noeud](1)at(1,0){};
        \draw[Arete](1)--(0);
        \node[Noeud](2)at(2,-2){};
        \node[Noeud](3)at(3,-1){};
        \draw[Arete](3)--(2);
        \draw[Arete](1)--(3);
    \end{tikzpicture}}
    \;
    \scalebox{0.15}{%
    \begin{tikzpicture}
        \node[Noeud](0)at(0,-1){};
        \node[Noeud](1)at(1,-2){};
        \draw[Arete](0)--(1);
        \node[Noeud](2)at(2,0){};
        \draw[Arete](2)--(0);
        \node[Noeud](3)at(3,-1){};
        \draw[Arete](2)--(3);
    \end{tikzpicture}}}
    \Tenseur
    1.
\end{split}\end{equation}

\subsection{\texorpdfstring{Properties of the Hopf algebra $\Baxter$}
                           {Properties of the Hopf algebra Baxter}}

\subsubsection{A polynomial realization}
We deduce a polynomial realization of~$\Baxter$ from the one of~$\FQSym$.
In this section, we shall use the notation~$J_0 \simeq J_1$ to say that
the labeled pairs of twin binary trees~$J_0$ and~$J_1$ have same shape.
\begin{Theoreme}
    The map $r_A : \Baxter \rightarrow \K \langle A \rangle$ defined by
    \begin{equation} \label{eq:RealisationBaxter}
        r_A\left(\PP_J\right) :=
            \sum_{\substack{u \; \in \; A^* \\
            \left(\Incr(u), \; \Decr(u)\right) \simeq J}}
            u,
    \end{equation}
    for any~$J \in \EnsABJ$ provides a polynomial realization of~$\Baxter$.
\end{Theoreme}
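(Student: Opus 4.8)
The strategy is to recognise the map~\eqref{eq:RealisationBaxter} as the restriction to the Hopf subalgebra $\Baxter$ of the polynomial realization $r_A$ of $\FQSym$ given by~\eqref{eq:RealisationFQSym}. That map is an injective algebra morphism $\FQSym \hookrightarrow \K\langle A \rangle$ whose image is compatible with the coproduct through the alphabet doubling trick, and by Theorem~\ref{thm:AlgebreHopfBaxter} the algebra $\Baxter$ is a Hopf subalgebra of $\FQSym$, so its product and coproduct are those inherited from $\FQSym$. Hence the restriction of $r_A$ to $\Baxter$ is automatically an injective algebra morphism compatible with the coalgebra structure, i.e.\ a polynomial realization of $\Baxter$. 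The whole proof thus reduces to computing this restriction and checking that it equals the right-hand side of~\eqref{eq:RealisationBaxter}.

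First I would compute $r_A(\PP_J)$ from the realization of $\FQSym$. Using $\PP_J = \sum_{\PSymb(\sigma) = J} \F_\sigma$, the identity $\F_\sigma = \G_{\sigma^{-1}}$, and~\eqref{eq:RealisationFQSym}, linearity gives
\[
    r_A(\PP_J)
    = \sum_{\substack{\sigma \in \EnsPermu \\ \PSymb(\sigma) = J}} \;
      \sum_{\substack{u \in A^* \\ \Std(u) = \sigma^{-1}}} u
    = \sum_{\substack{u \in A^* \\ \PSymb\left(\Std(u)^{-1}\right) = J}} u ,
\]
because a word $u$ occurs in the double sum exactly once, namely for $\sigma = \Std(u)^{-1}$ (here, as elsewhere, $\PSymb$ of a permutation is regarded as an unlabeled pair of twin binary trees). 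It therefore remains to prove the shape identity $\PSymb\left(\Std(u)^{-1}\right) \simeq \left(\Incr(u), \Decr(u)\right)$ for every $u \in A^*$.

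This is the heart of the argument, and the one place needing a little care because of the inverses floating around. Apply Lemma~\ref{lem:FormeInsIncr} to the word $w := \Std(u)^{-1}$: the left (resp.\ right) binary tree of $\PSymb(w)$ has the same shape as $\Incr\left(\Std(w)^{-1}\right)$ (resp.\ $\Decr\left(\Std(w)^{-1}\right)$). Since $w$ is a permutation, $\Std(w) = w$ and $\Std(w)^{-1} = w^{-1} = \Std(u)$, so $\PSymb\left(\Std(u)^{-1}\right) \simeq \left(\Incr(\Std(u)), \Decr(\Std(u))\right)$. Finally, an immediate induction on $|u|$ unfolding the recursive definitions of $\Incr$ and $\Decr$ shows that $\Incr(u)$ and $\Incr(\Std(u))$ (resp.\ $\Decr(u)$ and $\Decr(\Std(u))$) have the same shape, since these trees depend only on the relative order of the letters of $u$, which standardization preserves. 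Combining these observations yields the desired shape identity, whence $r_A(\PP_J) = \sum_{u \,:\, (\Incr(u),\Decr(u)) \simeq J} u$, which is formula~\eqref{eq:RealisationBaxter}. As a by-product this also shows that the right-hand side of~\eqref{eq:RealisationBaxter} is well defined: each $u$ contributes to exactly one $J$, and $\left(\Incr(u),\Decr(u)\right)$ is always the shape of a genuine pair of twin binary trees by Proposition~\ref{prop:PSymboleNonIncr}.

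I expect no real obstacle beyond this bookkeeping: the realization of $\FQSym$ is stated on the $\G$-basis, so moving to the $\F$-basis introduces one inversion and Lemma~\ref{lem:FormeInsIncr} a second, and the point of the computation is precisely that these two inversions cancel, leaving $\left(\Incr(u),\Decr(u)\right)$ with no inverse in the final formula. Everything else is a direct appeal to results already established, chiefly Theorem~\ref{thm:AlgebreHopfBaxter}, the known polynomial realization of $\FQSym$, and Lemma~\ref{lem:FormeInsIncr}.
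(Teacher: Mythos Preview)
Your proposal is correct and follows essentially the same approach as the paper: both restrict the known realization $r_A$ of $\FQSym$ to $\Baxter$, unwind the sum via $\F_\sigma = \G_{\sigma^{-1}}$, invoke Lemma~\ref{lem:FormeInsIncr} to identify the shape condition $\PSymb(\Std(u)^{-1}) = J$ with $(\Incr(\Std(u)),\Decr(\Std(u)))\simeq J$, and then use that $\Incr$ and $\Decr$ depend only on the standardization. The only cosmetic difference is the order in which you collapse the double sum versus apply the lemma; your extra remarks on why restriction preserves the polynomial-realization property and on well-definedness are welcome but not new ingredients.
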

\begin{proof}
    Let us apply the polynomial realization~$r_A$ of~$\FQSym$ defined
    in~(\ref{eq:RealisationFQSym}) on elements of the fundamental basis
    of~$\Baxter$:
    \begin{align}
        r_A\left(\PP_J\right) & =
            \sum_{\substack{\sigma \; \in \; \EnsPermu \\
                \PSymb(\sigma) = J}}
                r_A(\F_\sigma), \\
            & = \sum_{\substack{\sigma \; \in \; \EnsPermu \\
                \PSymb(\sigma^{-1}) = J}}
                r_A(\G_\sigma), \label{eq:PreuvePR1} \\
            & = \sum_{\substack{\sigma \; \in \; \EnsPermu \\
                \left(\Incr(\sigma), \; \Decr(\sigma)\right) \simeq J}}
                r_A(\G_\sigma), \label{eq:PreuvePR2} \\
            & = \sum_{\substack{\sigma \; \in \; \EnsPermu \\
                \left(\Incr(\sigma), \; \Decr(\sigma)\right) \simeq J}}
                \quad \sum_{\substack{u \; \in \; A^* \\
                \Std(u) = \sigma}} u, \label{eq:PreuvePR3}
    \end{align}
    The equality between~(\ref{eq:PreuvePR1}) and~(\ref{eq:PreuvePR2})
    follows from Lemma~\ref{lem:FormeInsIncr}. The equality between~(\ref{eq:PreuvePR3})
    and the right member of~(\ref{eq:RealisationBaxter}) follows from the
    fact that $\Incr(\sigma) \simeq \Incr(u)$ (resp. $\Decr(\sigma) \simeq \Decr(u)$)
    whenever~$\Std(u) = \sigma$.
\end{proof}

\subsubsection{The dual Hopf algebra}
We denote by~$\left\{\PP^\star_J\right\}_{J \in \EnsABJ}$ the dual basis
of the basis $\left\{\PP_J\right\}_{J \in \EnsABJ}$. The Hopf
algebra~$\Baxter^\star$, dual of~$\Baxter$, is a quotient Hopf algebra
of~$\FQSym^\star$. More precisely,
\begin{equation}
    \Baxter^\star = \FQSym^\star / I,
\end{equation}
where~$I$ is the Hopf ideal of~$\FQSym^\star$ spanned by the elements
$(\F^\star_\sigma - \F^\star_\nu)$ whenever~$\sigma \EquivBX \nu$.
\medskip

Let $\phi : \FQSym^\star \twoheadrightarrow \Baxter^\star$ be the canonical
projection, mapping~$\F^\star_\sigma$ on~$\PP^\star_{\PSymb(\sigma)}$.
By definition, the product of~$\Baxter^\star$ is
\begin{equation}
    \PP^\star_{J_0} \Prod \PP^\star_{J_1} =
    \phi \left(\F^\star_{\sigma} \Prod \F^\star_{\nu} \right),
\end{equation}
where~$\sigma$ and~$\nu$ are any permutations such that $\PSymb(\sigma) = J_0$
and $\PSymb(\nu) = J_1$. Note that due to the fact that~$\Baxter^\star$
is a quotient of~$\FQSym^\star$, the number of terms occurring in a product
$\PP^\star_{J_0} \Prod \PP^\star_{J_1}$ only depends on the number~$m$
(resp.~$n$) of nodes of~$J_0$ (resp.~$J_1$) and is~$\binom{m + n}{m}$.
For example,
\begin{equation}\begin{split}
    \PP^\star_{\scalebox{0.15}{%
}
    \caption{The graded graph~$G_{\PP^\star}$ restricted to vertices of
    order smaller than~$5$.}
    \label{fig:GrapheDualiteD}
\end{figure}

\subsubsection{A boolean basis}
We shall call a basis of an algebra (resp. coalgebra) a \emph{boolean algebra basis}
(resp. \emph{boolean coalgebra basis}) if each element of the basis (resp.
tensor square of the basis) only occurs with coefficient~$0$ or~$1$ in any
product (resp. coproduct) involving two (resp. one) elements of the basis.

\begin{Proposition} \label{prop:BaseEnsemblisteSSFQSym}
    If~$\equiv$ is an equivalence relation defined on~$A^*$ satisfying the
    conditions of Theorem~\ref{thm:HivertJanvier} and additionally, for
    all~$\pi, \mu \in \EnsPermu$,
    \begin{equation} \label{eq:BaseBool}
        \sigma, \nu \in \pi \cshuffle \mu \enspace
        \mbox{ and }
        \enspace \sigma^{-1} \equiv \nu^{-1}
        \quad \mbox{ imply } \quad
        \sigma = \nu,
    \end{equation}
    then, the family
    $\left\{\PP_{\widehat{\sigma}}\right\}_{\widehat{\sigma} \in \EnsPermu/_\equiv}$
    defined in~(\ref{eq:EquivFQSym}) is both an algebra and a coalgebra
    boolean basis of the corresponding Hopf subalgebra of~$\FQSym$.
\end{Proposition}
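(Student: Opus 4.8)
The plan is to deduce both claims from the multiplicity-free behaviour of the shifted shuffle of permutations, together with the self-duality of~$\FQSym$, using hypothesis~(\ref{eq:BaseBool}) only for the coalgebra half. Since, by Theorem~\ref{thm:HivertJanvier}, $\left\{\PP_{\widehat\sigma}\right\}_{\widehat\sigma \in \EnsPermu/_\equiv}$ already spans a Hopf subalgebra, writing a product $\PP_{\widehat\sigma} \Prod \PP_{\widehat\nu}$ (resp.\ a coproduct $\Delta\left(\PP_{\widehat\tau}\right)$) back in the $\F$-basis (resp.\ the $\F \Tenseur \F$-basis) and isolating the coefficient of a single $\F_\rho$ (resp.\ of a single $\F_\alpha \Tenseur \F_\beta$) returns exactly the coefficient of $\PP_{\widehat\rho}$ (resp.\ of $\PP_{\widehat\alpha} \Tenseur \PP_{\widehat\beta}$). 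So it suffices to bound these $\F$-level coefficients by~$1$.

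For the algebra statement, the coefficient of $\F_\rho$ in $\PP_{\widehat\sigma} \Prod \PP_{\widehat\nu} = \sum_{\sigma' \in \widehat\sigma, \; \nu' \in \widehat\nu} \sum_{\pi \in \sigma' \cshuffle \nu'} \F_\pi$ is the number of pairs $(\sigma', \nu') \in \widehat\sigma \times \widehat\nu$ with $\rho \in \sigma' \cshuffle \nu'$. Here I would use that $\sigma'$ and the word obtained from $\nu'$ by shifting by $m := |\sigma'|$ have the disjoint alphabets $\{1, \dots, m\}$ and $\{m+1, \dots, m+n\}$, so that each $\rho$ occurring in $\sigma' \cshuffle \nu'$ occurs exactly once and, moreover, $\rho$ recovers its factors: $\sigma'$ is the subword of $\rho$ consisting of the letters $\leq m$ and $\nu'$ is the standardization of the complementary subword. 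As $m = |\sigma|$ is fixed, at most one pair $(\sigma', \nu')$ can satisfy $\rho \in \sigma' \cshuffle \nu'$, so the coefficient is $0$ or $1$, and by the first paragraph $\left\{\PP_{\widehat\sigma}\right\}$ is an algebra boolean basis. Note that this part uses only the hypotheses of Theorem~\ref{thm:HivertJanvier}.

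For the coalgebra statement, fix $\tau$, $\alpha$, $\beta$ with $|\alpha| = p$, $|\beta| = q$, $p + q = |\tau|$. In $\Delta\left(\PP_{\widehat\tau}\right) = \sum_{\tau' \in \widehat\tau} \Delta\left(\F_{\tau'}\right)$ the coefficient of $\F_\alpha \Tenseur \F_\beta$ counts the $\tau' \in \widehat\tau$ such that $\Std\left(\tau'_1 \dots \tau'_p\right) = \alpha$ and $\Std\left(\tau'_{p+1} \dots \tau'_{p+q}\right) = \beta$, the cut position being forced to be $p$. The key step is to transport this condition onto inverses: using the self-duality isomorphism $\psi$ of $\FQSym$ recalled above, the coefficient of $\F_\alpha \Tenseur \F_\beta$ in $\Delta\left(\F_{\tau'}\right)$ equals the coefficient of $\F_{\left(\tau'\right)^{-1}}$ in $\F_{\alpha^{-1}} \Prod \F_{\beta^{-1}}$, hence it is $1$ if $\left(\tau'\right)^{-1} \in \alpha^{-1} \cshuffle \beta^{-1}$ and $0$ otherwise. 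Granting this, suppose two elements of $\widehat\tau$ both contribute; writing them as $\sigma^{-1}$ and $\nu^{-1}$, this means $\sigma, \nu \in \alpha^{-1} \cshuffle \beta^{-1}$ with $\sigma^{-1} \equiv \nu^{-1}$, so applying~(\ref{eq:BaseBool}) with $\pi := \alpha^{-1}$ and $\mu := \beta^{-1}$ gives $\sigma = \nu$. Thus the coefficient is $0$ or $1$, and $\left\{\PP_{\widehat\sigma}\right\}$ is a coalgebra boolean basis.

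The genuinely delicate point is the second part of the coalgebra argument: correctly identifying which $\F_\alpha \Tenseur \F_\beta$-coefficient must be controlled and pinning down, via $\psi$, the correspondence between deconcatenation on $\tau'$ and the shifted shuffle of $\alpha^{-1}$ and $\beta^{-1}$; once this dictionary is in place, hypothesis~(\ref{eq:BaseBool}) applies essentially verbatim. Everything else is the bookkeeping of class-sums inside a Hopf subalgebra and the disjointness of alphabets in the shifted shuffle.
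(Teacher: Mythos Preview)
Your proof is correct and follows essentially the same route as the paper: the algebra part is the immediate multiplicity-free observation for the shifted shuffle, and the coalgebra part is handled by passing to inverses via~$\psi$ and reducing to exactly the count $\#\{\sigma \in \pi^{-1} \cshuffle \mu^{-1} : \sigma^{-1} \in \widehat\tau\}$, which~(\ref{eq:BaseBool}) bounds by~$1$. The only cosmetic difference is that the paper packages the coalgebra half as ``the dual basis $\{\PP^\star_{\widehat\sigma}\}$ is a boolean algebra basis'' and then computes $\PP^\star_{\widehat\pi}\Prod\PP^\star_{\widehat\mu}$ directly, whereas you stay on the primal side and read off the $\F_\alpha\Tenseur\F_\beta$-coefficient of $\Delta(\PP_{\widehat\tau})$; the underlying computation is identical.
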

\begin{proof}
    It is immediate from the definition of the product of~$\FQSym$ that
    $\left\{\PP_{\widehat{\sigma}}\right\}_{\widehat{\sigma} \in \EnsPermu/_\equiv}$
    is a boolean algebra basis, regardless of~(\ref{eq:BaseBool}).

    By duality,
    $\left\{\PP_{\widehat{\sigma}}\right\}_{\widehat{\sigma} \in \EnsPermu/_\equiv}$
    is a boolean coalgebra basis if and only if its dual basis $\left\{\PP_{\widehat{\sigma}}^\star\right\}_{\widehat{\sigma} \in \EnsPermu/_\equiv}$
    is a boolean algebra basis. One has
    \begin{align}
        \PP_{\widehat{\pi}}^\star \Prod \PP_{\widehat{\mu}}^\star & =
        \phi\left(\F_\pi^\star \Prod \F_\mu^\star\right) \\
        & = \phi\left(\psi\left(\psi^{-1}\left(\F_\pi^\star\right) \Prod \psi^{-1}\left(\F_\mu^\star\right)\right)\right) \\
        & = \phi\left(\psi\left(\F_{\pi^{-1}} \Prod \F_{\mu^{-1}}\right)\right) \\
        & = \sum_{\sigma \; \in \; \pi^{-1} \; \cshuffle \; \nu^{-1}} \phi\left(\F_{\sigma^{-1}}^\star\right) \label{eq:PreuveBaseBool},
    \end{align}
    where~$\phi$ is the canonical projection mapping~$\F^\star_\sigma$
    on~$\PP^\star_{\widehat{\sigma}}$ for any permutation~$\sigma$, $\psi$
    is the Hopf isomorphism mapping~$\F_\sigma$ on~$\F^\star_{\sigma^{-1}}$
    for any permutation~$\sigma$, and $\pi \in \widehat{\pi}$ and
    $\mu \in \widehat{\mu}$. One can easily see that if~$\equiv$ satisfies
    the hypothesis of the proposition, then there are no multiplicities
    in~(\ref{eq:PreuveBaseBool}).
\end{proof}

Law and Reading have proved in~\cite{LR10} that the basis of their Baxter
Hopf algebra, analog to our basis~$\left\{\PP_J\right\}_{J \in \EnsABJ}$,
is both a boolean algebra basis and a boolean coalgebra basis. We re-prove
this result in our setting:
\begin{Proposition} \label{prop:BaseEnsembliste}
    The basis~$\left\{\PP_J\right\}_{J \in \EnsABJ}$ is both a boolean
    algebra basis and a boolean coalgebra basis of~$\Baxter$.
\end{Proposition}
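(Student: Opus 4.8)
The plan is to derive this as a direct consequence of Proposition~\ref{prop:BaseEnsemblisteSSFQSym}, since the Baxter congruence~$\EquivBX$ has already been shown (Propositions~\ref{prop:CompDestd} and~\ref{prop:CompRestrSegmAlph}) to satisfy the hypotheses of Theorem~\ref{thm:HivertJanvier}. So the only thing left to check is the extra condition~\eqref{eq:BaseBool}: for all $\pi, \mu \in \EnsPermu$, if $\sigma, \nu \in \pi \cshuffle \mu$ and $\sigma^{-1} \EquivBX \nu^{-1}$, then $\sigma = \nu$. Once this holds, Proposition~\ref{prop:BaseEnsemblisteSSFQSym} immediately gives that $\left\{\PP_{\widehat{\sigma}}\right\}$ is both a boolean algebra basis and a boolean coalgebra basis, and since by Theorem~\ref{thm:PSymboleClasses} the $\EquivBX$-equivalence classes are exactly the $\PSymb$-fibers, this family is precisely $\left\{\PP_J\right\}_{J \in \EnsABJ}$.

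First I would unwind the shifted shuffle. If $\sigma, \nu \in \pi \cshuffle \mu$ with $|\pi| = m$ and $|\mu| = n$, then in both $\sigma$ and $\nu$ the $m$ smallest values $\{1,\dots,m\}$ appear in the relative order dictated by $\pi$, and the $n$ largest values $\{m+1,\dots,m+n\}$ appear in the relative order dictated by $\mu$; the two permutations differ only in how the two blocks of positions are interleaved. Transposing, the inverses $\sigma^{-1}$ and $\nu^{-1}$ are both words of length $m+n$ obtained by shuffling the word $\pi^{-1}$ (on the letters $1,\dots,m$) with the word $(\mu^{-1}$ shifted by $m)$ (on the letters $m+1,\dots,m+n$) — i.e. $\sigma^{-1}, \nu^{-1} \in \pi^{-1} \cshuffle \mu^{-1}$. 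Crucially, every letter of the first block is strictly smaller than every letter of the second block, and the relative positions of letters within each block are the same in $\sigma^{-1}$ and in $\nu^{-1}$.

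Next I would use Proposition~\ref{prop:LienSylv} together with Lemma~\ref{lem:DiagHasseEqBX}: if $\sigma^{-1} \EquivBX \nu^{-1}$ and $\sigma^{-1} \ne \nu^{-1}$, then (by Proposition~\ref{prop:EquivBXInter} the class is an interval, and by Lemma~\ref{lem:DiagHasseEqBX} adjacency relations are covering relations of the permutohedron) there is a chain of $\AdjBXA$/$\AdjBXB$-adjacencies between them, so there exist two words $u' \EquivBX v'$ differing by a single adjacency $\LB\,x\,\LA\LD\,y\,\LB' \to \LB\,x\,\LD\LA\,y\,\LB'$ with $\LA < \LB, \LB' < \LD$, and both $u'$ and $v'$ lie in $\pi^{-1} \cshuffle \mu^{-1}$ (the whole interval stays in this shifted shuffle, because a single inversion of adjacent letters that are both in the same block would contradict the fixed intra-block order, so any rewriting must swap a letter of the first block with a letter of the second block — but then $\LA$ is in the first block, $\LD$ in the second, and the witnesses $\LB, \LB'$ satisfy $\LA < \LB, \LB' < \LD$). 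The letter $\LB$ sits to the left of $\LA\LD$ and $\LB'$ to the right; since $\LA$ is a first-block letter and $\LD$ a second-block letter, $\LB$ and $\LB'$ can each lie in either block. If $\LB$ is a first-block letter it appears before $\LA$, and if $\LB'$ is a first-block letter it appears after $\LA$, which is fine; but the key contradiction is that swapping $\LA$ (first block) past $\LD$ (second block) changes the relative position of a first-block letter and a second-block letter, and this is forced to happen in a way incompatible with $u', v' \in \pi^{-1} \cshuffle \mu^{-1}$ unless $\LA$ and $\LD$ are already non-adjacent in a shuffle-consistent manner — more precisely, I would argue that no such adjacency relation can connect two distinct elements of $\pi^{-1} \cshuffle \mu^{-1}$, because the blocks are value-intervals and the restriction-to-alphabet-interval compatibility (Proposition~\ref{prop:CompRestrSegmAlph}) forces the restrictions to $\{1,\dots,m\}$ and to $\{m+1,\dots,m+n\}$ to be equal in any two equivalent elements, which pins down the interleaving. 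Hence $\sigma^{-1} = \nu^{-1}$, so $\sigma = \nu$.

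The main obstacle is the last step: showing cleanly that two elements of $\pi^{-1}\cshuffle\mu^{-1}$ that are $\EquivBX$-equivalent must coincide. The cleanest route is probably to invoke Proposition~\ref{prop:CompRestrSegmAlph} twice — restricting to the alphabet interval $\{1,\dots,m\}$ recovers $\pi^{-1}$ and restricting to $\{m+1,\dots,m+n\}$ recovers the shifted $\mu^{-1}$, for both $\sigma^{-1}$ and $\nu^{-1}$ — and then observe that an element of a shifted shuffle of two words on disjoint value-intervals is completely determined by: the two restricted words (which agree), plus, for each position, whether it carries a small or a large letter; and this last datum is recovered by looking at, say, the descent-free structure relative to the value-threshold $m$, which $\EquivBX$ must also preserve since $\EquivBX$ refines the $3$-recoil congruence (Proposition~\ref{prop:Lien3Recul}) and two values straddling the gap $m$ have difference $\ge n \ge 1$... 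I'd need to be careful here, but the cleanest statement is simply: since the interleaving pattern of small versus large letters is itself an invariant of $\EquivBX$ on $\pi^{-1}\cshuffle\mu^{-1}$ (no Baxter rewriting can move a small letter past a large one while keeping all four order constraints $\LA<\LB,\LB'<\LD$ satisfiable inside the shuffle — a direct case check on the two adjacency relations), the two words agree.
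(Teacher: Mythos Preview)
Your plan to appeal to Proposition~\ref{prop:BaseEnsemblisteSSFQSym} is the right one, but the argument you build contains a genuine error. The claim that $\sigma,\nu\in\pi\cshuffle\mu$ implies $\sigma^{-1},\nu^{-1}\in\pi^{-1}\cshuffle\mu^{-1}$ is false. Take $\pi=21$, $\mu=1$: then $231\in 21\cshuffle 1$, but $231^{-1}=312$, and $312\notin 21\cshuffle 1=\pi^{-1}\cshuffle\mu^{-1}$. What is true is that $\sigma^{-1}$ lies in the \emph{convolution} product: its first $m$ entries standardize to $\pi^{-1}$ and its last $n$ entries standardize to $\mu^{-1}$. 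In particular, the first $m$ letters of $\sigma^{-1}$ are \emph{not} the letters $\{1,\dots,m\}$ in general, so your whole ``small-block versus large-block'' analysis of the Baxter adjacencies inside $\sigma^{-1}$, as well as the use of Proposition~\ref{prop:CompRestrSegmAlph} to ``recover $\pi^{-1}$ by restriction to $\{1,\dots,m\}$'', collapses.

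The paper avoids this trap by never passing to the inverses as shuffle elements. It keeps $\sigma,\nu\in\pi\cshuffle\mu$ and instead translates the hypothesis $\sigma^{-1}\EquivBX\nu^{-1}$ into a statement about $\sigma,\nu$ themselves: since $\EquivBX$ refines $\EquivS$ (Proposition~\ref{prop:LienSylv}), one has $\sigma^{-1}\EquivS\nu^{-1}$, and by Theorem~\ref{thm:PSymbPBT} together with Lemma~\ref{lem:FormeInsIncr} this says exactly that $\Decr(\sigma)$ and $\Decr(\nu)$ have the same shape. A short induction on $|\pi|+|\mu|$ then shows that two elements of a shifted shuffle with the same decreasing-tree shape must coincide (split at the position of the maximum, which lies in the $\mu$-part). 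The key idea you are missing is this use of the decreasing tree as an invariant that lives on $\sigma,\nu$ rather than on their inverses.
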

\begin{proof}
    Let us prove that the sylvester equivalence relation satisfies the
    assumptions of Proposition~\ref{prop:BaseEnsemblisteSSFQSym}. Indeed,
    the result directly follows from the fact that, by
    Proposition~\ref{prop:LienSylv}, the Baxter equivalence relation is
    finer than the sylvester equivalence relation.

    Let us start with a useful result: Let~$x$ and~$y$ be two words without
    repetition of same length and $u, v \in x \cshuffle y$ (here, the letters
    of~$y$ are shifted by~$\max(x)$). Let us prove by induction on~$|x| + |y|$
    that if~$\Decr(u)$ and~$\Decr(v)$ have same shape, then~$u = v$. It
    is obvious if $|x| + |y| = 0$. Otherwise, one has $u = u' \, \LB \, u''$
    and $v = v' \, \LB \, v''$ where $\LB := \max(u) = \max(v)$. Since the
    shape of the left subtree of~$\Decr(u)$ is equal to the shape of the
    left subtree of~$\Decr(v)$, the position of~$\LB$ in~$u$ and~$v$ is
    the same. Moreover, the word~$y$ is of the form $y = y' \, \LA \, y''$
    where $\LA := \max(y)$, and~$x$ is of the form~$x = x' x''$, where
    $u', v' \in x' \cshuffle y'$ and $u'', v'' \in x'' \cshuffle y''$.
    Since the left (resp. right) subtree of~$\Decr(u)$ is equal to the left
    (resp. right) subtree of~$\Decr(v)$, by induction hypothesis,~$u' = v'$
    and~$u'' = v''$, showing that~$u = v$.

    Now, let $\pi, \mu \in \EnsPermu$ and $\sigma \ne \nu \in \pi \cshuffle \mu$
    and assume that $\sigma^{-1} \EquivS \nu^{-1}$. Then, by
    Theorem~\ref{thm:PSymbPBT}, the permutations~$\sigma^{-1}$ and~$\nu^{-1}$
    give the same right binary search tree when inserted from right to left.
    By Lemma~\ref{lem:FormeInsIncr}, that implies that~$\Decr(\sigma)$
    and~$\Decr(\nu)$ have same shape. That implies~$\sigma = \nu$,
    contradicting our hypothesis.
\end{proof}

By duality, Proposition~\ref{prop:BaseEnsembliste} also shows that the
basis $\left\{\PP_J^\star\right\}_{J \in \EnsABJ}$ is a boolean algebra
and coalgebra basis.

\subsubsection{A lattice interval description of the product}
If~$\equiv$ is an equivalence relation of~$\EnsPermu$ and~$\sigma$ a permutation,
denote by~$\widehat{\sigma} \MinClasse$ (resp.~$\widehat{\sigma} \MaxClasse$)
the minimal (resp. maximal) permutation of the~$\equiv$-equivalence class
of~$\sigma$ for the permutohedron order.

\begin{Proposition} \label{prop:ProduitIntervalle}
    If~$\equiv$ is an equivalence relation defined on~$A^*$ satisfying the
    conditions of Theorem~\ref{thm:HivertJanvier} and additionally, the
    $\equiv$-equivalence classes of permutations are intervals of the
    permutohedron, then the product on the family defined in~(\ref{eq:EquivFQSym})
    can be expressed as:
    \begin{equation}
        \PP_{\widehat{\sigma}} \Prod \PP_{\widehat{\nu}} =
        \sum_{\substack{\widehat{\sigma} \MinClasse \Over \widehat{\nu} \MinClasse
            \OrdPermu \pi \OrdPermu
            \widehat{\sigma} \MaxClasse \Under \widehat{\nu} \MaxClasse \\
            \pi = \min \widehat{\pi}}}
        \PP_{\widehat{\pi}}.
    \end{equation}
\end{Proposition}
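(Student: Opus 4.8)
The plan is to reduce the claimed formula to the product formula~\eqref{eq:ProduitBaxterP} (or rather its $\FQSym$-level origin, the shifted shuffle) together with the interval description of Baxter classes in the permutohedron. First I would recall that, by Theorem~\ref{thm:HivertJanvier} and~\eqref{eq:EquivFQSym}, for permutations $\sigma$ and $\nu$ one has
\begin{equation}
    \PP_{\widehat{\sigma}} \Prod \PP_{\widehat{\nu}} =
    \sum_{\substack{\sigma' \, \in \, \widehat{\sigma}, \; \nu' \, \in \, \widehat{\nu} \\
        \pi \, \in \, \sigma' \, \cshuffle \, \nu'}} \F_\pi,
\end{equation}
and this sum, when collected into classes, writes as $\sum \PP_{\widehat{\pi}}$ over classes $\widehat{\pi}$ that meet $\bigcup_{\sigma', \nu'} \sigma' \cshuffle \nu'$. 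So the whole statement amounts to identifying this set of classes with the set of classes $\widehat{\pi}$ whose minimal element lies in the permutohedron interval $[\widehat{\sigma}\MinClasse \Over \widehat{\nu}\MinClasse, \; \widehat{\sigma}\MaxClasse \Under \widehat{\nu}\MaxClasse]$.

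The key structural fact I would invoke is that the shifted shuffle of two permutohedron intervals is again a permutohedron interval, with explicitly computable endpoints: for any $\sigma, \nu \in \EnsPermu$,
\begin{equation}
    \sigma \cshuffle \nu = \{\pi : \sigma \Over \nu \OrdPermu \pi \OrdPermu \sigma \Under \nu\}
\end{equation}
as a set of permutations (the minimal shuffle is the concatenation $\sigma \Over \nu$, the maximal is $\sigma \Under \nu$). Granting this, the union $\bigcup_{\sigma' \in \widehat{\sigma}, \nu' \in \widehat{\nu}} \sigma' \cshuffle \nu'$ is a union of intervals whose bottoms are the $\sigma' \Over \nu'$ and whose tops are the $\sigma' \Under \nu'$; since $\widehat{\sigma}$ and $\widehat{\nu}$ are themselves intervals (hypothesis), and since $\Over$ and $\Under$ are order-preserving in each argument for $\OrdPermu$, the union is contained in $[\widehat{\sigma}\MinClasse \Over \widehat{\nu}\MinClasse, \; \widehat{\sigma}\MaxClasse \Under \widehat{\nu}\MaxClasse]$, and it is not hard to see it actually equals this interval (the extreme choices $\sigma' = \widehat{\sigma}\MinClasse$, $\nu' = \widehat{\nu}\MinClasse$ realize the bottom, and dually for the top; connectedness of the interval then follows because the union of overlapping subintervals covering top and bottom of a lattice interval, with the sublattice-interval structure at hand, is the full interval).

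Next I would turn the set-level equality into the class-level statement using that $\EquivBX$ is a lattice congruence of the permutohedron (Theorem~\ref{thm:OrdreBaxterMinMax}): property~\ref{item:LatticeCong3} says $x \mapsto x\MinClasse$ is order-preserving, so $\pi \in [a,b]$ with $a,b$ already minimal in their classes implies $\pi\MinClasse \in [a,b]$ as well, i.e.\ a class $\widehat{\pi}$ meets the interval iff its minimal element does; this is what lets me rewrite the indexing set as $\{\widehat{\pi} : \widehat{\sigma}\MinClasse \Over \widehat{\nu}\MinClasse \OrdPermu \pi \OrdPermu \widehat{\sigma}\MaxClasse \Under \widehat{\nu}\MaxClasse, \ \pi = \min\widehat{\pi}\}$, matching the claimed formula. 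I would also note multiplicities are all~$1$: by Theorem~\ref{thm:EquivBXBaxter} (or directly by the boolean-basis Proposition~\ref{prop:BaseEnsembliste}) each class contributes exactly once, so no coefficient bookkeeping is needed.

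\textbf{Main obstacle.} The delicate point is not any Hopf-algebraic manipulation but the purely order-theoretic lemma that $\sigma \cshuffle \nu$ is exactly the permutohedron interval $[\sigma \Over \nu, \sigma \Under \nu]$, and then that a union of such intervals over an interval's worth of $(\sigma',\nu')$ fills out the big interval. The first half is classical (it follows from the inversion-set characterization of $\OrdPermu$: a permutation $\pi$ is in the shifted shuffle iff its restrictions to the first $|\sigma|$ values and the last $|\nu|$ values are $\sigma$ and $\nu$, which translates to a condition on inversions sandwiched between those of $\sigma\Over\nu$ and $\sigma\Under\nu$). The second half needs a short argument that $\Over$ and $\Under$ interact well with meets and joins — or, more cheaply, a direct verification that any $\pi$ in the big interval has a linear-extension-style decomposition placing it in some $\sigma' \cshuffle \nu'$ with $\sigma' \in \widehat{\sigma}$, $\nu' \in \widehat{\nu}$, which one gets by projecting $\pi$ suitably and using that $\widehat{\sigma}, \widehat{\nu}$ are intervals. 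I expect this to be the part requiring the most care, though still elementary.
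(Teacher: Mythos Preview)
Your core idea --- that the shifted shuffle of two permutohedron intervals is again an interval with endpoints $\sigma\Over\nu$ and $\sigma'\Under\nu'$ --- is exactly the fact the paper invokes, and from there the paper's proof is essentially a two-line application of Theorem~\ref{thm:HivertJanvier}. So the strategy is right.

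There is, however, a scope error in your argument. Proposition~\ref{prop:ProduitIntervalle} is stated for an \emph{arbitrary} congruence~$\equiv$ satisfying the hypotheses of Theorem~\ref{thm:HivertJanvier} together with the interval property; it is not a statement about~$\EquivBX$. Yet in your step~3 you invoke Theorem~\ref{thm:OrdreBaxterMinMax} (that~$\EquivBX$ is a lattice congruence, giving the order-preservation of $x\mapsto x\MinClasse$), and for multiplicities you appeal to Theorem~\ref{thm:EquivBXBaxter} and Proposition~\ref{prop:BaseEnsembliste}. None of these are available for a general~$\equiv$: the hypotheses do not include being a lattice congruence, nor anything implying a boolean basis. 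So as written, your argument does not prove the proposition in the generality claimed.

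The paper sidesteps this entirely. Once one has
\[
\PP_{\widehat{\sigma}} \Prod \PP_{\widehat{\nu}}
= \sum_{\widehat{\sigma}\MinClasse \Over \widehat{\nu}\MinClasse \,\OrdPermu\, \pi \,\OrdPermu\, \widehat{\sigma}\MaxClasse \Under \widehat{\nu}\MaxClasse} \F_\pi,
\]
Theorem~\ref{thm:HivertJanvier} guarantees that this element lies in the subalgebra, hence is a (necessarily multiplicity-free, since the $\F_\pi$ are) sum of some~$\PP_{\widehat{\pi}}$. Because each class is an interval and the $\PP_{\widehat{\pi}}$ are linearly independent, the big interval must decompose as a disjoint union of full $\equiv$-classes; indexing each by its minimum gives the formula. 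No lattice-congruence property, no Baxter-specific uniqueness, and no boolean-basis argument is needed. Your detour through ``union of subintervals fills the big interval'' and the order-preservation of $x\mapsto x\MinClasse$ is thus both unnecessary and, in this generality, unjustified.
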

\begin{proof}
    It is well-known that the shifted shuffle product of two permutohedron
    intervals is still a permutohedron interval. Restating this fact
    in~$\FQSym$, we have
    \begin{equation} \label{eq:ProduitIntervalleFQSym}
        \left(\sum_{\sigma \OrdPermu \mu \OrdPermu \sigma'} \F_\mu \right) \Prod
        \left(\sum_{\nu \OrdPermu \tau \OrdPermu \nu'} \F_\tau \right) =
        \sum_{\sigma \Over \nu \OrdPermu \pi \OrdPermu \sigma' \Under \nu'} \F_\pi.
    \end{equation}
    By~(\ref{eq:ProduitIntervalleFQSym}) and since that every $\equiv$-equivalence
    class is an interval of the permutohedron, we obtain
    \begin{equation} \label{eq:ExpressionPP}
        \PP_{\widehat{\sigma}} \Prod \PP_{\widehat{\nu}} =
            \sum_{\widehat{\sigma} \MinClasse \Over \widehat{\nu} \MinClasse
            \OrdPermu \pi \OrdPermu
            \widehat{\sigma} \MaxClasse \Under \widehat{\nu} \MaxClasse} \F_\pi.
    \end{equation}
    By Theorem~\ref{thm:HivertJanvier}, the expression~(\ref{eq:ExpressionPP})
    can be expressed as a sum of~$\PP_{\widehat{\pi}}$ elements and the
    proposition follows.
\end{proof}

Let $J_0 := (T^0_L, T^0_R)$ and $J_1 := (T^1_L, T^1_R)$ be two pairs of
twin binary trees. Let us define the pair of twin binary trees~$J_0 \Over J_1$
by
\begin{equation}
    J_0 \Over J_1 := (T^0_L \Under T^1_L, \; T^0_R \Over T^1_R).
\end{equation}
In the same way, the pair of twin binary trees~$J_0 \Under J_1$ is defined
by
\begin{equation}
    J_0 \Under J_1 := (T^0_L \Over T^1_L, \; T^0_R \Under T^1_R).
\end{equation}

Proposition~\ref{prop:ProduitIntervalle} leads to the following expression
for the product of~$\Baxter$.
\begin{Corollaire}
    For all pairs of twin binary trees~$J_0$ and~$J_1$, the product
    of~$\Baxter$ satisfies
    \begin{equation} \label{eq:ProdBXInter}
        \PP_{J_0} \Prod \PP_{J_1} =
        \sum_{J_0 \Over J_1 \OrdBX J \OrdBX J_0 \Under J_1} \PP_J.
    \end{equation}
\end{Corollaire}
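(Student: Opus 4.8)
The plan is to deduce~(\ref{eq:ProdBXInter}) from Proposition~\ref{prop:ProduitIntervalle} by carefully translating the permutohedron interval bounds into the Baxter order. Since by Proposition~\ref{prop:EquivBXInter} the $\EquivBX$-equivalence classes are intervals of the permutohedron, Proposition~\ref{prop:ProduitIntervalle} applies and gives, for any pairs of twin binary trees~$J_0$ and~$J_1$ labeled by permutations so that~$\PSymb(\sigma_i) = J_i$,
\begin{equation*}
    \PP_{J_0} \Prod \PP_{J_1} =
    \sum_{\substack{\widehat{\sigma_0} \MinClasse \Over \widehat{\sigma_1} \MinClasse
        \OrdPermu \pi \OrdPermu
        \widehat{\sigma_0} \MaxClasse \Under \widehat{\sigma_1} \MaxClasse \\
        \pi = \min \widehat{\pi}}}
    \PP_{\PSymb(\pi)}.
\end{equation*}
The first step is to identify the endpoints of this interval. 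I would show that if~$\mu$ is the minimal permutation of the class encoded by~$J_i$ for $i \in \{0,1\}$, then $\PSymb(\mu) = J_i$ (which is immediate from Theorem~\ref{thm:PSymboleClasses}), and that $\PSymb(\widehat{\sigma_0} \MinClasse \Over \widehat{\sigma_1} \MinClasse) = J_0 \Over J_1$ and $\PSymb(\widehat{\sigma_0} \MaxClasse \Under \widehat{\sigma_1} \MaxClasse) = J_0 \Under J_1$. The key here is to compare how the operations~$\Over$ and~$\Under$ on permutations interact with the leaf-insertion and root-insertion procedures: inserting the letters of $\sigma \Over \tau$ from left to right into a left binary search tree gives~$T^\sigma_L \Under T^\tau_L$ (since the small letters of~$\sigma$ come first and stay below the shifted letters of~$\tau$, which pile up on the rightmost branch), while inserting them from right to left into a right binary search tree gives~$T^\sigma_R \Over T^\tau_R$. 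This matches exactly the definitions of~$J_0 \Over J_1$ and~$J_0 \Under J_1$ given just above the corollary. A symmetric computation handles~$\Under$.

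The second step is to pass from the permutohedron interval to the Baxter interval. By Theorem~\ref{thm:OrdreBaxterMinMax}, $\EquivBX$ is a lattice congruence of the permutohedron, so the quotient map $\PSymb : \EnsPermu_n \to \EnsABJ_n$ is order-preserving and surjective onto the quotient lattice $(\EnsABJ_n, \OrdBX)$; moreover a standard fact about lattice congruences is that the image of a permutohedron interval $[\alpha, \beta]$ under the projection is exactly the quotient interval $[\PSymb(\alpha), \PSymb(\beta)]$ in $(\EnsABJ_n, \OrdBX)$, and conversely every class meeting $[\alpha,\beta]$ has its minimal element in $[\alpha \MinClasse, \beta]$ — this is precisely why Proposition~\ref{prop:ProduitIntervalle} can be rewritten class-wise. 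Applying this with $\alpha = \widehat{\sigma_0} \MinClasse \Over \widehat{\sigma_1} \MinClasse$ and $\beta = \widehat{\sigma_0} \MaxClasse \Under \widehat{\sigma_1} \MaxClasse$, and using Step~1 to identify $\PSymb(\alpha) = J_0 \Over J_1$ and $\PSymb(\beta) = J_0 \Under J_1$, the sum over $\pi$ with $\pi = \min\widehat\pi$ collapses to a sum over $J$ with $J_0 \Over J_1 \OrdBX J \OrdBX J_0 \Under J_1$, each appearing exactly once. That yields~(\ref{eq:ProdBXInter}).

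I expect the main obstacle to be Step~1, namely checking that the minimal (resp. maximal) element of a product of two classes is obtained by taking $\Over$ (resp. $\Under$) of the minimal (resp. maximal) elements and that its $\PSymb$-symbol is $J_0 \Over J_1$ (resp. $J_0 \Under J_1$). Concretely this requires verifying two things: that $\widehat{\sigma_0} \MinClasse \Over \widehat{\sigma_1} \MinClasse$ is itself the minimal element of its own Baxter class — equivalently that it avoids $2\text{-}41\text{-}3$ and $3\text{-}41\text{-}2$, which follows because an occurrence of such a pattern would have to be entirely inside the~$\sigma_0$ part or entirely inside the shifted~$\sigma_1$ part (a pattern cannot straddle the concatenation since all letters of the first block are smaller than all of the second) — and that the claimed identities $\PSymb(\sigma \Over \tau) = \PSymb(\sigma) \Over \PSymb(\tau)$-type statements hold at the level of shapes. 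The latter is a routine but slightly delicate structural induction on binary trees using the characterization of leaf and root insertion; once it is in place, combined with the already-established lattice-congruence machinery of Theorem~\ref{thm:OrdreBaxterMinMax} and the interval description of classes from Proposition~\ref{prop:EquivBXInter}, the corollary follows without further difficulty.
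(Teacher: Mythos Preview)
Your proposal is correct and follows essentially the same route as the paper: apply Proposition~\ref{prop:ProduitIntervalle} (which rests on Proposition~\ref{prop:EquivBXInter}) and then identify the endpoints via $\PSymb(\sigma \Over \nu) = \PSymb(\sigma) \Over \PSymb(\nu)$ and $\PSymb(\sigma \Under \nu) = \PSymb(\sigma) \Under \PSymb(\nu)$. The paper's proof is simply far terser: it declares the latter identities ``immediate from the definition of the $\PSymb$-symbol algorithm'' and then concludes directly from Propositions~\ref{prop:EquivBXInter} and~\ref{prop:ProduitIntervalle}, without spelling out your Step~2 or the minimality checks. Those checks you flag (that $\widehat{\sigma_0}\MinClasse \Over \widehat{\sigma_1}\MinClasse$ is minimal in its class, and the passage from the permutohedron interval to the Baxter interval) are in fact already absorbed into the statement and proof of Proposition~\ref{prop:ProduitIntervalle} itself, since Theorem~\ref{thm:HivertJanvier} forces the permutohedron interval in~(\ref{eq:ExpressionPP}) to be a union of full $\EquivBX$-classes; so while your care is not misplaced, it is not additional work needed beyond what the paper has already done.
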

\begin{proof}
    Let~$\sigma$ and~$\nu$ two permutations. It is immediate, from the
    definition of the $\PSymb$-symbol algorithm, that the $\PSymb$-symbol
    of the permutation~$\sigma \Over \nu$ (resp.~$\sigma \Under \nu$) is
    the pair of twin binary trees $\PSymb(\sigma) \Over \PSymb(\nu)$
    (resp. $\PSymb(\sigma) \Under \PSymb(\nu)$). The expression~(\ref{eq:ProdBXInter})
    follows from the fact that $\EquivBX$-equivalence classes of permutations
    are intervals of the permutohedron (Proposition~\ref{prop:EquivBXInter})
    and from Proposition~\ref{prop:ProduitIntervalle}.
\end{proof}

\subsubsection{Multiplicative bases and free generators}
Recall that the \emph{elementary} family
$\left\{\E^\sigma\right\}_{\sigma \in \EnsPermu}$ and the \emph{homogeneous}
family $\left\{\HH^\sigma\right\}_{\sigma \in \EnsPermu}$ of~$\FQSym$
respectively defined by
\begin{align}
    \E^\sigma  & := \sum_{\sigma \OrdPermu \sigma'} \F_{\sigma'}, \\[.5em]
    \HH^\sigma & := \sum_{\sigma' \OrdPermu \sigma} \F_{\sigma'},
\end{align}
form multiplicative bases of~$\FQSym$ (see~\cite{AS05,DHNT11} for an exposition
of some known bases of~$\FQSym$). Indeed, for all~$\sigma, \nu \in \EnsPermu$,
the product satisfies
\begin{align}
    \E^\sigma \Prod \E^\nu   & = \E^{\sigma \Over \nu}, \\
    \HH^\sigma \Prod \HH^\nu & = \HH^{\sigma \Under \nu}.
\end{align}

Mimicking these definitions, let us define the \emph{elementary} family
$\left\{\E_J\right\}_{J \in \EnsABJ}$ and the \emph{homogeneous} family
$\left\{\HH_J\right\}_{J \in \EnsABJ}$ of $\Baxter$ respectively by
\begin{align}
    \E_J  & := \sum_{J \OrdBX J'} \PP_{J'}, \\[.5em]
    \HH_J & := \sum_{J' \OrdBX J} \PP_{J'}.
\end{align}
These families are bases of $\Baxter$ since they are defined by triangularity.

\begin{Proposition} \label{prop:BaseEHBaxter}
    Let $J$ be a pair of twin binary trees and $\sigma \MinClasse$ (resp.
    $\sigma \MaxClasse$) be the minimal (resp. maximal) permutation such that
    $\PSymb(\sigma \MinClasse) = J$ (resp. $\PSymb(\sigma \MaxClasse) = J$). Then,
    \begin{align}
        \E_J  & = \E^{\sigma \MinClasse}, \\
        \HH_J & = \HH^{\sigma \MaxClasse}.
    \end{align}
\end{Proposition}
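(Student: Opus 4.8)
The plan is to prove the two equalities by comparing the $\F$-expansions of both sides, using the description of the Baxter equivalence classes as intervals of the permutohedron together with the fact that $\PSymb$ intertwines the permutohedron order with the Baxter order. I will treat the case of $\E_J$; the case of $\HH_J$ is completely analogous (one replaces minima by maxima, $\OrdBX$ by its reverse in the obvious way, and $\Over$-type arguments by $\Under$-type arguments), so I would simply remark at the end that it follows \emph{mutatis mutandis}.

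First I would expand $\E_J$ in the fundamental basis of $\FQSym$. By definition $\E_J = \sum_{J \OrdBX J'} \PP_{J'}$, and each $\PP_{J'} = \sum_{\PSymb(\pi) = J'} \F_\pi$, so
\begin{equation}
    \E_J = \sum_{\substack{\pi \; \in \; \EnsPermu \\ J \OrdBX \PSymb(\pi)}} \F_\pi.
\end{equation}
On the other hand, $\E^{\sigma \MinClasse} = \sum_{\sigma \MinClasse \OrdPermu \pi} \F_\pi$ by the recalled definition of the elementary basis of $\FQSym$. Thus it suffices to show that for every permutation $\pi$,
\begin{equation}
    J \OrdBX \PSymb(\pi) \qquad \Longleftrightarrow \qquad \sigma \MinClasse \OrdPermu \pi.
\end{equation}

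For the forward direction, suppose $J \OrdBX \PSymb(\pi)$. By the very definition of the Baxter order relation on pairs of twin binary trees, there exist permutations $\alpha, \beta$ with $\alpha \OrdPermu \beta$, $\PSymb(\alpha) = J$ and $\PSymb(\beta) = \PSymb(\pi)$. Since Baxter classes are intervals of the permutohedron (Proposition~\ref{prop:EquivBXInter}) and $\sigma \MinClasse$ is the minimum of the class whose $\PSymb$-symbol is $J$, we have $\sigma \MinClasse \OrdPermu \alpha$, and similarly $\beta \MinClasse \OrdPermu \pi$ where $\beta \MinClasse$ is the minimum of the class of $\beta$; but that class is exactly the class of $\pi$, so $\beta \MinClasse = \sigma' \MinClasse$ with $\PSymb(\pi) = \PSymb(\sigma' \MinClasse)$. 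Here the key input is Theorem~\ref{thm:OrdreBaxterMinMax}: the Baxter congruence is a lattice congruence of the permutohedron, so condition \ref{item:LatticeCong3} gives that $\alpha \OrdPermu \beta$ implies $\alpha \MinClasse \OrdPermu \beta \MinClasse$, i.e. $\sigma \MinClasse \OrdPermu \beta \MinClasse \OrdPermu \pi$, as desired. For the converse, if $\sigma \MinClasse \OrdPermu \pi$ then applying $\PSymb$ and the definition of $\OrdBX$ directly yields $J = \PSymb(\sigma \MinClasse) \OrdBX \PSymb(\pi)$, since $\sigma \MinClasse$ and $\pi$ themselves witness the required inequality. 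This establishes the equivalence and hence $\E_J = \E^{\sigma \MinClasse}$.

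\textbf{Main obstacle.} The routine part is the bookkeeping of who is the minimum of which class; the one genuinely load-bearing step is the use of property \ref{item:LatticeCong3} from Theorem~\ref{thm:OrdreBaxterMinMax} to push a permutohedron inequality down to the minima of the respective Baxter classes — without it, the implication $J \OrdBX \PSymb(\pi) \Rightarrow \sigma \MinClasse \OrdPermu \pi$ would not follow merely from the \emph{existence} of some witnessing pair $\alpha \OrdPermu \beta$. I would make sure to invoke it explicitly. As a sanity check one may also note, via Proposition~\ref{prop:LectureMin}, that $\sigma \MinClasse$ is precisely the output of {\sc ExtractMin} on $J$, which makes the statement concrete and confirms the triangularity already used to assert that $\{\E_J\}$ is a basis. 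The $\HH_J$ identity is obtained by the symmetric argument, replacing \ref{item:LatticeCong3} by \ref{item:LatticeCong2} and $\sigma \MinClasse$ by $\sigma \MaxClasse$.
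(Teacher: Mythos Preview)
Your proof is correct and follows essentially the same approach as the paper: expand $\E_J$ in the $\F$-basis, then use Theorem~\ref{thm:OrdreBaxterMinMax} (the lattice congruence property) to identify the condition $J \OrdBX \PSymb(\pi)$ with $\sigma \MinClasse \OrdPermu \pi$. The paper compresses this into a single chain of equalities, whereas you spell out both directions of the key equivalence and explicitly isolate the role of condition~\ref{item:LatticeCong3}; this is more detailed but not a different argument.
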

\begin{proof}
    Using the fact that, by Theorem~\ref{thm:OrdreBaxterMinMax},
    the $\EquivBX$-equivalence relation is a lattice congruence of the
    permutohedron, one successively has
    \begin{equation}
        \E_J = \sum_{J \OrdBX J'} \PP_{J'}
             = \sum_{J \OrdBX J'}
             \sum_{\substack{\nu \; \in \; \EnsPermu \\ \PSymb(\nu) = J'}} \F_\nu
             = \sum_{\substack{\nu \; \in \; \EnsPermu \\ J \OrdBX \PSymb(\nu)}}
             \F_\nu
             = \sum_{\substack{\nu \; \in \; \EnsPermu \\
             \sigma \MinClasse \OrdPermu \nu}} \F_\nu
            = \E^{\sigma \MinClasse}.
    \end{equation}
    The proof for the homogeneous family is analogous.
\end{proof}

\begin{Corollaire} \label{cor:BasesMult}
    For all pairs of twin binary trees $J_0$ and $J_1$, we have
    \begin{align}
        \E_{J_0} \Prod \E_{J_1}   & = \E_{J_0 \Over J_1}, \\[.5em]
        \HH_{J_0} \Prod \HH_{J_1} & = \HH_{J_0 \Under J_1}.
    \end{align}
\end{Corollaire}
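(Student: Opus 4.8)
\textbf{Proof plan for Corollary~\ref{cor:BasesMult}.} The plan is to reduce everything to the corresponding multiplicative property of the elementary and homogeneous bases of~$\FQSym$ via Proposition~\ref{prop:BaseEHBaxter}. First I would recall from that proposition that $\E_{J_0} = \E^{\sigma_0 \MinClasse}$ and $\E_{J_1} = \E^{\sigma_1 \MinClasse}$, where $\sigma_0 \MinClasse$ (resp. $\sigma_1 \MinClasse$) is the minimal permutation with $\PSymb(\sigma_0 \MinClasse) = J_0$ (resp. $\PSymb(\sigma_1 \MinClasse) = J_1$). Since $\Baxter$ is a Hopf subalgebra of~$\FQSym$ and the elementary family of~$\FQSym$ is multiplicative with $\E^\sigma \Prod \E^\nu = \E^{\sigma \Over \nu}$, this gives immediately
\begin{equation}
    \E_{J_0} \Prod \E_{J_1} = \E^{\sigma_0 \MinClasse} \Prod \E^{\sigma_1 \MinClasse}
    = \E^{\sigma_0 \MinClasse \Over \sigma_1 \MinClasse}.
\end{equation}

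It then remains to identify this with $\E_{J_0 \Over J_1}$, i.e. to show that $\sigma_0 \MinClasse \Over \sigma_1 \MinClasse$ is the \emph{minimal} permutation in its $\EquivBX$-class and that its $\PSymb$-symbol is $J_0 \Over J_1$. The second point is handled exactly as in the proof of the corollary giving the interval description of the product: from the definition of the $\PSymb$-symbol algorithm one checks that $\PSymb(\sigma \Over \nu) = \PSymb(\sigma) \Over \PSymb(\nu)$, hence $\PSymb(\sigma_0 \MinClasse \Over \sigma_1 \MinClasse) = J_0 \Over J_1$. For the minimality, the key input is Theorem~\ref{thm:OrdreBaxterMinMax}: since $\EquivBX$ is a lattice congruence of the permutohedron, the operation $\sigma \mapsto \sigma \MinClasse$ is order-preserving, and one needs that $\Over$ interacts well with taking minimal representatives. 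Concretely, I would argue that if $\tau \EquivBX \sigma_0 \MinClasse \Over \sigma_1 \MinClasse$ then, restricting the Baxter equivalence to the two blocks of positions $\{1,\dots,|\sigma_0|\}$ and $\{|\sigma_0|+1,\dots,|\sigma_0|+|\sigma_1|\}$ using compatibility with the restriction of alphabet intervals (Proposition~\ref{prop:CompRestrSegmAlph}), the class of $\sigma_0 \MinClasse \Over \sigma_1 \MinClasse$ is contained in $\widehat{\sigma_0 \MinClasse} \cshuffle \widehat{\sigma_1 \MinClasse}$ split along $\Over$; minimality of $\sigma_0 \MinClasse$ and $\sigma_1 \MinClasse$ in their own classes then forces $\sigma_0 \MinClasse \Over \sigma_1 \MinClasse$ to be the $\OrdPermu$-minimum of the whole. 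Equivalently, and perhaps more cleanly, one can invoke the interval description of the product already established: $\PP_{J_0} \Prod \PP_{J_1} = \sum_{J_0 \Over J_1 \OrdBX J \OrdBX J_0 \Under J_1} \PP_J$, whose minimal term is $\PP_{J_0 \Over J_1}$, and match this against the triangular expansion of $\E_{J_0 \Over J_1} = \sum_{J_0 \Over J_1 \OrdBX J'} \PP_{J'}$ together with the known expansion $\E^{\sigma_0\MinClasse \Over \sigma_1\MinClasse} = \sum_{\nu : \sigma_0\MinClasse\Over\sigma_1\MinClasse \OrdPermu \nu} \F_\nu$ regrouped by $\EquivBX$-classes as in the proof of Proposition~\ref{prop:BaseEHBaxter}.

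The homogeneous case is strictly analogous, using $\HH_J = \HH^{\sigma \MaxClasse}$, the identity $\HH^\sigma \Prod \HH^\nu = \HH^{\sigma \Under \nu}$ in~$\FQSym$, the fact that $\PSymb(\sigma \Under \nu) = \PSymb(\sigma) \Under \PSymb(\nu)$, and that $\sigma_0 \MaxClasse \Under \sigma_1 \MaxClasse$ is maximal in its $\EquivBX$-class; here one uses condition \ref{item:LatticeCong2} of the lattice congruence from Theorem~\ref{thm:OrdreBaxterMinMax} in place of \ref{item:LatticeCong3}. The main obstacle I anticipate is the bookkeeping needed to prove that $\Over$ and $\Under$ really do send minimal (resp. maximal) representatives to minimal (resp. maximal) representatives — this is where one genuinely uses the lattice-congruence property and not merely that $\EquivBX$ is a congruence — but once the identity $\PSymb(\sigma \Over \nu) = \PSymb(\sigma) \Over \PSymb(\nu)$ is in hand (immediate from the leaf/root insertion definitions), the rest is a direct triangularity argument matching the two bases term by term.
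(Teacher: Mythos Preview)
Your approach is essentially the same as the paper's: reduce to~$\FQSym$ via Proposition~\ref{prop:BaseEHBaxter}, use the multiplicativity $\E^\sigma \Prod \E^\nu = \E^{\sigma \Over \nu}$ there, check $\PSymb(\sigma \Over \nu) = \PSymb(\sigma) \Over \PSymb(\nu)$, and verify that $\sigma_0\MinClasse \Over \sigma_1\MinClasse$ is minimal in its $\EquivBX$-class. The paper handles the last point in one word (``obviously''), and you are over-engineering it: you do not need the lattice-congruence property or the interval description of the product. The direct reason is Proposition~\ref{prop:TwistedMinClasses} --- minimal representatives are exactly the permutations avoiding $2\text{-}41\text{-}3$ and $3\text{-}41\text{-}2$, and since in $\sigma \Over \nu$ every value in the first block is smaller than every value in the second block, any occurrence of either pattern must lie entirely within one block; hence if $\sigma$ and $\nu$ avoid them, so does $\sigma \Over \nu$. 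The homogeneous case is symmetric via anti-twisted Baxter permutations and $\Under$.
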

\begin{proof}
    Let~$\sigma$ and~$\nu$ be the minimal permutations of the $\EquivBX$-equivalence
    classes respectively encoded by~$J_0$ and~$J_1$. By
    Proposition~\ref{prop:BaseEHBaxter}, we have
    \begin{equation}
        \E_{J_0} \Prod \E_{J_1} = \E^\sigma \Prod \E^\nu = \E^{\sigma \Over \nu}.
    \end{equation}
    The permutation $\sigma \Over \nu$ is obviously the minimal element
    of its $\EquivBX$-equivalence class, and, by the definition of the
    $\PSymb$-symbol algorithm, the $\PSymb$-symbol of $\sigma \Over \nu$ is
    the pair of twin binary trees $\PSymb(\sigma) \Over \PSymb(\nu) = J_0 \Over J_1$.
    The proof of the second part of the proposition is analogous.
\end{proof}

For example,
\begin{align}
    \E_{\scalebox{0.15}{%
    \begin{tikzpicture}
        \node[Noeud](0)at(0,-2){};
        \node[Noeud](1)at(1,-3){};
        \draw[Arete](0)--(1);
        \node[Noeud](2)at(2,-1){};
        \draw[Arete](2)--(0);
        \node[Noeud](3)at(3,0){};
        \draw[Arete](3)--(2);
        \node[Noeud](4)at(4,-1){};
        \draw[Arete](3)--(4);
    \end{tikzpicture}}
    \;
    \scalebox{0.15}{%
    \begin{tikzpicture}
        \node[Noeud](0)at(0,-1){};
        \node[Noeud](1)at(1,0){};
        \draw[Arete](1)--(0);
        \node[Noeud](2)at(2,-2){};
        \node[Noeud](3)at(3,-3){};
        \draw[Arete](2)--(3);
        \node[Noeud](4)at(4,-1){};
        \draw[Arete](4)--(2);
        \draw[Arete](1)--(4);
    \end{tikzpicture}}}
    \Prod
    \E_{\scalebox{0.15}{%
    \begin{tikzpicture}
        \node[Noeud,Marque1](0)at(0,0){};
        \node[Noeud,Marque1](1)at(1,-2){};
        \node[Noeud,Marque1](2)at(2,-1){};
        \draw[Arete](2)--(1);
        \node[Noeud,Marque1](3)at(3,-2){};
        \draw[Arete](2)--(3);
        \draw[Arete](0)--(2);
    \end{tikzpicture}}
    \;
    \scalebox{0.15}{%
    \begin{tikzpicture}
        \node[Noeud,Marque1](0)at(0,-2){};
        \node[Noeud,Marque1](1)at(1,-1){};
        \draw[Arete,Marque1](1)--(0);
        \node[Noeud,Marque1](2)at(2,-2){};
        \draw[Arete](1)--(2);
        \node[Noeud,Marque1](3)at(3,0){};
        \draw[Arete](3)--(1);
    \end{tikzpicture}}}
    & =
    \E_{\scalebox{0.15}{%
    \begin{tikzpicture}
        \node[Noeud](0)at(0,-2){};
        \node[Noeud](1)at(1,-3){};
        \draw[Arete](0)--(1);
        \node[Noeud](2)at(2,-1){};
        \draw[Arete](2)--(0);
        \node[Noeud](3)at(3,0){};
        \draw[Arete](3)--(2);
        \node[Noeud](4)at(4,-1){};
        \node[Noeud,Marque1](5)at(5,-2){};
        \node[Noeud,Marque1](6)at(6,-4){};
        \node[Noeud,Marque1](7)at(7,-3){};
        \draw[Arete](7)--(6);
        \node[Noeud,Marque1](8)at(8,-4){};
        \draw[Arete](7)--(8);
        \draw[Arete](5)--(7);
        \draw[Arete](4)--(5);
        \draw[Arete](3)--(4);
    \end{tikzpicture}}
    \;
    \scalebox{0.15}{%
    \begin{tikzpicture}
        \node[Noeud](0)at(0,-4){};
        \node[Noeud](1)at(1,-3){};
        \draw[Arete](1)--(0);
        \node[Noeud](2)at(2,-5){};
        \node[Noeud](3)at(3,-6){};
        \draw[Arete](2)--(3);
        \node[Noeud](4)at(4,-4){};
        \draw[Arete](4)--(2);
        \draw[Arete](1)--(4);
        \node[Noeud,Marque1](5)at(5,-2){};
        \draw[Arete](5)--(1);
        \node[Noeud,Marque1](6)at(6,-1){};
        \draw[Arete](6)--(5);
        \node[Noeud,Marque1](7)at(7,-2){};
        \draw[Arete](6)--(7);
        \node[Noeud,Marque1](8)at(8,0){};
        \draw[Arete](8)--(6);
    \end{tikzpicture}}}, \\[1em]
    \HH_{\scalebox{0.15}{%
    \begin{tikzpicture}
        \node[Noeud](0)at(0,-2){};
        \node[Noeud](1)at(1,-3){};
        \draw[Arete](0)--(1);
        \node[Noeud](2)at(2,-1){};
        \draw[Arete](2)--(0);
        \node[Noeud](3)at(3,0){};
        \draw[Arete](3)--(2);
        \node[Noeud](4)at(4,-1){};
        \draw[Arete](3)--(4);
    \end{tikzpicture}}
    \;
    \scalebox{0.15}{%
    \begin{tikzpicture}
        \node[Noeud](0)at(0,-1){};
        \node[Noeud](1)at(1,0){};
        \draw[Arete](1)--(0);
        \node[Noeud](2)at(2,-2){};
        \node[Noeud](3)at(3,-3){};
        \draw[Arete](2)--(3);
        \node[Noeud](4)at(4,-1){};
        \draw[Arete](4)--(2);
        \draw[Arete](1)--(4);
    \end{tikzpicture}}}
    \Prod
    \HH_{\scalebox{0.15}{%
    \begin{tikzpicture}
        \node[Noeud,Marque1](0)at(0,0){};
        \node[Noeud,Marque1](1)at(1,-2){};
        \node[Noeud,Marque1](2)at(2,-1){};
        \draw[Arete](2)--(1);
        \node[Noeud,Marque1](3)at(3,-2){};
        \draw[Arete](2)--(3);
        \draw[Arete](0)--(2);
    \end{tikzpicture}}
    \;
    \scalebox{0.15}{%
    \begin{tikzpicture}
        \node[Noeud,Marque1](0)at(0,-2){};
        \node[Noeud,Marque1](1)at(1,-1){};
        \draw[Arete,Marque1](1)--(0);
        \node[Noeud,Marque1](2)at(2,-2){};
        \draw[Arete](1)--(2);
        \node[Noeud,Marque1](3)at(3,0){};
        \draw[Arete](3)--(1);
    \end{tikzpicture}}}
    & =
    \HH_{\scalebox{0.15}{%
    \begin{tikzpicture}
        \node[Noeud](0)at(0,-3){};
        \node[Noeud](1)at(1,-4){};
        \draw[Arete](0)--(1);
        \node[Noeud](2)at(2,-2){};
        \draw[Arete](2)--(0);
        \node[Noeud](3)at(3,-1){};
        \draw[Arete](3)--(2);
        \node[Noeud](4)at(4,-2){};
        \draw[Arete](3)--(4);
        \node[Noeud,Marque1](5)at(5,0){};
        \draw[Arete](5)--(3);
        \node[Noeud,Marque1](6)at(6,-2){};
        \node[Noeud,Marque1](7)at(7,-1){};
        \draw[Arete](7)--(6);
        \node[Noeud,Marque1](8)at(8,-2){};
        \draw[Arete](7)--(8);
        \draw[Arete](5)--(7);
    \end{tikzpicture}}
    \;
    \scalebox{0.15}{%
    \begin{tikzpicture}
        \node[Noeud](0)at(0,-1){};
        \node[Noeud](1)at(1,0){};
        \draw[Arete](1)--(0);
        \node[Noeud](2)at(2,-2){};
        \node[Noeud](3)at(3,-3){};
        \draw[Arete](2)--(3);
        \node[Noeud](4)at(4,-1){};
        \draw[Arete](4)--(2);
        \node[Noeud,Marque1](5)at(5,-4){};
        \node[Noeud,Marque1](6)at(6,-3){};
        \draw[Arete](6)--(5);
        \node[Noeud,Marque1](7)at(7,-4){};
        \draw[Arete](6)--(7);
        \node[Noeud,Marque1](8)at(8,-2){};
        \draw[Arete](8)--(6);
        \draw[Arete](4)--(8);
        \draw[Arete](1)--(4);
    \end{tikzpicture}}}.
\end{align}
\medskip

Corollary~\ref{cor:BasesMult} also shows that the $\left\{\E_J\right\}_{J \in \EnsABJ}$
and $\left\{\HH_J\right\}_{J \in \EnsABJ}$ bases of~$\Baxter$ are boolean
algebra bases. However, these are not boolean coalgebra bases since one has
\begin{equation}
    \Delta \left(\E_{\scalebox{0.15}{%
    \begin{tikzpicture}
        \node[Noeud](0)at(0.0,0){};
        \node[Noeud](1)at(1.0,-1){};
        \draw[Arete](0)--(1);
    \end{tikzpicture}}
    \;
    \scalebox{0.15}{%
    \begin{tikzpicture}
        \node[Noeud](0)at(0.0,-1){};
        \node[Noeud](1)at(1.0,0){};
        \draw[Arete](1)--(0);
    \end{tikzpicture}}} \right) =
    1 \Tenseur
    \E_{\scalebox{0.15}{%
    \begin{tikzpicture}
        \node[Noeud](0)at(0.0,0){};
        \node[Noeud](1)at(1.0,-1){};
        \draw[Arete](0)--(1);
    \end{tikzpicture}}
    \;
    \scalebox{0.15}{%
    \begin{tikzpicture}
        \node[Noeud](0)at(0.0,-1){};
        \node[Noeud](1)at(1.0,0){};
        \draw[Arete](1)--(0);
    \end{tikzpicture}}}
    + 2\,
    \E_{\scalebox{0.15}{%
    \begin{tikzpicture}
        \node[Noeud](0)at(0.0,0){};
    \end{tikzpicture}}
    \;
    \scalebox{0.15}{%
    \begin{tikzpicture}
        \node[Noeud](0)at(0.0,0){};
    \end{tikzpicture}}}
    \Tenseur
    \E_{\scalebox{0.15}{%
    \begin{tikzpicture}
        \node[Noeud](0)at(0.0,0){};
    \end{tikzpicture}}
    \;
    \scalebox{0.15}{%
    \begin{tikzpicture}
        \node[Noeud](0)at(0.0,0){};
    \end{tikzpicture}}}
    +
    \E_{\scalebox{0.15}{%
    \begin{tikzpicture}
        \node[Noeud](0)at(0.0,0){};
        \node[Noeud](1)at(1.0,-1){};
        \draw[Arete](0)--(1);
    \end{tikzpicture}}
    \;
    \scalebox{0.15}{%
    \begin{tikzpicture}
        \node[Noeud](0)at(0.0,-1){};
        \node[Noeud](1)at(1.0,0){};
        \draw[Arete](1)--(0);
    \end{tikzpicture}}}
    \Tenseur 1,
\end{equation}
and
\begin{equation}
    \Delta \left(\HH_{\scalebox{0.15}{%
    \begin{tikzpicture}
        \node[Noeud](0)at(0.0,-1){};
        \node[Noeud](1)at(1.0,0){};
        \draw[Arete](1)--(0);
    \end{tikzpicture}}
    \;
    \scalebox{0.15}{%
    \begin{tikzpicture}
        \node[Noeud](0)at(0.0,0){};
        \node[Noeud](1)at(1.0,-1){};
        \draw[Arete](0)--(1);
    \end{tikzpicture}}} \right) =
    1 \Tenseur
    \HH_{\scalebox{0.15}{%
    \begin{tikzpicture}
        \node[Noeud](0)at(0.0,-1){};
        \node[Noeud](1)at(1.0,0){};
        \draw[Arete](1)--(0);
    \end{tikzpicture}}
    \;
    \scalebox{0.15}{%
    \begin{tikzpicture}
        \node[Noeud](0)at(0.0,0){};
        \node[Noeud](1)at(1.0,-1){};
        \draw[Arete](0)--(1);
    \end{tikzpicture}}}
    + 2\,
    \HH_{\scalebox{0.15}{%
    \begin{tikzpicture}
        \node[Noeud](0)at(0.0,0){};
    \end{tikzpicture}}
    \;
    \scalebox{0.15}{%
    \begin{tikzpicture}
        \node[Noeud](0)at(0.0,0){};
    \end{tikzpicture}}}
    \Tenseur
    \HH_{\scalebox{0.15}{%
    \begin{tikzpicture}
        \node[Noeud](0)at(0.0,0){};
    \end{tikzpicture}}
    \;
    \scalebox{0.15}{%
    \begin{tikzpicture}
        \node[Noeud](0)at(0.0,0){};
    \end{tikzpicture}}}
    +
    \HH_{\scalebox{0.15}{%
    \begin{tikzpicture}
        \node[Noeud](0)at(0.0,-1){};
        \node[Noeud](1)at(1.0,0){};
        \draw[Arete](1)--(0);
    \end{tikzpicture}}
    \;
    \scalebox{0.15}{%
    \begin{tikzpicture}
        \node[Noeud](0)at(0.0,0){};
        \node[Noeud](1)at(1.0,-1){};
        \draw[Arete](0)--(1);
    \end{tikzpicture}}}
    \Tenseur 1.
\end{equation}
\medskip

Let us say that a pair of twin binary trees~$J$ is \emph{connected} (resp.
\emph{anti-connected}) if all the permutations~$\sigma$ such that
$\PSymb(\sigma) = J$ are connected (resp. anti-connected). Since for any
connected (resp. anti-connected) permutation~$\sigma$ and a permutation~$\nu$
such that~$\sigma \OrdPermu \nu$ (resp.~$\nu \OrdPermu \sigma$) the
permutation~$\nu$ is also connected (resp. anti-connected), it is enough
to check if the minimal (resp. maximal) permutation of the $\EquivBX$-equivalence
class encoded by~$J$ is connected (resp. anti-connected) to decide if~$J$
is connected (resp. anti-connected).

\begin{Lemme} \label{lem:ABJConnexes}
    For any pair of twin binary trees~$J$, there exists a sequence of
    connected (resp. anti-connected) pairs of twin binary trees
    $J_1$, \dots, $J_k$ such that
    \begin{equation}
        J = J_1 \Over \cdots \Over J_k \quad
        \mbox{(resp. $J = J_1 \Under \cdots \Under J_k$)}.
    \end{equation}
\end{Lemme}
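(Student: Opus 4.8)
The plan is to reduce the statement to the corresponding decomposition result for binary trees, combined with the characterisation of the $\PSymb$-symbol of $\sigma \Over \nu$ and $\sigma \Under \nu$. First I would recall the well-known fact (used implicitly for $\PBT$ and $\FQSym$) that every binary tree $T$ admits a unique maximal factorisation $T = T_1 \Over \cdots \Over T_k$ where each $T_j$ is \emph{not} of the form $T' \Over T''$ with both factors non-empty; equivalently, the $T_j$ are read off from the rightmost branch of $T$. Dually, $T = T_1 \Under \cdots \Under T_k$ with each $T_j$ not decomposable via $\Under$. So for a pair of twin binary trees $J = (T_L, T_R)$ there is a candidate factorisation coming from the $\Over$-decomposition of $T_R$ together with the $\Under$-decomposition of $T_L$; the work is to check these two decompositions are compatible (have the same number of parts, with matching node counts) and that each resulting pair is a genuine pair of twin binary trees that is connected.

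The cleanest route, though, is to pass through permutations. Let $\sigma := \sigma \MinClasse$ be the minimal permutation of the $\EquivBX$-class encoded by $J$ (well-defined by Proposition~\ref{prop:EquivBXInter}). Decompose $\sigma$ uniquely as $\sigma = \sigma_1 \Over \cdots \Over \sigma_k$ with each $\sigma_j$ connected. By the remark just before the lemma, since $\sigma$ is the minimal permutation of its class and connectedness is preserved upwards in the permutohedron, each $\sigma_j$ is the minimal permutation of its own $\EquivBX$-class, and that class is encoded by a connected pair of twin binary trees $J_j := \PSymb(\sigma_j)$. Now the identity $\PSymb(\sigma \Over \nu) = \PSymb(\sigma) \Over \PSymb(\nu)$ — already observed in the proof of Corollary~\ref{cor:BasesMult}, using the $\Over$/$\Under$ operations on pairs of twin binary trees defined just above — gives by iteration $J = \PSymb(\sigma) = J_1 \Over \cdots \Over J_k$. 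This proves the $\Over$-form; the $\Under$-form is entirely analogous, taking $\sigma := \sigma \MaxClasse$ and decomposing via $\Under$ into anti-connected parts.

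The steps, in order: (1) state and cite (or quickly justify via the rightmost branch) the unique maximal $\Over$-decomposition of a permutation into connected pieces, and dually for $\Under$; (2) invoke the pre-lemma remark to see that each piece of the decomposition of $\sigma \MinClasse$ is itself minimal in its $\EquivBX$-class, hence its $\PSymb$-symbol $J_j$ is connected; (3) apply $\PSymb(\sigma \Over \nu) = \PSymb(\sigma) \Over \PSymb(\nu)$ repeatedly to conclude $J = J_1 \Over \cdots \Over J_k$; (4) repeat mutatis mutandis for the anti-connected/$\Under$ case, using that $\sigma^\sim$ connected iff $\sigma$ anti-connected and that $\PSymb$ interacts with $\Under$ the same way. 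The main obstacle I anticipate is step~(3): one must be sure that the definitions $J_0 \Over J_1 := (T^0_L \Under T^1_L,\, T^0_R \Over T^1_R)$ and $J_0 \Under J_1 := (T^0_L \Over T^1_L,\, T^0_R \Under T^1_R)$ really do match the $\PSymb$-symbol of the concatenated permutation, i.e. that grafting on the leftmost/rightmost leaf of the left tree corresponds correctly to grafting on the opposite side of the right tree so that the canopies remain complementary — but this is exactly the content of the remark in the proof of Corollary~\ref{cor:BasesMult}, so it can be cited rather than re-derived. A minor point to handle is that connectedness (resp.\ anti-connectedness) of $J_j$ is well-defined, i.e.\ independent of which permutation in the class we pick, which is precisely the upward/downward stability noted just before the lemma statement.
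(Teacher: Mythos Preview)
Your proposal is correct and follows essentially the same route as the paper: pick the minimal permutation $\sigma$ of the class encoded by $J$, factor it as $\sigma = \sigma_1 \Over \cdots \Over \sigma_k$ into connected pieces, observe that each $\sigma_j$ is still minimal in its own $\EquivBX$-class so that $J_j := \PSymb(\sigma_j)$ is connected, and conclude via $\PSymb(\sigma \Over \nu) = \PSymb(\sigma) \Over \PSymb(\nu)$.

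One small point: your justification ``connectedness is preserved upwards in the permutohedron'' is not the reason each $\sigma_j$ is minimal in its own class --- that remark only tells you connectedness of $J_j$ can be tested on the minimal element. The actual reason (which the paper also leaves implicit) is that any $\EquivBX$-adjacency applicable inside $\sigma_j$ lifts verbatim to one inside $\sigma$, contradicting minimality of $\sigma$; equivalently, minimal elements are the twisted Baxter permutations (Proposition~\ref{prop:TwistedMinClasses}), and pattern avoidance passes to $\Over$-factors. You may want to tighten that sentence, but the argument is otherwise the paper's own.
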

\begin{proof}
    Let~$\sigma$ be the minimal permutation of the $\EquivBX$-equivalence
    class encoded by $J$ (recall that the existence of this element is ensured
    by Proposition~\ref{prop:EquivBXInter}). One can write~$\sigma$ as
    \begin{equation}
        \sigma = \sigma^{(1)} \Over \cdots \Over \sigma^{(k)},
    \end{equation}
    where the permutations~$\sigma^{(i)}$ are connected for all $1 \leq i \leq k$.
    Since~$\sigma$ is the minimal permutation of its $\EquivBX$-equivalence
    class, all the permutations~$\sigma^{(i)}$ are also minimal of their
    $\EquivBX$-equivalence classes. Hence, the pairs of twin binary trees
    $\PSymb\left(\sigma^{(i)}\right)$ are connected and we can write
    \begin{equation}
        J = \PSymb\left(\sigma^{(1)}\right)
        \Over \cdots \Over
        \PSymb\left(\sigma^{(k)}\right).
    \end{equation}
    The proof for the respective part is analogous.
\end{proof}

\begin{Theoreme} \label{thm:LiberteBaxter}
    The algebra~$\Baxter$ is free on the elements~$\E_J$ (resp.~$\HH_J$)
    such that~$J$ is a connected (resp. anti-connected) pair of twin
    binary trees.
\end{Theoreme}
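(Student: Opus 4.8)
The plan is to prove freeness of $\Baxter$ on the family of $\E_J$ with $J$ connected, the argument for $\HH_J$ with $J$ anti-connected being symmetric (via the Schützenberger involution, using Proposition~\ref{prop:CompSchutz}, or simply by mirroring every step). First I would recall from Corollary~\ref{cor:BasesMult} that $\E_{J_0} \Prod \E_{J_1} = \E_{J_0 \Over J_1}$, so the subalgebra generated by the $\E_J$ is spanned by all products $\E_{J_1} \Prod \cdots \Prod \E_{J_k}$ which, by iterating the corollary, equal $\E_{J_1 \Over \cdots \Over J_k}$. Then I would invoke Lemma~\ref{lem:ABJConnexes}: every pair of twin binary trees $J$ can be written as $J = J_1 \Over \cdots \Over J_k$ with all $J_i$ connected. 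This immediately shows that the $\E_J$ with $J$ connected generate $\Baxter$ as an algebra, since $\{\E_J\}_{J \in \EnsABJ}$ is a basis of $\Baxter$ (being defined by unitriangularity over $\{\PP_J\}_{J \in \EnsABJ}$).

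Next I would establish that this generating set is free, i.e.\ that the $\Over$-decomposition into connected factors is unique. The key fact is that a permutation $\sigma$ has a unique expression $\sigma = \sigma^{(1)} \Over \cdots \Over \sigma^{(k)}$ with each $\sigma^{(i)}$ connected (this is the standard "connected components" decomposition of a permutation, with the cut points being the positions $i$ such that $\{\sigma_1, \dots, \sigma_i\} = \{1, \dots, i\}$). I would transport this to pairs of twin binary trees: if $J$ is connected, then by definition the minimal permutation $\sigma \MinClasse$ of the class $J$ is connected, and conversely; moreover, from the definition of the $\PSymb$-symbol one checks that $\PSymb(\sigma \Over \nu) = \PSymb(\sigma) \Over \PSymb(\nu)$ (this was already used in the proof of Corollary~\ref{cor:BasesMult}), and that the minimal permutation of $J_0 \Over J_1$ is the $\Over$-product of the minimal permutations of $J_0$ and $J_1$. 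Hence a factorization $J = J_1 \Over \cdots \Over J_k$ with all $J_i$ connected corresponds bijectively to the connected-components factorization of $\sigma \MinClasse$, which is unique; so the decomposition of $J$ into connected $\Over$-factors is unique.

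Finally I would assemble the freeness statement. Consider the free associative algebra on a set of generators $\{x_J : J \text{ connected}\}$ in bijection with the connected pairs of twin binary trees, and the algebra morphism $\phi$ sending $x_J \mapsto \E_J$. A linear basis of the free algebra is given by the words $x_{J_1} \cdots x_{J_k}$ over connected $J_i$; by Corollary~\ref{cor:BasesMult} and the uniqueness of the connected $\Over$-factorization just proved, $\phi$ sends this basis bijectively onto the set $\{\E_J\}_{J \in \EnsABJ}$, which is a basis of $\Baxter$. Therefore $\phi$ is a linear isomorphism, hence an algebra isomorphism, and $\Baxter$ is free on the $\E_J$ with $J$ connected. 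The main obstacle is the combinatorial bookkeeping in the middle paragraph: one must carefully check that connectedness of a pair of twin binary trees is compatible with the $\Over$ operation in both directions --- that $J$ connected iff $\sigma \MinClasse$ connected, and that $\Over$ of pairs of twin binary trees corresponds under $\PSymb$ and under "take the minimal permutation" to $\Over$ of permutations --- so that the well-known unique factorization of permutations into connected components can be pulled back along the bijection of Theorem~\ref{thm:BijectionEquivBXPermuBX}; everything else is a formal consequence of Corollary~\ref{cor:BasesMult} and Lemma~\ref{lem:ABJConnexes}.
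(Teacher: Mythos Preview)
Your proposal is correct and follows essentially the same approach as the paper: both arguments rest on Corollary~\ref{cor:BasesMult}, Lemma~\ref{lem:ABJConnexes}, and the unique decomposition of a permutation into connected $\Over$-factors. The paper's version is marginally more streamlined in that, rather than transporting the unique factorization back to pairs of twin binary trees as you do, it invokes Proposition~\ref{prop:BaseEHBaxter} to identify $\E_J$ with $\E^{\sigma\MinClasse}$ inside $\FQSym$ and deduces the absence of relations there directly.
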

\begin{proof}
    By Corollary~\ref{cor:BasesMult} and Lemma~\ref{lem:ABJConnexes},
    each element~$\E_J$ can be expressed as
    \begin{equation}
        \E_J = \E_{J_1} \Prod \dots \Prod \E_{J_k},
    \end{equation}
    where the pairs of twin binary trees~$J_i$ are connected for all
    $1 \leq i \leq k$.

    Now, since for all permutations $\sigma$ and $\nu$ one has
    $\E^\sigma \cdot \E^\nu = \E^{\sigma \Over \nu}$ in $\FQSym$, and
    since any permutation $\sigma$ admits a unique expression
    \begin{equation}
        \sigma = \sigma^{(1)} \Over \cdots \Over \sigma^{(k)},
    \end{equation}
    where~$\sigma^{(1)}$, \dots, $\sigma^{(k)}$ are connected permutations,
    there is no relation in~$\FQSym$ between the elements~$\E^\sigma$
    where~$\sigma$ is a connected permutation.

    Hence, by Proposition~\ref{prop:BaseEHBaxter} and Corollary~\ref{cor:BasesMult},
    there is also no relation in~$\Baxter$ between the elements~$\E_J$
    where~$J$ is a connected pair of twin binary trees. The proof for the
    respective part is analogous.
\end{proof}

Let us denote by~$B_C(z)$ the generating series of connected (resp. anti-connected)
pairs of twin binary trees. It follows, from Theorem~\ref{thm:LiberteBaxter},
that the Hilbert series~$B(z)$ of~$\Baxter$ satisfies
$B(z) = 1 / \left(1 - B_C(z)\right)$. Hence, the generating series~$B_C(z)$
satisfies
\begin{equation} \label{eq:SGGenAlg}
    B_C(z) = 1 - \frac{1}{B(z)}.
\end{equation}
First dimensions of algebraic generators of~$\Baxter$ are
\begin{equation}
    0, 1, 1, 3, 11, 47, 221, 1113, 5903, 32607, 186143, 1092015.
\end{equation}

Here follows algebraic generators of $\Baxter$ of order $1$ to $4$:
\begin{equation}
    \E_{\scalebox{0.15}{%
    \begin{tikzpicture}
        \node[Noeud](0)at(0,0){};
    \end{tikzpicture}}
        \;
        \scalebox{0.15}{%
    \begin{tikzpicture}
        \node[Noeud](0)at(0,0){};
    \end{tikzpicture}}};
\end{equation}

\begin{equation}
\E_{\scalebox{0.15}{%
    \begin{tikzpicture}
        \node[Noeud](0)at(0,-1){};
        \node[Noeud](1)at(1,0){};
        \draw[Arete](1)--(0);
    \end{tikzpicture}}
        \;
        \scalebox{0.15}{%
    \begin{tikzpicture}
        \node[Noeud](0)at(0,0){};
        \node[Noeud](1)at(1,-1){};
        \draw[Arete](0)--(1);
    \end{tikzpicture}}};
\end{equation}

\begin{equation}
    \E_{\scalebox{0.15}{%
    \begin{tikzpicture}
        \node[Noeud](0)at(0,-1){};
        \node[Noeud](1)at(1,0){};
        \draw[Arete](1)--(0);
        \node[Noeud](2)at(2,-1){};
        \draw[Arete](1)--(2);
    \end{tikzpicture}}
        \;
        \scalebox{0.15}{%
    \begin{tikzpicture}
        \node[Noeud](0)at(0,0){};
        \node[Noeud](1)at(1,-2){};
        \node[Noeud](2)at(2,-1){};
        \draw[Arete](2)--(1);
        \draw[Arete](0)--(2);
    \end{tikzpicture}}},
    \quad
    \E_{\scalebox{0.15}{%
    \begin{tikzpicture}
        \node[Noeud](0)at(0,-1){};
        \node[Noeud](1)at(1,-2){};
        \draw[Arete](0)--(1);
        \node[Noeud](2)at(2,0){};
        \draw[Arete](2)--(0);
    \end{tikzpicture}}
        \;
        \scalebox{0.15}{%
    \begin{tikzpicture}
        \node[Noeud](0)at(0,-1){};
        \node[Noeud](1)at(1,0){};
        \draw[Arete](1)--(0);
        \node[Noeud](2)at(2,-1){};
        \draw[Arete](1)--(2);
    \end{tikzpicture}}},
    \quad
    \E_{\scalebox{0.15}{%
    \begin{tikzpicture}
        \node[Noeud](0)at(0,-2){};
        \node[Noeud](1)at(1,-1){};
        \draw[Arete](1)--(0);
        \node[Noeud](2)at(2,0){};
        \draw[Arete](2)--(1);
    \end{tikzpicture}}
        \;
        \scalebox{0.15}{%
    \begin{tikzpicture}
        \node[Noeud](0)at(0,0){};
        \node[Noeud](1)at(1,-1){};
        \node[Noeud](2)at(2,-2){};
        \draw[Arete](1)--(2);
        \draw[Arete](0)--(1);
    \end{tikzpicture}}};
\end{equation}

\begin{equation}\begin{split}
    \E_{\scalebox{0.15}{%
    \begin{tikzpicture}
        \node[Noeud](0)at(0,-1){};
        \node[Noeud](1)at(1,0){};
        \draw[Arete](1)--(0);
        \node[Noeud](2)at(2,-1){};
        \node[Noeud](3)at(3,-2){};
        \draw[Arete](2)--(3);
        \draw[Arete](1)--(2);
    \end{tikzpicture}}
        \;
        \scalebox{0.15}{%
    \begin{tikzpicture}
        \node[Noeud](0)at(0,0){};
        \node[Noeud](1)at(1,-3){};
        \node[Noeud](2)at(2,-2){};
        \draw[Arete](2)--(1);
        \node[Noeud](3)at(3,-1){};
        \draw[Arete](3)--(2);
        \draw[Arete](0)--(3);
    \end{tikzpicture}}}, &
    \quad
    \E_{\scalebox{0.15}{%
    \begin{tikzpicture}
        \node[Noeud](0)at(0,-1){};
        \node[Noeud](1)at(1,0){};
        \draw[Arete](1)--(0);
        \node[Noeud](2)at(2,-2){};
        \node[Noeud](3)at(3,-1){};
        \draw[Arete](3)--(2);
        \draw[Arete](1)--(3);
    \end{tikzpicture}}
        \;
        \scalebox{0.15}{%
    \begin{tikzpicture}
        \node[Noeud](0)at(0,0){};
        \node[Noeud](1)at(1,-2){};
        \node[Noeud](2)at(2,-1){};
        \draw[Arete](2)--(1);
        \node[Noeud](3)at(3,-2){};
        \draw[Arete](2)--(3);
        \draw[Arete](0)--(2);
    \end{tikzpicture}}},
    \quad
    \E_{\scalebox{0.15}{%
    \begin{tikzpicture}
        \node[Noeud](0)at(0,-2){};
        \node[Noeud](1)at(1,-1){};
        \draw[Arete](1)--(0);
        \node[Noeud](2)at(2,0){};
        \draw[Arete](2)--(1);
        \node[Noeud](3)at(3,-1){};
        \draw[Arete](2)--(3);
    \end{tikzpicture}}
        \;
        \scalebox{0.15}{%
    \begin{tikzpicture}
        \node[Noeud](0)at(0,0){};
        \node[Noeud](1)at(1,-2){};
        \node[Noeud](2)at(2,-3){};
        \draw[Arete](1)--(2);
        \node[Noeud](3)at(3,-1){};
        \draw[Arete](3)--(1);
        \draw[Arete](0)--(3);
    \end{tikzpicture}}},
    \quad
    \E_{\scalebox{0.15}{%
    \begin{tikzpicture}
        \node[Noeud](0)at(0,-1){};
        \node[Noeud](1)at(1,-2){};
        \draw[Arete](0)--(1);
        \node[Noeud](2)at(2,0){};
        \draw[Arete](2)--(0);
        \node[Noeud](3)at(3,-1){};
        \draw[Arete](2)--(3);
    \end{tikzpicture}}
        \;
        \scalebox{0.15}{%
    \begin{tikzpicture}
        \node[Noeud](0)at(0,-1){};
        \node[Noeud](1)at(1,0){};
        \draw[Arete](1)--(0);
        \node[Noeud](2)at(2,-2){};
        \node[Noeud](3)at(3,-1){};
        \draw[Arete](3)--(2);
        \draw[Arete](1)--(3);
    \end{tikzpicture}}},
    \quad
    \E_{\scalebox{0.15}{%
    \begin{tikzpicture}
        \node[Noeud](0)at(0,-2){};
        \node[Noeud](1)at(1,-1){};
        \draw[Arete](1)--(0);
        \node[Noeud](2)at(2,0){};
        \draw[Arete](2)--(1);
        \node[Noeud](3)at(3,-1){};
        \draw[Arete](2)--(3);
    \end{tikzpicture}}
        \;
        \scalebox{0.15}{%
    \begin{tikzpicture}
        \node[Noeud](0)at(0,0){};
        \node[Noeud](1)at(1,-1){};
        \node[Noeud](2)at(2,-3){};
        \node[Noeud](3)at(3,-2){};
        \draw[Arete](3)--(2);
        \draw[Arete](1)--(3);
        \draw[Arete](0)--(1);
    \end{tikzpicture}}},
    \quad
    \E_{\scalebox{0.15}{%
    \begin{tikzpicture}
        \node[Noeud](0)at(0,-1){};
        \node[Noeud](1)at(1,-2){};
        \node[Noeud](2)at(2,-3){};
        \draw[Arete](1)--(2);
        \draw[Arete](0)--(1);
        \node[Noeud](3)at(3,0){};
        \draw[Arete](3)--(0);
    \end{tikzpicture}}
        \;
        \scalebox{0.15}{%
    \begin{tikzpicture}
        \node[Noeud](0)at(0,-2){};
        \node[Noeud](1)at(1,-1){};
        \draw[Arete](1)--(0);
        \node[Noeud](2)at(2,0){};
        \draw[Arete](2)--(1);
        \node[Noeud](3)at(3,-1){};
        \draw[Arete](2)--(3);
    \end{tikzpicture}}}, \\
    \E_{\scalebox{0.15}{%
    \begin{tikzpicture}
        \node[Noeud](0)at(0,-1){};
        \node[Noeud](1)at(1,-3){};
        \node[Noeud](2)at(2,-2){};
        \draw[Arete](2)--(1);
        \draw[Arete](0)--(2);
        \node[Noeud](3)at(3,0){};
        \draw[Arete](3)--(0);
    \end{tikzpicture}}
        \;
        \scalebox{0.15}{%
    \begin{tikzpicture}
        \node[Noeud](0)at(0,-1){};
        \node[Noeud](1)at(1,0){};
        \draw[Arete](1)--(0);
        \node[Noeud](2)at(2,-1){};
        \node[Noeud](3)at(3,-2){};
        \draw[Arete](2)--(3);
        \draw[Arete](1)--(2);
    \end{tikzpicture}}}, &
    \quad
    \E_{\scalebox{0.15}{%
    \begin{tikzpicture}
        \node[Noeud](0)at(0,-2){};
        \node[Noeud](1)at(1,-1){};
        \draw[Arete](1)--(0);
        \node[Noeud](2)at(2,-2){};
        \draw[Arete](1)--(2);
        \node[Noeud](3)at(3,0){};
        \draw[Arete](3)--(1);
    \end{tikzpicture}}
        \;
        \scalebox{0.15}{%
    \begin{tikzpicture}
        \node[Noeud](0)at(0,-1){};
        \node[Noeud](1)at(1,-2){};
        \draw[Arete](0)--(1);
        \node[Noeud](2)at(2,0){};
        \draw[Arete](2)--(0);
        \node[Noeud](3)at(3,-1){};
        \draw[Arete](2)--(3);
    \end{tikzpicture}}},
    \quad
    \E_{\scalebox{0.15}{%
    \begin{tikzpicture}
        \node[Noeud](0)at(0,-2){};
        \node[Noeud](1)at(1,-1){};
        \draw[Arete](1)--(0);
        \node[Noeud](2)at(2,-2){};
        \draw[Arete](1)--(2);
        \node[Noeud](3)at(3,0){};
        \draw[Arete](3)--(1);
    \end{tikzpicture}}
        \;
        \scalebox{0.15}{%
    \begin{tikzpicture}
        \node[Noeud](0)at(0,0){};
        \node[Noeud](1)at(1,-2){};
        \node[Noeud](2)at(2,-1){};
        \draw[Arete](2)--(1);
        \node[Noeud](3)at(3,-2){};
        \draw[Arete](2)--(3);
        \draw[Arete](0)--(2);
    \end{tikzpicture}}},
    \quad
    \E_{\scalebox{0.15}{%
    \begin{tikzpicture}
        \node[Noeud](0)at(0,-2){};
        \node[Noeud](1)at(1,-3){};
        \draw[Arete](0)--(1);
        \node[Noeud](2)at(2,-1){};
        \draw[Arete](2)--(0);
        \node[Noeud](3)at(3,0){};
        \draw[Arete](3)--(2);
    \end{tikzpicture}}
        \;
        \scalebox{0.15}{%
    \begin{tikzpicture}
        \node[Noeud](0)at(0,-1){};
        \node[Noeud](1)at(1,0){};
        \draw[Arete](1)--(0);
        \node[Noeud](2)at(2,-1){};
        \node[Noeud](3)at(3,-2){};
        \draw[Arete](2)--(3);
        \draw[Arete](1)--(2);
    \end{tikzpicture}}},
    \quad
    \E_{\scalebox{0.15}{%
    \begin{tikzpicture}
        \node[Noeud](0)at(0,-3){};
        \node[Noeud](1)at(1,-2){};
        \draw[Arete](1)--(0);
        \node[Noeud](2)at(2,-1){};
        \draw[Arete](2)--(1);
        \node[Noeud](3)at(3,0){};
        \draw[Arete](3)--(2);
    \end{tikzpicture}}
        \;
        \scalebox{0.15}{%
    \begin{tikzpicture}
        \node[Noeud](0)at(0,0){};
        \node[Noeud](1)at(1,-1){};
        \node[Noeud](2)at(2,-2){};
        \node[Noeud](3)at(3,-3){};
        \draw[Arete](2)--(3);
        \draw[Arete](1)--(2);
        \draw[Arete](0)--(1);
    \end{tikzpicture}}}.
\end{split}\end{equation}
\medskip

\begin{Proposition}
    If~$\sigma$ is a connected (resp. anti-connected) Baxter permutation,
    then any permutation~$\nu$ such that~$\sigma \EquivBX \nu$ is also
    connected (resp. anti-connected).
\end{Proposition}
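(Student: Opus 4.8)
The plan is to reduce the statement to a property of the minimal element of a Baxter class, using the characterization of Baxter classes as intervals of the permutohedron together with the behaviour of connectedness along the permutohedron order. Recall (from the remarks preceding Lemma~\ref{lem:ABJConnexes}) that if $\sigma$ is a connected permutation and $\sigma \OrdPermu \nu$, then $\nu$ is connected as well; dually, if $\sigma$ is anti-connected and $\nu \OrdPermu \sigma$, then $\nu$ is anti-connected. So it suffices, in the connected case, to show that the \emph{minimal} permutation $\sigma\MinClasse$ of the $\EquivBX$-class of $\sigma$ is connected whenever some member $\sigma$ of the class is connected; the general $\nu$ in that class then satisfies $\sigma\MinClasse \OrdPermu \nu$ by Proposition~\ref{prop:EquivBXInter}, and connectedness propagates upward. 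Symmetrically, in the anti-connected case one reduces to showing $\sigma\MaxClasse$ is anti-connected.

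So the heart of the matter is: \textbf{a Baxter class cannot contain both a connected permutation and a non-connected one} — more precisely, if the minimal element of a Baxter class is not connected, then no element of the class is connected. First I would make the notion of (non-)connectedness combinatorial in terms of $\PSymb$. A permutation $\tau \in \EnsPermu_n$ is non-connected iff $\tau = \tau' \Over \tau''$ for some nonempty $\tau', \tau''$, equivalently iff there exists $1 \le k \le n-1$ with $\{\tau_1,\dots,\tau_k\} = \{1,\dots,k\}$. Now observe that the $\Over$ operation on permutations is compatible with $\PSymb$ and with the Baxter order: indeed, from the definition of the $\PSymb$-symbol algorithm one has $\PSymb(\tau' \Over \tau'') = \PSymb(\tau') \Over \PSymb(\tau'')$ (this is exactly the fact used in the proof of the product formula, Corollary~\ref{cor:BasesMult}), and the pair of twin binary trees $J = J' \Over J''$ is characterized by the shape of its two components: $T_R$ has a leftmost branch leading into the subtree $T'_R$, and dually $T_L$ has $T'_L$ grafted at the right. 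Hence $J$ admits a decomposition $J = J' \Over J''$ with $J', J''$ nonempty precisely when a certain "decomposition point" $k$ exists for the shapes of $(T_L, T_R)$, and this is a property of the \emph{shape} $J$ alone, i.e. of the Baxter class, not of the individual permutation.

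Concretely: let $J = \PSymb(\sigma)$. I claim $J$ decomposes as $J' \Over J''$ with both factors nonempty iff every permutation $\tau$ with $\PSymb(\tau) = J$ is non-connected, and moreover the decomposition point $k$ is the same for all of them. The "only if" direction: if $J = J' \Over J''$ with $|J'| = k$, then for any $\tau$ with $\PSymb(\tau) = J$, the linear-extension characterization of Proposition~\ref{prop:BXExtLin} applied to $\PosetG(T_L) \cup \PosetD(T_R)$ shows that the $k$-th node in inorder of each tree separates the poset into a lower part (the $k$ smallest values, forming $J'$) and an upper part, with no order relation crossing; consequently the first $k$ letters of $\tau$ are exactly $\{1,\dots,k\}$, so $\tau$ is non-connected. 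The "if" direction follows by contraposition together with the fact that $\PSymb$ and $\Over$ commute: if $J$ does not decompose, pick the minimal permutation $\sigma\MinClasse$ of the class; were it non-connected, $\sigma\MinClasse = \tau' \Over \tau''$ would give $J = \PSymb(\tau') \Over \PSymb(\tau'')$, a nontrivial decomposition, contradiction. Hence $\sigma\MinClasse$ is connected, and by upward propagation every $\nu$ in the class is connected. The anti-connected case is handled by applying the $\Under$/$\MaxClasse$ analogue, or equivalently by conjugating with the mirror image $\sigma \mapsto \sigma^\sim$, which exchanges connected and anti-connected and, by Proposition~\ref{prop:CompSchutz} (compatibility with the Schützenberger involution) composed with the obvious symmetry, sends Baxter classes to Baxter classes.

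The step I expect to be the main obstacle is establishing cleanly that "decomposability of $J$ as $J' \Over J''$" is equivalent to "the decomposition point $k$ works simultaneously for $T_L$ (via $\Under$) and $T_R$ (via $\Over$)" and that this forces the splitting $\{\tau_1,\dots,\tau_k\} = \{1,\dots,k\}$ for \emph{every} $\tau$ in the class; one must check carefully, using Lemma~\ref{lem:OrientationFeuille} on the canopy and the twin condition, that the canopy-complementarity is respected by the decomposition, i.e. that $\Canop(T_L)$ and $\Canop(T_R)$ both have the matching $0$/$1$ at position $k$ that permits grafting on the leftmost/rightmost leaf. Once this structural lemma about $\EnsABJ$ is in place, the rest — commuting $\PSymb$ with $\Over$, invoking Proposition~\ref{prop:BXExtLin} and Proposition~\ref{prop:EquivBXInter}, and the upward/downward propagation of (anti-)connectedness — is routine.
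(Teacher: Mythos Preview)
Your central claim --- that ``a Baxter class cannot contain both a connected permutation and a non-connected one'', equivalently that if $J = J' \Over J''$ nontrivially then \emph{every} $\tau$ with $\PSymb(\tau) = J$ is non-connected --- is false. Take $J = \PSymb(21) \Over \PSymb(21) = \PSymb(2143)$: the class encoded by $J$ is $\{2143, 2413\}$, and $2413$ is connected while $2143$ is not. So the ``only if'' direction you sketch (via Proposition~\ref{prop:BXExtLin}) cannot go through; the poset $\PosetG(T_L) \cup \PosetD(T_R)$ does \emph{not} force $\{1,\dots,k\}$ to appear as a prefix in every linear extension when $J$ decomposes. The proposition is weaker than what you are trying to prove: it only asserts that the \emph{Baxter} representative of a decomposable class is non-connected (in the example, the Baxter permutation is $2143$, which is indeed non-connected), not that all representatives are.

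The paper's argument is a counting argument, not a structural one. One observes that the $\Over$-factorization of a Baxter permutation into connected pieces yields Baxter pieces (pattern avoidance is inherited by the factors), so connected Baxter permutations are counted by $B_C(z) = 1 - 1/B(z)$; by~(\ref{eq:SGGenAlg}) this is also the generating series for connected pairs of twin binary trees. Now every connected $J$ has all its permutations connected (by definition), in particular its unique Baxter permutation; this gives an injection from connected $J$'s to connected Baxter permutations, and equinumerosity forces it to be a bijection. Hence every connected Baxter permutation lies in a connected class, which is exactly the statement. Your structural route could be salvaged, but you would need to argue directly that the Baxter permutation extracted by {\sc ExtractBaxter} from $J' \Over J''$ factors as a $\Over$-product --- a statement about that specific algorithm, not about arbitrary class members.
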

\begin{proof}
    As any permutation, every Baxter permutation~$\sigma$ can be uniquely
    expressed as
    \begin{equation}
        \sigma = \sigma^{(1)} \Over \cdots \Over \sigma^{(k)},
    \end{equation}
    where the permutations~$\sigma^{(i)}$ are connected for all $1 \leq i \leq k$.
    Moreover, since~$\sigma$ avoids the permutation patterns $2-41-3$
    and $3-14-2$, the permutations~$\sigma^{(i)}$ also does, and hence,
    the~$\sigma^{(i)}$ are Baxter permutations. This shows that the generating
    series of connected Baxter permutations is~$B_C(z)$ and thus, that
    connected Baxter permutations, connected pairs of twin binary trees,
    and connected minimal permutations of Baxter equivalence classes are
    equinumerous.

    The proposition follows from Theorem~\ref{thm:EquivBXBaxter} saying
    that each $\EquivBX$-equivalence class of permutations contains
    exactly one Baxter permutation. The proof for the respective part is
    analogous.
\end{proof}

\begin{Corollaire}
    The algebra~$\Baxter$ is free on the elements~$\E_J$ (resp.~$\HH_J$)
    where the Baxter permutation belonging to the $\EquivBX$-equivalence
    class encoded by~$J$ is connected (resp. anti-connected).
\end{Corollaire}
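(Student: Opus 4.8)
The plan is to deduce this directly from Theorem~\ref{thm:LiberteBaxter} by checking that the two descriptions of the free generating set coincide, namely that a pair of twin binary trees $J$ is connected if and only if the unique Baxter permutation lying in the $\EquivBX$-equivalence class encoded by $J$ is connected (and likewise for anti-connected/$\HH_J$). So the corollary is really a reindexing of Theorem~\ref{thm:LiberteBaxter} along this equivalence.

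First I would record the easy implication: if $J$ is connected, then by the very definition of a connected pair of twin binary trees every permutation $\sigma$ with $\PSymb(\sigma) = J$ is connected; in particular the Baxter permutation of the class (which exists and is unique by Theorem~\ref{thm:EquivBXBaxter}, and satisfies $\PSymb(\sigma) = J$) is connected. For the converse I would invoke the Proposition just proved: if the Baxter permutation $\sigma$ of the class is connected, then every permutation $\nu$ with $\nu \EquivBX \sigma$ is connected, which is exactly the statement that $J$ is connected. Putting the two implications together yields the claimed equivalence. The anti-connected case is handled identically, using the respective parts of Theorem~\ref{thm:EquivBXBaxter} and of the preceding Proposition.

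I would then conclude as follows. By Theorem~\ref{thm:LiberteBaxter}, $\Baxter$ is free on the family $\left\{\E_J\right\}$ indexed by connected pairs of twin binary trees (resp. on the family $\left\{\HH_J\right\}$ indexed by anti-connected ones). Substituting the equivalent characterization just established, the index set $\{J : J \text{ connected}\}$ is nothing but $\{J : \text{the Baxter permutation of the class } J \text{ is connected}\}$ (resp. the anti-connected analogue), and the corollary follows. As a sanity check one may note, exactly as in the proof of the preceding Proposition via the identity $B_C(z) = 1 - 1/B(z)$, that connected Baxter permutations, connected pairs of twin binary trees, and connected minimal permutations of $\EquivBX$-classes are all counted by $B_C(z)$, so no generator is lost or duplicated in passing from one description of the generating set to the other.

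I do not expect a genuine obstacle here: all the substance — the freeness itself and the stability of connectedness across an entire $\EquivBX$-class — has already been absorbed into Theorem~\ref{thm:LiberteBaxter} and the Proposition immediately above. The only point requiring a little care is the bookkeeping of the ``resp.'' statements, so that connected/anti-connected, $\E$/$\HH$, and $\Over$/$\Under$ remain consistently matched throughout the argument.
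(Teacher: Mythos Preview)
Your proposal is correct and matches the paper's intended reasoning: the corollary is stated without proof in the paper, as it is meant to follow immediately from Theorem~\ref{thm:LiberteBaxter} together with the preceding Proposition, via exactly the equivalence you spell out (that $J$ is connected if and only if the Baxter permutation of its class is connected). Your write-up simply makes this implicit deduction explicit.
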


\subsubsection{Bidendriform bialgebra structure and self-duality}
A Hopf algebra $(H, \Prod, \Delta)$ can be fit into a bidendriform
bialgebra structure~\cite{Foi07} if $(H^+, \Gauche, \Droite)$ is a dendriform
algebra~\cite{Lod01} and $(H^+, \DeltaG, \DeltaD)$ a codendriform coalgebra,
where~$H^+$ is the augmentation ideal of~$H$. The operators~$\Gauche$, $\Droite$,
$\DeltaG$ and~$\DeltaD$ have to fulfill some compatibility relations. In
particular, for all~$x, y \in H^+$, the product~$\Prod$ of~$H$ is retrieved
by $x \Prod y = x \Gauche y + x \Droite y$ and the coproduct~$\Delta$ of~$H$
is retrieved by $\Delta(x) = 1 \Tenseur x + \DeltaG(x) + \DeltaD(x) + x \Tenseur 1$.
Recall that an element~$x \in H^+$ is \emph{totally primitive} if
$\DeltaG(x) = 0 = \DeltaD(x)$.
\medskip

The Hopf algebra~$\FQSym$ admits a bidendriform bialgebra structure~\cite{Foi07}.
Indeed, for all~$\sigma, \nu \in \EnsPermu_n$ with~$n \geq 1$, set
\begin{equation}
    \F_\sigma \Gauche \F_\nu :=
    \sum_{\substack{\pi \; \in \; \sigma \; \cshuffle \; \nu \\
    \pi_{|\pi|} = \sigma_{|\sigma|}}} \F_\pi,
\end{equation}
\begin{equation}
    \F_\sigma \Droite \F_\nu :=
    \sum_{\substack{\pi \; \in \; \sigma \; \cshuffle \; \nu \\
    \pi_{|\pi|} = \nu_{|\nu|} + |\sigma|}} \F_\pi,
\end{equation}
\begin{equation}
    \DeltaG(\F_\sigma) :=
    \sum_{\substack{\sigma = uv \\ \max(u) = \max(\sigma)}}
    \F_{\Std(u)} \Tenseur \F_{\Std(v)},
\end{equation}
\begin{equation}
    \DeltaD(\F_\sigma) := \sum_{\substack{\sigma = uv \\
    \max(v) = \max(\sigma)}}
    \F_{\Std(u)} \Tenseur \F_{\Std(v)}.
\end{equation}

\begin{Proposition} \label{prop:MemeLettreEquiv}
    If~$\equiv$ is an equivalence relation defined on~$A^*$ satisfying the
    conditions of Theorem~\ref{thm:HivertJanvier} and additionally, for
    all~$u, v \in A^*$, the relation~$u \equiv v$ implies $u_{|u|} = v_{|v|}$,
    then, the family defined in~(\ref{eq:EquivFQSym}) spans a bidendriform
    sub-bialgebra of~$\FQSym$ that is free as an algebra, cofree as a coalgebra,
    self-dual, free as a dendriform algebra on its totally primitive elements,
    and the Lie algebra of its primitive elements is free.
\end{Proposition}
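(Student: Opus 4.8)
The plan is to verify that the extra hypothesis $u \equiv v \Rightarrow u_{|u|} = v_{|v|}$ makes the subspace spanned by $\{\PP_{\widehat\sigma}\}$ stable under the four operators $\Gauche$, $\Droite$, $\DeltaG$, $\DeltaD$ of $\FQSym$, and then invoke Foissy's structure theorem for bidendriform bialgebras~\cite{Foi07}. First I would recall from Theorem~\ref{thm:HivertJanvier} that the family already spans a Hopf subalgebra; what remains is to promote this to a sub-bidendriform-bialgebra. For the dendriform part, I would show that $\PP_{\widehat\sigma} \Gauche \PP_{\widehat\nu}$ lies in the span of the $\PP$'s: expanding into the $\F$ basis, $\PP_{\widehat\sigma} \Gauche \PP_{\widehat\nu} = \sum \F_\pi$ over $\pi \in \sigma' \cshuffle \nu'$ with $\pi_{|\pi|} = \sigma'_{|\sigma'|}$, where $\sigma'$ ranges over $\widehat\sigma$ and $\nu'$ over $\widehat\nu$. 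By the argument already given after Theorem~\ref{thm:HivertJanvier} (compatibility with destandardization, restriction of alphabet intervals, and the congruence property), the full set of such $\pi$ is a union of $\equiv$-classes; the only new point is that the constraint ``last letter comes from the left factor'' is preserved within a class, which is exactly the added hypothesis, since two $\equiv$-equivalent permutations $\pi, \pi''$ in the shuffle have the same last letter, hence it sits over the same side. The same reasoning handles $\Droite$.

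Next I would treat the codendriform part. Here $\DeltaG(\PP_{\widehat\pi})$ expands as $\sum \F_{\Std(u)} \Tenseur \F_{\Std(v)}$ over factorizations $\pi' = uv$ of elements $\pi' \in \widehat\pi$ with $\max(u) = \max(\pi')$. Since $\equiv$ is a congruence compatible with destandardization, the set of pairs $(\Std(u), \Std(v))$ appearing is a union of products of $\equiv$-classes --- this is the argument already sketched in the paper for the ordinary coproduct $\Delta$ --- and the condition $\max(u) = \max(\pi')$ selects those factorizations where the \emph{overall} maximum lies in the left part. I need that this property only depends on the $\equiv$-classes of $\Std(u)$ and $\Std(v)$, not on the representative $\pi'$; this follows because if $\pi' \equiv \pi''$ and $\pi' = uv$, $\pi'' = u''v''$ with $\Std(u)\equiv\Std(u'')$, $\Std(v)\equiv\Std(v'')$ and $|u|=|u''|$, then $\max(u) = \max(\pi')$ iff the value $n$ sits in the first $|u|$ positions of $\pi'$, and since $\pi' \equiv \pi''$ means (via compatibility) the same coproduct term structure, the position of the maximum relative to the cut is class-invariant. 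So $\DeltaG(\PP_{\widehat\pi})$ is a sum of $\PP_{\widehat\sigma} \Tenseur \PP_{\widehat\nu}$, and likewise $\DeltaD$.

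Having established stability, the compatibility axioms between $\Gauche,\Droite,\DeltaG,\DeltaD$ are inherited for free from $\FQSym$, since they are identities holding in $\FQSym$ and our object is a sub-bidendriform-bialgebra. Then I would apply Foissy's results: a bidendriform bialgebra over a field of characteristic zero is automatically free as an associative algebra, cofree as a coassociative coalgebra, self-dual, free as a dendriform algebra over its space of totally primitive elements, and its Lie algebra of primitive elements is free~\cite{Foi07}. This yields the stated conclusion verbatim.

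The main obstacle I anticipate is not any single hard computation but rather pinning down cleanly the ``class-invariance of the cut position of the maximum'' claim for $\DeltaG$ and $\DeltaD$: one must be careful that the added hypothesis $u \equiv v \Rightarrow u_{|u|} = v_{|v|}$, which literally constrains only last letters, suffices to control where the global maximum of a permutation lands under a factorization, across all representatives of a class. The key observation making this work is that for permutations the last-letter condition, combined with compatibility with the restriction of alphabet intervals (restricting to $\{n\}$ and its neighbours) and with destandardization, forces the position of the value $n$ to be an invariant of the $\equiv$-class --- an argument I would spell out as a short lemma before invoking Foissy. Everything else is bookkeeping transported from the already-cited $\FQSym$ identities.
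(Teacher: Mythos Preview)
Your overall plan---show that the subspace is stable under $\Gauche$, $\Droite$, $\DeltaG$, $\DeltaD$ and then invoke Foissy~\cite{Foi07}---is exactly the paper's plan, and your treatment of $\Gauche$ and $\Droite$ is essentially the paper's argument.

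The place where you diverge is the codendriform part, and there you have made your own life harder than necessary. You try to reduce stability of $\DeltaG$ to the claim that the \emph{position} of the value $n$ is constant across a $\equiv$-class, and you correctly flag this as the main obstacle. The paper sidesteps this entirely. Given a term $\F_\nu \Tenseur \F_\pi$ in $\DeltaG(\PP_{\widehat\sigma})$, coming from $\sigma = uv$ with $\Std(u)=\nu$, $\Std(v)=\pi$ and $\max(uv)\in u$, and given $\nu'\equiv\nu$, $\pi'\equiv\pi$, the paper simply \emph{constructs} a witness: take $u'$ to be the rearrangement of the letters of $u$ with $\Std(u')=\nu'$, and $v'$ the rearrangement of $v$ with $\Std(v')=\pi'$. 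Compatibility with destandardization gives $u\equiv u'$ and $v\equiv v'$; the congruence property gives $uv\equiv u'v'$; and since $u'$ has the same multiset of values as $u$, the maximum of $u'v'$ trivially lies in $u'$. So $\F_{\nu'}\Tenseur\F_{\pi'}$ appears in $\DeltaG(\PP_{\widehat\sigma})$. No lemma about the position of $n$ is needed; in fact the extra last-letter hypothesis is not used at all for $\DeltaG$ and $\DeltaD$---it is needed only for the dendriform products. Replace your ``position of $n$'' argument with this direct construction and the proof goes through without the obstacle you anticipated.
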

\begin{proof}
    It is enough to show that the operators~$\Gauche$, $\Droite$, $\DeltaG$
    and~$\DeltaD$ of~$\FQSym$ are well-defined in the Hopf subalgebra~$H$
    of~$\FQSym$ spanned by the elements
    $\left\{\PP_{\widehat{\sigma}}\right\}_{\widehat{\sigma} \in \EnsPermu/_\equiv}$.
    In this way, $H$ is endowed with a structure of bidendriform bialgebra
    and the results of Foissy~\cite{Foi07} imply the rest of the proposition.

    Fix $\widehat{\sigma}, \widehat{\nu} \in \EnsPermu/_\equiv$ and an
    element~$\F_\pi$ appearing in the product
    $\PP_{\widehat{\sigma}} \Gauche \PP_{\widehat{\nu}}$. Hence, there is
    a permutation~$\sigma \in \widehat{\sigma}$ such that
    $\pi_{|\pi|} = \sigma_{|\sigma|}$. Let~$\pi'$ a permutation such
    that~$\pi \equiv \pi'$. By Theorem~\ref{thm:HivertJanvier}, the
    element~$\F_{\pi'}$ appears in the product
    $\PP_{\widehat{\sigma}} \Prod \PP_{\widehat{\nu}}$, and hence, it also
    appears in $\PP_{\widehat{\sigma}} \Gauche \PP_{\widehat{\nu}}$
    or in $\PP_{\widehat{\sigma}} \Droite \PP_{\widehat{\nu}}$. Assume by
    contradiction that~$\F_{\pi'}$ appears in
    $\PP_{\widehat{\sigma}} \Droite \PP_{\widehat{\nu}}$.
    There are two permutations~$\sigma' \in \widehat{\sigma}$
    and~$\nu' \in \widehat{\nu}$ such that
    $\pi'_{|\pi'|} = \nu'_{|\nu'|} + |\sigma'|$. That implies that
    $\pi_{|\pi|} \ne \pi'_{|\pi'|}$ and contradicts the fact that all
    permutations of a same $\equiv$-equivalence class end with a same
    letter. Hence, the element~$\F_{\pi'}$ appears in
    $\PP_{\widehat{\sigma}} \Gauche \PP_{\widehat{\nu}}$, showing that
    the product~$\Gauche$ is well-defined in~$H$. Then so is~$\Droite$
    since~$\Gauche + \Droite$ is the whole product.

    Fix $\widehat{\sigma} \in \EnsPermu/_\equiv$ and an
    element $\F_\nu \Tenseur \F_\pi$ appearing in the coproduct
    $\DeltaG(\PP_{\widehat{\sigma}})$. Hence, there is a
    permutation~$\sigma \in \widehat{\sigma}$ such that~$\sigma = uv$,
    $\nu = \Std(u)$, $\pi = \Std(v)$ and the maximal letter of~$uv$ is
    in the factor~$u$. Now, let~$\nu'$ and~$\pi'$ be two permutations such
    that $\nu \equiv \nu'$, $\pi \equiv \pi'$. Let us show that the element
    $\F_{\nu'} \Tenseur \F_{\pi'}$ also appears in~$\DeltaG(\PP_{\widehat{\sigma}})$.
    For that, let~$u'$ be a permutation of~$u$ such that~$\Std(u') = \nu'$,
    and~$v'$ be a permutation of~$v$ such that~$\Std(v') = \pi'$. Since
    $\Eval(u') = \Eval(u)$, $\Std(u') \equiv \Std(u)$, and~$\equiv$ is
    compatible with the destandardization process, one has~$u \equiv u'$.
    For the same reason,~$v \equiv v'$, and since~$\equiv$ is a congruence,
    one has~$uv \equiv u'v'$. Finally, since the maximal letter of~$uv$
    is in~$u$, the maximal letter of~$u'v'$ is in~$u'$, showing that the
    element $\F_{\nu'} \Tenseur \F_{\pi'}$ appears in
    $\DeltaG(\PP_{\widehat{\sigma}})$. Thus, the coproduct~$\DeltaG$
    is well-defined in~$H$. The proof for the coproduct~$\DeltaD$ is analogous.
\end{proof}

\begin{Corollaire} \label{cor:BaxterBidendr}
    The Hopf algebra $\Baxter$ is free as an algebra, cofree as a coalgebra,
    self-dual, free as a dendriform algebra on its totally primitive elements,
    and the Lie algebra of its primitive elements is free.
\end{Corollaire}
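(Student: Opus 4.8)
The plan is to derive Corollary~\ref{cor:BaxterBidendr} directly from Proposition~\ref{prop:MemeLettreEquiv} applied to the Baxter equivalence relation~$\EquivBX$. Since Proposition~\ref{prop:MemeLettreEquiv} packages all five conclusions (freeness as an algebra, cofreeness as a coalgebra, self-duality, freeness as a dendriform algebra on totally primitive elements, and freeness of the Lie algebra of primitive elements) into a single statement, the only work is to check that~$\EquivBX$ satisfies all the hypotheses of that proposition.

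First I would recall that, by definition,~$\EquivBX$ is a congruence, and that by Propositions~\ref{prop:CompDestd} and~\ref{prop:CompRestrSegmAlph} it is compatible with the destandardization process and with the restriction of alphabet intervals; hence~$\EquivBX$ satisfies the conditions of Theorem~\ref{thm:HivertJanvier}. It then remains to verify the one additional hypothesis of Proposition~\ref{prop:MemeLettreEquiv}, namely that for all~$u, v \in A^*$, the relation~$u \EquivBX v$ implies~$u_{|u|} = v_{|v|}$, that is, two Baxter-equivalent words end with the same letter.

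This last point is the only place any argument is needed, and it is short. By Proposition~\ref{prop:LienSylv}, $u \EquivBX v$ implies~$u \EquivS v$, so~$u$ and~$v$ are sylvester-equivalent; since sylvester-equivalent words are obtained by reading the same right binary search tree, and since the last letter of a word equals the label of the root of the right binary search tree it produces by right-to-left leaf insertion (the first inserted letter becomes the root and is never moved), we get~$u_{|u|} = v_{|v|}$. Alternatively, one checks this directly on the adjacency relations~$\AdjBXA$ and~$\AdjBXB$ of Definition~\ref{def:MonoideBaxter}: in both~(\ref{eq:EquivBXAdj1}) and~(\ref{eq:EquivBXAdj2}) the rewriting only permutes two interior letters~$\LA$ and~$\LD$ that are strictly followed by the letter~$\LB$ (resp.~$\LC$), so the last letter of the word is untouched, and the property passes to the reflexive-transitive closure.

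With this verification in hand, Proposition~\ref{prop:MemeLittreEquiv}---read for~$\equiv\,=\,\EquivBX$, whose induced Hopf subalgebra of~$\FQSym$ is exactly~$\Baxter$ by Theorem~\ref{thm:AlgebreHopfBaxter}---yields all the asserted properties at once. I do not expect any genuine obstacle here: the main content of the corollary is already absorbed into Proposition~\ref{prop:MemeLettreEquiv} and into Foissy's theory of bidendriform bialgebras~\cite{Foi07} it invokes, so the ``hard part'', such as it is, is simply recognizing that the only new thing to check is the stability of the last letter under~$\EquivBX$, which is essentially immediate from the shape of the defining relations.
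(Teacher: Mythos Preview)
Your proposal is correct and follows exactly the same approach as the paper: apply Proposition~\ref{prop:MemeLettreEquiv} after checking that $\EquivBX$-equivalent words share the same last letter. The paper's proof is actually terser than yours---it simply asserts that words in the same $\EquivBX$-class end with the same letter and invokes the proposition---so your additional justifications (via the sylvester $\PSymb$-symbol or directly on the adjacency relations) only add detail; note the typo \verb|prop:MemeLittreEquiv| in your last paragraph.
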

\begin{proof}
    Since all words of a same $\EquivBX$-equivalence class end with a same
    letter,~$\EquivBX$ satisfies the premises of
    Proposition~\ref{prop:MemeLettreEquiv} and hence,~$\Baxter$ satisfies
    all stated properties.
\end{proof}

Considering the map $\theta' : \PBT \hookrightarrow \FQSym$ that is the
injection from~$\PBT$ to~$\FQSym$ and
$\phi' : \FQSym^\star \twoheadrightarrow \PBT^\star$ the surjection
from~$\FQSym^\star$ to $\PBT^\star$, it is well-known (see~\cite{HNT05})
that the map $\phi' \circ \psi \circ \theta'$ induces an isomorphism
between~$\PBT$ and~$\PBT^\star$. Hence, since by Corollary~\ref{cor:BaxterBidendr},
the Hopf algebras~$\Baxter$ and~$\Baxter^\star$ are isomorphic, it is natural
to test if an analogous map is still an isomorphism between~$\Baxter$
and~$\Baxter^\star$. However, denoting by $\theta : \Baxter \hookrightarrow \FQSym$
the injection from~$\Baxter$ to~$\FQSym$, the map
$\phi \circ \psi \circ \theta : \Baxter \rightarrow \Baxter^\star$ is not
an isomorphism. Indeed
\begin{align}
    \phi \circ \psi \circ \theta \left(
    \PP_{\scalebox{0.15}{%
    \begin{tikzpicture}
        \node[Noeud](0)at(0,-1){};
        \node[Noeud](1)at(1,0){};
        \draw[Arete](1)--(0);
        \node[Noeud](2)at(2,-2){};
        \node[Noeud](3)at(3,-1){};
        \draw[Arete](3)--(2);
        \draw[Arete](1)--(3);
    \end{tikzpicture}}
    \;
    \scalebox{0.15}{%
    \begin{tikzpicture}
        \node[Noeud](0)at(0,-1){};
        \node[Noeud](1)at(1,-2){};
        \draw[Arete](0)--(1);
        \node[Noeud](2)at(2,0){};
        \draw[Arete](2)--(0);
        \node[Noeud](3)at(3,-1){};
        \draw[Arete](2)--(3);
    \end{tikzpicture}}} \right)
    & = \phi \circ \psi \left( \F_{2143} + \F_{2413} \right)
      = \phi \left( \F^\star_{2143} + \F^\star_{3142} \right)
      =
    \PP^\star_{\scalebox{0.15}{%
    \begin{tikzpicture}
        \node[Noeud](0)at(0,-1){};
        \node[Noeud](1)at(1,0){};
        \draw[Arete](1)--(0);
        \node[Noeud](2)at(2,-2){};
        \node[Noeud](3)at(3,-1){};
        \draw[Arete](3)--(2);
        \draw[Arete](1)--(3);
    \end{tikzpicture}}
    \;
    \scalebox{0.15}{%
    \begin{tikzpicture}
        \node[Noeud](0)at(0,-1){};
        \node[Noeud](1)at(1,-2){};
        \draw[Arete](0)--(1);
        \node[Noeud](2)at(2,0){};
        \draw[Arete](2)--(0);
        \node[Noeud](3)at(3,-1){};
        \draw[Arete](2)--(3);
    \end{tikzpicture}}}
    +
    \PP^\star_{\scalebox{0.15}{%
    \begin{tikzpicture}
        \node[Noeud](0)at(0,-1){};
        \node[Noeud](1)at(1,-2){};
        \draw[Arete](0)--(1);
        \node[Noeud](2)at(2,0){};
        \draw[Arete](2)--(0);
        \node[Noeud](3)at(3,-1){};
        \draw[Arete](2)--(3);
    \end{tikzpicture}}
    \;
    \scalebox{0.15}{%
    \begin{tikzpicture}
        \node[Noeud](0)at(0,-1){};
        \node[Noeud](1)at(1,0){};
        \draw[Arete](1)--(0);
        \node[Noeud](2)at(2,-2){};
        \node[Noeud](3)at(3,-1){};
        \draw[Arete](3)--(2);
        \draw[Arete](1)--(3);
    \end{tikzpicture}}}, \\
    \phi \circ \psi \circ \theta \left(
    \PP_{\scalebox{0.15}{%
    \begin{tikzpicture}
        \node[Noeud](0)at(0,-1){};
        \node[Noeud](1)at(1,-2){};
        \draw[Arete](0)--(1);
        \node[Noeud](2)at(2,0){};
        \draw[Arete](2)--(0);
        \node[Noeud](3)at(3,-1){};
        \draw[Arete](2)--(3);
    \end{tikzpicture}}
    \;
    \scalebox{0.15}{%
    \begin{tikzpicture}
        \node[Noeud](0)at(0,-1){};
        \node[Noeud](1)at(1,0){};
        \draw[Arete](1)--(0);
        \node[Noeud](2)at(2,-2){};
        \node[Noeud](3)at(3,-1){};
        \draw[Arete](3)--(2);
        \draw[Arete](1)--(3);
    \end{tikzpicture}}} \right)
    & = \phi \circ \psi \left( \F_{3142} + \F_{3412} \right)
      = \phi \left( \F^\star_{2413} + \F^\star_{3412} \right)
      =
    \PP^\star_{\scalebox{0.15}{%
    \begin{tikzpicture}
        \node[Noeud](0)at(0,-1){};
        \node[Noeud](1)at(1,0){};
        \draw[Arete](1)--(0);
        \node[Noeud](2)at(2,-2){};
        \node[Noeud](3)at(3,-1){};
        \draw[Arete](3)--(2);
        \draw[Arete](1)--(3);
    \end{tikzpicture}}
    \;
    \scalebox{0.15}{%
    \begin{tikzpicture}
        \node[Noeud](0)at(0,-1){};
        \node[Noeud](1)at(1,-2){};
        \draw[Arete](0)--(1);
        \node[Noeud](2)at(2,0){};
        \draw[Arete](2)--(0);
        \node[Noeud](3)at(3,-1){};
        \draw[Arete](2)--(3);
    \end{tikzpicture}}}
    +
    \PP^\star_{\scalebox{0.15}{%
    \begin{tikzpicture}
        \node[Noeud](0)at(0,-1){};
        \node[Noeud](1)at(1,-2){};
        \draw[Arete](0)--(1);
        \node[Noeud](2)at(2,0){};
        \draw[Arete](2)--(0);
        \node[Noeud](3)at(3,-1){};
        \draw[Arete](2)--(3);
    \end{tikzpicture}}
    \;
    \scalebox{0.15}{%
    \begin{tikzpicture}
        \node[Noeud](0)at(0,-1){};
        \node[Noeud](1)at(1,0){};
        \draw[Arete](1)--(0);
        \node[Noeud](2)at(2,-2){};
        \node[Noeud](3)at(3,-1){};
        \draw[Arete](3)--(2);
        \draw[Arete](1)--(3);
    \end{tikzpicture}}},
\end{align}
showing that~$\phi \circ \psi \circ \theta$ is not injective.

\subsubsection{Primitive and totally primitive elements}

Since the family~$\left\{\E_J\right\}_{J \in C}$
(resp.~$\left\{\HH_J\right\}_{J \in C}$), where~$C$ is the set of connected
(resp. anti-connected) pairs of twin binary trees are indecomposable elements
of~$\Baxter$, its dual family~$\left\{\E^\star_J\right\}_{J \in C}$
(resp.~$\left\{\HH^\star_J\right\}_{J \in C}$) forms a basis of the Lie
algebra of  the primitive elements of~$\Baxter^\star$. By
Corollary~\ref{cor:BaxterBidendr}, this Lie algebra is free.
\medskip

Following~\cite{Foi07}, the generating series $B_T(z)$ of the totally
primitive elements of $\Baxter$ is
\begin{equation}
    B_T(z) = \frac{B(z) - 1}{B(z)^2}.
\end{equation}
First dimensions of totally primitive  elements of $\Baxter$ are
\begin{equation}
    0, 1, 0, 1, 4, 19, 96, 511, 2832, 16215, 95374, 573837.
\end{equation}

Here follows a basis of the totally primitive elements of $\Baxter$ of
order $1$, $3$ and $4$:
\begin{align}
    t_{1, 1} & =
    \PP_{\scalebox{0.15}{%
    \begin{tikzpicture}
        \node[Noeud](0)at(0,0){};
    \end{tikzpicture}}
    \;
    \scalebox{0.15}{%
    \begin{tikzpicture}
        \node[Noeud](0)at(0,0){};
    \end{tikzpicture}}}, \\[1em]
    t_{3, 1} & =
    \PP_{\scalebox{0.15}{%
    \begin{tikzpicture}
        \node[Noeud](0)at(0,-1){};
        \node[Noeud](1)at(1,0){};
        \draw[Arete](1)--(0);
        \node[Noeud](2)at(2,-1){};
        \draw[Arete](1)--(2);
    \end{tikzpicture}}
    \;
    \scalebox{0.15}{%
    \begin{tikzpicture}
        \node[Noeud](0)at(0,0){};
        \node[Noeud](1)at(1,-2){};
        \node[Noeud](2)at(2,-1){};
        \draw[Arete](2)--(1);
        \draw[Arete](0)--(2);
    \end{tikzpicture}}}
    -
    \PP_{\scalebox{0.15}{%
    \begin{tikzpicture}
        \node[Noeud](0)at(0,0){};
        \node[Noeud](1)at(1,-2){};
        \node[Noeud](2)at(2,-1){};
        \draw[Arete](2)--(1);
        \draw[Arete](0)--(2);
    \end{tikzpicture}}
    \;
    \scalebox{0.15}{%
    \begin{tikzpicture}
        \node[Noeud](0)at(0,-1){};
        \node[Noeud](1)at(1,0){};
        \draw[Arete](1)--(0);
        \node[Noeud](2)at(2,-1){};
        \draw[Arete](1)--(2);
    \end{tikzpicture}}}, \\[1em]
    t_{4, 1} & =
    \PP_{\scalebox{0.15}{%
    \begin{tikzpicture}
        \node[Noeud](0)at(0,-2){};
        \node[Noeud](1)at(1,-1){};
        \draw[Arete](1)--(0);
        \node[Noeud](2)at(2,0){};
        \draw[Arete](2)--(1);
        \node[Noeud](3)at(3,-1){};
        \draw[Arete](2)--(3);
    \end{tikzpicture}}
    \;
    \scalebox{0.15}{%
    \begin{tikzpicture}
        \node[Noeud](0)at(0,0){};
        \node[Noeud](1)at(1,-1){};
        \node[Noeud](2)at(2,-3){};
        \node[Noeud](3)at(3,-2){};
        \draw[Arete](3)--(2);
        \draw[Arete](1)--(3);
        \draw[Arete](0)--(1);
    \end{tikzpicture}}}
    +
    \PP_{\scalebox{0.15}{%
    \begin{tikzpicture}
        \node[Noeud](0)at(0,-2){};
        \node[Noeud](1)at(1,-1){};
        \draw[Arete](1)--(0);
        \node[Noeud](2)at(2,0){};
        \draw[Arete](2)--(1);
        \node[Noeud](3)at(3,-1){};
        \draw[Arete](2)--(3);
    \end{tikzpicture}}
    \;
    \scalebox{0.15}{%
    \begin{tikzpicture}
        \node[Noeud](0)at(0,0){};
        \node[Noeud](1)at(1,-2){};
        \node[Noeud](2)at(2,-3){};
        \draw[Arete](1)--(2);
        \node[Noeud](3)at(3,-1){};
        \draw[Arete](3)--(1);
        \draw[Arete](0)--(3);
    \end{tikzpicture}}}
    +
    \PP_{\scalebox{0.15}{%
    \begin{tikzpicture}
        \node[Noeud](0)at(0,0){};
        \node[Noeud](1)at(1,-2){};
        \node[Noeud](2)at(2,-3){};
        \draw[Arete](1)--(2);
        \node[Noeud](3)at(3,-1){};
        \draw[Arete](3)--(1);
        \draw[Arete](0)--(3);
    \end{tikzpicture}}
    \;
    \scalebox{0.15}{%
    \begin{tikzpicture}
        \node[Noeud](0)at(0,-2){};
        \node[Noeud](1)at(1,-1){};
        \draw[Arete](1)--(0);
        \node[Noeud](2)at(2,0){};
        \draw[Arete](2)--(1);
        \node[Noeud](3)at(3,-1){};
        \draw[Arete](2)--(3);
    \end{tikzpicture}}}
    +
    \PP_{\scalebox{0.15}{%
    \begin{tikzpicture}
        \node[Noeud](0)at(0,0){};
        \node[Noeud](1)at(1,-1){};
        \node[Noeud](2)at(2,-3){};
        \node[Noeud](3)at(3,-2){};
        \draw[Arete](3)--(2);
        \draw[Arete](1)--(3);
        \draw[Arete](0)--(1);
    \end{tikzpicture}}
    \;
    \scalebox{0.15}{%
    \begin{tikzpicture}
        \node[Noeud](0)at(0,-2){};
        \node[Noeud](1)at(1,-1){};
        \draw[Arete](1)--(0);
        \node[Noeud](2)at(2,0){};
        \draw[Arete](2)--(1);
        \node[Noeud](3)at(3,-1){};
        \draw[Arete](2)--(3);
    \end{tikzpicture}}} \\
    & -
    \PP_{\scalebox{0.15}{%
    \begin{tikzpicture}
        \node[Noeud](0)at(0,-1){};
        \node[Noeud](1)at(1,-2){};
        \draw[Arete](0)--(1);
        \node[Noeud](2)at(2,0){};
        \draw[Arete](2)--(0);
        \node[Noeud](3)at(3,-1){};
        \draw[Arete](2)--(3);
    \end{tikzpicture}}
    \;
    \scalebox{0.15}{%
    \begin{tikzpicture}
        \node[Noeud](0)at(0,-1){};
        \node[Noeud](1)at(1,0){};
        \draw[Arete](1)--(0);
        \node[Noeud](2)at(2,-2){};
        \node[Noeud](3)at(3,-1){};
        \draw[Arete](3)--(2);
        \draw[Arete](1)--(3);
    \end{tikzpicture}}}
    -
    \PP_{\scalebox{0.15}{%
    \begin{tikzpicture}
        \node[Noeud](0)at(0,0){};
        \node[Noeud](1)at(1,-3){};
        \node[Noeud](2)at(2,-2){};
        \draw[Arete](2)--(1);
        \node[Noeud](3)at(3,-1){};
        \draw[Arete](3)--(2);
        \draw[Arete](0)--(3);
    \end{tikzpicture}}
    \;
    \scalebox{0.15}{%
    \begin{tikzpicture}
        \node[Noeud](0)at(0,-1){};
        \node[Noeud](1)at(1,0){};
        \draw[Arete](1)--(0);
        \node[Noeud](2)at(2,-1){};
        \node[Noeud](3)at(3,-2){};
        \draw[Arete](2)--(3);
        \draw[Arete](1)--(2);
    \end{tikzpicture}}}
    -
    \PP_{\scalebox{0.15}{%
    \begin{tikzpicture}
        \node[Noeud](0)at(0,0){};
        \node[Noeud](1)at(1,-2){};
        \node[Noeud](2)at(2,-1){};
        \draw[Arete](2)--(1);
        \node[Noeud](3)at(3,-2){};
        \draw[Arete](2)--(3);
        \draw[Arete](0)--(2);
    \end{tikzpicture}}
    \;
    \scalebox{0.15}{%
    \begin{tikzpicture}
        \node[Noeud](0)at(0,-1){};
        \node[Noeud](1)at(1,0){};
        \draw[Arete](1)--(0);
        \node[Noeud](2)at(2,-2){};
        \node[Noeud](3)at(3,-1){};
        \draw[Arete](3)--(2);
        \draw[Arete](1)--(3);
    \end{tikzpicture}}}, \nonumber \\
    t_{4, 2} & =
    \PP_{\scalebox{0.15}{%
    \begin{tikzpicture}
        \node[Noeud](0)at(0,-1){};
        \node[Noeud](1)at(1,0){};
        \draw[Arete](1)--(0);
        \node[Noeud](2)at(2,-2){};
        \node[Noeud](3)at(3,-1){};
        \draw[Arete](3)--(2);
        \draw[Arete](1)--(3);
    \end{tikzpicture}}
    \;
    \scalebox{0.15}{%
    \begin{tikzpicture}
        \node[Noeud](0)at(0,0){};
        \node[Noeud](1)at(1,-2){};
        \node[Noeud](2)at(2,-1){};
        \draw[Arete](2)--(1);
        \node[Noeud](3)at(3,-2){};
        \draw[Arete](2)--(3);
        \draw[Arete](0)--(2);
    \end{tikzpicture}}}
    -
    \PP_{\scalebox{0.15}{%
    \begin{tikzpicture}
        \node[Noeud](0)at(0,0){};
        \node[Noeud](1)at(1,-3){};
        \node[Noeud](2)at(2,-2){};
        \draw[Arete](2)--(1);
        \node[Noeud](3)at(3,-1){};
        \draw[Arete](3)--(2);
        \draw[Arete](0)--(3);
    \end{tikzpicture}}
    \;
    \scalebox{0.15}{%
    \begin{tikzpicture}
        \node[Noeud](0)at(0,-1){};
        \node[Noeud](1)at(1,0){};
        \draw[Arete](1)--(0);
        \node[Noeud](2)at(2,-1){};
        \node[Noeud](3)at(3,-2){};
        \draw[Arete](2)--(3);
        \draw[Arete](1)--(2);
    \end{tikzpicture}}}, \\
    t_{4, 3} & =
    \PP_{\scalebox{0.15}{%
    \begin{tikzpicture}
        \node[Noeud](0)at(0,-1){};
        \node[Noeud](1)at(1,0){};
        \draw[Arete](1)--(0);
        \node[Noeud](2)at(2,-1){};
        \node[Noeud](3)at(3,-2){};
        \draw[Arete](2)--(3);
        \draw[Arete](1)--(2);
    \end{tikzpicture}}
    \;
    \scalebox{0.15}{%
    \begin{tikzpicture}
        \node[Noeud](0)at(0,0){};
        \node[Noeud](1)at(1,-3){};
        \node[Noeud](2)at(2,-2){};
        \draw[Arete](2)--(1);
        \node[Noeud](3)at(3,-1){};
        \draw[Arete](3)--(2);
        \draw[Arete](0)--(3);
    \end{tikzpicture}}}
    -
    \PP_{\scalebox{0.15}{%
    \begin{tikzpicture}
        \node[Noeud](0)at(0,0){};
        \node[Noeud](1)at(1,-2){};
        \node[Noeud](2)at(2,-1){};
        \draw[Arete](2)--(1);
        \node[Noeud](3)at(3,-2){};
        \draw[Arete](2)--(3);
        \draw[Arete](0)--(2);
    \end{tikzpicture}}
    \;
    \scalebox{0.15}{%
    \begin{tikzpicture}
        \node[Noeud](0)at(0,-1){};
        \node[Noeud](1)at(1,0){};
        \draw[Arete](1)--(0);
        \node[Noeud](2)at(2,-2){};
        \node[Noeud](3)at(3,-1){};
        \draw[Arete](3)--(2);
        \draw[Arete](1)--(3);
    \end{tikzpicture}}}, \\
    t_{4, 4} & =
    \PP_{\scalebox{0.15}{%
    \begin{tikzpicture}
        \node[Noeud](0)at(0,-1){};
        \node[Noeud](1)at(1,0){};
        \draw[Arete](1)--(0);
        \node[Noeud](2)at(2,-2){};
        \node[Noeud](3)at(3,-1){};
        \draw[Arete](3)--(2);
        \draw[Arete](1)--(3);
    \end{tikzpicture}}
    \;
    \scalebox{0.15}{%
    \begin{tikzpicture}
        \node[Noeud](0)at(0,-1){};
        \node[Noeud](1)at(1,-2){};
        \draw[Arete](0)--(1);
        \node[Noeud](2)at(2,0){};
        \draw[Arete](2)--(0);
        \node[Noeud](3)at(3,-1){};
        \draw[Arete](2)--(3);
    \end{tikzpicture}}}
    -
    \PP_{\scalebox{0.15}{%
    \begin{tikzpicture}
        \node[Noeud](0)at(0,-1){};
        \node[Noeud](1)at(1,-2){};
        \draw[Arete](0)--(1);
        \node[Noeud](2)at(2,0){};
        \draw[Arete](2)--(0);
        \node[Noeud](3)at(3,-1){};
        \draw[Arete](2)--(3);
    \end{tikzpicture}}
    \;
    \scalebox{0.15}{%
    \begin{tikzpicture}
        \node[Noeud](0)at(0,-1){};
        \node[Noeud](1)at(1,0){};
        \draw[Arete](1)--(0);
        \node[Noeud](2)at(2,-2){};
        \node[Noeud](3)at(3,-1){};
        \draw[Arete](3)--(2);
        \draw[Arete](1)--(3);
    \end{tikzpicture}}}.
\end{align}

\subsubsection{\texorpdfstring{Compatibility with the $\#$ product}
                              {Compatibility with the sharp product}}
Aval and Viennot~\cite{AV10} endowed~$\PBT$ with a new associative product
called the \emph{$\#$ product}. The product of two elements of~$\PBT$
of degrees~$n$ and~$m$ is an element of degree~$n + m - 1$. Aval, Novelli,
and Thibon~\cite{ANT11} generalized the $\#$ product at the level of the
associative algebra and showed that it is still well-defined in~$\FQSym$.
\medskip

Let for all~$k \geq 1$ the linear maps $d_k : \FQSym \to \FQSym$ defined
for any permutation~$\sigma$ of~$\EnsPermu_n$ by
\begin{equation}
    d_k(\F_\sigma) :=
    \begin{cases}
        \F_{\Std(\sigma_1 \dots \sigma_i \sigma_{i + 2} \dots \sigma_n)} &
            \mbox{if there is $1 \leq i \leq n - 1$ such that $\sigma_i = k$
            and $\sigma_{i + 1} = k + 1$}, \\
        0 & \mbox{otherwise}.
    \end{cases}
\end{equation}
Now, for any permutations~$\sigma$ and~$\nu$, the $\#$-product is defined
in~$\FQSym$ by
\begin{equation}
    \F_\sigma \# \F_\nu := d_n\left(\F_\sigma \cdot \F_\nu\right),
\end{equation}
where~$n$ is the size of~$\sigma$.

\begin{Proposition} \label{prop:DkInterBaxter}
    The linear maps~$d_k$ are well-defined in~$\Baxter$. More precisely,
    one has for any pair of twin binary trees~$J := (T_0, T_1)$,
    \begin{equation}
        d_k(\PP_J) =
        \begin{cases}
            \PP_{J'} &
            \substack{\mbox{if the $k\!+\!1$-st (resp. $k$-th) node is a
            child of the} \\
            \mbox{$k$-th (resp. $k\!+\!1$-st) node in $T_L$ (resp. $T_R$),}} \\
            0 & \mbox{otherwise},
        \end{cases}
    \end{equation}
    where $J' := (T'_L, T'_R)$ is the pair of twin binary trees obtained
    by contracting in~$T_L$ and~$T_R$ the edges connecting the
    $k$-th and the $k\!+\!1$-st nodes.
\end{Proposition}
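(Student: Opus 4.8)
The plan is to deduce this statement from the analogous result for $\FQSym$ and the description of $\Baxter$ as a Hopf subalgebra of $\FQSym$ via the fundamental basis $\PP_J = \sum_{\PSymb(\sigma) = J} \F_\sigma$. First I would recall that, by the work of Aval, Novelli, and Thibon, the map $d_k$ is well-defined on $\FQSym$ at the level of the associative algebra. So the essential task is to show that $d_k$ sends an element $\PP_J = \sum_{\PSymb(\sigma) = J} \F_\sigma$ to either $0$ or to another fundamental basis element $\PP_{J'}$ with the stated shape. Fix a pair of twin binary search trees $J = (T_L, T_R)$ (labeled by the permutation realization of $J$), and examine $d_k\left(\sum_{\PSymb(\sigma) = J} \F_\sigma\right) = \sum_{\PSymb(\sigma) = J} d_k(\F_\sigma)$. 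The key combinatorial claim is: for a permutation $\sigma$ with $\PSymb(\sigma) = J$, the letters $k$ and $k+1$ are adjacent in $\sigma$ (in this order) if and only if, in $T_L$, the $k\!+\!1$-st node is a child of the $k$-th node (equivalently, by complementarity of canopies and Lemma~\ref{lem:OrientationFeuille}, in $T_R$ the $k$-th node is a child of the $k\!+\!1$-st node). This should follow from Proposition~\ref{prop:FeuillesInversions}: $k$ being a recoil of $\sigma$ corresponds exactly to the $k\!+\!1$-st leaf of $T_L$ being right-oriented, i.e., attached to the $k$-th node, which by binary search tree structure forces the $k$-th and $k\!+\!1$-st nodes into an ancestor-descendant (in fact parent-child) relation. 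Conversely, if this parent-child configuration does not occur, then no $\sigma$ in the class has $k$ immediately before $k+1$, so every term vanishes and $d_k(\PP_J) = 0$.

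Next I would handle the ``good'' case: assuming the $k\!+\!1$-st node is a child of the $k$-th node in $T_L$. Here I need to show that $d_k$ acts uniformly across the whole class, i.e., that $\{\Std(\sigma_1\cdots\sigma_i\sigma_{i+2}\cdots\sigma_n) : \PSymb(\sigma) = J\}$ is again a single $\EquivBX$-equivalence class, and that it is the class $J'$ obtained by edge-contraction. The cleanest route is via the poset characterization of Proposition~\ref{prop:BXExtLin}: the words of the class $J$ are the common linear extensions of $\PosetG(T_L)$ and $\PosetD(T_R)$. Deleting the letter $k+1$ and restandardizing (note that all letters $> k$ drop by one, and the letter $k+1$ is removed) corresponds, on the poset side, to contracting the two elements $k$ and $k+1$ into a single element. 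Because $k$ and $k+1$ are in a parent-child relation in both $T_L$ and $T_R$ (the parent-child relation in $T_R$ being the transpose, descendant direction), this contraction is compatible with both $\PosetG$ and $\PosetD$ simultaneously, and yields exactly the posets $\PosetG(T'_L)$ and $\PosetD(T'_R)$ where $T'_L, T'_R$ are the trees with the corresponding edges contracted. One then checks that $(T'_L, T'_R)$ is still a pair of twin binary search trees—contracting the edge between consecutive nodes removes one leaf from each tree in a way that preserves complementarity of canopies (this is again Lemma~\ref{lem:OrientationFeuille} bookkeeping). Hence $d_k(\PP_J) = \sum_{\PSymb(\tau) = J'} \F_\tau = \PP_{J'}$.

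The main obstacle I anticipate is the precise verification that edge-contraction on $T_L$ and on $T_R$ are ``the same operation'' on the combined poset $\PosetG(T_L) \cup \PosetD(T_R)$—that is, that contracting the parent-child edge at nodes $k, k+1$ in $T_L$ (the parent being node $k$) is mirrored by contracting the parent-child edge at nodes $k+1, k$ in $T_R$ (the parent being node $k+1$), and that the result remains a valid pair of twin binary search trees. This requires carefully tracking leaf orientations through the contraction using Lemma~\ref{lem:OrientationFeuille}: before contraction the $k\!+\!1$-st leaf of $T_L$ is right-oriented (attached to node $k$) while in $T_R$ it is left-oriented (attached to node $k+1$); after contraction this leaf disappears and the canopies shorten by one position, and one must confirm no other canopy entry changes. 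A secondary, more routine point is confirming that restandardization after deleting letter $k+1$ interacts correctly with the $\PSymb$-symbol: that $\PSymb(\Std(\sigma_1\cdots\sigma_i\sigma_{i+2}\cdots\sigma_n))$ depends only on $J$ and not on the chosen representative $\sigma$, which follows once the poset-contraction description is established. With these in hand, the statement follows by linearity, and the ``$d_k$ well-defined in $\Baxter$'' assertion is an immediate corollary.
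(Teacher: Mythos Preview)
Your approach via Proposition~\ref{prop:BXExtLin} (Baxter classes as common linear extensions of $\PosetG(T_L)$ and $\PosetD(T_R)$) is exactly the one the paper uses, and your treatment of the ``good'' case via poset contraction is more explicit than the paper's. However, there is a genuine error in your setup.

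You assert that the condition ``node $k\!+\!1$ is a child of node $k$ in $T_L$'' is \emph{equivalent}, via complementarity of canopies and Lemma~\ref{lem:OrientationFeuille}, to ``node $k$ is a child of node $k\!+\!1$ in $T_R$''. This is false. The canopy argument only yields that node $k\!+\!1$ has no left child in $T_L$ if and only if node $k$ has no right child in $T_R$; combined with the fact that consecutive inorder nodes lie on a common root path, this tells you that $k\!+\!1$ is a \emph{descendant} of $k$ in $T_L$ iff $k$ is a \emph{descendant} of $k\!+\!1$ in $T_R$, but not that either descendant is an immediate child. A concrete counterexample is $J = \PSymb(2134)$: here $T_L$ has root $2$ with $2.L=1$, $2.R=3$, $3.R=4$, so node $3$ is a child of node $2$; but $T_R$ has root $4$ with $4.L=3$, $3.L=1$, $1.R=2$, so node $2$ is only a \emph{grandchild} of node $3$. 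The class is the singleton $\{2134\}$, which has no factor $23$, so $d_2(\PP_J)=0$ --- the proposition's hypothesis requires \emph{both} parent--child conditions, and both are genuinely needed.

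This breaks your case analysis: your ``otherwise'' argument only invokes the failure of the $T_L$ condition, but you must also handle the sub-case where the $T_L$ condition holds and the $T_R$ condition fails (and symmetrically). The fix is straightforward and is what the paper does implicitly: if the condition fails in $T_R$, then there is an intermediate node $m$ on the path from $k\!+\!1$ down to $k$ in $T_R$, hence $k < m < k\!+\!1$ in $\PosetD(T_R)$, forcing $m$ strictly between $k$ and $k\!+\!1$ in every linear extension and killing the factor $k.(k\!+\!1)$.

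A smaller point: your ``key combinatorial claim'' is phrased as a per-$\sigma$ biconditional whose right-hand side depends only on $J$; read literally it would say that \emph{every} $\sigma$ in the class has $k,k\!+\!1$ adjacent or none does, which is false even in the good case. What you actually need (and later use) is: \emph{some} $\sigma$ in the class has the factor $k.(k\!+\!1)$ iff both parent--child conditions hold, and in that case the surviving $\sigma$'s map bijectively, after deletion and standardization, onto the class encoded by the contracted pair $J'$.
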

\begin{proof}
    This proof relies on the fact that, according to Proposition~\ref{prop:BXExtLin},
    the permutations of a Baxter equivalence class coincide with linear
    extensions of the posets~$\PosetG(T_L)$ and~$\PosetD(T_R)$.

    We have two cases to consider whether the $k\!+\!1$-st (resp. $k$-th)
    node is a child of the $k$-th (resp. $k\!+\!1$-st) node in~$T_L$
    (resp.~$T_R$).
    \begin{enumerate}[label = {\bf Case \arabic*.}, fullwidth]
        \item If so, there is in the Baxter equivalence class represented
        by~$J$ some permutations with a factor~$k.(k\!+\!1)$. The map~$d_k$
        deletes letters~$k\!+\!1$ in these permutations and standardizes
        them. The obtained permutations coincide with linear extensions
        of the posets~$\PosetG(T'_L)$ and~$\PosetD(T'_R)$.
        \item If this is not the case, since the $k$-th and $k\!+\!1$-st nodes
        of a binary tree are on a same path starting from the root, no
        permutation of the Baxter class represented by~$J$ has a
        factor~$k.(k\!+\!1)$. Hence,~$d_k(\PP_J) = 0$. \qedhere
    \end{enumerate}
\end{proof}

One has for example
\begin{equation}
    d_3\left(\PP_{
    \scalebox{.15}{
    \begin{tikzpicture}
        \node[Noeud](0)at(0.0,-1){};
        \node[Noeud](1)at(1.0,0){};
        \draw[Arete](1)--(0);
        \node[Noeud](2)at(2.0,-2){};
        \node[Noeud](3)at(3.0,-3){};
        \draw[Arete](2)--(3);
        \node[Noeud](4)at(4.0,-1){};
        \draw[Arete](4)--(2);
        \node[Noeud](5)at(5.0,-2){};
        \draw[Arete](4)--(5);
        \draw[Arete](1)--(4);
    \end{tikzpicture}} \;
    \scalebox{.15}{
    \begin{tikzpicture}
        \node[Noeud](0)at(0.0,-2){};
        \node[Noeud](1)at(1.0,-3){};
        \draw[Arete](0)--(1);
        \node[Noeud](2)at(2.0,-1){};
        \draw[Arete](2)--(0);
        \node[Noeud](3)at(3.0,0){};
        \draw[Arete](3)--(2);
        \node[Noeud](4)at(4.0,-2){};
        \node[Noeud](5)at(5.0,-1){};
        \draw[Arete](5)--(4);
        \draw[Arete](3)--(5);
    \end{tikzpicture}}} \right) =
    \PP_{
    \scalebox{.15}{
    \begin{tikzpicture}
        \node[Noeud](0)at(0.0,-1){};
        \node[Noeud](1)at(1.0,0){};
        \draw[Arete](1)--(0);
        \node[Noeud](2)at(2.0,-2){};
        \node[Noeud](3)at(3.0,-1){};
        \draw[Arete](3)--(2);
        \node[Noeud](4)at(4.0,-2){};
        \draw[Arete](3)--(4);
        \draw[Arete](1)--(3);
    \end{tikzpicture}} \;
    \scalebox{.15}{
    \begin{tikzpicture}
        \node[Noeud](0)at(0.0,-1){};
        \node[Noeud](1)at(1.0,-2){};
        \draw[Arete](0)--(1);
        \node[Noeud](2)at(2.0,0){};
        \draw[Arete](2)--(0);
        \node[Noeud](3)at(3.0,-2){};
        \node[Noeud](4)at(4.0,-1){};
        \draw[Arete](4)--(3);
        \draw[Arete](2)--(4);
    \end{tikzpicture}}}\,.
\end{equation}

Proposition~\ref{prop:DkInterBaxter} shows in particular that the $\#$ product
in well-defined in~$\Baxter$.

\subsection{\texorpdfstring{Connections with other Hopf subalgebras of $\FQSym$}
                           {Connections with other Hopf subalgebras of FQSym}}

\subsubsection{\texorpdfstring{Connection with the Hopf algebra $\PBT$}
                              {Connection with the Hopf algebra PBT}}
We already recalled that the sylvester congruence leads to the construction
of the Hopf subalgebra~$\PBT$~\cite{LR98} of~$\FQSym$, whose fundamental
basis
\begin{equation}
    \left\{\PP_T : T \in \EnsAB\right\}
\end{equation}
is defined in accordance
with~(\ref{eq:EquivFQSym}) (see~\cite{HNT02} and~\cite{HNT05}). By
Proposition~\ref{prop:LienSylv}, every $\EquivS$-equivalence class is a
union of some $\EquivBX$-equivalence classes. Hence, we have the following
injective Hopf map:
\begin{equation}
    \rho : \PBT \hookrightarrow \Baxter,
\end{equation}
satisfying
\begin{equation}
    \rho \left(\PP_T\right) =
    \sum_{\substack{T' \; \in \; \EnsAB \\ J := (T', T) \; \in \; \EnsABJ}}
    \PP_J,
\end{equation}
for any binary tree~$T$. For example,
\begin{align}
    \rho \left(
    \PP_{\scalebox{0.15}{%
    \begin{tikzpicture}
        \node[Noeud,Marque1](0)at(0,-2){};
        \node[Noeud,Marque1](1)at(1,-1){};
        \draw[Arete](1)--(0);
        \node[Noeud,Marque1](2)at(2,0){};
        \draw[Arete](2)--(1);
        \node[Noeud,Marque1](3)at(3,-2){};
        \node[Noeud,Marque1](4)at(4,-1){};
        \draw[Arete](4)--(3);
        \draw[Arete](2)--(4);
    \end{tikzpicture}}}
    \right)
    & =
    \PP_{\scalebox{0.15}{%
    \begin{tikzpicture}
        \node[Noeud](0)at(0,0){};
        \node[Noeud](1)at(1,-2){};
        \node[Noeud](2)at(2,-3){};
        \draw[Arete](1)--(2);
        \node[Noeud](3)at(3,-1){};
        \draw[Arete](3)--(1);
        \node[Noeud](4)at(4,-2){};
        \draw[Arete](3)--(4);
        \draw[Arete](0)--(3);
    \end{tikzpicture}}
    \;
    \scalebox{0.15}{%
    \begin{tikzpicture}
        \node[Noeud,Marque1](0)at(0,-2){};
        \node[Noeud,Marque1](1)at(1,-1){};
        \draw[Arete](1)--(0);
        \node[Noeud,Marque1](2)at(2,0){};
        \draw[Arete](2)--(1);
        \node[Noeud,Marque1](3)at(3,-2){};
        \node[Noeud,Marque1](4)at(4,-1){};
        \draw[Arete](4)--(3);
        \draw[Arete](2)--(4);
    \end{tikzpicture}}}
    +
    \PP_{\scalebox{0.15}{%
    \begin{tikzpicture}
        \node[Noeud](0)at(0,-1){};
        \node[Noeud](1)at(1,-2){};
        \node[Noeud](2)at(2,-3){};
        \draw[Arete](1)--(2);
        \draw[Arete](0)--(1);
        \node[Noeud](3)at(3,0){};
        \draw[Arete](3)--(0);
        \node[Noeud](4)at(4,-1){};
        \draw[Arete](3)--(4);
    \end{tikzpicture}}
    \;
    \scalebox{0.15}{%
    \begin{tikzpicture}
        \node[Noeud,Marque1](0)at(0,-2){};
        \node[Noeud,Marque1](1)at(1,-1){};
        \draw[Arete](1)--(0);
        \node[Noeud,Marque1](2)at(2,0){};
        \draw[Arete](2)--(1);
        \node[Noeud,Marque1](3)at(3,-2){};
        \node[Noeud,Marque1](4)at(4,-1){};
        \draw[Arete](4)--(3);
        \draw[Arete](2)--(4);
    \end{tikzpicture}}}
    +
    \PP_{\scalebox{0.15}{%
    \begin{tikzpicture}
        \node[Noeud](0)at(0,0){};
        \node[Noeud](1)at(1,-1){};
        \node[Noeud](2)at(2,-3){};
        \node[Noeud](3)at(3,-2){};
        \draw[Arete](3)--(2);
        \node[Noeud](4)at(4,-3){};
        \draw[Arete](3)--(4);
        \draw[Arete](1)--(3);
        \draw[Arete](0)--(1);
    \end{tikzpicture}}
    \;
    \scalebox{0.15}{%
    \begin{tikzpicture}
        \node[Noeud,Marque1](0)at(0,-2){};
        \node[Noeud,Marque1](1)at(1,-1){};
        \draw[Arete](1)--(0);
        \node[Noeud,Marque1](2)at(2,0){};
        \draw[Arete](2)--(1);
        \node[Noeud,Marque1](3)at(3,-2){};
        \node[Noeud,Marque1](4)at(4,-1){};
        \draw[Arete](4)--(3);
        \draw[Arete](2)--(4);
    \end{tikzpicture}}}.
\end{align}

\subsubsection{\texorpdfstring{Connection with the Hopf algebra $\DSym{3}$}
                              {Connection with the Hopf algebra DSym3}}
The congruence~$\EquivR{3}$ leads to the construction of the Hopf
subalgebra~$\DSym{3}$ of~$\FQSym$, whose fundamental basis
\begin{equation}
    \left\{\PP_{\widehat{\sigma}} :
    \widehat{\sigma} \in \EnsPermu /_{\EquivR{3}}\right\}
\end{equation}
is defined in accordance with~(\ref{eq:EquivFQSym}) (see~\cite{NRT11}).
By Proposition~\ref{prop:Lien3Recul}, every $\EquivR{3}$-equivalence
class of permutations is a union of some $\EquivBX$-equivalence classes.
Hence, we have the following injective Hopf map:
\begin{equation}
    \alpha : \DSym{3} \hookrightarrow \Baxter,
\end{equation}
satisfying
\begin{equation}
    \alpha \left( \PP_{\widehat{\sigma}} \right) =
    \sum_{\sigma \; \in \; \widehat{\sigma} \cap \EnsPermuBX} \PP_{\PSymb(\sigma)},
\end{equation}
for any $\EquivR{3}$-equivalence class~$\widehat{\sigma}$ of permutations.

\subsubsection{\texorpdfstring{Connection with the Hopf algebra $\Sym$}
                              {Connection with the Hopf algebra Sym}}
The hypoplactic congruence~\cite{N98} leads to the construction of the Hopf
subalgebra~$\Sym$ of~$\FQSym$. As already mentioned, the hypoplactic congruence
is the same as the congruence~$\EquivR{2}$ when both are restricted on
permutations. Moreover, the hypoplactic equivalence classes of permutations
can  be encoded by binary words. Indeed, if~$\widehat{\sigma}$ is such an
equivalence class,~$\widehat{\sigma}$ contains all the permutations having
a given recoil set. Thus, the class~$\widehat{\sigma}$ can be encoded by
the binary word~$b$ of length~$n - 1$ where~$n$ is the length of the elements
of~$\widehat{\sigma}$ and~$b_i = 1$ if and only if~$i$ is a recoil of the elements
of~$\widehat{\sigma}$. We denote by
\begin{equation}
    \left\{\PP_b : b \in \{0, 1\}^*\right\}
\end{equation}
the fundamental basis of~$\Sym$ indexed by binary words.
\medskip

Since~$\PBT$ is a Hopf subalgebra of~$\Baxter$ and~$\Sym$ is a Hopf subalgebra
of~$\PBT$~\cite{HNT05}, $\Sym$ is itself a Hopf subalgebra of~$\Baxter$.
The injective Hopf map
\begin{equation}
    \beta : \Sym \hookrightarrow \PBT,
\end{equation}
satisfies, thanks to the fact that the hypoplactic equivalence classes are
union of $\EquivS$-equivalence classes and Proposition~\ref{prop:FeuillesInversions},
\begin{equation}
    \beta \left(\PP_b\right) =
    \sum_{\substack{T \; \in \; \EnsAB \\ \Canop(T) = b}} \PP_T,
\end{equation}
for any binary word~$b$. From a combinatorial point of view, given a binary
word~$b$, the map~$\beta$ computes the sum of the binary trees having~$b$
as canopy. The composition $\rho \circ \beta$ is an injective Hopf map
from~$\Sym$ to~$\Baxter$. From a combinatorial point of view, given a
binary word~$b$, the map $\rho \circ \beta$ computes the sum of the pairs
of twin binary trees~$(T_L, T_R)$ where the canopy of~$T_R$ is~$b$ and
the canopy of~$T_L$ is the complementary of~$b$.

\subsubsection{Full diagram of embeddings}
Figure~\ref{fig:DiagrammeAHC} summarizes the relations between known Hopf
algebras related to $\Baxter$.
\begin{figure}[ht]
    \centering
    \begin{tikzpicture}[scale=.5]
        \node(FQSym)at(5,0){$\FQSym$};
        \node(DSym4)at(0,-3){$\DSym{4}$};
        \node(Baxter)at(5,-3){$\Baxter$};
        \node(DSym3)at(0,-6){$\DSym{3}$};
        \node(PBT)at(10,-6){$\PBT$};
        \node(Sym)at(5,-9){$\Sym$};
        \draw[Injection,dashed](DSym4)--(FQSym);
        \draw[Injection](Baxter) edge node[anchor=south,right] {$\theta$} (FQSym);
        \draw[Injection](DSym3)--(DSym4);
        \draw[Injection](DSym3) edge node[anchor=south,below] {$\alpha$} (Baxter);
        \draw[Injection](PBT) edge node[anchor=south,below] {$\rho$} (Baxter);
        \draw[Injection](Sym)--(DSym3);
        \draw[Injection](Sym) edge node[anchor=south,below] {$\beta$} (PBT);
    \end{tikzpicture}
    \caption{Diagram of injective Hopf maps between some Hopf algebras
    related to $\Baxter$. Arrows~$\rightarrowtail$ are injective Hopf maps.}
    \label{fig:DiagrammeAHC}
\end{figure}
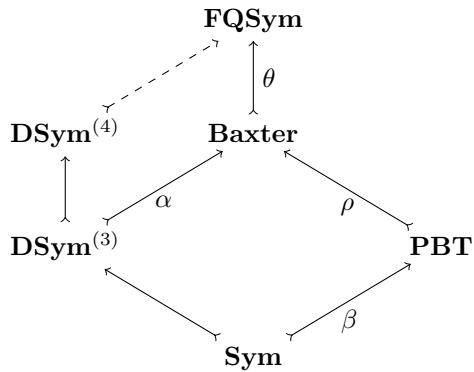

\bibliographystyle{alpha}
\bibliography{Bibliographie}

\end{document}